\documentclass[11pt,a4paper]{article}
\usepackage[margin=25mm]{geometry}
\usepackage[T1]{fontenc}
\usepackage{lmodern,microtype}
\usepackage{amsmath,amssymb,amsthm,mathtools,mathrsfs}
\usepackage{enumitem,booktabs,longtable,array,aliascnt,xcolor}
\usepackage{indentfirst}
\usepackage{hyperref}
\usepackage[nameinlink,capitalise,noabbrev]{cleveref}
\makeatletter
\AtBeginDocument{%
  \let\paper@hyper@refstepcounter\hyper@refstepcounter
  \def\paper@thm@refstepcounter#1{%
    \HyCnt@ProvideTheHCounter{#1}%
    \hyper@makecurrent{#1}%
  }%
  \let\paper@thm\@thm
  \def\@thm{%
    \let\hyper@refstepcounter\paper@thm@refstepcounter
    \paper@thm
  }%
  \let\paper@begintheorem\@begintheorem
  \def\@begintheorem#1#2[#3]{%
    \let\hyper@refstepcounter\paper@hyper@refstepcounter
    \paper@begintheorem{#1}{#2}[#3]%
  }%
  \let\paper@deferred@thm@head\deferred@thm@head
  \def\deferred@thm@head#1{%
    \paper@deferred@thm@head{%
      \Hy@raisedlink{\hyper@anchorstart{\@currentHref}\hyper@anchorend}%
      #1}%
  }%
}
\makeatother
\hypersetup{hidelinks,pdftitle={One-dimensional pressureless flow with a nonlinear velocity law: Bregman variational construction and well-posedness},pdfauthor={Gaowei Cao, Feimin Huang, Guirong Tang}}
\numberwithin{equation}{section}
\allowdisplaybreaks[2]
\makeatletter
\renewcommand*\l@subsection{\@dottedtocline{2}{2em}{2.8em}}
\makeatother
\setlist[enumerate]{leftmargin=2.5em,itemsep=0.2em,font=\normalfont}
\newtheorem{theorem}{Theorem}[section]
\newaliascnt{lemma}{theorem}\newtheorem{lemma}[lemma]{Lemma}\aliascntresetthe{lemma}
\newaliascnt{proposition}{theorem}\newtheorem{proposition}[proposition]{Proposition}\aliascntresetthe{proposition}
\newaliascnt{corollary}{theorem}\newtheorem{corollary}[corollary]{Corollary}\aliascntresetthe{corollary}
\theoremstyle{definition}
\newaliascnt{definition}{theorem}\newtheorem{definition}[definition]{Definition}\aliascntresetthe{definition}
\newaliascnt{example}{theorem}\newtheorem{example}[example]{Example}\aliascntresetthe{example}
\theoremstyle{remark}
\newaliascnt{remark}{theorem}\newtheorem{remark}[remark]{Remark}\aliascntresetthe{remark}
\newcommand{\R}{\mathbb R}
\newcommand{\K}{\mathcal K}

\newcommand{\M}{\mathcal M}
\newcommand{\dd}{\,\mathrm d}
\newcommand{\one}{\mathbf1}
\newcommand{\supp}{\operatorname{supp}}
\newcommand{\TV}{\operatorname{TV}}
\newcommand{\osc}{\operatorname{osc}}
\newcommand{\esssup}{\operatorname*{ess\,sup}}

\newcommand{\W}{\mathfrak W_{f_R}}
\newcommand{\qmeas}{\mathfrak q}
\newcommand{\mean}[2]{\frac{1}{|#1|}\int_{#1}#2\dd a}

\newcommand{\ip}[2]{\left\langle #1,#2\right\rangle}

\title{One-dimensional pressureless flow with a nonlinear velocity law: Bregman variational construction and well-posedness}

\author{Gaowei Cao\thanks{\raggedright Hetao Institute of Mathematics and Interdisciplinary Sciences, Shenzhen 518038, China. E-mail: \texttt{caogaowei@himis-sz.cn}.}
\and Feimin Huang\thanks{\raggedright Corresponding author. Academy of Mathematics and Systems Science, Chinese Academy of Sciences, Beijing 100190, China; School of Mathematical Sciences, University of Chinese Academy of Sciences, Beijing 100049, China. E-mail: \texttt{fhuang@amt.ac.cn}.}
\and Guirong Tang\thanks{\raggedright School of Mathematical Sciences, Capital Normal University, Beijing 100048, China. E-mail: \texttt{tangguirong@amss.ac.cn}.}}

\date{}

\begin{document}
\maketitle
\begin{abstract}
We establish global existence, uniqueness and quantitative stability for one-dimensional pressureless flow governed by
\[
\partial_t\rho+\partial_x\bigl(\rho f(u)\bigr)=0,
\qquad
\partial_t(\rho u)+\partial_x\bigl(\rho u f(u)\bigr)=0,
\]
where $\rho$ is the mass density, possibly measure-valued, $u$ is the physical velocity, and $f\in C^1(\mathbb R)$, with $f'>0$, determines the transport velocity $f(u)$. The initial mass has finite positive mass and finite second moment, allowing atoms and unbounded support, and the initial physical velocity is bounded.
The difficulty at a cluster collision is that mass and momentum conservation determine a mass-weighted average of physical velocity, whereas transport is governed by the nonlinear law $f$ applied to that average. We construct global entropy solutions by a Bregman variational time discretization whose cost is chosen to reproduce the transport velocity determined by this mass-weighted average, coupled with conditional-expectation updates of physical velocity.
For every entropy solution satisfying the Oleinik condition on the transport velocity and initial weak continuity of the kinetic energy measure, we establish a Lagrangian representation and prove uniqueness within this class by comparing flows with possibly different cluster intervals.
For two such solutions with the same velocity law and common mass, we obtain a $W_\infty$ estimate for the normalized mass measures in terms of the initial position and physical-velocity discrepancies in mass coordinates, whenever the initial $W_\infty$ distance is finite. The Lagrangian evolution has the semigroup property.
\end{abstract}
\noindent\textbf{Keywords.} Pressureless flow; nonlinear velocity law; Bregman variational construction; entropy solutions; Lagrangian representation; Wasserstein stability.

\tableofcontents

\section{Introduction}\label{sec:intro}

We study the Cauchy problem for mass and momentum transported by a common nonlinear velocity law:
\begin{equation}\label{eq:system}
 \partial_t\rho+\partial_x(\rho f(u))=0,\qquad
 \partial_t(\rho u)+\partial_x(\rho u f(u))=0
 \quad\text{in }\R\times(0,\infty).
\end{equation}
Here $\rho$ is the mass density, possibly measure-valued, and $u$ is the physical velocity; both conserved quantities move with transport velocity $f(u)$. We assume
\begin{equation}\label{eq:velocity-law}
 f\in C^1(\R),\qquad f'(r)>0
 \quad\text{for every }r\in\R.
\end{equation}
Thus $f$ is strictly increasing. In smooth regions of positive density the second equation reduces, using the first, to $u_t+f(u)u_x=0$, so $f(u)$ is also the characteristic speed.

The choice $f(u)=u$ recovers the classical pressureless Euler system. Nonlinear laws satisfying \eqref{eq:velocity-law} occur in physically motivated transport models. Examples include, after suitable changes of dependent variables, one-dimensional reductions of single-phase moment models in geometrical optics \cite{Runborg} and the baryon-number--momentum formulation of pressureless relativistic flow \cite{YangZhang}.

Compression can concentrate mass at a single position, even when the initial mass has a density. A solution must therefore allow mass measures with atoms. An atom represents a cluster, and clusters may merge when they meet. Conservation of mass and momentum determines the physical velocity after a merger: clusters with masses $m_j>0$ and physical velocities $u_j$ merge with physical velocity $p=\sum_jm_ju_j/\sum_jm_j$. Their subsequent transport velocity is
\[
 f(p)=f\!\left(\frac{\sum_jm_ju_j}{\sum_jm_j}\right)
 \ne\frac{\sum_jm_jf(u_j)}{\sum_jm_j}
 \quad\text{in general}.
\]
The order of averaging and applying $f$ is therefore essential. A variational construction must recover the velocity prescribed by conservation while permitting mass to concentrate and clusters to merge.

Our aim is to construct global entropy solutions for measure-valued initial mass, recover a Lagrangian representation for every solution in the chosen Eulerian class, and prove uniqueness and quantitative stability by comparing the resulting flows.

\subsection{Previous work and contribution}\label{sec:previous-work}

For the nonlinear system \eqref{eq:system}, Huang \cite{Huang} proved global entropy existence for bounded nonnegative initial density and bounded initial physical velocity, under the hypotheses on the increasing velocity law used there. The construction uses piecewise constant Riemann solutions, wave-interaction estimates and generalized characteristics, and already incorporates mass-weighted averaging of physical velocity and characteristic triangles. Huang's admissibility condition is an Oleinik estimate on the physical velocity with a data-dependent constant; the constructed solution also satisfies the Oleinik condition on the transport velocity with constant $1/t$.

Our data may contain initial atoms, but have finite mass and finite second moment. Huang's initial density class permits infinite mass and excludes atoms, so neither initial-data class contains the other. We prescribe the Oleinik condition on the transport velocity with the universal constant $1/t$, together with initial weak continuity of the kinetic energy measure. These conditions define the class in which we prove uniqueness; their precise formulation is given in \cref{def:entropy}.

For classical pressureless flow, the generalized variational principle of E, Rykov and Sinai \cite{ERS} yields global weak solutions, and the work of Brenier and Grenier \cite{BG} relates sticky dynamics to scalar conservation laws. Bouchut and James developed duality solutions for transport equations with one-sided Lipschitz coefficients \cite{BJtransport} and applied them to pressureless gases \cite{BJ}. Huang and Wang \cite{HW} proved existence and uniqueness for locally Radon initial mass and bounded measurable initial velocity, imposing an Oleinik condition on the physical velocity and initial weak continuity of the kinetic energy measure. In that setting physical and transport velocities coincide. Thus neither measure data nor the initial energy condition is new to the classical theory.

The classical Lagrangian theory also provides conditional averaging, monotone maps and evolution semigroups. For bounded initial velocities satisfying a negative-jump condition, Moutsinga \cite{Moutsinga} described sticky dynamics through convex hulls, conditional expectations and a forward flow. Natile and Savar\'e \cite{NS} constructed an evolution semigroup for probability measures with finite second moment and square-integrable initial velocity. Brenier, Gangbo, Savar\'e and Westdickenberg \cite{BGSW} used monotone-cone differential inclusions to treat interactions, while Cavalletti, Sedjro and Westdickenberg \cite{CSW} gave a direct projection proof of global weak existence.

Hynd \cite{Hynd} obtained a conditional-expectation Lagrangian evolution for absolutely continuous initial velocity. For finite-second-moment probability data and bounded right-continuous initial velocity, Suder and Tudorascu \cite{ST} proved uniqueness within their sticky particle flow class and identified the Eulerian solution with the scalar-conservation-law solution. Entropy--Lagrangian equivalence with force is developed in \cite{CG}. These results involve different velocity assumptions and solution classes; finite second moments and atomic masses alone do not distinguish our setting.

Variational time discretization is available as well. For classical Euler flow, Cavalletti, Sedjro and Westdickenberg \cite{CSWtime} minimize an acceleration cost over monotone transport maps and project the updated velocity. Their one-dimensional pressureless construction recovers sticky dynamics and distinguishes displacement from updated velocities. For the nonlinear law considered here, we select a Bregman cost whose collision barycenter is the transport velocity of the averaged physical velocity.

Our analysis connects this variational construction with two further steps: recovery of the same nonlinear Lagrangian evolution for every entropy solution in the prescribed Eulerian class, and quantitative comparison of flows whose cluster intervals need not coincide. Together these steps establish global well-posedness and stability for the data and solution class stated below.

\subsection[Main results]{Main results}\label{sec:main-results}

Let $\rho_0\in\M_+(\R)$ and $u_0\in L^\infty(\rho_0)$ be the initial mass measure and physical velocity. We assume that
\begin{equation}\label{eq:P2-data}
 0<M:=\rho_0(\R)<\infty,\qquad
 \int_\R |x|^2\dd\rho_0(x)<\infty.
\end{equation}

Fix $R>0$ such that $\|u_0\|_{L^\infty(\rho_0)}\le R$. By \eqref{eq:velocity-law},
\begin{equation}\label{eq:f}
 c_f:=\min_{|r|\le R}f'(r)>0,\qquad C_f:=\max_{|r|\le R}f'(r)<\infty.
\end{equation}
Set $v_-=f(-R)$, $v_+=f(R)$ and $V_*=\max(|v_-|,|v_+|)$. Thus physical velocities in $[-R,R]$ correspond to transport velocities in $[v_-,v_+]$, bounded in magnitude by $V_*$.

For the convex-duality and comparison arguments, extend $f|_{[-R,R]}$ by the tangent lines at its endpoints and denote the extension by $f_R$. It is a $C^1$ increasing bijection of $\R$, with $c_f\le f_R'\le C_f$, and agrees with $f$ throughout the physical velocity range. The equation itself continues to use $f$.

We express the results in both Eulerian and mass coordinates. Let $X(t)$ be the right-continuous quantile of $\rho_t$, so that $X(t)_\#\dd a=\rho_t$ for the mass label $a\in(0,M)$. The position maps belong to
\[
 \K=\{X\in L^2(0,M):X\text{ has a nondecreasing representative}\}.
\]
Labels at the same position form a fibre; a level interval of positive length represents a cluster. For $X\in\K$, the conditional expectation $P_X=\mathbb E[\,\cdot\mid\sigma(X)]$ is the orthogonal projection onto functions of the position. It averages on each positive fibre and leaves a function unchanged almost everywhere outside those fibres. Applied to physical velocity, it implements mass-weighted averaging within a cluster.

Write $X_0=X_{\rho_0}$ and $U_0=u_0\circ X_0$. Then $X_0\in\K\subset L^2(0,M)$, $|U_0|\le R$, and
\begin{equation}\label{eq:data}
 X_{0\#}\dd a=\rho_0,\quad X_{0\#}(U_0\dd a)=u_0\rho_0,
 \quad U_0=P_{X_0}U_0.
\end{equation}
The compatibility $U_0=P_{X_0}U_0$ means that labels in the same initial atom have the same physical velocity. For a cluster represented by a label interval $I$, the nonlinear averaging issue becomes
\begin{equation}\label{eq:mismatch}
 f\left(\mean{I}{U_0}\right)\ne\mean{I}{f(U_0)}
\end{equation}
in general. In particular, the quadratic projection of $X_0+tf(U_0)$ need not select the transport velocity dictated by momentum conservation; \cref{prop:classical-free} identifies the affine restriction of this rule. The Bregman construction used instead is described in \cref{sec:construction-comparison}.

We call $u^2\rho$ the kinetic energy measure, omitting the factor $1/2$. The solution class is selected by two conditions in addition to the weak conservation laws. The Oleinik condition on the transport velocity controls transport at positive times; initial weak continuity of the kinetic energy measure excludes an energy defect at time zero.

\begin{definition}[Entropy solution]\label{def:entropy}
An entropy solution on $[0,T]$ with data $(\rho_0,u_0)$ consists of $\rho\in C([0,T];\M_+(\R)\text{-narrow})$ of mass $M$ and a jointly measurable physical velocity $|u|\le R$, understood modulo $\rho_t\dd t$. For all test functions $\phi,\psi\in C_c^1(\R\times[0,T))$, it satisfies
\begin{align}
 \int_0^T\!\int(\phi_t+f(u)\phi_x)\dd\rho_t\dd t
 +\int\phi(x,0)\dd\rho_0&=0,\label{eq:weak-m}\\
 \int_0^T\!\int(u\psi_t+uf(u)\psi_x)\dd\rho_t\dd t
 +\int\psi(x,0)u_0\dd\rho_0&=0.\label{eq:weak-q}
\end{align}
The transport velocity admits a jointly Borel extension $b$, with $b=f(u)$ for $\rho_t\dd t$-almost every spacetime point, such that for almost every $t>0$ and all $x<y$,
\begin{equation}\label{eq:OSL}
 b(y,t)-b(x,t)\le (y-x)/t.
\end{equation}
On a fixed full-measure set $G$, the kinetic energy measure is weakly continuous at the initial time, in the sense that
\begin{equation}\label{eq:E0}
 \lim_{t\downarrow0,\ t\in G}\int\varphi(x)|u(x,t)|^2\dd\rho_t
 =\int\varphi(x)|u_0|^2\dd\rho_0
 \qquad(\varphi\in C_b(\R)).
\end{equation}
Let $\qmeas$ be the canonical momentum measure supplied by \cref{lem:canonical}: the unique narrowly continuous signed measure family agreeing with $u\rho_t$ for almost every $t$. We choose $G$ so that $u(\cdot,t)\rho_t=\qmeas_t$ for every $t\in G$.
\end{definition}

The two admissibility conditions serve different parts of the argument. On positive-time strips the Oleinik condition determines the transport of mass and momentum. At time zero the momentum trace determines averages on initial fibres, while the energy trace excludes residual variation within them. For this latter step the proof uses \eqref{eq:E0} with $\varphi\equiv1$; the definition retains the full measure trace. The splitting example in \cref{ex:energy-needed} shows why the conserved quantities alone do not suffice. No separate trace or dissipation condition is imposed on $\int f(u)^2\dd\rho$.

For probability measures $\mu,\nu$ on $\R$, we use the extended distance
\[
 W_\infty(\mu,\nu)
 :=\inf_{\pi\in\Pi(\mu,\nu)}\esssup_{(x,y)\sim\pi}|x-y|
 \in[0,\infty],
\]
where $\Pi(\mu,\nu)$ is the set of probability couplings with these marginals. The following theorem combines existence by the variational construction, uniqueness in the Eulerian class, and quantitative comparison in mass coordinates.

\begin{theorem}[Global well-posedness]\label{thm:main}
\textup{(a)} For a velocity law $f$ satisfying \eqref{eq:velocity-law} and initial data $\rho_0\in\M_+(\R)$, $u_0\in L^\infty(\rho_0)$ satisfying \eqref{eq:P2-data}, the Cauchy problem \eqref{eq:system} admits a unique global entropy solution $(\rho,u)$ in the sense of \textup{\cref{def:entropy}}. Here global means that the definition holds on every finite time interval. Writing $\qmeas_t$ for the canonical narrowly continuous representative of $u\rho_t$ determined by the weak momentum equation, equal data determine $\rho_t,\qmeas_t$ at every $t\ge0$ and $u$ modulo $\rho_t\dd t$. Moreover,
\begin{equation}\label{eq:main-moment}
 \left(\int_\R |x|^2\dd\rho_t(x)\right)^{1/2}
 \le\left(\int_\R |x|^2\dd\rho_0(x)\right)^{1/2}
 +\sqrt M\,V_*t\qquad(t\ge0).
\end{equation}

\textup{(b)} For two such solutions $(\rho,u)$ and $(\widetilde\rho,\widetilde u)$ with common mass $M$, velocity law $f$ and physical velocity range $[-R,R]$, set
\[
 \delta_0:=\|u_0\circ X_{\rho_0}
 -\widetilde u_0\circ X_{\widetilde\rho_0}\|_{L^\infty(0,M)}.
\]
If $d_0:=W_\infty(\rho_0/M,\widetilde\rho_0/M)<\infty$, then
\begin{equation}\label{eq:main-mass-stability}
 W_\infty\!\left(\frac{\rho_t}{M},\frac{\widetilde\rho_t}{M}\right)
 \le W_\infty\!\left(\frac{\rho_0}{M},\frac{\widetilde\rho_0}{M}\right)
 +C_ft\,\delta_0\qquad(t\ge0).
\end{equation}

\textup{(c)} For every entropy solution, let $X(t)=X_{\rho_t}$ and $U_0=u_0\circ X_{\rho_0}$. Then
\begin{align}
 p_X(t)&=P_{X(t)}U_0,\qquad \partial_tX=f(p_X),\label{eq:main-state}\\
 \rho_t&=X(t)_\#\dd a,\qquad\qmeas_t=X(t)_\#(U_0\dd a).\label{eq:main-push}
\end{align}
The conditional physical velocity and push-forward identities hold at every time with the canonical representatives; the derivative is taken in $L^2(0,M)$ for almost every time. Positive fibres have the no-splitting property. For equal initial data, the two Lagrangian maps agree on one common full-measure label set for all $t\ge0$.
\end{theorem}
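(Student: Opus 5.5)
The proof combines three parts: a variational existence argument, a Lagrangian representation of every entropy solution, and a comparison principle in mass coordinates. Uniqueness and stability follow from the last two.

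\emph{Existence.} For a time step $\tau>0$ we run the Bregman scheme of \cref{sec:construction-comparison}. Given $(X^n,U^n)$ with $U^n=P_{X^n}U^n$, we minimise over $X\in\K$ the Bregman divergence generated by a convex potential $\Phi$ with $\Phi'=f_R^{-1}$, measured between the velocity $(X-X^n)/\tau$ and $f_R(U^n)$. The first-order condition on a cluster $I$ gives a velocity $\mathbb E[f_R^{-1}((X-X^n)/\tau)\mid I]$-averaged, that is, the transport velocity $f(\mean{I}{U^n})$. Hence the displacement on a collision interval is $f$ of the mass-weighted average, which is exactly what momentum conservation prescribes. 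We then update $U^{n+1}=P_{X^{n+1}}U^n$.

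The projection is a contraction in $L^\infty$, so $|U^n|\le R$ is preserved. The displacements are bounded by $V_*\tau$, which gives the moment bound \eqref{eq:main-moment} by the triangle inequality in $L^2(0,M)$ and uniform Lipschitz equicontinuity of $X^\tau$. Monotonicity of the minimiser should give the discrete Oleinik bound $b(y)-b(x)\le (y-x)/t$, as in the classical sticky case.

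Passing to the limit $\tau\to0$ via Arzel\`a--Ascoli in $L^2$ gives a limit $X$. The velocity is $U(t)=P_{X(t)}U_0$, using that the fibres of $X(t)$ only merge in time. The weak equations \eqref{eq:weak-m}--\eqref{eq:weak-q} follow from the push-forward formulas. The energy trace \eqref{eq:E0} follows because $P_{X(t)}U_0\to U_0$ in $L^2$ as $t\downarrow0$, since $X(t)\to X_0$ and $U_0=P_{X_0}U_0$.

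\emph{Lagrangian representation, part (c).} Let $(\rho,u)$ be any entropy solution and set $X(t)=X_{\rho_t}$. From \eqref{eq:weak-m} and the Oleinik bound \eqref{eq:OSL}, the Bouchut--James duality theory for one-sided Lipschitz $b$ shows that $\rho_t$ is transported by the forward Filippov flow of $b$. In quantile form, $\partial_tX=b(X,t)$ in $L^2$ for a.e.\ $t$, and positive fibres are never split: the Oleinik condition forbids labels in an atom from separating.

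Next, apply \eqref{eq:weak-q} with test functions of the form $\psi(x)$ constant on fibres. This shows that $\qmeas_t$ is the push-forward of a label velocity $U(t)$ that is constant in time along fibres that persist. It also shows that the total momentum of a merged cluster is the sum of the momenta of its parts. This yields $\qmeas_t=X(t)_\#(V\,\dd a)$ with $P_{X(t)}V=u\circ X(t)$, where $V$ is the label velocity at $0^+$.

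The delicate step is identifying $V=U_0$ and excluding a defect at time $0$. Momentum continuity gives $P_{X_0}V=U_0$. The energy trace \eqref{eq:E0} with $\varphi\equiv1$ gives $\|V\|_{L^2}\le\|U_0\|_{L^2}$, and equality in the orthogonal projection then forces $V=U_0$. This yields \eqref{eq:main-state}--\eqref{eq:main-push}.

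\emph{Uniqueness and stability, part (b).} Take two Lagrangian flows $X,\widetilde X$ solving $\partial_tX=f(P_XU_0)$ and $\partial_t\widetilde X=f(P_{\widetilde X}\widetilde U_0)$. Since $W_\infty(\rho_t/M,\widetilde\rho_t/M)=\|X(t)-\widetilde X(t)\|_{L^\infty}/M$ after rescaling labels, it suffices to estimate $D(t)=\|X-\widetilde X\|_{L^\infty}$. This is the main obstacle, because the cluster intervals of the two flows differ.

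I would argue at a label $a$ near which the essential supremum of $X-\widetilde X$ is attained. There I would compare the cluster interval $I\ni a$ of $X$ with the interval $\widetilde I\ni a$ of $\widetilde X$, using characteristic triangles (barycentric convex-hull characterisations). These characterisations express $X(t)$ on a cluster as determined by the maximal or minimal averages of $X_0+tf(\cdot)$ over subintervals, in the spirit of Moutsinga's convex hull description. They should give the upper Dini derivative bound $D'(t)\le C_f\delta_0$ using $|f(p)-f(\widetilde p)|\le C_f|p-\widetilde p|$ and $|P_IU_0-P_I\widetilde U_0|\le\delta_0$. Integrating gives \eqref{eq:main-mass-stability}.

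For equal data we have $\delta_0=0$ and $d_0=0$, so $X=\widetilde X$ for all $t$, which gives uniqueness of $\rho_t$ and $\qmeas_t$. The semigroup property follows by restarting the flow at time $s$ with data $(X(s),P_{X(s)}U_0)$.
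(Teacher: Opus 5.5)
\textbf{Part (b): the comparison mechanism fails.} Your plan has a genuine gap at exactly the step you flagged. The convex-hull description you invoke determines clusters by extreme averages of the free trajectories $X_0+tf(U_0)$, in the spirit of Moutsinga. For nonlinear $f$ it is false: it is the classical cumulative rule excluded by \cref{prop:classical-free}. Two merged blocks with physical velocities $u>v$ afterwards move with speed $f(\lambda u+(1-\lambda)v)$, whereas the averaged free trajectory moves with speed $\lambda f(u)+(1-\lambda)f(v)$.

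Moreover, your bound $|P_IU_0-P_I\widetilde U_0|\le\delta_0$ compares averages over one interval. The two flows, however, average over different intervals $I\ne\widetilde I$ through the same label, and $f(p_I)-f(\widetilde p_{\widetilde I})$ is not $O(\delta_0)$ in general. What is missing is a one-sided mechanism:
\begin{itemize}
\item \emph{Compression.} For every entropy solution, \cref{thm:allcut} shows that prefix averages of the source physical velocity over a terminal fibre are at least $p_I$ and suffix averages at most $p_I$, including at source time zero. These follow from monotonicity of $J_\theta=q-\theta m$ along lines of slope $f_R(\theta)$ (\cref{lem:pencil}).
\item \emph{Interlacing.} At an active trace of the penalized gap, interlacing (\cref{lem:interlace}) makes the overlap $K$ a prefix of the $X$-fibre and a suffix of the $\widetilde X$-fibre, whose physical velocity is $\widetilde p$. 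Hence $p_X\le\mean{K}{U_0}\le\mean{K}{\widetilde U_0}+\delta_0\le\widetilde p+\delta_0$. It is this one-sided order, not a two-sided bound, that feeds the upper Dini estimate.
\end{itemize}
Your plan also ignores three further difficulties. The essential supremum need not be approached along labels with bounded positions when the support is unbounded. Near-maximizing labels at $t+h$ vary with $h$. A label may be captured by an incoming cluster within the increment. The paper handles these with the penalization $T_\varepsilon$, the first-capture expansions (\cref{thm:right-response}) and \cref{lem:envelope}. Finally, $W_\infty(\rho_t/M,\widetilde\rho_t/M)=\|X(t)-\widetilde X(t)\|_{L^\infty(0,M)}$, with no factor $1/M$.

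\textbf{Existence: the limit is not identified.} The first-order condition does not make the discrete displacement equal to $f$ of the fibre average. It gives only $P_{X^{n+1}}g(V^{n+1})=U^{n+1}$, and $V^{n+1}=(X^{n+1}-X^n)/h$ is not constant on a new fibre because $X^n$ is not. Consequently Arzel\`a--Ascoli for $X_h$ yields $\partial_tX$ only as a weak limit of $V_h$, and you cannot pass through the nonlinear $f$. You need three ingredients:
\begin{itemize}
\item the discrepancy bound $\sum_nh\|V^{n+1}-f(U^{n+1})\|_2^2\le C_f^2h\|U_0\|_2^2$, from the normal cone and strong monotonicity of $g$ (\cref{lem:incremental-velocity-error});
\item strong $L^2$ compactness of $U_h$ (\cref{lem:nested-state-compactness});
\item identification of the limit as $U=P_XU_0$.
\end{itemize}
The identification does not follow from fibres only merging. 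The map $X\mapsto P_X$ is not continuous in $L^2$, since a limit of maps without a given flat may acquire it. The paper obtains $P_XU=P_XU_0$ from conditional tests, and $U\in\mathcal H_X$ from tangency of $\partial_tX$ on current fibres (\cref{lem:fibre-tangent}). The discrete Oleinik bound likewise needs the endpoint-trace induction of \cref{lem:discrete-OSL}; monotonicity of the minimizer alone does not give it.

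\textbf{Part (c): no-splitting and common transport.} The Oleinik condition with constant $1/t$ does not forbid splitting of initial atoms (\cref{ex:energy-needed}). Your energy argument identifying $V=U_0$ is the right one. No-splitting from time zero, which the comparison uses through compression at source time $s=0$, must still be derived afterwards from $p_X(s)\to U_0$ and the gap estimate. The common transport of $\qmeas_t$ also needs an actual argument, for example superposition applied to $k\rho_t\pm\qmeas_t$.
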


\begin{remark}[Localization of the velocity-law assumptions]\label{rem:local-law}
For fixed initial data as in \cref{thm:main} and $R$ chosen as above, the conclusions require only a strictly increasing $f\in C^1([-R,R])$ with $f'>0$ on $[-R,R]$. The construction preserves $|u|\le R$, and this bound is part of \cref{def:entropy}; thus the weak conservation laws and entropy conditions depend only on this restriction of $f$. The auxiliary tangent-line extension supplies the global bounds used in the proof. For the comparison in \cref{thm:main}\textup{(b)}, the same interval is chosen to contain both initial physical velocity ranges.
\end{remark}

In part \textup{(b)}, $\delta_0=\|U_0-W_0\|_\infty$, with $W_0=\widetilde u_0\circ X_{\widetilde\rho_0}$: the initial physical velocities are compared at common mass labels. These compositions depend only on the initial almost-everywhere classes and require no values in vacuum. Finite second moments do not imply finite $d_0$. If $d_0=\infty$, the estimate has only its extended-value meaning, whereas equal-data uniqueness is unaffected.

The uniqueness assertion concerns mass and canonical momentum at every time, and physical velocity modulo $\rho_t\dd t$. It does not prescribe velocity in vacuum or a unique auxiliary extension $b$. The same Lagrangian representation also permits restarting the flow and yields the semigroup in \cref{prop:semigroup}.

\subsection[Construction and proof strategy]{Construction and proof strategy}\label{sec:construction-comparison}

\emph{Choosing the cost from the collision law.} Set $H(u)=\int_0^u f_R(r)\dd r$ and $L=H^*$. For the Bregman divergence
\[
 D_L(v\mid w)=L(v)-L(w)-L'(w)(v-w),
\]
the identity $L'(f(u))=u$ on $[-R,R]$ makes the optimality condition an average of physical velocities. Consequently,
\[
 \operatorname*{arg\,min}_{v\in\R}\sum_jm_jD_L(v\mid f(u_j))
 =f\!\left(\frac{\sum_jm_ju_j}{\sum_jm_j}\right),
\]
for $m_j>0$ and $u_j\in[-R,R]$. This is the transport velocity required by mass and momentum conservation. \Cref{prop:generator} characterizes the cost within the separable Bregman class.

\emph{Passing from discrete minimizers to a flow.} The scheme minimizes the Bregman cost of the position increment and then updates physical velocity by conditional expectation. With time step $h$, discrete positions $X^n$ and physical velocities $U^n$, the displacement velocity $V^{n+1}=(X^{n+1}-X^n)/h$ need not equal $f(U^{n+1})$. The normal-cone relation and strong monotonicity of $L'$ bound the squared $L^2$ discrepancy by the kinetic energy lost in the update. Multiplication by $h$ and summation make this discrepancy vanish in $L^2((0,T)\times(0,M))$ as $h\downarrow0$, for each finite $T$.

The no-splitting property makes the conditional averages nested. Their energy identity controls time translations, while approximation of the initial physical velocity by $BV$ functions controls translations in the mass label. Together these estimates yield strong compactness of the physical velocities, without a $BV$ assumption on the initial data or a bound on the position range, and permit passage through the nonlinear law $f$ along a subsequence. The limiting conditional-expectation identities first identify the average on each current fibre. Tangency of the limiting monotone path then makes its derivative constant on that fibre for almost every time; applying the inverse velocity law identifies the physical velocity itself as $P_XU_0$. These two steps yield the closure $X_t=f(P_XU_0)$. Eulerian reconstruction, the initial energy trace and the Oleinik extension complete the existence proof.

\emph{Recovering the structure of every entropy solution.} The constructed flow does not by itself establish uniqueness in the Eulerian class. For an arbitrary entropy solution, the weak equations and the Oleinik condition provide a common transport map for mass and momentum on positive-time strips. To reach the initial time, we combine the momentum trace, which fixes fibre averages, with the kinetic energy trace, which excludes residual variation. This recovers the representation in \cref{thm:main}\textup{(c)}, including on initial atoms, without appealing to the discrete construction.

\emph{Comparing flows with different clusters.} Even after this representation, two solutions may average their initial physical velocities over different label intervals. The compression inequalities in \cref{thm:allcut} control this difficulty. At any earlier source time, including zero, the average over a cluster's left portion bounds its physical velocity from above, and the average over its right portion bounds it from below, whenever the portion has positive length. These inequalities allow us to compare flows with different cluster intervals and to control the growth of their position discrepancy by the initial physical-velocity discrepancy $\|U_0-W_0\|_\infty$.

We apply this control to the essential supremum of the position gap. Unbounded initial support prevents a direct reduction to a maximizing label at a finite position. A spatial penalization preserves fibres and confines near-maximizing labels to finite positions. Short-time expansions through first capture by a cluster allow the future near-maximizing labels to vary with the time increment. The resulting estimate, after integration and removal of the penalty, proves \eqref{eq:main-mass-stability} and equal-data uniqueness.

The same Lagrangian structure yields affine backward fans and one-sided cumulative identities, as well as kinetic energy dissipation, convex entropies and the semigroup property. \Cref{sec:preliminaries} provides the common tools, \cref{sec:existence} carries out the construction, and \cref{sec:lifting} recovers the structure of entropy solutions. \Cref{sec:compare} proves the comparison estimate and uniqueness. The energy and semigroup consequences are developed in \cref{sec:consequences}, and \cref{app:measure} contains the measure and representative proofs.

\emph{Notation.} Set $Q_U(a)=\int_0^aU_0(b)\dd b$. Unless a spacetime domain is specified, $L^q$ spaces and norms in mass coordinates are taken over $(0,M)$.

\section{Lagrangian preliminaries}\label{sec:preliminaries}

We collect the monotone-map, conditional-average and regularity tools used in the construction and in the analysis of entropy solutions. Write $g=f_R^{-1}$. Conditional expectations are taken with respect to Lebesgue-completed sigma-algebras.

Clipping $b$ to $[v_-,v_+]$ leaves its values unchanged $\rho_t\dd t$-almost everywhere and preserves the Oleinik condition on the transport velocity, since clipping is nondecreasing and $1$-Lipschitz. We use this bounded extension.

\subsection[\texorpdfstring{Monotone maps, conditional expectations and normal cones}{Monotone maps, conditional expectations and normal cones}]{Monotone maps, conditional expectations and normal cones}\label{sec:fibres}

A \emph{positive fibre} of $X$ is a maximal open level interval of positive length. It represents a cluster in mass coordinates. Endpoints do not affect conditional averages.

\begin{lemma}[Fibrewise conditional expectation]\label{lem:PX}
For $X\in\K\subset L^2(0,M)$, let $\mathscr I_X$ denote its positive fibres and set
\[
 \Omega_X=\bigcup_{I\in\mathscr I_X}I,\qquad
 N_X=(0,M)\setminus\Omega_X.
\]
Then
\[
 \mathcal H_X=L^2(\sigma(X))
 =\bigl\{W\in L^2(0,M):\ \forall I\in\mathscr I_X,\ \exists c_I\in\R,
 \ W=c_I\ \text{a.e. on }I\bigr\}.
\]
For every $U\in L^2(0,M)$, the orthogonal projection onto $\mathcal H_X$ is
\[
 (P_XU)(a)=
 \begin{cases}
 \displaystyle\frac1{|I|}\int_I U(b)\dd b,&a\in I\in\mathscr I_X,\\[0.6em]
 U(a),&a\in N_X,
 \end{cases}
 \qquad\text{for a.e. }a\in(0,M).
\]
Thus the projection acts as the identity almost everywhere on the non-flat set $N_X$.
\end{lemma}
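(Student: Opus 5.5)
The plan is to identify $\sigma(X)$ concretely, up to Lebesgue-null sets, and then read off the projection. Fix the nondecreasing representative of $X$. It has at most countably many positive fibres, since these are disjoint open intervals of positive length. Let $Z_X=X(\Omega_X)$ be the set of plateau values; it is countable. On $N_X$, the map $X$ is injective except on a null set: two distinct points of $N_X$ with the same image would span a level interval of positive length, and such an interval lies in some $I\in\mathscr I_X$ up to its endpoints. There are countably many such endpoints.

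\textbf{Step 1: $\mathcal H_X\subset\{\text{fibrewise constant}\}$.} A function $W\in L^2(\sigma(X))$ has the form $W=\varphi\circ X$ a.e. for a Borel $\varphi$, by Doob--Dynkin together with completion. Hence $W$ equals $\varphi(c)$ on a fibre $I$ where $X\equiv c$, a.e. on $I$.

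\textbf{Step 2: the converse inclusion.} Let $W$ be constant $c_I$ a.e. on each $I\in\mathscr I_X$. I will define $\varphi$ on $\R$ as follows. Set $\varphi(X(I))=c_I$ for plateau values. Off $Z_X$, use the generalized inverse of the monotone map, $\varphi(x)=W(X^{-1}(x))$, which is well defined on $X(N_X)\setminus Z_X$ except at countably many points; set $\varphi=0$ elsewhere.

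The main obstacle is measurability of $\varphi$. Write $X^{-1}$ as the left-continuous quantile-inverse $x\mapsto\inf\{a:X(a)\ge x\}$; this is monotone, hence Borel. Then $\varphi=W_0\circ X^{-1}$ on $\R\setminus Z_X$ for a Borel version $W_0$ of $W$. Composition of a Borel function with a monotone Borel map is Borel, and modifying on the countable set $Z_X$ keeps $\varphi$ Borel.

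The identity $\varphi\circ X=W$ a.e. on $N_X$ needs $X^{-1}(X(a))=a$ for a.e. $a\in N_X$, which holds off the countable endpoint set by the injectivity noted above. On $\Omega_X$ the identity holds by construction. Since $W=\varphi\circ X$ is square integrable, we get $W\in L^2(\sigma(X))$. This step is the only delicate point; the rest is routine.

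\textbf{Step 3: the projection formula.} Define $PU$ by the displayed formula. It is fibrewise constant, and $\|PU\|_2\le\|U\|_2$ by Jensen on each $I$, so $PU\in\mathcal H_X$ by Step 2. For any $W\in\mathcal H_X$, compute
\[
\int(U-PU)W\dd a=\sum_{I}c_I\int_I(U-PU)\dd a+\int_{N_X}(U-U)W\dd a=0,
\]
because $\int_I(U-PU)=0$ by the definition of the average. The sum converges absolutely by Cauchy--Schwarz on each fibre and summability. Thus $U-PU\perp\mathcal H_X$, and $PU=P_XU$ by uniqueness of orthogonal projection.

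The set $\mathcal H_X$ is closed: it is $L^2$ of a sub-$\sigma$-algebra, or equivalently constancy on each $I$ is preserved under $L^2$ limits. Endpoints of fibres are null, so they do not matter. The final sentence of the lemma is then just the second case of the formula.
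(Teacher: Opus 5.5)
Your proposal is correct and follows the paper's own argument. The paper uses countably many fibres and injectivity of $X$ on $N_X$ off the fibre endpoints, with a Borel inverse; you make that inverse explicit through the generalized inverse $x\mapsto\inf\{a:X(a)\ge x\}$. Fibrewise-constant functions are then functions of $X$, and the fibre averages are the projection because they preserve integrals against such functions.

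One cosmetic point: you describe $\varphi$ off $Z_X$ in two ways, first as zero outside $X(N_X)$ and then as $W_0\circ X^{-1}$ on all of $\R\setminus Z_X$. Keep only the latter, which avoids having to check that $X(N_X)$ is Borel.
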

\begin{proof}
There are at most countably many positive fibres. After removing their endpoints, $X$ is a Borel injection on the non-flat set and its inverse on the image is Borel. Hence a Borel function constant on every positive fibre is a function of $X$, modulo null sets. This proves the characterization of $\mathcal H_X$. The stated averages preserve integrals against every such function and therefore give the conditional expectation.
\end{proof}

\begin{lemma}[Approximation by functions of the position]\label{lem:projection-approximation}
Let $X,Y\in\K$ and $W\in\mathcal H_X$. For every $\varphi\in C_c^\infty(\R)$,
\begin{equation}\label{eq:projection-approximation}
 \|P_YW-W\|_2
 \le\|W-\varphi(X)\|_2+\operatorname{Lip}(\varphi)\|Y-X\|_2.
\end{equation}
Consequently, if $X_n\to X$ in $L^2$, then $P_{X_n}W\to W$ in $L^2$ for every $W\in\mathcal H_X$.
\end{lemma}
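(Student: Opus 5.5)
The key observation is that $\varphi(Y)$ lies in $\mathcal H_Y$, so the best-approximation property of the orthogonal projection $P_Y$ (\cref{lem:PX}) gives
\[
 \|P_YW-W\|_2\le\|W-\varphi(Y)\|_2 .
\]
This holds because $\varphi(Y)$ is a bounded Borel function of $Y$, hence in $L^2(\sigma(Y))$ on the finite interval $(0,M)$. The triangle inequality then splits the right-hand side into $\|W-\varphi(X)\|_2+\|\varphi(X)-\varphi(Y)\|_2$. The Lipschitz bound $|\varphi(X(a))-\varphi(Y(a))|\le\operatorname{Lip}(\varphi)|X(a)-Y(a)|$ holds pointwise almost everywhere, which yields \eqref{eq:projection-approximation}. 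No monotonicity of $Y$ beyond membership in $\K$ is needed for this part, since only the characterization of $P_Y$ as an orthogonal projection onto a space containing $\varphi(Y)$ is used.

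For the consequence, fix $W\in\mathcal H_X$ and $\varepsilon>0$. The step requiring an argument is density: I need to show that $\{\varphi(X):\varphi\in C_c^\infty(\R)\}$ is dense in $\mathcal H_X=L^2(\sigma(X))$. By Doob–Dynkin (as in \cref{lem:PX}), $W=g\circ X$ a.e. for some Borel $g$. Moreover $\int|g|^2\dd(X_\#\dd a)=\|W\|_2^2<\infty$, so $g\in L^2(\mu)$ with $\mu=X_\#\dd a$, a finite Borel measure on $\R$ of total mass $M$. Since $C_c^\infty(\R)$ is dense in $L^2(\mu)$ for any finite (indeed locally finite) Borel measure on $\R$, by regularity of $\mu$, Lusin, mollification and truncation, I can choose $\varphi$ with $\|g-\varphi\|_{L^2(\mu)}<\varepsilon$. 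The change of variables then gives $\|W-\varphi(X)\|_2<\varepsilon$.

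Applying \eqref{eq:projection-approximation} with $Y=X_n$ gives $\limsup_n\|P_{X_n}W-W\|_2\le\varepsilon+\operatorname{Lip}(\varphi)\limsup_n\|X_n-X\|_2=\varepsilon$. Since $\varepsilon$ is arbitrary, $P_{X_n}W\to W$ in $L^2$. The only point to double-check is that the completed sigma-algebra convention does not spoil the identification $W=g\circ X$ modulo null sets; it does not, because the completion changes functions only on null sets, and \cref{lem:PX} already gives the description of $\mathcal H_X$ as functions of $X$ modulo null sets.
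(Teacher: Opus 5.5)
Your proof is correct and follows the paper's argument essentially verbatim. You use best approximation by $\varphi(Y)\in\mathcal H_Y$, then the triangle inequality with the pointwise Lipschitz bound, and finally density of $C_c^\infty(\mathbb R)$ in $L^2(X_\#\dd a)$ after writing $W$ as a Borel function of $X$. One cosmetic remark: the paper already reserves the letter $g$ for $f_R^{-1}$, so use another name for the Doob--Dynkin factor.
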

\begin{proof}
Since $\varphi(Y)\in\mathcal H_Y$, best approximation gives
$\|P_YW-W\|_2\le\|W-\varphi(Y)\|_2$, and the triangle inequality proves \eqref{eq:projection-approximation}.
Write $W=w(X)$ with $w\in L^2(X_\#\dd a)$. Since $X_\#\dd a$ is a finite Radon measure, $C_c^\infty(\R)$ is dense in this space. Choose $\varphi$ with arbitrarily small $\|W-\varphi(X)\|_2$ and then let $n\to\infty$ in \eqref{eq:projection-approximation}.
\end{proof}

Compression is expressed by the normal cone, with the convention
\begin{equation}\label{eq:normaldef}
 N_\K(X)=\{\xi\in L^2:\ip{\xi}{Y-X}\le0\text{ for all }Y\in\K\}.
\end{equation}

\begin{lemma}[Primitive characterization and orthogonality]\label{lem:normal}
For $X\in\K$ and $\xi\in L^2(0,M)$, put $\Xi(a)=\int_0^a\xi$. Then
\begin{equation}\label{eq:normalchar}
 \xi\in N_\K(X)\quad\Longleftrightarrow\quad
 \Xi(M)=0,\quad\Xi\ge0,\quad\int_{(0,M)}\Xi\dd D_aX=0.
\end{equation}
Here $D_aX$ is the nonnegative, locally finite Stieltjes measure on $(0,M)$; for $\Xi\ge0$, its integral is the increasing limit over compact subintervals. Moreover $P_X\xi=0$, so $\ip{\xi}{W}=0$ for all $W\in\mathcal H_X$.
\end{lemma}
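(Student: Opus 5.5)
The plan is to reduce membership in the normal cone to a statement about primitives through an integration by parts in the mass label. For $Y\in\K$, the key identity I will aim for is
\[
 \ip{\xi}{Y-X}=-\int_{(0,M)}\Xi\dd D_a(Y-X),
\]
valid when $\Xi(M)=0$ and the boundary terms at $0$ and $M$ vanish. First I will test $\xi\in N_\K(X)$ against constant shifts $Y=X\pm c$, which lie in $\K$ because the mass $M$ is finite. These give $\pm c\,\Xi(M)\le0$, hence $\Xi(M)=0$. Next I test against $Y=X+\one_{(s,M)}$ for $s\in(0,M)$, which stays nondecreasing. This gives $\int_s^M\xi=\Xi(M)-\Xi(s)=-\Xi(s)\le0$, so $\Xi\ge0$. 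Finally, taking $Y=2X$ and $Y=X/2$ (both in $\K$) gives $\ip{\xi}{X}=0$. Justifying the integration by parts then converts this into $\int\Xi\dd D_aX=0$.

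The boundary issue is the technical point. $X$ may be unbounded near $0$ and $M$, so I will work on compact subintervals $[\alpha,\beta]\subset(0,M)$. There the Stieltjes formula $\int_\alpha^\beta\xi X=\Xi(\beta)X(\beta)-\Xi(\alpha)X(\alpha)-\int_{(\alpha,\beta]}\Xi\dd D_aX$ holds for right-continuous representatives. I then let $\alpha\downarrow0$ and $\beta\uparrow M$. For the boundary terms I use Cauchy--Schwarz, $|\Xi(\alpha)|\le\sqrt\alpha\,\|\xi\|_{L^2(0,\alpha)}$, together with $|X(\alpha)|\sqrt\alpha\le\|X\|_{L^2(0,\alpha)}$ (by monotonicity near $0$, for $\alpha$ small). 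The product therefore tends to $0$, and symmetrically at $M$ using $\Xi(\beta)=-\int_\beta^M\xi$. Since $\Xi\ge0$, the integral against $D_aX$ converges monotonically. This yields $0=\ip{\xi}{X}=-\int\Xi\dd D_aX$, which proves the forward implication.

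For the converse I take $\Xi(M)=0$, $\Xi\ge0$ and $\int\Xi\dd D_aX=0$. For any $Y\in\K$ the same limiting argument gives $\ip{\xi}{Y}=-\int\Xi\dd D_aY\le0$ and $\ip{\xi}{X}=0$, so $\ip{\xi}{Y-X}\le0$. I expect the vanishing of the boundary terms to be the main obstacle. It is handled by the estimate above, using only $X,Y\in L^2$ and monotonicity; when $\int\Xi\dd D_aY=+\infty$, the inequality simply holds in the extended sense.

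For the orthogonality claim, let $I=(\alpha,\beta)$ be a positive fibre, so $X$ is constant on $I$ and $D_aX$ puts no mass in $I$. If $\Xi(\alpha)$ or $\Xi(\beta)$ were positive, continuity of $\Xi$ would keep it positive near that endpoint. Since $I$ is maximal, $X$ strictly increases across the endpoint, so $D_aX$ charges every neighbourhood outside $I$. This contradicts $\int\Xi\dd D_aX=0$. Hence $\Xi(\alpha)=\Xi(\beta)=0$, i.e. $\int_I\xi=0$; if the endpoint is $0$ or $M$, the conclusion follows from $\Xi(0)=0$ or $\Xi(M)=0$ directly. So by \cref{lem:PX} $P_X\xi=0$ on $\Omega_X$.

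On $N_X$ I need $\xi=0$ almost everywhere. Here $\Xi$ vanishes on $\supp D_aX$, since a point of that support with $\Xi>0$ would give positive integral by continuity. So $\Xi=0$ on the closed set $(0,M)\setminus\Omega_X$. At almost every point of $N_X$ that is a density point of $N_X$ and a Lebesgue point of $\xi$, we get $\xi=\Xi'=0$ (a Lebesgue point where $\Xi$ vanishes on a set of density one). Thus $P_X\xi=\xi=0$ there, so $P_X\xi=0$ and $\ip{\xi}{W}=\ip{P_X\xi}{W}=0$ for $W\in\mathcal H_X$.
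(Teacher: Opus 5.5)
Your proposal is correct and follows essentially the same route as the paper. Both use integration by parts on compact subintervals with vanishing boundary terms, the tests $X\pm c$, $\one_{(s,M)}$ and rescalings of $X$, and, for orthogonality, the support argument together with $\xi=\Xi'=0$ a.e.\ on the zero set of $\Xi$.

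Two small corrections. First, $\sqrt\alpha\,|X(\alpha)|\le\|X\|_{L^2(0,\alpha)}$ holds only when $X(\alpha)\le0$; it fails, for instance, for $X(b)=b$. When $X$ is positive near $0$ you should instead use that it is bounded there by an interior value, as the paper does, and symmetrically at $M$. Second, your limiting identity already shows that $\int\Xi\dd D_aY=-\ip{\xi}{Y}$ is finite, so the extended-value case you mention never arises.
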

\begin{proof}
Every nondecreasing $Z\in L^2(0,M)$ satisfies
\begin{equation}\label{eq:monotone-endpoint-decay}
 \sqrt a\,|Z(a)|\to0\quad(a\downarrow0),\qquad
 \sqrt{M-a}\,|Z(a)|\to0\quad(a\uparrow M).
\end{equation}
Indeed, if $Z(a)\le0$ then $a|Z(a)|^2\le\int_0^a|Z|^2$, while its positive part is bounded near zero by an interior value. The other endpoint follows by reversing the interval and the sign.
When $\Xi(0)=\Xi(M)=0$, Cauchy--Schwarz gives
\[
 |\Xi(a)|\le\min\{\sqrt a\|\xi\|_{L^2(0,a)},
                         \sqrt{M-a}\|\xi\|_{L^2(a,M)}\},
\]
so $Z\Xi$ vanishes at both endpoints. Integrating by parts on compact intervals with continuity endpoints and exhausting $(0,M)$ therefore yields, for $\Xi\ge0$,
\begin{equation}\label{eq:L2-Stieltjes-pairing}
 \ip{\xi}{Z}=-\int_{(0,M)}\Xi\dd D_aZ,\qquad Z\in\K.
\end{equation}
The Stieltjes integrals increase, and their limit is finite because $Z\xi\in L^1$.

If $\xi\in N_\K(X)$, the tests $X+c$ give $\Xi(M)=0$, and $0,2X$ give $\ip{\xi}{X}=0$. Testing next with $\one_{(a,M)}$ gives $-\Xi(a)\le0$. Formula \eqref{eq:L2-Stieltjes-pairing} proves the last condition in \eqref{eq:normalchar}. Conversely, that formula gives
$\ip{\xi}{Y-X}=-\int\Xi\dd D_aY\le0$ for every $Y\in\K$.

Continuity of $\Xi$ implies that it vanishes on $\supp D_aX$, whose complement consists of the positive fibres. Its values at their endpoints are zero, including $0,M$, and hence $\int_I\xi=0$ on each fibre. On the complement, $\xi=0$ almost everywhere, since an absolutely continuous function has zero derivative almost everywhere on its zero set. Thus $P_X\xi=0$ by \cref{lem:PX}.
\end{proof}

\begin{lemma}[Closure and the no-splitting property]\label{lem:compact-mono}
The cone $\K$ is strongly closed in $L^2$. If every positive fibre of $X\in\K$ is contained in a fibre of $Y\in\K$ (the no-splitting property), then $\sigma(Y)\subset\sigma(X)$ and $P_YP_X=P_Y$.
\end{lemma}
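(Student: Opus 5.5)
Two assertions are needed: strong closedness of $\K$, and the inclusion of $\sigma$-algebras under no-splitting. Both are elementary.

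\emph{Closedness.} Let $X_n\in\K$ converge to $X$ in $L^2(0,M)$. Passing to a subsequence, $X_n\to X$ pointwise on a full-measure set $E$. For $a<b$ in $E$ we have $X_n(a)\le X_n(b)$ for every $n$, hence $X(a)\le X(b)$. Thus $X|_E$ is nondecreasing. Its right-continuous regularization $\bar X(a)=\lim_{b\downarrow a,\,b\in E}X(b)$ is then nondecreasing on $(0,M)$. It agrees with $X$ at every continuity point of $X|_E$ in $E$. These are all but countably many points of $E$, so $\bar X=X$ almost everywhere. Therefore $X\in\K$.

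\emph{No-splitting.} By \cref{lem:PX}, $L^2(\sigma(Y))=\mathcal H_Y$ consists of the $L^2$ functions that are a.e. constant on each positive fibre of $Y$. Take $W\in\mathcal H_Y$ and a positive fibre $I$ of $X$. By hypothesis $I\subset J$ for some positive fibre $J$ of $Y$, so $W$ is a.e. constant on $J$ and hence on $I$. Thus $W\in\mathcal H_X$, that is, $\mathcal H_Y\subset\mathcal H_X$.

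To obtain the statement about $\sigma$-algebras, apply this to indicators. For $B\in\sigma(Y)$, the function $\one_B$ lies in $\mathcal H_Y\subset\mathcal H_X$, so $\one_B$ is $\sigma(X)$-measurable modulo null sets. Since both $\sigma$-algebras are Lebesgue-completed, as the paper's convention specifies, this gives $B\in\sigma(X)$.

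Finally, $P_Y$ and $P_X$ are orthogonal projections onto closed subspaces with $\mathcal H_Y\subset\mathcal H_X$. For every $U$, the vector $U-P_XU$ is orthogonal to $\mathcal H_X$, hence also to $\mathcal H_Y$. Therefore $P_Y(U-P_XU)=0$, which gives $P_YP_X=P_Y$. This is the tower property of conditional expectation.

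The only delicate point is the completion convention needed to pass from the function spaces to the $\sigma$-algebras; the paper's standing convention handles it.
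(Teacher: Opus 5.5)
Your proof is correct. The no-splitting half is essentially the paper's argument. The paper notes that $Y$ is constant on every positive fibre of $X$, so $Y\in\mathcal H_X$ by \cref{lem:PX} and $\sigma(Y)\subset\sigma(X)$. You instead prove the inclusion $\mathcal H_Y\subset\mathcal H_X$ and pass to $\sigma$-algebras through indicators and completion. Both rest on the same fibre characterization, and your nested-subspace argument for $P_YP_X=P_Y$ is the tower property the paper invokes.

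For closedness you take a genuinely different route. The paper writes $\K$ as the set of $X\in L^2$ with $-\int X\varphi'\dd a\ge0$ for all $0\le\varphi\in C_c^\infty(0,M)$. This is an intersection of closed half-spaces, so it gives strong and weak closedness at once and ties $\K$ to the Stieltjes measure $D_aX$ used in \cref{lem:normal}. Its price is the standard fact, taken for granted there, that a nonnegative distributional derivative yields a nondecreasing representative.

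Your argument uses an a.e.\ convergent subsequence, monotonicity on a full-measure set $E$, and right-continuous regularization, which changes $X$ only at the countably many jumps of $X|_E$. It is self-contained, needing only Riesz--Fischer and countability of monotone jumps. It gives strong closedness directly, and weak closedness then follows from convexity.

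One small point to state explicitly: fix nondecreasing representatives of the $X_n$ before extracting the pointwise limit, so that $X_n(a)\le X_n(b)$ holds for every $a<b$ in $E$.
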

\begin{proof}
The distributional characterization
\[
 \K=\{X\in L^2:D_aX\ge0\}
 =\left\{X\in L^2:-\int X\varphi'\dd a\ge0
                    \text{ for }0\le\varphi\in C_c^\infty(0,M)\right\}
\]
proves closure.
Under the fibre-inclusion hypothesis, $Y\in\mathcal H_X$ by \cref{lem:PX}, so $\sigma(Y)\subset\sigma(X)$. The tower property yields $P_YP_X=P_Y$.
\end{proof}

\subsection[Quantiles and regularity estimates]{Quantiles and regularity estimates}\label{sec:compactness-traces}

\begin{lemma}[Monotone push-forward and quantile composition]\label{lem:quantile-details}
Let $\mu,\nu$ be finite nonnegative Radon measures on $\R$ with common mass $M>0$, and let $X_\mu,X_\nu$ denote their right-continuous quantiles. Let $T$ be a measurable map on a full $\mu$-measure set, finite and nondecreasing there, and suppose $\nu=T_\#\mu$. Then
\[
 X_\nu=T\circ X_\mu\qquad\text{a.e. on }(0,M).
\]
Only this full-measure composition is needed; no real-valued extension across an infinite endpoint trace is asserted.
\end{lemma}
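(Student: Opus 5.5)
The plan is to reduce the identity to the characterization of quantiles through distribution functions. Write $F_\mu(x)=\mu((-\infty,x])$, and define the quantile by
\[
 X_\mu(a)=\inf\{x: F_\mu(x)>a\}
\]
(right-continuous version). The basic fact, used in both directions, is that for $a\in(0,M)$ and $x\in\R$,
\[
 X_\mu(a)\le x\quad\Longleftarrow\quad F_\mu(x)>a,
 \qquad
 F_\mu(x)\ge a\quad\Longleftarrow\quad X_\mu(a)\le x .
\]
Let $A$ be the full-$\mu$-measure set on which $T$ is finite and nondecreasing. Since $X_{\mu\#}\dd a=\mu$, the label set $\{a: X_\mu(a)\in A\}$ has full Lebesgue measure, so $Z:=T\circ X_\mu$ is defined almost everywhere. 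It is nondecreasing on its domain, as a composition of nondecreasing maps. Moreover
\[
 Z_\#\dd a=T_\#(X_{\mu\#}\dd a)=T_\#\mu=\nu .
\]

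The argument then reduces to a uniqueness statement: a nondecreasing map $Z$, defined on a full-measure subset $D\subset(0,M)$ and pushing $\dd a$ forward to $\nu$, agrees almost everywhere with $X_\nu$. To prove it, fix $x\in\R$ and set $S_x=\{a\in D: Z(a)\le x\}$.
\begin{itemize}
 \item Because $Z$ is monotone on $D$, $S_x$ is an initial segment of $D$. Its Lebesgue measure is $\nu((-\infty,x])=F_\nu(x)$, so $S_x$ agrees with $(0,F_\nu(x))$ up to a null set.
 \item Hence, for a.e.\ $a\in D$: $Z(a)\le x$ whenever $a<F_\nu(x)$, and $Z(a)>x$ whenever $a>F_\nu(x)$.
 \item Remove, for each rational $x$, the null exceptional set together with the boundary point $a=F_\nu(x)$. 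This is a countable union of null sets.
\end{itemize}
On the remaining full-measure set, for every rational $x$,
\[
 Z(a)\le x \iff a<F_\nu(x)\ \text{(up to the boundary point)} .
\]
By the characterization of $X_\nu$ this is equivalent to $X_\nu(a)\le x$, except possibly when $a$ is a value of $F_\nu$ at an atom. That exceptional set of labels is countable, hence null. Since the rationals are dense, the two equivalences give $Z(a)=X_\nu(a)$ for a.e.\ $a$, which is the claim.

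The main obstacle is the bookkeeping of null sets where $T$ is undefined or infinite. No extension of $T$ across an infinite endpoint trace should be needed, as the statement stresses. I would handle this by working only with $D$ and never evaluating $T$ off $A$. I would also verify the initial-segment claim carefully: monotonicity on $D$ alone suffices, since if $a<a'$ in $D$ and $a'\in S_x$ then $a\in S_x$. Once that is in place, the argument is the standard uniqueness of monotone rearrangements.
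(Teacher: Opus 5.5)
Your proof is correct and follows the paper's route: compose to get a monotone map on a full-measure label set pushing $\dd a$ to $\nu$, then conclude by matching sublevel-set lengths with those of $X_\nu$. The only difference is that the paper first passes to the finite interior one-sided completion of $T\circ X_\mu$, while you work directly on the full-measure domain $D$ through initial segments; the two are equivalent.
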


The proof is given in \cref{app:measure}.

For arbitrary finite nonnegative Radon measures $\mu,\nu$ of common mass $M>0$, independently of any transport-map hypothesis,
\begin{equation}\label{eq:quantile-Winfty}
 W_\infty(\mu/M,\nu/M)
 =\|X_\mu-X_\nu\|_{L^\infty(0,M)}.
\end{equation}
Indeed, if a coupling $\pi\in\Pi(\mu/M,\nu/M)$ satisfies $|x-y|\le d<\infty$ almost everywhere, then
\[
 \mu(({-}\infty,x-d])\le\nu(({-}\infty,x])
 \le\mu(({-}\infty,x+d]).
\]
With the right-continuous inverse $X_\mu(a)=\inf\{x:\mu(({-}\infty,x])>a\}$, the strict superlevel sets give $X_\mu(a)-d\le X_\nu(a)\le X_\mu(a)+d$ for $0<a<M$. Taking the infimum over finite coupling bounds yields one inequality. The reverse follows from the coupling $(X_\mu,X_\nu)_\#(\dd a/M)$. If the quantile discrepancy is infinite, the first argument excludes every finite coupling bound, so the identity also holds in the extended sense. Neither bounded support, atomlessness nor attainment of the infimum is required.

\begin{lemma}[Canonical one-sided position traces]\label{lem:alltime-labels}
Suppose $X:[0,T]\to L^2(0,M)$ belongs to $\K$ at every time and satisfies
$\|X(t)-X(s)\|_\infty\le C|t-s|$. Its interior traces are finite, and there is a countable set $N\subset(0,M)$ such that
\[
 \begin{aligned}
 |X(a\pm,t)-X(a\pm,s)|&\le C|t-s|
 &&(0<a<M,\ s,t\in[0,T]),\\
 X(a-,t)&=X(a+,t)
 &&(a\in(0,M)\setminus N,\ t\in[0,T]).
 \end{aligned}
\]
An endpoint trace, $X(0+,t)$ or $X(M-,t)$, is either finite at all times, with the same estimate, or infinite with the same sign at all times. Infinite traces are not regarded as real-valued paths.
\end{lemma}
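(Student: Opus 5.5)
We work with the nondecreasing representative of each $X(t)$ and its one-sided limits $X(a\pm,t)$, which exist in $[-\infty,\infty]$ by monotonicity. The plan is to transfer the $L^\infty$ Lipschitz bound to these traces using the fact that one-sided limits are determined by essential limits. For a monotone function, $X(a+,t)=\lim_{\epsilon\downarrow0}\operatorname{ess\,inf}_{(a,a+\epsilon)}X(\cdot,t)$. The essential infimum over any interval is $1$-Lipschitz with respect to the $L^\infty$ norm. Hence $|X(a+,t)-X(a+,s)|\le\|X(t)-X(s)\|_\infty\le C|t-s|$ whenever these quantities are finite, and likewise for $X(a-,\cdot)$ with essential suprema over $(a-\epsilon,a)$.

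Finiteness of the interior traces needs a separate argument. Fix $0<a<M$. Monotonicity gives $X(a+,t)\le X(b,t)$ for almost every $b\in(a,M)$, and $X(b,t)<\infty$ almost everywhere since $X(t)\in L^2$. So $X(a+,t)<\infty$; it is also $>-\infty$ because it dominates $X(a',t)$ for $a'<a$. The same reasoning applies to $X(a-,t)$. The one-sided trace estimate above therefore holds for all $0<a<M$ and all $s,t$, with no exceptional set.

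For the endpoint traces, $X(0+,t)\in[-\infty,\infty)$ is the limit of the essential infimum over $(0,\epsilon)$. By the same $1$-Lipschitz property, the essential infima at times $t$ and $s$ differ by at most $C|t-s|$ for every $\epsilon$. If the limit at time $s$ is $-\infty$, it is therefore $-\infty$ at time $t$ as well. Otherwise both limits are finite and satisfy the estimate. This gives the finite-or-infinite dichotomy, holding uniformly in time. The endpoint $M$ is symmetric.

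The main point is the countable exceptional set $N$, which must be independent of $t$. For a fixed time $t$, the jump set $J_t=\{a:X(a-,t)<X(a+,t)\}$ is countable. Let $D$ be a countable dense subset of $[0,T]$, for instance the rationals together with $T$, and set $N=\bigcup_{q\in D}J_q$, which is countable. Take $a\notin N$ and an arbitrary $t\in[0,T]$, and choose $q\in D$ with $|t-q|<\eta$. Since $a\notin J_q$, the trace estimate gives
\[
 X(a+,t)-X(a-,t)\le X(a+,q)-X(a-,q)+2C|t-q|<2C\eta .
\]
Letting $\eta\to0$ yields $X(a-,t)=X(a+,t)$. This density argument resolves the potential difficulty, namely that the jump sets could move with $t$: the Lipschitz bound on the traces pins them uniformly in time.
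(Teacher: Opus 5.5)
Your proof is correct and follows essentially the same route as the paper. Both arguments transfer the $L^\infty$ increment bound to the one-sided traces, you via essential infima/suprema over one-sided intervals and the paper by passing to one-sided limits through the full-measure set where the bound holds, and both obtain the endpoint dichotomy from the same extended-value comparison; the time-independent countable set is then obtained exactly as in the paper, by removing the jump sets at a countable dense set of times and using the $2C|t-q|$ perturbation of jump sizes.
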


The proof is given in \cref{app:measure}.

For one-dimensional $BV$ representatives and one-sided traces, we use the conventions of \cite[Section 3.2]{AFP}.

\begin{lemma}[Interval averaging preserves the BV bound]\label{lem:BV-projection}
If $U\in BV(0,M)$ and $X\in\K$, then $P_XU$ has a regulated $BV$ representative and
\[
 \TV(P_XU)\le\TV(U).
\]
More generally, if $|U|\le R$ and $U|_J\in BV(J)$ on an open label interval $J\subset(0,M)$, allowing $0$ or $M$ as endpoints, then
\begin{equation}\label{eq:local-BV-projection}
 \TV_J(P_XU)\le\TV_J(U)+4R.
\end{equation}
\end{lemma}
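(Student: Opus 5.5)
The plan is to treat $P_XU$ as $U$ with each positive fibre $I\in\mathscr I_X$ replaced by its average, and to compare total variations through partitions. By \cref{lem:PX}, $P_XU=U$ a.e. on $N_X$ and $P_XU=c_I:=\mean{I}{U}$ on each $I$. For the representative I take the good (regulated) representative $\bar U$ of $U$ off $\Omega_X$ and the constant $c_I$ on each open fibre $I$; the fibre endpoints form a countable null set, and there I use $\bar U$.

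The key observation is the following. For any finite partition $a_0<a_1<\dots<a_k$ with points in $(0,M)$, each point lying in a fibre $I=(\alpha,\beta)$ can be replaced by a point of $I$ at which the good representative takes a value between the essential infimum and supremum of $U$ on $I$ in the right ordering. Since $c_I$ lies in the closed convex hull of the essential values of $U$ on $I$, and a $BV$ function on an interval attains, in the sense of one-sided limits, every value between its essential infimum and supremum up to jumps, I can choose a point $b\in\bar I$ (or a one-sided limit) with $\bar U(b\pm)$ on either side of $c_I$. Consecutive partition points in the same fibre contribute zero, so I merge them. Then I apply the elementary inequality $|p-c|+|c-q|\le$ the variation of $\bar U$ along a path from $p$ through $I$ to $q$. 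This holds because the variation of $U$ across $I$ is at least $|p-\operatorname{ess\,sup}_I|+(\sup-\inf)+\dots$; more simply, $\TV_{[\alpha',\beta']}(U)\ge |U(\alpha')-c|+|c-U(\beta')|$ whenever $c\in[\inf_I U,\sup_I U]$ and $\alpha'\le\alpha<\beta\le\beta'$. Summing over disjoint label segments shows that the partition sum of $P_XU$ is at most $\TV(U)$. The sup over partitions gives the bound, and since a bounded-variation function has one-sided limits everywhere, the representative is regulated.

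For the local bound \eqref{eq:local-BV-projection}, the same argument works on $J$, except that fibres may straddle the endpoints of $J$. At most two fibres meet $\partial J$, one at each end. On such a boundary fibre $I$, the average $c_I$ involves values of $U$ outside $J$, so the inequality above fails. However, $|c_I|\le R$ and $|U|\le R$, so replacing the contribution near each end costs at most $2R$: one jump $|c_I-\bar U(\cdot)|\le 2R$ per end. The interior fibres are handled as before, and this yields $\TV_J(U)+4R$.

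The main obstacle is the rigorous choice of intermediate points so that the partition comparison is valid with jumps. I will handle this by working with the one-sided limits $\bar U(a\pm)$ and using that $\TV$ equals the essential variation, so intermediate values are only needed as limits. An alternative route I would try first, if that becomes fiddly, is lower semicontinuity: approximate $\mathscr I_X$ by finitely many fibres, for which averaging one interval at a time is directly checked by the inequality above, and then pass to the limit using $L^1$ convergence and lower semicontinuity of $\TV$.
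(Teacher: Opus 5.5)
Your proposal is correct. For the global bound it has the same skeleton as the paper: refine a partition of $P_XU$ by label points inside the fibres, then dominate it by a partition sum of $U$. The mechanism differs. The paper attaches to each merged entry the normalized Lebesgue measure of its fibre, or a Dirac mass at a non-flat label, and applies Jensen over the product of these probability measures. No points have to be chosen, and fibres and singletons are treated uniformly. You instead make a deterministic selection.

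In your version the bookkeeping should be done on a refined ordered sequence, not on segments around each fibre. Your inequality $\TV_{[\alpha',\beta']}(U)\ge|U(\alpha')-c|+|c-U(\beta')|$ does not apply directly when two fibre averages are adjacent, and the segments would overlap. The fix is as follows.
\begin{enumerate}
\item In each fibre $I$ take points $s_I<t_I$ with $c_I$ between $\bar U(s_I)$ and $\bar U(t_I)$. Two Lebesgue points suffice, since $c_I$ is the mean of $U$ on $I$; your one-sided-limit point also works. At non-flat labels put $s=t=a$.
\item Bound each transition by $|v_j-\bar U(s_j)|+|\bar U(s_j)-\bar U(t_{j-1})|+|\bar U(t_{j-1})-v_{j-1}|$.
\item Sum, using $|\bar U(s_I)-c_I|+|c_I-\bar U(t_I)|=|\bar U(t_I)-\bar U(s_I)|$.
\end{enumerate}

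For the local estimate your route is genuinely different. The paper extends $U|_J$ constantly by its endpoint traces to $\widetilde U$, with $\TV(\widetilde U)=\TV_J(U)$ and $|\widetilde U|\le R$. It applies the global bound to $\widetilde U$ and pays $2R$ for each boundary fibre, on which $P_XU$ and $P_X\widetilde U$ differ by a constant on an initial or terminal interval. You cut the partition directly instead. All points of a fibre crossing $\partial J$ carry the single value $c_I$, so each such fibre produces one transition of size at most $2R$. The middle run meets only fibres contained in $J$, so the global argument bounds it by $\TV_J(U)$.

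Your version avoids the extension and the endpoint traces; the paper's reuses the global estimate as a black box. The degenerate case of one fibre containing both endpoints of $J$, where $P_XU$ is constant on $J$, is trivial but should be stated.
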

\begin{proof}
Take ordered Lebesgue points of $P_XU$ and remove repetitions from each fibre. Associate with each remaining entry the normalized measure of its positive fibre, or a Dirac mass at its non-flat label. These probability measures $\lambda_0,\ldots,\lambda_k$ have ordered supports. Jensen's inequality gives
\[
 \sum_{j=1}^k\left|\int U\dd\lambda_j-\int U\dd\lambda_{j-1}\right|
 \le\int\cdots\int\sum_{j=1}^k|U(b_j)-U(b_{j-1})|
                  \dd\lambda_0\cdots\dd\lambda_k
 \le\TV(U).
\]
Thus all variation sums are bounded on a full-measure set. Taking one-sided limits gives the regulated $BV$ representative with this bound.

For the local assertion, extend $U|_J$ constantly by its endpoint traces, obtaining $\widetilde U$ with variation $\TV_J(U)$ and $|\widetilde U|\le R$. On $J$, $P_XU$ differs from $P_X\widetilde U$ only on the at most two fibres crossing the boundary of $J$. Each changes a constant on an initial or terminal interval by at most $2R$, contributing at most $2R$ to variation. If one fibre crosses both endpoints, the projection is constant on $J$. This proves \eqref{eq:local-BV-projection}, also for intervals adjoining $0$ or $M$.
\end{proof}

\begin{lemma}[OSL and local variation]\label{lem:OSL-BV}
Let $X\in\K$, $t>0$, and $B\in L^\infty(0,M)$ take values in $f([-R,R])$. The estimate
$B(b)-B(a)\le[X(b)-X(a)]/t$ for almost every $a<b$ is equivalent to the existence of a nonincreasing representative of $G=B-X/t$. On every open label interval $J$ of finite position oscillation,
\begin{equation}\label{eq:static-OSL-BV}
 \TV_J B\le2V_*+2\osc_JX/t,\qquad
 \TV_J(g(B))\le c_f^{-1}\TV_JB.
\end{equation}
The resulting representatives of $B$ and $g(B)$ have finite one-sided traces on $J$, including at $0$ or $M$ when $J$ adjoins that endpoint.
\end{lemma}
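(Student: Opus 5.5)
The equivalence is a statement about real functions of one label variable, so I will work with representatives. First I show that the Oleinik inequality is equivalent to $G=B-X/t$ being nondecreasing almost everywhere. If $a<b$ are in a full-measure set where the inequality holds, then
\[
G(b)-G(a)=B(b)-B(a)-[X(b)-X(a)]/t\le0 .
\]
So $G$ is nonincreasing on a full-measure set $E$. The same computation run backwards gives the converse.

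Next I produce an honest nonincreasing representative. For all $a$, define $\bar G(a)=\sup\{G(b):b\in E,\ b\ge a\}$, or equivalently use one-sided essential limits. This $\bar G$ is nonincreasing and finite in the interior, because $B$ is bounded and $X$ is finite there. It agrees with $G$ off the countably many discontinuity points intersected with $E$, so almost everywhere. Then $B=\bar G+X/t$ is a representative of $B$ written as a difference of monotone functions: $\bar G$ is nonincreasing and $X/t$ is nondecreasing.

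For the variation bound on $J$, I write $B=\bar G+X/t$. A nonincreasing function has variation on $J$ equal to its decrease, and $\TV_J(X/t)=\osc_JX/t$, which is finite by hypothesis. Then
\[
\TV_J B\le\TV_J\bar G+\osc_JX/t.
\]
For $\TV_J\bar G=\sup_J\bar G-\inf_J\bar G$, I take limits from inside $J$ at points of $E$:
\[
\sup_J \bar G-\inf_J\bar G\le \sup_J B-\inf_J B+\osc_J X/t\le 2V_*+\osc_JX/t .
\]
Here $\osc_J B\le v_+-v_-\le 2V_*$. Adding the two bounds gives $\TV_JB\le 2V_*+2\osc_JX/t$. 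This is the step that needs care when $J$ adjoins $0$ or $M$. There $X$ may be unbounded in general, but the finite-oscillation hypothesis keeps the endpoint limits of $X$, and hence of $\bar G$, finite.

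For $g(B)$, note that $g=f_R^{-1}$ has $g'=1/f_R'(g)\le c_f^{-1}$, so $g$ is Lipschitz with constant $c_f^{-1}$ on $\R$. Composing each variation sum with $g$ gives $\TV_J g(B)\le c_f^{-1}\TV_JB$ for the representative $g(B)$.

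Finite one-sided traces follow because a function of bounded variation on an interval, including near its endpoints, has finite one-sided limits everywhere in the closure. This uses the conventions of \cite[Section 3.2]{AFP}. Monotonicity of $\bar G$ and of $X$ gives the limits of each piece, and the bounded-variation bound keeps them finite. Continuity of $g$ transfers the traces to $g(B)$.

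The main obstacle is purely bookkeeping. One has to choose the representative once, so that it serves simultaneously for the almost-everywhere inequality, the pointwise variation, and the traces at the endpoints $0,M$. I would fix $\bar G$ together with the right-continuous representative of $X$ at the start, and derive everything from that single choice.
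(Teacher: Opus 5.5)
\textbf{Gap in the first step.} Your route is the paper's: a nonincreasing envelope of $G$, the splitting $B=G+X/t$ into a nonincreasing and a nondecreasing part, the $c_f^{-1}$-Lipschitz bound for $g$, and one-sided limits from finite variation. Your estimate $\TV_J\bar G\le 2V_*+\osc_JX/t$ and your endpoint discussion are fine.

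The problem is the first step. The hypothesis ``for almost every $a<b$'' says only that the failure set $N=\{(a,b):a<b,\ G(b)>G(a)\}$ is null in $(0,M)^2$. It does not give you a full-measure label set $E$ with $G(b)\le G(a)$ for \emph{all} $a<b$ in $E$. A null set of pairs need not avoid $E\times E$ for any full-measure $E$: for instance, $\{(a,b):b-a\in\mathbb Q\}$ meets $E\times E$ for every such $E$. So the phrase ``if $a<b$ are in a full-measure set where the inequality holds'' assumes exactly what the equivalence has to prove. Your $\bar G$, defined as a supremum over $E$, is therefore not yet available.

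\textbf{How the paper closes it.} The paper uses a construction that needs no such $E$. Put $G^+(a)=\esssup_{b>a}G(b)$.
\begin{itemize}
\item $G^+$ is nonincreasing by definition. It is finite at interior labels, since $G(b)\le v_+-X(a+)/t$ for almost every $b>a$.
\item Fubini applied to the null set $N$ shows that, for almost every $a$, the section $\{b>a:G(b)>G(a)\}$ is null. Hence $G^+(a)\le G(a)$ almost everywhere.
\item At a Lebesgue point of $G$, the right averages $\epsilon^{-1}\int_a^{a+\epsilon}G\le G^+(a)$ tend to $G(a)$. This gives the reverse inequality.
\end{itemize}
Thus $G=G^+$ almost everywhere. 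With $E=\{G=G^+\}\cap\{v_-\le B\le v_+\}$, the rest of your argument goes through unchanged.

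If you read the hypothesis as ``for all $a<b$ in some full-measure set'', your argument is complete, but the equivalence is then nearly immediate. The paper's use of Fubini shows that the weaker pairwise reading is intended.

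Two minor points. You wrote ``nondecreasing'' where you mean nonincreasing. Also, $\bar G=G$ holds on all of $E$, not merely off discontinuity points.
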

\begin{proof}
Under the pair inequality, $G^+(a)=\esssup_{b>a}G(b)$ is nonincreasing and finite at every interior label. Fubini gives $G^+(a)\le G(a)$ almost everywhere, while right averages at a Lebesgue point give the reverse inequality. The converse follows from monotonicity. On $J$,
$\TV_JG^+\le2V_*+\osc_JX/t$, so adding $\TV_JX/t$ gives the first estimate. The second follows from the $c_f^{-1}$-Lipschitz continuity of $g$. Finite variation gives both one-sided traces; no assertion is made at an endpoint of infinite position oscillation.
\end{proof}

\begin{lemma}[Tangency on the current fibres]\label{lem:fibre-tangent}
Let $Z\in W^{1,2}(0,T;L^2(0,M))$ have its continuous $L^2$ representative in $\K$ at every time. Then
\begin{equation}\label{eq:fibre-tangent}
 \dot Z(t)\in\mathcal H_{Z(t)}\qquad\text{for almost every }t\in(0,T).
\end{equation}
The no-splitting property is not assumed.
\end{lemma}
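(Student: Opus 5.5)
Fix a time $t$ at which $Z$ is differentiable in $L^2$; by the vector-valued Lebesgue differentiation theorem for $W^{1,2}(0,T;L^2)$, this holds for almost every $t$, with difference quotients converging in $L^2$. By \cref{lem:PX}, $\mathcal H_{Z(t)}$ consists of the $L^2$ functions that are a.e.\ constant on each positive fibre $I$ of $Z(t)$. So it suffices to show that $\dot Z(t)$ is a.e.\ constant on every such $I$. There are countably many fibres, so we may treat them one at a time.

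\emph{One-sided monotonicity on a fibre.} Fix a positive fibre $I$ of $Z(t)$. For $s>t$, the function $Z(s)$ is nondecreasing, while $Z(t)$ is constant on $I$. Hence $(Z(s)-Z(t))/(s-t)$ has a nondecreasing representative on $I$. Monotone functions form a closed cone in $L^2(I)$; this is the same distributional argument as in \cref{lem:compact-mono}. Letting $s\downarrow t$, the right derivative $\dot Z(t)$ is therefore a.e.\ equal to a nondecreasing function on $I$.

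For $s<t$, the quotient $(Z(t)-Z(s))/(t-s)$ equals, on $I$, the constant $Z(t)|_I$ minus a nondecreasing function, divided by a positive number. It is therefore nonincreasing on $I$. Letting $s\uparrow t$, the left derivative, which equals $\dot Z(t)$ at a point of differentiability, is a.e.\ nonincreasing on $I$.

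\emph{Conclusion.} A function that is a.e.\ both nondecreasing and nonincreasing on an interval is a.e.\ constant there. Concretely, compare at Lebesgue points $a<b$ in $I$: the two monotonicity properties give $\dot Z(t)(a)\le\dot Z(t)(b)$ and $\dot Z(t)(a)\ge\dot Z(t)(b)$. Thus $\dot Z(t)$ is constant on each positive fibre, and $\dot Z(t)\in\mathcal H_{Z(t)}$. No fibre inclusion between $Z(s)$ and $Z(t)$ is used, so the no-splitting property is not needed.

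\emph{Main obstacle.} The only delicate point is the measure-theoretic selection of a single full-measure set of times $t$ at which the two-sided $L^2$ derivative exists. This follows from absolute continuity of $Z$ with values in the Hilbert space $L^2$, which has the Radon--Nikodym property, so $Z$ is differentiable almost everywhere with derivative equal to its weak derivative. A second point is closedness of nondecreasing functions on a subinterval $I$ under $L^2(I)$ convergence. This follows by testing against $-\varphi'$ with $0\le\varphi\in C_c^\infty(I)$, exactly as in \cref{lem:compact-mono}.
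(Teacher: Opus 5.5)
Your proof is correct and follows the paper's argument essentially verbatim. At an $L^2$ Lebesgue point of $\dot Z$, the forward and backward difference quotients are respectively nondecreasing and nonincreasing on each positive fibre of $Z(t)$; both cones are closed in $L^2(I)$, so the common limit is constant there, and \cref{lem:PX} concludes (the appeal to the Radon--Nikodym property is unnecessary, since $Z(t)=Z(0)+\int_0^t\dot Z$ already gives convergence of the quotients at Lebesgue points of $\dot Z$).
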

\begin{proof}
At each $L^2$-valued Lebesgue point $t$ of $\dot Z$, the two-sided difference quotients $[Z(t+\epsilon)-Z(t)]/\epsilon$ converge to $\dot Z(t)$ in $L^2$ by the Bochner integral identity.
On every positive fibre $I$ of $Z(t)$ the quotient is nondecreasing for $\epsilon>0$ and nonincreasing for $\epsilon<0$. Both cones are closed in $L^2(I)$, so their common limit is constant there. The exceptional time set was fixed before $I$; hence \cref{lem:PX} gives \eqref{eq:fibre-tangent} on all current fibres.
\end{proof}
\section[Bregman variational construction]{Bregman variational construction}\label{sec:existence}

Momentum conservation selects the Bregman velocity cost within the separable class characterized in \cref{prop:generator}. We first identify that cost and its collision barycenter, then construct a discrete evolution and identify its continuous limit. Variational time discretization using monotone transport maps and velocity projection was developed for classical Euler flow in \cite{CSWtime}. Here only time is discretized; all minimizations take place in $L^2(0,M)$. The Bregman cost determines the position increment, while conditional expectation updates the physical velocity and controls the discrepancy between displacement and transport velocities.

\subsection[\texorpdfstring{Averaging physical velocities and the Bregman cost}{Averaging physical velocities and the Bregman cost}]{Averaging physical velocities and the Bregman cost}\label{sec:collision-law}

Define $H(u)=\int_0^u f_R(r)\dd r$ and let $L=H^*$ be its Legendre transform, using the auxiliary function $f_R$ specified after \eqref{eq:f}. With $g=f_R^{-1}$, uniform convexity and maximization in one variable give
\begin{equation}\label{eq:dual}
 L(v)=vg(v)-H(g(v)),\quad L'(v)=g(v),\quad
 1/C_f\le L''(v)=1/f_R'(g(v))\le1/c_f.
\end{equation}
For a differentiable convex function $A$, we write
\[
 D_A(z\mid w)=A(z)-A(w)-A'(w)(z-w).
\]
The divergence is taken in Bregman's sense \cite{Bregman}. The first argument is the minimization variable, corresponding to the left-sided centroid convention in \cite[Theorem 3.2]{NN}.

\begin{proposition}[Bregman barycenter for cluster collisions]\label{prop:bary}
When masses $m_j>0$ with physical velocities $u_j\in[-R,R]$ merge conserving mass and momentum, their common physical velocity and transport velocity are
\begin{equation}\label{eq:bary}
 p=\frac{\sum_jm_ju_j}{\sum_jm_j},\qquad v=f(p)
 =\operatorname*{arg\,min}_{v\in\R}\sum_jm_jD_L(v\mid f(u_j)).
\end{equation}
\end{proposition}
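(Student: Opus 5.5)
The plan has two parts: derive the collision law from the conservation laws, then identify the Bregman minimizer. For the first part, suppose the merged cluster has mass $m=\sum_jm_j$ and common physical velocity $p$. Conservation of momentum gives $mp=\sum_jm_ju_j$, which is the formula for $p$ in \eqref{eq:bary}. Since $u_j\in[-R,R]$ and $p$ is a convex combination of the $u_j$, we have $p\in[-R,R]$. Because the cluster is transported with velocity $f$ of its physical velocity, its transport velocity is $v=f(p)$, and $f(p)=f_R(p)$ because $f_R$ agrees with $f$ on $[-R,R]$.

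For the variational identity, set
\[
 \Phi(v)=\sum_jm_jD_L(v\mid w_j),\qquad w_j=f(u_j)=f_R(u_j).
\]
By \eqref{eq:dual}, $L$ is $C^2$ with $L''\ge1/C_f>0$, so each $D_L(\cdot\mid w_j)$ is strictly convex. It also grows at least like $(v-w_j)^2/(2C_f)$, again by \eqref{eq:dual}. Hence $\Phi$ is strictly convex and coercive on $\R$, and it has a unique minimizer characterized by $\Phi'(v)=0$.

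To compute the derivative, note that $\partial_vD_L(v\mid w)=L'(v)-L'(w)$. Using $L'=g=f_R^{-1}$ together with $g(w_j)=u_j$, the first-order condition becomes
\[
 \sum_jm_j\bigl(g(v)-u_j\bigr)=0,\qquad\text{that is,}\qquad g(v)=p.
\]
Since $f_R$ is a bijection, this yields $v=f_R(p)=f(p)$, which identifies the minimizer and completes the proof.

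The only delicate point is making sure that $g(f(u_j))=u_j$ holds and that the final identification uses $f$ rather than $f_R$. Both rely on $u_j,p\in[-R,R]$, where $f$ and $f_R$ coincide. This is also why the auxiliary extension $f_R$ is used to define $L$ on all of $\R$: it gives the global convexity bounds in \eqref{eq:dual}, so the argmin over all of $\R$ is well defined and unique.
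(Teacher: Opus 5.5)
Your proof is correct and follows the paper's route: momentum conservation gives $p$, and strict convexity and coercivity from $L''\ge1/C_f$, together with the first-order condition $\sum_jm_j(g(v)-u_j)=0$, identify the unique minimizer as $f(p)$. Your explicit remark that $p\in[-R,R]$, so that $f_R(p)=f(p)$, spells out a step the paper leaves implicit.
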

\begin{proof}
Set $m=\sum_jm_j$. Conservation gives $mp=\sum_jm_ju_j$, and the velocity law gives the transport velocity $f(p)$. Denoting the functional in \eqref{eq:bary} by $\mathcal B$, we obtain
\[
 \mathcal B'(v)=\sum_jm_j\{g(v)-g(f(u_j))\}=m(g(v)-p),\qquad
 \mathcal B''(v)=mL''(v)\ge m/C_f.
\]
The functional is coercive and strictly convex. Its unique minimizer satisfies $g(v)=p$, as claimed.
\end{proof}

\begin{proposition}[Characterization within separable Bregman costs]\label{prop:generator}
Let $J=f([-R,R])$ and let $\Phi\in C^2(J)$ satisfy $\Phi''>0$. Suppose that, for every finite family $m_j>0$ and $u_j\in[-R,R]$, the objective
\[
 v\longmapsto\sum_jm_jD_\Phi(v\mid f(u_j)),\qquad v\in J,
\]
is minimized at $f(\sum_jm_ju_j/\sum_jm_j)$. Then
\begin{equation}\label{eq:generator}
 \Phi(v)=A L(v)+Bv+C\quad(v\in J),\qquad A>0.
\end{equation}
Conversely, every generator in \eqref{eq:generator} has this property, and $D_\Phi=A D_L$ on $J\times J$.
\end{proposition}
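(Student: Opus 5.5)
The plan is to obtain the first-order optimality condition from the barycenter property and turn it into a functional equation for $\Phi'\circ f$. Write $D_\Phi(v\mid w)=\Phi(v)-\Phi(w)-\Phi'(w)(v-w)$. Then
\[
\partial_v\sum_jm_jD_\Phi(v\mid w_j)=\sum_jm_j\bigl(\Phi'(v)-\Phi'(w_j)\bigr),
\]
and the objective is strictly convex on the interval $J$ because $\Phi''>0$. Set $u_j\in[-R,R]$, $w_j=f(u_j)$ and $p=\sum m_ju_j/\sum m_j$. Then $f(p)\in J$. If $p$ is strictly between $\min u_j$ and $\max u_j$, then $f(p)$ lies in the interior of $J$, so a minimizer there must be a critical point. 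This gives
\[
\Phi'(f(p))=\frac{\sum_jm_j\Phi'(f(u_j))}{\sum_jm_j}.
\]
In the degenerate case where all $u_j$ coincide, this identity holds trivially.

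Define $\psi=\Phi'\circ f$ on $[-R,R]$. It is $C^1$ because $\Phi\in C^2$ and $f\in C^1$. Take two velocities $u_1<u_2$ with weights $m_1=1-\lambda$, $m_2=\lambda$, where $\lambda\in(0,1)$. The identity becomes
\[
\psi((1-\lambda)u_1+\lambda u_2)=(1-\lambda)\psi(u_1)+\lambda\psi(u_2).
\]
Rational weights are enough here: they force $\psi$ to be affine along a dense set, and continuity then makes $\psi$ affine on $[-R,R]$. So $\psi(u)=Au+B$.

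Now $\Phi'=\psi\circ g=A\,g+B$ on $J$, since $g=f^{-1}$ there and $L'=g$ by \eqref{eq:dual}. The hypothesis $\Phi''>0$ gives $A\,g'>0$. Since $g'>0$, this forces $A>0$. Integrating $\Phi'=Ag+B$ yields $\Phi=AL+Bv+C$, which is \eqref{eq:generator}.

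For the converse, affine terms cancel in a Bregman divergence, so $D_\Phi=AD_L$ on $J\times J$. The objective is therefore $A$ times the functional of \cref{prop:bary}. Its minimizer over $\R$ is $f(p)\in J$, and hence it is also the minimizer over $J$.

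The only delicate point is the boundary of $J$. A minimizer at an endpoint of $J$ need not be a critical point, which is why the optimality condition should be derived only for non-degenerate families, where $f(p)$ is interior. Affinity on the closed interval $[-R,R]$ then follows by continuity.
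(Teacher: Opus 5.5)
Your proof is correct and follows essentially the same route as the paper. You derive the first-order condition at the interior minimizer $f(p)$ for a nondegenerate two-point family, conclude that $\Phi'\circ f$ is affine, integrate using $L'=g$, and obtain the converse from cancellation of affine terms together with \cref{prop:bary}. The only minor difference is that the hypothesis allows all real weights, so the paper takes $\theta\in(0,1)$ arbitrary with $r=-R$, $s=R$ and gets affinity directly; your restriction to rational weights followed by a continuity step is harmless but unnecessary.
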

\begin{proof}
For two values $r<s$ and weights $1-\theta,\theta$ with $0<\theta<1$, the prescribed minimizer lies in the interior of $J$. Differentiation of the objective therefore gives
\[
 k((1-\theta)r+\theta s)=(1-\theta)k(r)+\theta k(s),
 \qquad k(u)=\Phi'(f(u)).
\]
Taking $r=-R$ and $s=R$ identifies $k$ on the whole interval with its affine interpolant, so $k(u)=Au+B$. Since $k'=\Phi''(f)f'>0$, we have $A>0$. Substitution of $u=g(v)$ gives $\Phi'(v)=Ag(v)+B$, and integration yields \eqref{eq:generator}. Conversely, affine terms cancel in a Bregman divergence and positive scaling preserves its minimizer. Thus $D_\Phi=A D_L$, and \cref{prop:bary} gives the asserted barycenter.
\end{proof}

The normalization $L'=g$ fixes the scale and linear part. By \eqref{eq:dual}, $L$ is quadratic on the transport-velocity interval exactly when $f$ is affine on the physical velocity interval.

\begin{proposition}[Obstruction to the classical free potential]\label{prop:classical-free}
Consider the Euclidean free projection
\begin{equation}\label{eq:classical-free-projection}
 Y_{\rm cl}(t)=\operatorname*{arg\,min}_{Y\in\K}
 \frac12\int_0^M|Y-X_0-tf(U_0)|^2\dd a,
\end{equation}
or the classical cumulative rule whose left and right mass labels at $x$ are the extreme minimizers of
\begin{equation}\label{eq:classical-free-potential}
 F_t^{\rm cl}(a;x)=\int_0^a\{X_0(b)+tf(U_0(b))-x\}\dd b.
\end{equation}
If either rule gives the transport velocity determined by the average physical velocity on every merged fibre for all bounded compatible data, then $f$ is affine on $[-R,R]$. The assertion is restricted to these two classical free-trajectory rules, both using the mass-label measure $\dd a$.
\end{proposition}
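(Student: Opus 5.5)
The plan is to test both rules on the simplest data that force a merger: two equal-mass clusters that collide, and to read off a midpoint-affinity (Jensen equality) condition for $f$. Fix $r<s$ in $[-R,R]$ and take $M=2$, $U_0=s$ on $(0,1)$ and $U_0=r$ on $(1,2)$. For the position take $X_0=0$ on $(0,1)$ and $X_0=\ell$ on $(1,2)$ with $\ell>0$. These data are bounded, and they are compatible because $U_0$ is constant on each initial fibre. The free trajectories $X_0+tf(U_0)$ are ordered in $\K$ until the time $t_c=\ell/(f(s)-f(r))$. For $t>t_c$ they are inverted. The monotone projection in \eqref{eq:classical-free-projection} of a function that is constant on two intervals and decreasing across them is their common mass-weighted average, here $\tfrac12\ell+\tfrac t2(f(s)+f(r))$. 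This is the pool-adjacent-violators step, and it is justified by the normal cone characterization of \cref{lem:normal}: the residual has a nonnegative primitive vanishing at $0$ and $M$ and on the support of $D_aY$. So $(0,2)$ is a single merged fibre with transport velocity $\tfrac12(f(r)+f(s))$. The required velocity is $f(\tfrac12(r+s))$, so the hypothesis gives $f(\tfrac{r+s}2)=\tfrac12(f(r)+f(s))$ for all $r<s$.

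For the cumulative rule \eqref{eq:classical-free-potential} the computation is similar. $F_t^{\rm cl}(\cdot;x)$ is piecewise linear in $a$. For $t>t_c$ and $x$ in the appropriate range its minimizers are $a=0$ or $a=2$, and they switch exactly at $x$ equal to the same mean. So the merged cluster $(0,2)$ sits at $\tfrac12\ell+\tfrac t2(f(r)+f(s))$ and moves with the same averaged velocity. The rule therefore again forces midpoint affinity.

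Choosing unequal masses $\theta,1-\theta$ instead would give $f(\theta s+(1-\theta)r)=\theta f(s)+(1-\theta)f(r)$ directly. The midpoint version already suffices, because $f$ is continuous: a continuous function satisfying the Jensen midpoint equation on an interval is affine there. To see this, apply the identity to dyadic convex combinations of $-R$ and $R$ and pass to the limit by density. Hence $f$ is affine on $[-R,R]$.

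The only genuinely careful step is verifying that the projection and the cumulative rule really produce the single merged fibre with the arithmetic-mean velocity. For the projection I would check the three conditions of \eqref{eq:normalchar} explicitly for $\xi=X_0+tf(U_0)-Y$. Its primitive is piecewise linear, rises then falls to $0$, and is nonnegative precisely when the free trajectories have crossed. The third condition holds trivially since $D_aY=0$ on $(0,2)$.
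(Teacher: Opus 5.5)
Your proposal is correct and follows the paper's proof. Both use the same two-block colliding data, identify a single merged fibre at the mass-weighted mean of the free positions under both rules, and derive the resulting Jensen identity for $f$. The differences are cosmetic: the paper takes a general mass fraction $\lambda$ with $u=R$, $v=-R$ to obtain affinity directly and justifies the projection by block averaging, whereas you use equal masses with the normal-cone check and close with the midpoint equation plus continuity, which is equally valid (if $M$ is fixed, use the two halves of $(0,M)$ in place of $M=2$).
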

\begin{proof}
Fix $u>v$ in $[-R,R]$ and $0<\lambda<1$. Take $(X_0,U_0)=(0,u)$ on $(0,\lambda M)$ and $(X_0,U_0)=(1,v)$ on $(\lambda M,M)$. These data are compatible. The free positions $z_-=tf(u)$ and $z_+=1+tf(v)$ satisfy $z_->z_+$ for $t>t_c=(f(u)-f(v))^{-1}$. Averaging a monotone candidate on each initial block preserves monotonicity and decreases the quadratic cost. Thus \eqref{eq:classical-free-projection} reduces to
\[
 \min_{y_-\le y_+}\{\lambda(y_--z_-)^2+(1-\lambda)(y_+-z_+)^2\}.
\]
For $t>t_c$ the constraint is active, and its unique solution is
$y_-=y_+=q(t)=\lambda z_-+(1-\lambda)z_+$.
The potential \eqref{eq:classical-free-potential} has decreasing slopes $z_--x,z_+-x$, hence attains its minimum at $0$ or $M$. Since $F_t^{\rm cl}(M;x)=M(q(t)-x)$, the extreme minimizers select the same single fibre at $q(t)$.
Its physical velocity is $\lambda u+(1-\lambda)v$, so collision closure requires
\begin{equation}\label{eq:classical-affine-identity}
 f(\lambda u+(1-\lambda)v)=\lambda f(u)+(1-\lambda)f(v).
\end{equation}
Taking $u=R$, $v=-R$ and varying $\lambda$ proves affinity. Conversely, \eqref{eq:classical-affine-identity} holds for affine $f$.
\end{proof}

This obstruction does not exclude static functionals with additional label weights or nonlocal dependence.

\begin{lemma}[Dual Bregman estimates]\label{lem:dual-Bregman}
For the tangent-line extension $f_R$ fixed after \eqref{eq:f}, the functions $H$ and $L=H^*$ satisfy, for all real $r,s,v,w$,
\begin{align}
 D_L(v\mid f_R(r))&=L(v)+H(r)-rv,
 &D_H(r\mid s)&=D_L(f_R(s)\mid f_R(r)),\label{eq:Bregman-duality}\\
 \frac{c_f}{2}|r-s|^2&\le D_H(r\mid s)\le\frac{C_f}{2}|r-s|^2,
 &\frac{|v-w|^2}{2C_f}&\le D_L(v\mid w)\le\frac{|v-w|^2}{2c_f}.
 \label{eq:Bregman-quadratic}
\end{align}
In particular,
\begin{equation}\label{eq:Bregman-symmetric}
 D_L(v\mid w)+D_L(w\mid v)
 =(g(v)-g(w))(v-w)\ge C_f^{-1}|v-w|^2.
\end{equation}
\end{lemma}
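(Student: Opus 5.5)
The argument is pure convex duality for the pair $H$, $L=H^*$. No global analysis is needed; each claim reduces to a one-variable computation using $L'=g=f_R^{-1}$, $H'=f_R$ and the bounds $c_f\le f_R'\le C_f$ on all of $\R$. These bounds hold globally because the tangent-line extension keeps $f_R'$ equal to its endpoint values outside $[-R,R]$.

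\emph{Step 1: the Fenchel--Young form of $D_L$.} By \eqref{eq:dual}, $L(f_R(r))=f_R(r)r-H(r)$ and $L'(f_R(r))=r$. Substituting $w=f_R(r)$ into the definition of $D_L$ gives
\[
 D_L(v\mid f_R(r))=L(v)-rf_R(r)+H(r)-r(v-f_R(r))=L(v)+H(r)-rv,
\]
which is the first identity in \eqref{eq:Bregman-duality}. For the second identity, expand $D_H(r\mid s)=H(r)-H(s)-f_R(s)(r-s)$. Use $H(s)=sf_R(s)-L(f_R(s))$, which is Fenchel--Young equality at a conjugate pair. The result is $H(r)+L(f_R(s))-rf_R(s)$, and this equals $D_L(f_R(s)\mid f_R(r))$ by the first identity with $v=f_R(s)$.

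\emph{Step 2: the quadratic bounds.} Taylor's formula with integral remainder gives
\[
 D_H(r\mid s)=\int_0^1(1-\tau)H''(s+\tau(r-s))\dd\tau\,(r-s)^2 .
\]
Since $H''=f_R'\in[c_f,C_f]$ everywhere and $\int_0^1(1-\tau)\dd\tau=\tfrac12$, this proves the first pair of bounds in \eqref{eq:Bregman-quadratic}. The same formula applies to $D_L$ with $L''\in[1/C_f,1/c_f]$ from \eqref{eq:dual}, which gives the second pair.

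\emph{Step 3: the symmetrized divergence.} Adding the two divergences, the $L$ terms cancel and
\[
 D_L(v\mid w)+D_L(w\mid v)=(L'(v)-L'(w))(v-w)=(g(v)-g(w))(v-w).
\]
By the mean value theorem, $g(v)-g(w)=g'(\xi)(v-w)$ with $g'=1/f_R'(g)\ge 1/C_f$. This gives the lower bound in \eqref{eq:Bregman-symmetric}. Alternatively, sum the lower bounds from Step 2 to get $2\cdot|v-w|^2/(2C_f)$.

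The only point needing care is that \eqref{eq:dual} must hold on all of $\R$ and not just on $[-R,R]$. This follows because $H$ is $C^2$ and uniformly convex, and $f_R$ is a bijection of $\R$. Hence $H^*$ is finite and differentiable everywhere with $L'=(H')^{-1}$, and the second-derivative bounds transfer by the inverse function theorem. This step is the main, and mild, obstacle; everything else is direct computation.
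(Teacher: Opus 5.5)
Your proposal is correct and follows the paper's proof step by step: the conjugacy identities $L(f_R(r))=rf_R(r)-H(r)$ and $L'(f_R(r))=r$ give the duality relations, Taylor's formula with integral remainder gives the quadratic bounds, and adding the two divergences with $g'\ge C_f^{-1}$ gives the symmetric estimate. Your remark that \eqref{eq:dual} holds on all of $\R$, because the tangent-line extension keeps $c_f\le f_R'\le C_f$ globally, is exactly the fact the paper uses implicitly.
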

\begin{proof}
By \eqref{eq:dual}, $L(f_R(r))=rf_R(r)-H(r)$ and $L'(f_R(r))=r$, which give \eqref{eq:Bregman-duality}. Taylor's formula with integral remainder, together with $c_f\le H''\le C_f$ and $C_f^{-1}\le L''\le c_f^{-1}$, gives \eqref{eq:Bregman-quadratic}. Adding the two $D_L$ terms and using $g'\ge C_f^{-1}$ proves \eqref{eq:Bregman-symmetric}.
\end{proof}

\subsection{Time discretization and estimates}\label{sec:time-increments}

For $h>0$, start from $X^0=X_0$, $U^0=U_0$ and define
\begin{equation}\label{eq:incremental-scheme}
 \begin{split}
 X^{n+1}&=\operatorname*{arg\,min}_{Y\in\K}
 \int_0^M D_L\!\left(\frac{Y-X^n}{h}\,\middle|\,f(U^n)\right)\dd a,\\
 U^{n+1}&=P_{X^{n+1}}U^n,\qquad
 V^{n+1}=\frac{X^{n+1}-X^n}{h}.
 \end{split}
\end{equation}
Since $L'(f(U^n))=U^n$ and $D_L(v\mid f(U^n))=L(v)+H(U^n)-U^nv$,
\begin{equation}\label{eq:incremental-Bregman}
 \begin{aligned}
 &\int_0^M\left[hL\!\left(\frac{Y-X^n}{h}\right)-U^nY\right]\dd a\\
 &\qquad=h\int_0^M D_L\!\left(\frac{Y-X^n}{h}\,\middle|\,f(U^n)\right)\dd a
 -\int_0^M[U^nX^n+hH(U^n)]\dd a.
 \end{aligned}
\end{equation}
Thus the two objectives have the same minimizer. We use the equivalent $hL-UY$ form on the left-hand side of \eqref{eq:incremental-Bregman} for the Euler--Lagrange calculation.

\begin{lemma}[Incremental minimization and no-splitting]\label{lem:variational-step}
For every $n\ge0$, \eqref{eq:incremental-scheme} has a unique minimizer. It satisfies
\begin{equation}\label{eq:incremental-bounds}
 X^n+hv_-\le X^{n+1}\le X^n+hv_+,
 \qquad |U^{n+1}|\le R.
\end{equation}
Every positive fibre of $X^n$ is contained in a positive fibre of $X^{n+1}$. Consequently,
\begin{equation}\label{eq:incremental-tower}
 \mathcal H_{X^{n+1}}\subset\mathcal H_{X^n},\qquad
 U^n=P_{X^n}U_0,\qquad U^m=P_{X^m}U^n\quad(m\ge n),
\end{equation}
and
\begin{equation}\label{eq:incremental-energy}
 \|U^n\|_2^2-\|U^m\|_2^2=\|U^n-U^m\|_2^2
 \qquad(m\ge n).
\end{equation}
Moreover,
\begin{equation}\label{eq:incremental-H-loss}
 \int_0^M[H(U^n)-H(U^{n+1})]\dd a
 =\int_0^M D_H(U^n\mid U^{n+1})\dd a
 \ge\frac{c_f}{2}\|U^n-U^{n+1}\|_2^2.
\end{equation}
The following normal-cone relations hold:
\begin{equation}\label{eq:incremental-normal}
 \xi^{n+1}:=U^n-g(V^{n+1})\in N_\K(X^{n+1}),
 \qquad U_0-U^{n+1}\in N_\K(X^{n+1}).
\end{equation}
\end{lemma}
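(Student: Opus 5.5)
We argue by induction on $n$, carrying along the hypotheses $|U^n|\le R$, $U^n\in\mathcal H_{X^n}$ and $U^n=P_{X^n}U_0$.

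\emph{Existence, uniqueness and the Euler--Lagrange relation.} We minimize the functional $\mathcal J(Y)=\int[hL((Y-X^n)/h)-U^nY]\dd a$ over the closed convex cone $\K$, where closedness is \cref{lem:compact-mono}. Since $L''\ge C_f^{-1}$ by \eqref{eq:dual}, $\mathcal J$ is strongly convex and coercive on $L^2$. It is also continuous, because $L$ has quadratic growth and $U^n$ is bounded. Hence $\mathcal J$ has a unique minimizer. The variational inequality reads $\ip{g(V^{n+1})-U^n}{Y-X^{n+1}}\ge0$ for all $Y\in\K$, which is exactly $\xi^{n+1}\in N_\K(X^{n+1})$. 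By \cref{lem:normal}, $P_{X^{n+1}}\xi^{n+1}=0$, that is, $P_{X^{n+1}}g(V^{n+1})=P_{X^{n+1}}U^n=U^{n+1}$.

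\emph{Bounds and no-splitting.} For the bounds, we compare $X^{n+1}$ with the competitors $\max(X^{n+1},X^n+hv_-)$ and $\min(X^{n+1},X^n+hv_+)$. Both lie in $\K$, since the max and min of nondecreasing maps are nondecreasing. The pointwise integrand $v\mapsto L(v)-U^nv$ is strictly convex with minimizer $f_R(U^n)\in[v_-,v_+]$. Clipping $v$ toward this interval therefore lowers the integrand wherever the clip acts, and uniqueness of the minimizer gives \eqref{eq:incremental-bounds}. The bound $|U^{n+1}|\le R$ holds because averages preserve the bound.

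For no-splitting, let $I$ be a positive fibre of $X^n$. Then $U^n$ is constant on $I$, so the integrand is label-independent there. Replacing $X^{n+1}$ on $I$ by its average keeps monotonicity: the average lies between the endpoint values, and monotonicity outside $I$ is unaffected. By Jensen this does not increase the cost, and strict convexity gives a strict decrease unless $X^{n+1}$ is already constant on $I$. Uniqueness therefore forces $X^{n+1}$ to be constant on $I$, so $I$ lies inside a positive fibre of $X^{n+1}$. \Cref{lem:compact-mono} then gives $\mathcal H_{X^{n+1}}\subset\mathcal H_{X^n}$ and $P_{X^{n+1}}P_{X^n}=P_{X^{n+1}}$, and by induction $U^{n+1}=P_{X^{n+1}}U_0$. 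The nesting of these spaces gives $U^m=P_{X^m}U^n$ for $m\ge n$. Since $U^n-U^m$ is orthogonal to $\mathcal H_{X^m}\ni U^m$, Pythagoras yields \eqref{eq:incremental-energy}.

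\emph{Energy and second normal cone.} We expand $H(U^n)-H(U^{n+1})=D_H(U^n\mid U^{n+1})+f_R(U^{n+1})(U^n-U^{n+1})$. Because $f_R(U^{n+1})\in\mathcal H_{X^{n+1}}$ and $U^n-U^{n+1}\perp\mathcal H_{X^{n+1}}$, the last term integrates to zero. The lower bound in \eqref{eq:incremental-H-loss} is then \eqref{eq:Bregman-quadratic}.

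For $U_0-U^{n+1}\in N_\K(X^{n+1})$ we use \cref{lem:normal}. Set $\Xi(a)=\int_0^a(U_0-U^{n+1})$. First, $\Xi$ vanishes at the endpoints of every fibre of $X^{n+1}$ and on $N_{X^{n+1}}$, since $U^{n+1}=P_{X^{n+1}}U_0$. Hence $\int\Xi\dd D_aX^{n+1}=0$, and $\Xi(M)=0$. It remains to show $\Xi\ge0$ inside each fibre $I=(\alpha,\beta)$ of $X^{n+1}$, i.e.\ that the left partial averages of $U_0$ over $I$ are at least the full average.

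This sign condition is the main obstacle. The plan is to prove it by induction from the first relation, using the key fact that $\Xi_{n+1}:=\int_0^a\xi^{n+1}\ge0$ and $U^n-U^{n+1}=\xi^{n+1}+(g(V^{n+1})-U^{n+1})$. On a fibre $I$ of $X^{n+1}$ we have $V^{n+1}=(c-X^n)/h$ with $X^n$ nondecreasing, so $g(V^{n+1})$ is nonincreasing on $I$. Its primitive minus the average primitive is therefore nonnegative on $I$, and so is the primitive of $g(V^{n+1})-U^{n+1}$. Thus $U^n-U^{n+1}$ has a nonnegative primitive vanishing at fibre endpoints.

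Finally we write $U_0-U^{n+1}=(U_0-U^n)+(U^n-U^{n+1})$. The first term has a nonnegative primitive by the induction hypothesis, and this primitive vanishes at the endpoints of fibres of $X^n$. Since fibres of $X^{n+1}$ are unions of fibres of $X^n$ together with points of $N_{X^n}$, this primitive also vanishes at the endpoints of fibres of $X^{n+1}$. Summing the two nonnegative primitives gives $\Xi\ge0$, which closes the induction.
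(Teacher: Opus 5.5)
Your proposal is correct and follows the paper's proof essentially step for step: the strongly convex $hL-UY$ functional on the closed cone $\K$, clipping against the barriers and fibre averaging combined with uniqueness for the bounds and for no-splitting, the tower property and the Bregman expansion for the energy identities, and the decomposition $U_0-U^{n+1}=(U_0-U^n)+\xi^{n+1}+(g(V^{n+1})-U^{n+1})$, which uses that $g(V^{n+1})$ is nonincreasing on each new fibre. The only cosmetic difference concerns the inherited term $U_0-U^n$: you verify the primitive characterization of \cref{lem:normal} directly for the summed primitive, whereas the paper observes abstractly that a normal at $X^n$ remains a normal at any $Y\in\mathcal H_{X^n}\cap\K$.
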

\begin{proof}
Fix $X\in\K\subset L^2(0,M)$ and $U=P_XU$, $|U|\le R$. By \eqref{eq:incremental-Bregman}, \cref{lem:dual-Bregman} and the Lipschitz continuity of $g=L'$, the $hL-UY$ objective is finite, continuous, coercive and $1/(C_fh)$-strongly convex on $L^2$. The direct method on the closed convex cone $\K$ therefore gives a unique minimizer. Denote the minimizer by $Y$ and set $p=P_YU$.

The scalar integrand has its unique minimum at $y=X(a)+hf(U(a))$, between the nondecreasing barriers $X+hv_-$ and $X+hv_+$. Clipping a candidate between these barriers preserves monotonicity and strictly lowers the objective if a barrier is violated on a set of positive measure. This gives the position bounds in \eqref{eq:incremental-bounds}; conditional averaging gives the physical velocity bound.

On an old positive fibre $I$, both $X$ and $U$ are constant. Replacing $Y|_I$ by its average preserves monotonicity and, by strict Jensen, decreases the objective unless $Y$ is constant on $I$. This proves no-splitting. Starting from $U_0=P_{X_0}U_0$, the tower property in \cref{lem:compact-mono} and orthogonality give \eqref{eq:incremental-tower}--\eqref{eq:incremental-energy}. Since $\langle U^n-U^{n+1},f(U^{n+1})\rangle=0$, \cref{lem:dual-Bregman} gives \eqref{eq:incremental-H-loss}. Conditional Jensen also makes $\int\eta(U^n)$ nonincreasing for every continuous convex $\eta$ on $[-R,R]$.

The $L^2$ gradient is $g((Y-X)/h)-U$, so the variational inequality is
\[
 \langle g((Y-X)/h)-U,Z-Y\rangle\ge0
 \qquad(Z\in\K),
\]
which is the first relation in \eqref{eq:incremental-normal}.

For the second relation, put $Q=g((Y-X)/h)$ and $\xi=U-Q$. By \cref{lem:normal}, $P_Y\xi=0$, so $P_YQ=p$. On a positive fibre $I=(l,r)$ of $Y$, the function $Q$ is nonincreasing and has average $p_I$. Hence
\[
 \int_l^a(Q-p_I)\dd b\ge0\quad(l<a<r),
 \qquad\int_I(Q-p_I)\dd b=0.
\]
On the non-flat set, $Q=U=p$ almost everywhere. Thus the primitive of $Q-p$ is nonnegative, vanishes off the positive fibres and at their endpoints, and pairs to zero with $D_aY$. By \cref{lem:normal}, $Q-p\in N_\K(Y)$, and therefore $U-p\in N_\K(Y)$.

Finally, if $\zeta\in N_\K(X)$ and $Y\in\mathcal H_X\cap\K$, then $\langle\zeta,Z\rangle\le0$ for $Z\in\K$ and $\langle\zeta,Y\rangle=0$ by \cref{lem:normal}; hence $\zeta\in N_\K(Y)$. No-splitting and addition of normals, starting from $U_0-U^0=0$, prove the second assertion of \eqref{eq:incremental-normal}.
\end{proof}

\begin{samepage}
The normal-cone relation controls the discrepancy between displacement and transport velocities by the change in physical velocity under conditional averaging.

\begin{lemma}[Displacement and transport velocities]\label{lem:incremental-velocity-error}
For every $N\ge1$,
\begin{equation}\label{eq:velocity-error-sum}
 \sum_{n=0}^{N-1}h\|V^{n+1}-f(U^{n+1})\|_2^2
 \le C_f^2h\bigl(\|U_0\|_2^2-\|U^N\|_2^2\bigr).
\end{equation}
\end{lemma}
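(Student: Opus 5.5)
The plan is to prove the per-step bound
\[
 \|V^{n+1}-f(U^{n+1})\|_2\le C_f\|U^n-U^{n+1}\|_2 \qquad(n\ge0),
\]
then square, multiply by $h$, and sum. The energy identity \eqref{eq:incremental-energy} with $m=n+1$ gives $\|U^n-U^{n+1}\|_2^2=\|U^n\|_2^2-\|U^{n+1}\|_2^2$. The sum of these terms telescopes to $\|U_0\|_2^2-\|U^N\|_2^2$, since $U^0=U_0$. This yields \eqref{eq:velocity-error-sum}.

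For the per-step bound, set $D=V^{n+1}-f(U^{n+1})$ and use strong monotonicity of $g=L'$ from \eqref{eq:Bregman-symmetric}. Since $g(f(U^{n+1}))=U^{n+1}$ (because $|U^{n+1}|\le R$),
\[
 C_f^{-1}\|D\|_2^2\le\ip{g(V^{n+1})-U^{n+1}}{D}.
\]
By the definition of $\xi^{n+1}$ in \eqref{eq:incremental-normal}, we decompose
\[
 g(V^{n+1})-U^{n+1}=(U^n-U^{n+1})-\xi^{n+1}.
\]
We then pair each piece with $D=V^{n+1}-f(U^{n+1})$.

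The first piece is $U^n-U^{n+1}=(I-P_{X^{n+1}})U^n$. It is orthogonal to $\mathcal H_{X^{n+1}}$, which contains $f(U^{n+1})$. Hence its pairing with $D$ reduces to $\ip{U^n-U^{n+1}}{V^{n+1}}$, but I will simply keep it as $\ip{U^n-U^{n+1}}{D}$ and bound it by Cauchy--Schwarz.

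For the second piece, $P_{X^{n+1}}\xi^{n+1}=0$ by \cref{lem:normal}, so $\ip{\xi^{n+1}}{f(U^{n+1})}=0$. Moreover $hV^{n+1}=X^{n+1}-X^n$. The normal-cone relation $\xi^{n+1}\in N_\K(X^{n+1})$ gives $\ip{\xi^{n+1}}{X^{n+1}}=0$, since $X^{n+1}\in\mathcal H_{X^{n+1}}$. It also gives $\ip{\xi^{n+1}}{X^n-X^{n+1}}\le0$, using $X^n\in\K$. Hence $-\ip{\xi^{n+1}}{D}=\ip{\xi^{n+1}}{X^n}/h\le0$. Combining,
\[
 C_f^{-1}\|D\|_2^2\le\ip{U^n-U^{n+1}}{D}\le\|U^n-U^{n+1}\|_2\|D\|_2,
\]
which gives the per-step bound.

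The only delicate point is the sign of the normal-cone term. It relies on $\xi^{n+1}$ annihilating $\mathcal H_{X^{n+1}}$ while pairing nonpositively with the old position $X^n\in\K$. Both facts are available from \cref{lem:normal} and \cref{lem:variational-step}, so no bound on the range of $V^{n+1}$ is needed.
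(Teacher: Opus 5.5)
Your proposal is correct and follows the paper's proof essentially step for step: strong monotonicity of $g$ via \eqref{eq:Bregman-symmetric}, the splitting $g(V^{n+1})-U^{n+1}=(U^n-U^{n+1})-\xi^{n+1}$, the vanishing of $\ip{\xi^{n+1}}{f(U^{n+1})}$ by \cref{lem:normal}, and the sign $\ip{\xi^{n+1}}{V^{n+1}}\ge0$ from testing the normal cone with $X^n$, followed by Cauchy--Schwarz and telescoping \eqref{eq:incremental-energy}. The only cosmetic difference is your detour through $\ip{\xi^{n+1}}{X^{n+1}}=0$, which is unnecessary because $\ip{\xi^{n+1}}{X^n-X^{n+1}}\le0$ already gives the required sign directly.
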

\end{samepage}
\begin{proof}
Write $V=V^{n+1}$, $p=U^{n+1}$ and $\xi=\xi^{n+1}$. Testing the normal-cone relation with $X^n$ gives $\langle\xi,V\rangle\ge0$. Since $f(p)\in\mathcal H_{X^{n+1}}$, \cref{lem:normal} gives $\langle\xi,f(p)\rangle=0$. By the symmetric Bregman estimate,
\begin{align*}
 C_f^{-1}\|V-f(p)\|_2^2
 &\le\int_0^M\{D_L(V\mid f(p))+D_L(f(p)\mid V)\}\dd a\\
 &=\langle g(V)-p,V-f(p)\rangle\\
 &=\langle U^n-p,V-f(p)\rangle-\langle\xi,V\rangle\\
 &\le\|U^n-p\|_2\|V-f(p)\|_2.
\end{align*}
Squaring $\|V-f(p)\|_2\le C_f\|U^n-p\|_2$, multiplying by $h$ and summing \eqref{eq:incremental-energy} proves the claim.
\end{proof}

For an interval $I=(\alpha,\beta)$, a prefix is a subinterval adjoining $\alpha$, and a suffix is a subinterval adjoining $\beta$; the whole interval $I$ is allowed in either case. Endpoints do not affect integrals, and averages are used only on intervals of positive length.

\begin{lemma}[Discrete Oleinik estimate]\label{lem:discrete-OSL}
For every $n\ge1$ and almost every ordered pair $a<b$,
\begin{equation}\label{eq:discrete-OSL}
 f(U^n(b))-f(U^n(a))\le\frac{X^n(b)-X^n(a)}{nh}.
\end{equation}
The functions $f(U^n)-X^n/(nh)$ have nonincreasing representatives, and $U^n\in BV_{\mathrm{loc}}(0,M)$ for $n\ge1$. On any label interval $J$ on which $X^n$ has finite oscillation,
\begin{equation}\label{eq:discrete-local-BV}
 \TV_J(f(U^n))\le2V_*+\frac{2\operatorname{osc}_JX^n}{nh},
 \qquad \TV_J(U^n)\le c_f^{-1}\TV_J(f(U^n)).
\end{equation}
\end{lemma}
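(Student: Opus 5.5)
The plan is to prove \eqref{eq:discrete-OSL} by induction on $n$, from a one-step implication, and then to read off the remaining assertions from \cref{lem:OSL-BV}. Fix $n\ge0$ and write $V=V^{n+1}$, $\xi=\xi^{n+1}=U^n-g(V)\in N_\K(X^{n+1})$ from \eqref{eq:incremental-normal}. The induction hypothesis for $n\ge1$ is \eqref{eq:discrete-OSL} at level $n$; for $n=0$ no hypothesis is used. The target is
\[
 f(U^{n+1}(b))-f(U^{n+1}(a))\le\frac{X^{n+1}(b)-X^{n+1}(a)}{(n+1)h}
 \qquad\text{for a.e. }a<b.
\]

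\emph{Step 1: local structure of one step.} By \cref{lem:normal}, $\xi=0$ a.e.\ on the non-flat set of $X^{n+1}$, and $U^{n+1}=U^n$ there by \cref{lem:PX}. Hence $V=f(U^n)=f(U^{n+1})$ at such labels.

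On a positive fibre $I=(l,r)$ of $X^{n+1}$ with value $x_I$, we have $V=(x_I-X^n)/h$, which is nonincreasing on $I$. By \cref{lem:normal}, $P_{X^{n+1}}\xi=0$, so the average of $g(V)$ over $I$ equals $p_I=U^{n+1}|_I$. Monotonicity of $g(V)$ then gives
\[
 V(r-)\le f(p_I)\le V(l+).
\]
Moreover the primitive $\Xi$ of $\xi$ is nonnegative and vanishes at $l$ and $r$. Averaging $\xi$ over $(l,l+\epsilon)$ and over $(r-\epsilon,r)$ and letting $\epsilon\downarrow0$ yields the one-sided comparisons
\[
 V(l+)\le f(U^n)(l+),\qquad V(r-)\ge f(U^n)(r-).
\]
These are understood as essential one-sided limits, or as comparisons along Lebesgue points approaching the endpoint.

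\emph{Step 2: reduction to a pair $a'<b'$.} Take $a<b$ in different fibres or non-flat points; within one fibre the claim is trivial. Let $a'$ be the right end of $a$'s fibre, or $a'=a$ if $a$ is non-flat, and let $b'$ be the left end of $b$'s fibre, or $b'=b$. Then $X^{n+1}(b)-X^{n+1}(a)=X^{n+1}(b')-X^{n+1}(a')$, and by Step 1,
\[
 f(U^{n+1}(b))-f(U^{n+1}(a))\le\Delta V\le f(U^n)(b')-f(U^n)(a'),
\]
where $\Delta$ denotes the difference between the $b'$ and $a'$ values, taken as the appropriate one-sided traces. Write $\Delta X^{n+1}=\Delta X^n+h\Delta V$. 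The target inequality $\Delta V\le\Delta X^{n+1}/((n+1)h)$ is then equivalent to $\Delta V\le\Delta X^n/(nh)$ when $n\ge1$. This follows from the induction hypothesis applied at $a'<b'$, passing to traces through the nonincreasing representative of $f(U^n)-X^n/(nh)$. When $n=0$ the target reads $\Delta V\le\Delta X^0/h+\Delta V$, which is trivial because $X^0$ is nondecreasing.

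\emph{Step 3: remaining assertions.} The pair inequality, combined with \cref{lem:OSL-BV} applied with $B=f(U^n)$ and $t=nh$, gives the nonincreasing representative, the estimates \eqref{eq:discrete-local-BV}, and $U^n=g(f(U^n))\in BV_{\rm loc}$.

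The main obstacle is making Steps 1 and 2 rigorous at the level of a.e.\ representatives. The comparisons are made at fibre endpoints and require one-sided traces of $V$ and $f(U^n)$ there. I would handle this by using the monotone representative of $V$ on each fibre and the nonincreasing representative from the induction hypothesis, which has one-sided limits everywhere. The endpoint inequalities would then be derived as limits of averages over $(l,l+\epsilon)$ and $(r-\epsilon,r)$, so that no pointwise values on null sets are needed. The degenerate cases are a fibre touching $0$ or $M$, or an endpoint with infinite position; there only the labels on the finite side are used.
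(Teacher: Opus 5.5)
Your proposal is correct and follows essentially the paper's proof. Both arguments obtain the fibre-average bound $V(r-)\le f(p_I)\le V(l+)$ and the normal-cone endpoint comparisons with the old traces. Both then take a trace-free first step, run the induction through the nonincreasing representative of $f(U^n)-X^n/(nh)$, and finish with \cref{lem:OSL-BV}. The only difference is bookkeeping: you sandwich the new velocity jump between the exact endpoint displacement jump $h\Delta V=\Delta X^{n+1}-\Delta X^n$ and the old trace jump, whereas the paper adds the two inequalities $h\Delta v\le D-d$ and $nh\Delta v\le d$.
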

\begin{proof}
We first derive endpoint bounds on each new fibre, establish the first-step estimate without physical velocity traces, and then propagate the bound by induction.

Fix a step and abbreviate
\[
 X=X^n,\quad U=U^n,\quad Y=X^{n+1},\quad p=U^{n+1},
 \quad Q=g((Y-X)/h),\quad\Xi(a)=\int_0^a(U-Q)\dd b.
\]
By \cref{lem:normal,lem:variational-step}, $\Xi\ge0$, and $\Xi$ vanishes at the endpoints of every positive fibre of $Y$.

\emph{Endpoint bounds.} Let $I=(l,r)$ be such a fibre, at position $y$, with physical velocity $p_I$. The displacement barriers give
\[
 y-hv_+\le X(a)\le y-hv_-\qquad\hbox{for a.e. }a\in I.
\]
Thus the interior endpoint traces of $X|_I$ are finite, including when $l=0$ or $r=M$. The corresponding traces of $Q=g((y-X)/h)$ are finite as well. On $I$,
\begin{equation}\label{eq:OSL-prefix-suffix}
 \int_I Q\dd a=|I|p_I,\qquad
 \int_l^c(U-Q)\dd a\ge0,\qquad
 \int_c^r(U-Q)\dd a\le0\quad(l<c<r).
\end{equation}
Since $X$ is nondecreasing, $Q=g((y-X)/h)$ is nonincreasing on $I$. Its average and its endpoint traces give
\begin{equation}\label{eq:OSL-displacement-ends}
 \frac{y-X(r-)}h\le f(p_I)\le\frac{y-X(l+)}h.
\end{equation}
This argument uses no physical velocity traces and applies also to $n=0$.

If $U|_I\in BV(I)$, divide the prefix and suffix inequalities in \eqref{eq:OSL-prefix-suffix} by their interval lengths and let the lengths decrease to zero. The interior one-sided limits exist, and
\[
 U(l+)\ge Q(l+)\ge p_I,
 \qquad U(r-)\le Q(r-)\le p_I.
\]
Thus
\begin{equation}\label{eq:OSL-state-ends}
 U(r-)\le p_I\le U(l+).
\end{equation}
Only traces from within $I$ occur here; their existence will follow from the preceding step's discrete Oleinik estimate and the displacement barriers, also when $l=0$ or $r=M$.

On the complement of the positive fibres, remove their countably many endpoints and the null sets in \cref{lem:PX,lem:normal}. At every remaining Lebesgue point $a$,
\begin{equation}\label{eq:OSL-nonflat}
 p(a)=U(a)=Q(a),\qquad
 f(p(a))=\frac{Y(a)-X(a)}h.
\end{equation}
Such a label is treated below as a singleton fibre: both old position values are $X(a)$, and, whenever physical velocity traces are used, both old physical velocity values are $U(a)$. Hence \eqref{eq:OSL-displacement-ends} and \eqref{eq:OSL-state-ends} become equalities. Here the labels are chosen in the common full-measure set of Lebesgue points of all the functions involved.

\emph{The first time step.} Consider two distinct ordered new fibres, allowing these singletons, with positions $y_A<y_B$, physical velocities $p_A,p_B$ and transport velocities $v_A=f(p_A)$, $v_B=f(p_B)$. Write $x_A^+$ for the old position at the right interior endpoint of the left fibre, and $x_B^-$ for the old position at the left interior endpoint of the right fibre; for a singleton use its point value. Set
\[
 D=y_B-y_A,\qquad d=x_B^--x_A^+\ge0,
 \qquad\Delta v=v_B-v_A.
\]
The lower bound for the left fibre and the upper bound for the right fibre in \eqref{eq:OSL-displacement-ends} give
\begin{equation}\label{eq:OSL-new-gap}
 h\Delta v\le D-d.
\end{equation}
For $n=0$, this proves the desired bound at time $h$. Two labels in the same new fibre have both position and transport-velocity difference zero. Together with \eqref{eq:OSL-nonflat}, this proves \eqref{eq:discrete-OSL} at the first step for almost every ordered pair.

\emph{Induction.} Suppose \eqref{eq:discrete-OSL} holds at step $n\ge1$. By \cref{lem:OSL-BV}, $f(U)-X/(nh)$ has a nonincreasing representative and \eqref{eq:discrete-local-BV} holds at the old step. The displacement barriers bound $X|_I$ for every new positive fibre, including those touching $0$ or $M$. Hence $U|_I\in BV(I)$, and the traces in \eqref{eq:OSL-state-ends} exist. The discrete Oleinik inequality at the old step extends to these traces by approaching the endpoints from within the respective fibres through a common full-measure set; at a shared cut the approaches are from opposite sides. Non-flat labels use the Lebesgue values in \eqref{eq:OSL-nonflat}. All position traces used here are finite.

Let $u_A^+$ and $u_B^-$ denote the corresponding old physical velocity traces, or the singleton values in \eqref{eq:OSL-nonflat}. By \eqref{eq:OSL-state-ends} and the induction hypothesis,
\begin{equation}\label{eq:OSL-old-gap}
 \Delta v\le f(u_B^-)-f(u_A^+)\le\frac d{nh}.
\end{equation}
Adding \eqref{eq:OSL-new-gap} and $nh\Delta v\le d$ gives
\[
 (n+1)h\Delta v\le D.
\]
This proves the induction for positive fibres and non-flat labels. Apply \cref{lem:OSL-BV} at the new time to obtain \eqref{eq:discrete-local-BV}. The first step required no $BV$ assumption on $U_0$.
\end{proof}

\subsection[Compactness and identification of the limit]{Compactness and identification of the limit}\label{sec:approximation-compactness}

For $nh<t\le(n+1)h$, define
\begin{equation}\label{eq:time-interpolants}
 \begin{aligned}
 X_h(t)&=X^n+(t-nh)V^{n+1},&\quad \overline X_h(t)&=X^{n+1},\\
 U_h(t)&=U^{n+1},& V_h(t)&=V^{n+1}.
 \end{aligned}
\end{equation}
At zero set $X_h(0)=\overline X_h(0)=X_0$ and $U_h(0)=U_0$. Iteration of the displacement barriers gives, on $[0,T]$,
\begin{equation}\label{eq:interpolant-bounds}
 \begin{gathered}
 |X_h(t)-X_0|\le V_*t,\qquad
 |\overline X_h(t)-X_0|\le V_*(t+h),\qquad
 |U_h|\le R,\quad |V_h|\le V_*,\\
 \|X_h(t)\|_2\le\|X_0\|_2+\sqrt M V_*t,\qquad
 \|\overline X_h-X_h\|_{L^\infty((0,T)\times(0,M))}\le hV_*.
 \end{gathered}
\end{equation}
The following compactness result uses nested conditional averaging and does not depend on the position range. The $BV$ approximation in its proof is used only to estimate label translations; no $BV$ assumption is imposed on the initial physical velocity.

\begin{lemma}[Strong compactness of nested conditional averages]\label{lem:nested-state-compactness}
Let $A_k\to A$ in $L^2(0,M)$, with $|A_k|,|A|\le R$. Suppose $Z_k(t)\in\K\subset L^2(0,M)$ are jointly Borel maps with the no-splitting property in time, and set
\[
 W_k(t)=P_{Z_k(t)}A_k.
\]
Then $(W_k)$ is relatively compact in $L^2((0,T)\times(0,M))$. No position bound is assumed. In particular, the physical velocities $U_h$ in \eqref{eq:time-interpolants} have this compactness as $h\downarrow0$.
\end{lemma}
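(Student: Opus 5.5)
The plan is to verify the Riesz--Fr\'echet--Kolmogorov criterion in $L^2((0,T)\times(0,M))$. Since $|W_k|\le R$ and the domain is bounded, the family is bounded. It also has uniformly small $L^2$ mass in strips of width $\sigma$ near the boundary: at most $R^2\sigma M$ and $R^2\sigma T$. So it suffices to show that time translations and label translations are uniformly small in $L^2$. Joint measurability of $W_k$ is taken from the Borel hypothesis on $Z_k$; I would not dwell on it. I treat the two directions separately, because the no-splitting structure controls time while a $BV$ approximation controls labels.

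\emph{Time translations.} Fix $s<t$. No-splitting and \cref{lem:compact-mono} give $P_{Z_k(t)}P_{Z_k(s)}=P_{Z_k(t)}$, hence $W_k(t)=P_{Z_k(t)}W_k(s)$. By orthogonality,
\[
\|W_k(s)-W_k(t)\|_2^2=e_k(s)-e_k(t),\qquad e_k(t):=\|W_k(t)\|_2^2 .
\]
This is the same identity as \eqref{eq:incremental-energy}. The function $e_k$ is nonincreasing and takes values in $[0,R^2M]$. Therefore
\[
\int_0^{T-\tau}\|W_k(t+\tau)-W_k(t)\|_2^2\dd t=\int_0^{T-\tau}\bigl(e_k(t)-e_k(t+\tau)\bigr)\dd t\le\tau\,e_k(0)\le\tau R^2M .
\]
This bound is uniform in $k$, and no position bound enters.

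\emph{Label translations.} Given $\varepsilon>0$, choose $B\in BV(0,M)$ with $|B|\le R$ and $\|A-B\|_2<\varepsilon$, for instance a truncated mollification. Split
\[
W_k(t)=P_{Z_k(t)}B+P_{Z_k(t)}(A_k-B).
\]
By \cref{lem:BV-projection}, $\TV(P_{Z_k(t)}B)\le\TV(B)$. Together with the bound $2R$ on oscillation, this gives
\[
\|P_{Z_k(t)}B(\cdot+\sigma)-P_{Z_k(t)}B\|_{L^2(0,M-\sigma)}^2\le2R\,\sigma\,\TV(B).
\]
The second term contributes at most $2\|A_k-B\|_2\le2\|A_k-A\|_2+2\varepsilon$ to the translation, because projections are contractions. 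Integrating in $t$, the label-translation modulus is bounded by
\[
C\bigl(\sqrt{T\sigma R\,\TV(B)}+\sqrt T(\|A_k-A\|_2+\varepsilon)\bigr).
\]
This gives smallness for all $k\ge k_0(\varepsilon)$ once $\sigma$ is small. Each of the finitely many remaining $W_k$ is a single $L^2$ function, so its translations are small by continuity of translation; an $\varepsilon/3$ argument then gives uniformity over all $k$. I expect this uniformity step to be the only delicate point. The reason is that the initial data are not assumed to be $BV$, so the variation bound is used only on the approximant $B$, while the error $A_k-B$ is carried through the $L^2$-contraction of $P_{Z_k(t)}$.

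\emph{Application to $U_h$.} For the scheme, take $A_k=U_0$ and $Z_h(t)=\overline X_h(t)$, which is piecewise constant in time. \Cref{lem:variational-step} gives the no-splitting property along the steps and $U_h(t)=U^{n+1}=P_{X^{n+1}}U_0=P_{\overline X_h(t)}U_0$ by \eqref{eq:incremental-tower}. So the general statement applies directly.
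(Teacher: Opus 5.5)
Your proposal is correct and follows essentially the paper's route: the Fr\'echet--Kolmogorov criterion, time translates controlled by the tower-property identity $\|W_k(t)-W_k(s)\|_2^2=e_k(s)-e_k(t)$, label translates controlled through a $BV$ approximant of $A$ using \cref{lem:BV-projection} and the $L^2$-contraction of the projections, and continuity of translation for the finitely many remaining $W_k$. The paper extends by zero to $\R^2$ instead of treating boundary strips separately, which makes no difference.

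The one point you should not wave away is the joint measurability of $W_k$. The Borel hypothesis on $Z_k$ does not give it directly, because $P_{Z_k(t)}A_k$ is defined only up to a null set at each $t$. The paper fixes a representative explicitly: it takes $\int_0^MA_k(b)\one_{\{Z_k(b,t)=Z_k(a,t)\}}\dd b\,/\,m_k(a,t)$ where $m_k(a,t)=\int_0^M\one_{\{Z_k(b,t)=Z_k(a,t)\}}\dd b>0$, and $A_k(a)$ where $m_k=0$. This representative is jointly measurable by Tonelli and, by \cref{lem:PX}, is a version of $W_k(t)$ at every time.
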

\begin{proof}
The conditional averages can be chosen jointly measurable without any time regularity of $Z_k$. Indeed, the denominator
\[
 m_k(a,t)=\int_0^M\one_{\{Z_k(b,t)=Z_k(a,t)\}}\dd b
\]
is measurable, as is the same integral with the factor $A_k(b)$. Where $m_k>0$, take their ratio; where $m_k=0$, take $A_k(a)$. By \cref{lem:PX}, this is a representative of $W_k(t)$ for every time. The same construction applies to every auxiliary conditional average used below.

For $s<t$, the tower property and orthogonality give
\begin{equation}\label{eq:nested-translation-energy}
 \|W_k(t)-W_k(s)\|_2^2=e_k(s)-e_k(t),
 \qquad e_k(t)=\|W_k(t)\|_2^2\le MR^2.
\end{equation}
Thus, for $0<\tau<T$,
\begin{align}
 \int_0^{T-\tau}\|W_k(t+\tau)-W_k(t)\|_2^2\dd t
 &=\int_0^\tau e_k(t)\dd t-\int_{T-\tau}^T e_k(t)\dd t\notag\\
 &\le MR^2\tau.
 \label{eq:state-time-translates}
\end{align}

By smooth approximation and truncation, choose $A^j\in BV(0,M)$ with $|A^j|\le R$ and $\epsilon_j=\|A^j-A\|_2\to0$. Put $W_k^j=P_{Z_k}A^j$ and $\delta_k=\|A_k-A\|_2$. Projection contraction and \cref{lem:BV-projection} give, at every time,
\[
 \|W_k-W_k^j\|_2\le\delta_k+\epsilon_j,
 \qquad |W_k^j|\le R,
 \qquad\TV(W_k^j)\le\TV(A^j).
\]
For $0<\delta<M$, a $BV$ function $z$ satisfies
\[
 \int_0^{M-\delta}|z(a+\delta)-z(a)|\dd a
 \le\int_0^{M-\delta}|Dz|((a,a+\delta])\dd a
 \le\delta\TV(z),
\]
where the last inequality follows by Fubini. Since $|z|\le R$ implies $|z(a+\delta)-z(a)|^2\le2R|z(a+\delta)-z(a)|$, decomposition through $W_k^j$ yields
\begin{equation}\label{eq:state-space-translates}
 \int_0^{M-\delta}|W_k(a+\delta,t)-W_k(a,t)|^2\dd a
 \le6(\delta_k+\epsilon_j)^2+6R\delta\TV(A^j).
\end{equation}

Extend $W_k$ by zero outside $(0,T)\times(0,M)$, and denote this extension by $\widetilde W_k$. The two time boundary strips add at most $2MR^2\tau$ to \eqref{eq:state-time-translates}. The two label boundary strips add at most $2TR^2\delta$ to the time integral of \eqref{eq:state-space-translates}. Consequently,
\begin{align}
 \|\widetilde W_k(\cdot+\tau,\cdot)-\widetilde W_k\|_{L^2(\R^2)}^2
 &\le3MR^2\tau,\label{eq:state-zero-time}\\
 \|\widetilde W_k(\cdot,\cdot+\delta)-\widetilde W_k\|_{L^2(\R^2)}^2
 &\le6T(\delta_k+\epsilon_j)^2
 +6RT\delta\TV(A^j)+2TR^2\delta.
 \label{eq:state-zero-space}
\end{align}
A change of variables and the triangle inequality give the corresponding bounds for negative and general two-dimensional translations. First choose $j$ to make $\epsilon_j$ small, then a sequence tail on which $\delta_k$ is small, and finally let the translation sizes tend to zero. The finite omitted family has a common vanishing modulus by $L^2$ translation continuity. Thus
\begin{equation}\label{eq:state-FK-modulus}
 \lim_{r\downarrow0}\sup_k\sup_{|z|_\infty\le r}
 \|\widetilde W_k(\cdot+z)-\widetilde W_k\|_{L^2(\R^2)}=0.
\end{equation}

The bounded zero extensions have common compact support and satisfy \eqref{eq:state-FK-modulus}; the Fr\'echet--Kolmogorov criterion \cite[Corollary 4.27]{Brezis} gives strong $L^2$ compactness. For the scheme, use $A_k=A=U_0$ and $Z_k=\overline X_{h_k}$, with no-splitting from \cref{lem:variational-step}. The approximants $A^j$ enter only the estimates, whose constants are independent of the position range.
\end{proof}

\begin{proposition}[The continuous Lagrangian limit]\label{prop:variational-limit}
Along a subsequence $h\downarrow0$, for every fixed $T<\infty$,
\begin{equation}\label{eq:variational-strong-limit}
 U_h\to p,\quad f(U_h)\to f(p),\quad V_h\to f(p)
 \quad\text{in }L^2((0,T)\times(0,M)),
 \qquad X_h\to X\quad\text{in }C([0,T];L^2).
\end{equation}
The limit satisfies $X(0)=X_0$, $X(t)\in\K$, and
\begin{align}
 X&\in C([0,T];L^2)\cap W^{1,\infty}(0,T;L^2),\label{eq:main-reg}\\
 \|X(t)-X(s)\|_\infty&\le V_*|t-s|\qquad(0\le s,t\le T).\label{eq:timeLip}
\end{align}
With the canonical Lagrangian physical velocity $p(t)=P_{X(t)}U_0$ defined at every time, $X_t=f(p)$ in $L^2$ for almost every time, and
\begin{equation}\label{eq:limit-normal}
 p(t)=P_{X(t)}U_0,\qquad U_0-p(t)\in N_\K(X(t))
 \quad\text{for almost every }t.
\end{equation}
The map $X$ has the no-splitting property from time zero, and $|X(t)-X_0|\le V_*t$ almost everywhere in the label. For the canonical monotone traces at interior labels,
\begin{equation}\label{eq:limit-gap}
 0\le X(b,t)-X(a,t)
 \le\frac ts\,[X(b,s)-X(a,s)]
 \qquad(a<b,\ 0<s<t).
\end{equation}
In particular, for almost every positive time and almost every ordered label pair,
\begin{equation}\label{eq:limit-material-OSL}
 f(p(b,t))-f(p(a,t))\le\frac{X(b,t)-X(a,t)}t.
\end{equation}
\end{proposition}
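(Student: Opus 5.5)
We pass to the limit in the scheme \eqref{eq:incremental-scheme} in four stages: compactness, Lipschitz bounds, identification of the velocity, and transfer of the discrete Oleinik estimate.

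\emph{Compactness and regularity of positions.} By \eqref{eq:interpolant-bounds}, $X_h$ is $V_*$-Lipschitz in time with values in $L^\infty$, uniformly in $h$. Hence $\|X_h(t)-X_h(s)\|_2\le\sqrt M V_*|t-s|$. Since $X_h(t)-X_0$ is bounded in $L^\infty$, we apply a compactness argument pointwise in $t$ to the monotone differences $X_h(t)-X_0$. These differences are not monotone themselves, but $X_h(t)\in\K$ and $X_h(t)-X_0$ is bounded. Helly's theorem applied to $X_h(t)$ on compact label subintervals, together with dominated convergence using the $L^\infty$ bound on $X_h(t)-X_0$, gives relative compactness of $\{X_h(t)\}$ in $L^2$ for each $t$. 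Arzel\`a--Ascoli on a countable dense set of times then yields $X_h\to X$ in $C([0,T];L^2)$ along a subsequence, and a diagonal argument handles $T\to\infty$. The limit inherits $X(t)\in\K$ by \cref{lem:compact-mono}, $X(0)=X_0$, the bound $|X(t)-X_0|\le V_*t$, and \eqref{eq:timeLip}, since $L^\infty$ balls are $L^2$-closed. This gives \eqref{eq:main-reg}.

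\emph{Velocities.} By \cref{lem:nested-state-compactness}, $U_h\to p$ strongly in $L^2((0,T)\times(0,M))$ along a further subsequence. Since $f_R$ is Lipschitz and $|U_h|\le R$, also $f(U_h)\to f(p)$. \Cref{lem:incremental-velocity-error} gives $\|V_h-f(U_h)\|^2_{L^2_{t,a}}\le C_f^2hMR^2\to0$, so $V_h\to f(p)$. Because $\partial_tX_h=V_h$, passing to the limit in $X_h(t)=X_0+\int_0^tV_h$ gives $X_t=f(p)$ for almost every $t$.

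\emph{Identifying $p$ with $P_{X(t)}U_0$.} This is the main obstacle. First pass to the limit in $U_h(t)=P_{\overline X_h(t)}U_0$, using that $\overline X_h\to X$ in $L^2$ as well. Projections are not continuous under $L^2$ convergence of $X$, so we argue weakly. The constraint $p\in\mathcal H_{X}$ persists, since for $\varphi\in C_c^\infty$ the relation $\langle U_h-\ldots\rangle$ passes to the limit. Orthogonality $\langle U_0-U_h,W(\overline X_h)\rangle=0$ for $W\in C_c$ passes to the limit and gives $\langle U_0-p,W(X)\rangle=0$. These two facts alone would give $p=P_XU_0$ if we knew $p\in\mathcal H_{X(t)}$. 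That membership is the subtle point: merging in the limit could occur without the discrete fibres merging. We therefore use tangency instead. By \cref{lem:fibre-tangent}, $f(p)=\dot X(t)\in\mathcal H_{X(t)}$ for almost every $t$. Applying the inverse $g$ gives $p(t)\in\mathcal H_{X(t)}$, and together with the orthogonality above, $p(t)=P_{X(t)}U_0$ almost everywhere in time. We then redefine $p$ canonically at every time. The normal-cone relation $U_0-U_h\in N_\K(\overline X_h)$ from \eqref{eq:incremental-normal} passes to the limit through the characterization \eqref{eq:normaldef}: the product of a strongly convergent and a weakly convergent sequence converges, and in the pairing $\langle U_0-U_h,Y-\overline X_h\rangle$ both factors converge strongly in $L^2_{t,a}$. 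This gives the relation integrated in time against nonnegative weights, hence \eqref{eq:limit-normal} for almost every $t$.

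\emph{No-splitting and the gap estimate.} The discrete Oleinik estimate \eqref{eq:discrete-OSL} is equivalent to the statement that $X^n(b)-X^n(a)\ge nh\,[f(U^n(b))-f(U^n(a))]$. It is cleaner to derive \eqref{eq:limit-gap} from a discrete analogue. From \eqref{eq:OSL-new-gap} and $nh\Delta v\le d$ one gets $D\le d+h\Delta v\le d(1+1/n)$, and iterating gives
\[
X^m(b)-X^m(a)\le\frac mn\,[X^n(b)-X^n(a)]
\]
for the traces. Passing to the limit at continuity labels and using \cref{lem:alltime-labels} for the canonical traces yields \eqref{eq:limit-gap}. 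Letting $s\to0$ is not allowed, but fibre inclusion follows directly: if $X(\cdot,s)$ is constant on $(a,b)$, then so is $X(\cdot,t)$. Together with discrete no-splitting from $X_0$, carried over via the $L^\infty$ bound at $s=0$, this gives no-splitting from time zero. Finally, \eqref{eq:limit-material-OSL} follows from \eqref{eq:limit-gap}. Differentiating $t\mapsto X(b,t)-X(a,t)-\frac ts[X(b,s)-X(a,s)]$ at $t=s$ from the right is $\le0$, which gives $f(p(b,s))-f(p(a,s))\le[X(b,s)-X(a,s)]/s$ at Lebesgue times, for almost every label pair by Fubini.
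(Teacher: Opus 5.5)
\textbf{The genuine gap is the discrete gap estimate.} You write $D\le d+h\Delta v$, but \eqref{eq:OSL-new-gap} reads $h\Delta v\le D-d$. That is a \emph{lower} bound on the new gap $D$, so it cannot give the upper bound you need.

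The reversed inequality is in fact false in general. On a new fibre $A=(l_A,r_A)$ at position $y_A$, the function $Q=g((y_A-X^n)/h)$ is nonincreasing with mean $p_A$. Hence $y_A-x_A^+=hf(Q(r_A-))\le hv_A$, and likewise $y_B-x_B^-=hf(Q(l_B+))\ge hv_B$. Both inequalities are strict unless $X^n$ was already constant on the fibre. So $D-d>h\Delta v$ whenever either fibre comes from a genuine merger at that step. Consequently your iteration $X^m(b)-X^m(a)\le\frac mn[X^n(b)-X^n(a)]$ is unsupported as written, and with it your derivation of \eqref{eq:limit-gap} and then of \eqref{eq:limit-material-OSL}.

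\textbf{Two ways to repair it.}
\begin{enumerate}
\item The discrete estimate itself is true, but it must be built from the \emph{old} physical-velocity traces, not the new transport velocities. The prefix and suffix conditions \eqref{eq:OSL-prefix-suffix} give $U^n(r_A-)\le Q(r_A-)$ and $U^n(l_B+)\ge Q(l_B+)$. Equivalently, $y_A\ge x_A^++hf(u_A^+)$ and $y_B\le x_B^-+hf(u_B^-)$. Hence $D\le d+h[f(u_B^-)-f(u_A^+)]\le(1+1/n)d$, using the step-$n$ Oleinik bound at the traces (the second inequality in \eqref{eq:OSL-old-gap}).
\item Alternatively, reverse your order, as the paper does. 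Pass \eqref{eq:discrete-OSL} to the limit pointwise, using $t_h=h\lceil t/h\rceil$ and almost-everywhere spacetime convergence of $f(U_h)$ and $\overline X_h$; this gives \eqref{eq:limit-material-OSL} directly. Then, along the absolutely continuous label paths, the derivative of $t^{-1}[X(b,t)-X(a,t)]$ is $t^{-2}\{t[f(p(b,t))-f(p(a,t))]-[X(b,t)-X(a,t)]\}\le0$. Integrating gives \eqref{eq:limit-gap}.
\end{enumerate}

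\textbf{No-splitting at time zero.} This is not carried by the $L^\infty$ displacement bound: the splitting flow in \cref{ex:energy-needed} has finite displacement. What passes to the limit is $X_h(t)\in\mathcal H_{X_0}$, which holds by discrete no-splitting and affine interpolation, together with closedness of $\mathcal H_{X_0}$ in $L^2$.

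\textbf{What is fine.} Your Helly/Arzel\`a--Ascoli compactness for $X_h$ is a valid, if roundabout, substitute for the paper's direct bound $\sup_t\|X_h(t)-X(t)\|_2\le\sqrt T\,\|V_h-f(U)\|_{L^2}$. Your identification of $p(t)=P_{X(t)}U_0$, via orthogonality plus \cref{lem:fibre-tangent}, matches the paper, as does your limit of the normal-cone inequality. Drop the earlier sentence claiming that $p\in\mathcal H_X$ ``persists'' in the limit: limits of functions of $\overline X_h$ need not be functions of $X$, and you correctly obtain this membership from tangency anyway.
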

\begin{proof}
The proof proceeds in three steps.

\emph{Step 1. Strong convergence and limiting motion.} By \cref{lem:nested-state-compactness}, extract $U_h\to U$ strongly in spacetime $L^2$. Since $|U_h|\le R$ and $f$ is Lipschitz on $[-R,R]$, $|U|\le R$ and $f(U_h)\to f(U)$ strongly. By \cref{lem:incremental-velocity-error}, with $N=\lceil T/h\rceil$,
\[
 \|V_h-f(U_h)\|_{L^2((0,T)\times(0,M))}^2
 \le C_f^2h\|U_0\|_2^2\longrightarrow0.
\]
Thus $V_h\to f(U)$ strongly. Define
\begin{equation}\label{eq:limit-integral-first}
 X(t)=X_0+\int_0^t f(U(r))\dd r.
\end{equation}
Since $\dot X_h=V_h$,
\[
 \sup_{0\le t\le T}\|X_h(t)-X(t)\|_2
 \le\sqrt T\|V_h-f(U)\|_{L^2((0,T)\times(0,M))}\longrightarrow0.
\]
By \eqref{eq:interpolant-bounds}, the piecewise constant interpolants $\overline X_h$ have the same limit, and the time-Lipschitz, displacement and $L^2$ bounds pass to $X\in C([0,T];\K)$, with $X_t=f(U)$. In particular, $|X(t)|>K>2V_*T$ implies $|X_0|>K/2$ and $|X(t)|\le2|X_0|$, so
\begin{equation}\label{eq:constructed-moment-tail}
 \sup_{0\le t\le T}\int_{\{|X(t)|>K\}}|X(t)|^2\dd a
 \le4\int_{\{|X_0|>K/2\}}|X_0|^2\dd a
 \longrightarrow0\qquad(K\to\infty).
\end{equation}

\emph{Step 2. Identification of the Lagrangian physical velocity and normal-cone relation.} For bounded Lipschitz $\phi$ and bounded time tests $\eta$, conditional expectation gives $\int_0^T\eta\langle U_h-U_0,\phi(\overline X_h)\rangle\dd t=0$. Strong convergence passes this identity to
\begin{equation}\label{eq:limit-conditional-tests}
 \int_0^T\eta(t)\int_0^M(U-U_0)\phi(X)\dd a\dd t=0.
\end{equation}
Here $\phi(\overline X_h)\to\phi(X)$ strongly in $L^2$ and $|U_h-U_0|\le2R$. Choose a countable uniformly dense family of compactly supported Lipschitz functions in $C_0(\R)$. Outside one null time set, \eqref{eq:limit-conditional-tests} gives zero for each corresponding spatial integral, hence for every $\phi\in C_0$ by the bound $2RM\|\phi\|_\infty$. Therefore $X(t)_\#((U(t)-U_0)\dd a)=0$, which is equivalent to
\begin{equation}\label{eq:limit-projected-state}
 P_{X(t)}U(t)=P_{X(t)}U_0\qquad\text{for almost every }t.
\end{equation}
To identify $U$ itself, apply \cref{lem:fibre-tangent}, which yields $X_t(t)\in\mathcal H_{X(t)}$ almost everywhere. Since $U=g(X_t)$, we also have $U(t)\in\mathcal H_{X(t)}$; thus \eqref{eq:limit-projected-state} identifies $U=P_XU_0$. Set $p(t)=P_{X(t)}U_0$ at every time.

For $Z\in\K\cap L^\infty$ and nonnegative bounded $\eta$, \eqref{eq:incremental-normal} gives
\[
 \int_0^T\eta(t)\langle U_0-U_h(t),Z-\overline X_h(t)\rangle\dd t\le0.
\]
Both factors converge strongly in $L^2$. Passing to the limit first on a countable $L^2$-dense family of bounded elements of $\K$, then on all $Z\in\K$ by clipping and continuity, proves \eqref{eq:limit-normal}. No continuity of the projection operators is assumed.

\emph{Step 3. Oleinik condition and no-splitting.} Extract pointwise almost-everywhere convergence in spacetime. With $t_h=h\lceil t/h\rceil$, \eqref{eq:discrete-OSL} reads
\[
 t_h[f(U_h(b,t))-f(U_h(a,t))]
 \le\overline X_h(b,t)-\overline X_h(a,t).
\]
For almost every $t$ it passes to the limit on a full-measure set of labels, proving \eqref{eq:limit-material-OSL}.

The Bochner identity and Fubini yield absolutely continuous paths on one full-measure label set. Agreement with monotone representatives at rational times and time continuity preserve order there for all times. For almost every ordered label pair, $d(t)=X(b,t)-X(a,t)$ is nonnegative and absolutely continuous, and
\[
 (d(t)/t)'=\frac{td'(t)-d(t)}{t^2}\le0\quad\text{for a.e. }t>0.
\]
Integration gives \eqref{eq:limit-gap} for all positive time pairs. Approaching any two cuts from the prescribed sides through the full-measure pair set extends the inequality to the monotone one-sided traces. Interior traces are finite; endpoint traces are used only when finite, a property independent of time by finite displacement. Their time-Lipschitz representatives are supplied by \cref{lem:alltime-labels}. Hence every gap vanishing at a positive time remains zero.

At time zero, \cref{lem:variational-step} and affine interpolation give $X_h(t)\in\mathcal H_{X_0}$ for every $t$. The strong limit belongs to this closed subspace, so initial positive fibres do not split either.
\end{proof}

\subsection{Eulerian reconstruction and global existence}\label{sec:euler-reconstruction}

With the canonical Lagrangian physical velocity $p(t)=P_{X(t)}U_0$ at every time, set
\begin{equation}\label{eq:euler-push}
 \rho_t=X(t)_\#\dd a,\qquad
 \qmeas_t=X(t)_\#(U_0\dd a)=X(t)_\#(p(t)\dd a).
\end{equation}
The last identity is conditional expectation tested against bounded functions of $X(t)$; in particular $|\qmeas_t|\le R\rho_t$.

\begin{lemma}[Jointly Borel representatives]\label{lem:joint}
The maps $X,p$ have jointly Borel representatives. The measure kernels $\rho_t,\qmeas_t$ are narrowly continuous. There is a jointly Borel $u^\sharp(x,t)\in[-R,R]$ such that, at every time, $\qmeas_t=u^\sharp(\cdot,t)\rho_t$ and
\begin{equation}\label{eq:pullback}
 u^\sharp(X(a,t),t)=p(a,t)\qquad\text{for almost every }a.
\end{equation}
\end{lemma}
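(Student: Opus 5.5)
Joint measurability comes first, and I will get it from time continuity. By \cref{prop:variational-limit}, $X\in C([0,T];L^2)$, so $X:[0,T]\to L^2(0,M)$ is Borel. A Carath\'eodory-type argument then yields a jointly Borel representative of $(a,t)\mapsto X(a,t)$. Concretely, on dyadic time grids take piecewise-constant-in-time interpolants of Borel representatives. These converge in $L^2((0,T)\times(0,M))$, and along a subsequence they converge almost everywhere. The $\limsup$ is jointly Borel and agrees with $X(t)$ for almost every $a$ at every $t$, because \eqref{eq:timeLip} gives uniform convergence in $L^\infty$.

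For $p$, I reuse the ratio construction from the proof of \cref{lem:nested-state-compactness}. Put $m(a,t)=\int_0^M\one_{\{X(b,t)=X(a,t)\}}\dd b$, and let $n(a,t)$ be the same integral weighted by $U_0(b)$. Both are jointly Borel by Fubini--Tonelli applied to Borel sets in $(a,b,t)$. Set $p=n/m$ where $m>0$ and $p=U_0(a)$ otherwise. By \cref{lem:PX}, this represents $P_{X(t)}U_0$ at every time.

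Narrow continuity of $\rho_t$ and $\qmeas_t$ follows from the $L^\infty$ Lipschitz bound \eqref{eq:timeLip}. For $\varphi\in C_b(\R)$, I use $|\int\varphi(X(t))\,\omega\dd a-\int\varphi(X(s))\,\omega\dd a|\to0$ with $\omega=1$ or $\omega=U_0$. This holds by dominated convergence, since $X(s)\to X(t)$ uniformly in $a$ and $\varphi$ is continuous at each value. The second equality in \eqref{eq:euler-push} is the conditional-expectation identity against $\varphi(X(t))$.

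The main obstacle is building a single jointly Borel density $u^\sharp$. Disintegrating at each time separately loses measurability in $t$. My plan is to define it through the spacetime measures directly. Let $\bar\rho=\int_0^T\rho_t\dd t$ and $\bar\qmeas=\int_0^T\qmeas_t\dd t$ on $\R\times[0,T]$; these are the push-forwards of $\dd a\dd t$ and $U_0\dd a\dd t$ under the Borel map $(a,t)\mapsto(X(a,t),t)$. Since $|\bar\qmeas|\le R\bar\rho$, Radon--Nikodym gives a Borel $\bar u\in[-R,R]$ with $\bar\qmeas=\bar u\bar\rho$. This only gives the identity for almost every $t$, but the statement asks for every time.

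To obtain every time, I define $u^\sharp$ by an explicit formula rather than an abstract density:
\[
u^\sharp(x,t)=\limsup_{k\to\infty}\frac{\qmeas_t([x,x+2^{-k}))}{\rho_t([x,x+2^{-k}))},
\]
with the value $0$ when the denominator vanishes, clipped to $[-R,R]$. Each quotient is jointly Borel, because $t\mapsto\rho_t(B)$ is Borel by narrow continuity and monotone-class arguments, and $x\mapsto\rho_t([x,x+r))$ is one-sided continuous. By the one-dimensional differentiation theorem for Radon measures (Besicovitch), at every fixed $t$ the limit exists and equals $\dd\qmeas_t/\dd\rho_t$ for $\rho_t$-a.e.\ $x$. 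Hence $\qmeas_t=u^\sharp(\cdot,t)\rho_t$ for every $t$.

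Finally, \eqref{eq:pullback} follows at each fixed $t$. Both $u^\sharp(X(\cdot,t),t)$ and $p(\cdot,t)$ lie in $\mathcal H_{X(t)}$. Testing against $\varphi(X(t))$ shows
\[
\int\varphi(X(t))\,u^\sharp(X(t),t)\dd a=\int\varphi\dd\qmeas_t=\int\varphi(X(t))\,p(t)\dd a .
\]
By \cref{lem:PX}, functions of $X(t)$ are determined by such tests, which yields equality almost everywhere in $a$.
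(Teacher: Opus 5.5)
Your proposal is correct and follows the same outline as the paper's proof. Both construct jointly Borel representatives, derive narrow continuity of the push-forwards, obtain $u^\sharp$ from dyadic quotients of $\qmeas_t$ by $\rho_t$, and identify the pullback by testing inside $\mathcal H_{X(t)}$.

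The technical choices differ slightly. The paper takes difference quotients of the jointly continuous primitive $\int_0^aX(b,t)\dd b$. This yields the right-continuous monotone representative at every time, which the paper then uses to define the fibre endpoints and to write $p$ through $Q_U$. Your time discretization, with the uniform-in-label Lipschitz bound, gives only some jointly Borel a.e.\ representative. Combined with your level-set ratio for $p$, that suffices for this lemma.

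The one step whose cited justification does not fit is the differentiation. Your intervals $[x,x+2^{-k})$ are anchored at $x$, not centred there, so Besicovitch's theorem for centred balls does not apply as stated. The claim is still true in one dimension. Pull back through the quantile of $\rho_t$: non-atomic points then reduce to one-sided Lebesgue differentiation in the mass label, and atoms are trivial. Alternatively, simply use centred intervals.

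The paper avoids the issue with the fixed dyadic partition $D_n(x)$. There $u_n(\cdot,t)$ is the conditional expectation of $\dd\qmeas_t/\dd\rho_t$ on an increasing filtration generating the Borel sets. The martingale convergence theorem then gives $\rho_t$-a.e.\ convergence at every time, and joint measurability is immediate because $u_n$ is piecewise constant in $x$.
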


The proof in \cref{app:measure} applies by \cref{prop:variational-limit}, \eqref{eq:constructed-moment-tail} and \eqref{eq:euler-push}.

\begin{proposition}[Eulerian fluxes and weak equations]\label{prop:closure}
The measure pair in \eqref{eq:euler-push}, with physical velocity $u^\sharp$, satisfies both weak conservation laws \eqref{eq:weak-m}--\eqref{eq:weak-q}. At every time,
\begin{equation}\label{eq:fluxclosure}
 X(t)_\#(f(p(t))\dd a)=f(u^\sharp)\rho_t,
 \qquad
 X(t)_\#(p(t)f(p(t))\dd a)=u^\sharp f(u^\sharp)\rho_t.
\end{equation}
\end{proposition}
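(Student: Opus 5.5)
The plan is to prove the flux identities \eqref{eq:fluxclosure} first, at every time, and then derive the weak equations by differentiating the push-forward pairings along the Lagrangian paths.

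\emph{Step 1: the flux identities.} Fix $t$. By \cref{prop:variational-limit}, $p(t)=P_{X(t)}U_0\in\mathcal H_{X(t)}$. Hence $f(p(t))$ and $p(t)f(p(t))$ are functions of $X(t)$. By \eqref{eq:pullback}, $p(a,t)=u^\sharp(X(a,t),t)$ for almost every $a$. So for bounded Borel $\phi$,
\[
 \int_0^M\phi(X(a,t))f(p(a,t))\dd a=\int_0^M(\phi f(u^\sharp(\cdot,t)))(X(a,t))\dd a=\int\phi f(u^\sharp)\dd\rho_t .
\]
The same computation with $pf(p)$ gives the second identity. Here $|u^\sharp|\le R$, so $f$ is evaluated only in the physical range. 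This also gives $\qmeas_t=u^\sharp\rho_t$, which is consistent with \cref{lem:joint}.

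\emph{Step 2: the mass equation.} Take $\phi\in C_c^1(\R\times[0,T))$ and set $F(t)=\int_0^M\phi(X(a,t),t)\dd a=\int\phi(\cdot,t)\dd\rho_t$. By \eqref{eq:main-reg}, $X\in W^{1,\infty}(0,T;L^2)$ with $X_t=f(p)$ almost everywhere. Since $\phi$ has bounded $C^1$ norm, $t\mapsto\phi(X(t),t)$ is Lipschitz into $L^1(0,M)$. Its derivative is $\phi_t(X,t)+\phi_x(X,t)f(p)$; to justify this, apply the chain rule along the absolutely continuous label paths from Step 3 of \cref{prop:variational-limit} and use Fubini. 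Integrating over $(0,T)$ and using $F(T)=0$ and $F(0)=\int\phi(\cdot,0)\dd\rho_0$ gives
\[
 \int_0^T\!\!\int_0^M[\phi_t(X,t)+\phi_x(X,t)f(p)]\dd a\dd t+\int\phi(x,0)\dd\rho_0=0 .
\]
Step 1 converts this into \eqref{eq:weak-m} with $u=u^\sharp$.

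\emph{Step 3: the momentum equation.} Take $G(t)=\int_0^M\psi(X(a,t),t)U_0(a)\dd a=\int\psi\dd\qmeas_t$. Since $U_0$ does not depend on time, the same differentiation applies and gives
\[
 G'(t)=\int_0^M[\psi_t(X,t)+\psi_x(X,t)f(p)]U_0\dd a .
\]
The key point is that $U_0$ can be replaced by $p(t)$. The multipliers $\psi_t(X,t)$ and $\psi_x(X,t)f(p)$ both lie in $\mathcal H_{X(t)}$, and $p=P_{X(t)}U_0$ is an orthogonal projection, so the pairing is unchanged. One could alternatively invoke the normal-cone relation \eqref{eq:limit-normal} together with the orthogonality in \cref{lem:normal}, but the projection property alone suffices. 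After the replacement, Step 1 turns the two terms into $\int(u^\sharp\psi_t+u^\sharp f(u^\sharp)\psi_x)\dd\rho_t$. Finally, $G(0)=\int\psi(x,0)u_0\dd\rho_0$ by \eqref{eq:data}, which yields \eqref{eq:weak-q}.

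\emph{Main obstacle.} The delicate point is justifying the time differentiation when the support is unbounded. The labels whose positions stay in the compact support of $\phi$ form a set of finite measure, and the displacement bound $|X(t)-X_0|\le V_*t$ localizes them uniformly in time. The integrand also vanishes outside this set, so dominated convergence applies with the bound $\|\phi\|_{C^1}(1+V_*)\one_{\{|X_0|\le K+V_*T\}}$. Measurability of the composite integrands follows from the jointly Borel representatives in \cref{lem:joint}.
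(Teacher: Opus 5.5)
Your proposal is correct and follows the paper's proof essentially step for step: the flux identities come from the pullback \eqref{eq:pullback}; the mass law comes from the pathwise chain rule on the full-measure set of absolutely continuous label paths; and the momentum law comes from multiplying by $U_0$ and replacing it with $p$, since both multipliers lie in $\mathcal H_{X(t)}$. The unbounded-support issue in your last paragraph is harmless but not needed: the label space $(0,M)$ has finite measure, so all integrands are already bounded on the finite mass--time rectangle.
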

\begin{proof}
Both identities in \eqref{eq:fluxclosure} follow by testing with bounded Borel functions and using \eqref{eq:pullback}. On the full-measure set of absolutely continuous label paths, the chain rule for $\phi\in C_c^1(\R\times[0,T))$ gives, after integration,
\begin{equation}\label{eq:material-mass-test}
 \int_0^T\!\int_0^M
 [\phi_t(X,t)+f(p)\phi_x(X,t)]\dd a\dd t
 +\int_0^M\phi(X_0,0)\dd a=0.
\end{equation}
All integrands are bounded on the finite mass--time rectangle; push-forward yields \eqref{eq:weak-m}. Multiplying the pathwise identity by $U_0(a)$ gives the second law: at almost every time, both $\psi_t(X,t)$ and $f(p)\psi_x(X,t)$ belong to $\mathcal H_{X(t)}$, so conditional expectation replaces $U_0$ by $p$ in these terms. The initial term remains $\int U_0\psi(X_0,0)\dd a$. Equations \eqref{eq:data}, \eqref{eq:pullback} and \eqref{eq:fluxclosure} then give \eqref{eq:weak-q}, without differentiating $p$ in time.
\end{proof}

\begin{lemma}[Initial physical velocity and energy traces]\label{lem:constructed-energy}
The canonical Lagrangian physical velocity satisfies
\begin{equation}\label{eq:initial-strong-state}
 \begin{gathered}
 \lim_{t\downarrow0}\|p(t)-U_0\|_2=0,\\
 \lim_{t\downarrow0}\int\varphi(x)|u^\sharp(x,t)|^2\dd\rho_t
 =\int\varphi(x)|u_0|^2\dd\rho_0
 \qquad(\varphi\in C_b(\R)).
 \end{gathered}
\end{equation}
\end{lemma}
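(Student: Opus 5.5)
The first claim, strong convergence $p(t)\to U_0$ in $L^2$, reduces through the energy identity for nested conditional averages to a statement about $X(t)\to X_0$. By the no-splitting property from time zero (\cref{prop:variational-limit}), $\mathcal H_{X(t)}\subset\mathcal H_{X(s)}$ for $s<t$, including $s=0$. Since $U_0\in\mathcal H_{X_0}$, orthogonality gives
\[
 \|p(t)-U_0\|_2^2=\|U_0\|_2^2-\|p(t)\|_2^2 .
\]
Instead of proving monotone convergence of the energies directly, I will use \cref{lem:projection-approximation} with $W=U_0\in\mathcal H_{X_0}$, $X=X_0$ and $Y=X(t)$. For every $\varphi\in C_c^\infty(\R)$,
\[
 \|P_{X(t)}U_0-U_0\|_2\le\|U_0-\varphi(X_0)\|_2+\operatorname{Lip}(\varphi)\,\|X(t)-X_0\|_2 .
\]
The bound $\|X(t)-X_0\|_2\le\sqrt M\,V_*t$ comes from $|X(t)-X_0|\le V_*t$. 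First choose $\varphi$ so that the first term is small; this is possible because $U_0=u_0\circ X_0$ with $u_0\in L^2(\rho_0)$ and $C_c^\infty$ is dense there. Then let $t\downarrow0$. This proves the first limit. No time regularity of the projections is needed beyond the continuity of $X$.

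For the energy trace, I will rewrite the Eulerian integral in mass coordinates. By \eqref{eq:pullback} and $\rho_t=X(t)_\#\dd a$,
\[
 \int\varphi\,|u^\sharp(\cdot,t)|^2\dd\rho_t=\int_0^M\varphi(X(a,t))\,|p(a,t)|^2\dd a ,
\]
and similarly, using \eqref{eq:data},
\[
 \int\varphi\,|u_0|^2\dd\rho_0=\int_0^M\varphi(X_0)\,|U_0|^2\dd a .
\]
The pushforward identity holds for bounded Borel $\varphi$, since $u^\sharp\circ X(t)=p(t)$ almost everywhere. The difference of the two integrals then splits into
\[
 \int_0^M\varphi(X(t))\bigl(|p(t)|^2-|U_0|^2\bigr)\dd a
 \quad\text{and}\quad
 \int_0^M\bigl(\varphi(X(t))-\varphi(X_0)\bigr)|U_0|^2\dd a .
\]
The first term is at most $2R\|\varphi\|_\infty\sqrt M\,\|p(t)-U_0\|_2$ by Cauchy--Schwarz and $|p|,|U_0|\le R$, so it tends to zero by the first part.

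The second term is where $\varphi$ being only in $C_b(\R)$, rather than uniformly continuous, matters. This is the step I expect to need the most care, and I will handle it by dominated convergence. Because $|X(t)-X_0|\le V_*t$ almost everywhere, $X(a,t)\to X_0(a)$ for almost every $a$ as $t\downarrow0$. Continuity of $\varphi$ then gives pointwise convergence of the integrand, and the integrand is dominated by $2\|\varphi\|_\infty R^2$ on the finite interval $(0,M)$. Unbounded support causes no difficulty here, because the domination is uniform and the convergence is pointwise, not uniform. The limit therefore holds along all $t\downarrow0$, not merely along a full-measure set of times.
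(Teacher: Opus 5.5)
Your proof is correct and follows essentially the same route as the paper. Both prove the strong trace from \cref{lem:projection-approximation} with a $C_c^\infty$ approximant of $U_0$ as a function of $X_0$, together with the displacement bound $\|X(t)-X_0\|_2\le\sqrt M V_*t$; both then use the pullback identity to split the energy difference into a term bounded by $2R\sqrt M\|\varphi\|_\infty\|p(t)-U_0\|_2$ and a term handled by finite displacement and dominated convergence (your opening orthogonality identity is correct but not needed).
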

\begin{proof}
Since $U_0\in\mathcal H_{X_0}$, choose $\zeta_j\in C_c^\infty(\R)$ with $\varepsilon_j=\|\zeta_j(X_0)-U_0\|_2\to0$. By \cref{lem:projection-approximation} and the displacement bound,
\begin{equation}\label{eq:initial-trace-bound}
 \|p(t)-U_0\|_2
 \le\varepsilon_j+\operatorname{Lip}(\zeta_j)\|X(t)-X_0\|_2
 \le\varepsilon_j+\sqrt M\operatorname{Lip}(\zeta_j)V_*t.
\end{equation}
Let $t\downarrow0$, then $j\to\infty$. For every $\varphi\in C_b(\R)$, \eqref{eq:pullback} gives
\[
 \begin{aligned}
 &\left|\int\varphi(x)|u^\sharp(x,t)|^2\dd\rho_t
       -\int\varphi(x)|u_0(x)|^2\dd\rho_0\right|\\
 &\quad\le 2R\sqrt M\|\varphi\|_\infty\|p(t)-U_0\|_2
 +\int_0^M|U_0|^2|\varphi(X(t))-\varphi(X_0)|\dd a
 \longrightarrow0.
 \end{aligned}
\]
The second term tends to zero by finite displacement and dominated convergence. Thus the energy-measure trace holds through all positive times, and in particular on the fixed full-measure set required in \cref{def:entropy}.
\end{proof}

\begin{lemma}[Jointly Borel monotone-envelope extension]\label{lem:OSL-extension}
Let $X(a,t),B(a,t)$ be jointly Borel on $(0,M)\times(0,T]$, with $X(t)\in\K\subset L^2(0,M)$ and $v_-\le B\le v_+$. Suppose that, for almost every time, $B(t)\in\mathcal H_{X(t)}$ and
\begin{equation}\label{eq:labelOSL-pre}
 B(b,t)-B(a,t)\le\frac{X(b,t)-X(a,t)}t
 \quad\text{for almost every }a<b.
\end{equation}
There is a jointly Borel $b(x,t)\in[v_-,v_+]$ satisfying the Oleinik condition \eqref{eq:OSL} and $b(X(a,t),t)=B(a,t)$ for almost every $(a,t)$.
\end{lemma}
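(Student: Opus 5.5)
We construct $b$ time by time as a monotone envelope in the Eulerian variable, and check joint Borel measurability at the end. Fix a time $t$ in the full-measure set where $B(t)\in\mathcal H_{X(t)}$ and \eqref{eq:labelOSL-pre} holds. By \cref{lem:OSL-BV}, $G(t)=B(t)-X(t)/t$ has a nonincreasing representative in the label. Since $B(t)$ is constant on positive fibres of $X(t)$, \cref{lem:PX} lets us write $B(a,t)=\beta_t(X(a,t))$ for a Borel $\beta_t$ on the support of $\rho_t=X(t)_\#\dd a$. The condition then reads: $\gamma_t(x)=\beta_t(x)-x/t$ is nonincreasing on a full $\rho_t$-measure set $S_t$.

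We define the extension by the upper envelope
\[
 b(x,t)=\min\Bigl\{v_+,\ \frac xt+\operatorname*{ess\,sup}_{a:\,X(a,t)\ge x}\Bigl(B(a,t)-\frac{X(a,t)}t\Bigr)\Bigr\},
\]
with the convention that the essential supremum over an empty label set is $-\infty$. We then clip from below at $v_-$. The inner quantity is nonincreasing in $x$, so $x\mapsto b(x,t)-x/t$ is nonincreasing before clipping. As noted after \cref{sec:preliminaries}, clipping to $[v_-,v_+]$ preserves \eqref{eq:OSL}, since clipping is nondecreasing and $1$-Lipschitz. It remains to check that $b(X(a,t),t)=B(a,t)$ for almost every label $a$.

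For a label $a$ in the full-measure set where the nonincreasing representative of $G$ agrees with $G$, monotonicity of $G$ and of $X$ in the label gives that the essential supremum over $\{X\ge X(a,t)\}$ equals $G$ evaluated at the left end of that level set. This is $G(a,t)$ when $a$ is a non-flat label. On a positive fibre, $X$ is constant and $B$ is constant, so $G$ is constant, and the envelope again returns $G(a,t)$. Adding $X(a,t)/t$ recovers $B(a,t)$, which lies in $[v_-,v_+]$, so clipping does nothing. The delicate part is the left boundary of a level set, where labels just to the left could carry larger values of $G$. These are excluded because they have strictly smaller position, or, at a cut, they lie on a null set.

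\textbf{Main obstacle: joint Borel measurability.} Measurability in $(x,t)$ is the step I expect to be hardest. I would avoid essential suprema over uncountable families by using the Lebesgue-point formula
\[
 \operatorname*{ess\,sup}_{\{X\ge x\}}F=\lim_{k\to\infty}\frac1k\log\int_0^M\one_{\{X(a,t)\ge x\}}\,e^{kF(a,t)}\dd a,
\]
which holds for bounded $F$. By Fubini/Tonelli, each integral on the right is jointly Borel in $(x,t)$ because the integrand is jointly Borel in $(a,x,t)$, and limits of Borel functions are Borel. This approach also handles time sets of measure zero: on the exceptional null time set we simply set $b=v_+$. That set is itself Borel, so this choice preserves measurability and is irrelevant for the almost-every-time statements, namely both \eqref{eq:OSL} and the identity $b(X(a,t),t)=B(a,t)$ for almost every $(a,t)$.
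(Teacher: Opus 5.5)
Your proposal is correct and is essentially the paper's proof. You use the same envelope $S(x,t)=\esssup_{X(a,t)\ge x}\bigl(B(a,t)-X(a,t)/t\bigr)$, which is nonincreasing in $x$ at every time because the selector sets shrink, the same clipping, and the same identification on positive fibres and at non-flat labels. For the non-flat case you should restrict to continuity points of the monotone representative of $G$, not merely to points where $G$ agrees with it; this is still a full-measure set. The only real difference is the measurability device. You take the limit $\frac1k\log\int_0^M\one_{\{X(a,t)\ge x\}}e^{kG(a,t)}\dd a$, which is valid because $G\le v_+-x/t$ on the selector, even though $G$ is unbounded when $X$ is. The paper instead uses the superlevel-set identity $\{S>r\}=\{(x,t):\int_0^M\one_{\{X(a,t)\ge x\}}\one_{\{G(a,t)>r\}}\dd a>0\}$ for rational $r$. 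Since $S\le v_+-x/t$ holds automatically and the envelope satisfies the Oleinik condition at every $t>0$, your upper clip at $v_+$ and the separate redefinition on an exceptional time set are unnecessary.
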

\begin{proof}
Set $G(a,t)=B(a,t)-X(a,t)/t$, and take the essential supremum over an empty or null set to be $-\infty$. For every $x\in\R$ and $t>0$, define
\begin{align}
 S(x,t)&=\esssup_{a:\,X(a,t)\ge x}G(a,t),\label{eq:S-envelope}\\
 H_0(x,t)&=\max\{v_--x/t,S(x,t)\},\label{eq:H-envelope}\\
 b(x,t)&=x/t+H_0(x,t).
 \label{eq:b-envelope}
\end{align}
The joint measurability follows from the exact superlevel-set identity
\[
 \{S>r\}
 =\left\{(x,t):\int_0^M
 \one_{\{X(a,t)\ge x\}}\one_{\{G(a,t)>r\}}\dd a>0\right\}
 \qquad(r\in\mathbb Q).
\]
On the selector set $X(a,t)\ge x$ one has $G(a,t)\le v_+-x/t$, and therefore $S(x,t)\le v_+-x/t$, including when the selector is null. Thus
\[
 v_--x/t\le H_0(x,t)\le v_+-x/t,
 \qquad v_-\le b(x,t)\le v_+.
\]
Thus $b$ is everywhere finite. The selector sets decrease with $x$, so $S(\cdot,t)$ and $H_0(\cdot,t)$ are nonincreasing at every time. Hence $b=x/t+H_0$ satisfies \eqref{eq:OSL}.

Fix a time at which \eqref{eq:labelOSL-pre} and fibre constancy hold, so $G$ is essentially nonincreasing. On a positive fibre $I$ at position $x$, the non-strict selector $X\ge x$ includes all of $I$; hence $S(x,t)=G|_I$. At a Lebesgue point $a$ of $G$ in the non-flat set, excluding fibre endpoints, $\{a':X(a',t)\ge X(a,t)\}$ is the right label tail modulo null sets. Its essential supremum equals the right limit $G(a,t)$. Thus in both cases
\[
 S(X(a,t),t)=G(a,t)\ge v_--X(a,t)/t,\qquad
 b(X(a,t),t)=B(a,t)
\]
for almost every label. Fubini gives the asserted spacetime equality.
\end{proof}

\begin{proof}[Proof of the existence assertion in \textup{\cref{thm:main}}]
Apply \cref{lem:nested-state-compactness,prop:variational-limit} successively on $[0,1],[0,2],\ldots$ to nested subsequences of $h_k\downarrow0$. The diagonal limits agree on overlaps, independently of uniqueness, and define a global map $X$ with the stated path equation, regularity and no-splitting property. The push-forwards \eqref{eq:euler-push} attain the prescribed initial data by \eqref{eq:data}. Their narrow continuity follows from \cref{lem:joint}, the two weak conservation laws from \cref{prop:closure}, and initial weak continuity of the kinetic energy measure from \cref{lem:constructed-energy}.

Apply \cref{lem:OSL-extension} to $B=f(p)$, using \eqref{eq:limit-material-OSL}. Its formula defines one jointly Borel coefficient for all $t>0$, without a support bound. By \eqref{eq:pullback}, this coefficient equals $f(u^\sharp)$ for $\rho_t\dd t$-almost every point. Hence \cref{def:entropy} holds on every finite interval. This proves global existence and the representation \eqref{eq:main-state}--\eqref{eq:main-push} for the constructed solution, with the regularity and displacement bounds of \cref{prop:variational-limit}. Its $L^2$ position bound and the push-forward identity give \eqref{eq:main-moment}.
\end{proof}

\section[Lagrangian structure of entropy solutions]{Lagrangian structure of entropy solutions}\label{sec:lifting}

Throughout this section, $(\rho,u)$ is an arbitrary entropy solution in the sense of \cref{def:entropy}. We recover its Lagrangian representation and establish compression inequalities, then derive one-sided physical velocity information and backward-fan geometry.

\subsection{Lagrangian representation}\label{sec:common-transport}

\begin{lemma}[Finite displacement, moment propagation and the momentum measure]\label{lem:canonical}
Let $X(t)$ be the quantile of the mass measure of an arbitrary entropy solution. Then, for all $0\le s<t\le T$,
\begin{equation}\label{eq:propagation}
 |X(a,t)-X(a,s)|\le V_*(t-s)\quad\text{for a.e. }a,\qquad
 \|X(t)\|_2\le\|X_0\|_2+\sqrt M V_*t.
\end{equation}
In particular $X\in C([0,T];L^2)$, and the second moments of $\rho_t$, $0\le t\le T$, are uniformly integrable. There is a unique narrowly continuous signed measure family $\qmeas_t$ agreeing with $u\rho_t$ for almost every $t$, with $\qmeas_0=u_0\rho_0$ and $|\qmeas_t|\le R\rho_t$. For $\phi\in C_c^1$,
\begin{equation}\label{eq:qtest}
 \frac{\dd}{\dd t}\int\phi\dd\qmeas_t=\int\phi'uf(u)\dd\rho_t
 \quad\text{a.e.},\qquad
 \left|\frac{\dd}{\dd t}\int\phi\dd\qmeas_t\right|
 \le MRV_*\|\phi'\|_\infty.
\end{equation}
\end{lemma}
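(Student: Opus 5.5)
We prove the lemma in two stages: first the displacement and moment bounds from the weak mass equation alone, then the momentum measure from the weak momentum equation.

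\emph{Displacement bound.} Since $|u|\le R$, the transport velocity satisfies $|f(u)|\le V_*$. Fix $x\in\R$ and test \eqref{eq:weak-m} with
\[
\phi(y,t)=\chi(t)\,\theta_\epsilon\bigl(y-x-V_*(t-s)\bigr),
\]
where $\theta_\epsilon$ is a smooth nonincreasing approximation of $\one_{(-\infty,0]}$ and $\chi$ localizes to $[s,t]$. Along the moving point $x+V_*(\tau-s)$ we get
\[
\phi_t+f(u)\phi_x=\chi'\theta_\epsilon+\chi\,\theta_\epsilon'\,(f(u)-V_*),
\]
and the second term is $\ge0$. Letting $\chi$ approach $\one_{[s,t]}$, which is justified by narrow continuity, and then letting $\epsilon\to0$ (using countably many $x$ and right-continuity) gives
\[
\rho_t\bigl((-\infty,x+V_*(t-s)]\bigr)\ge\rho_s\bigl((-\infty,x]\bigr).
\]
The symmetric test with $-V_*$ gives the reverse shift. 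Since the measures are not compactly supported, we also truncate $\theta_\epsilon$ on the far left by a wide cutoff. The error from this cutoff vanishes as its width grows, because the total mass is finite and all speeds are bounded. In quantile form the two inequalities say exactly that $|X(a,t)-X(a,s)|\le V_*(t-s)$, which is the first estimate in \eqref{eq:propagation}. This is essentially the computation behind \eqref{eq:quantile-Winfty}. The $L^2$ bound follows by Minkowski's inequality, and $X\in C([0,T];L^2)$ follows from the sup-norm bound and $|X(t)|\le|X_0|+V_*T$. The latter also gives uniform integrability of the second moments, exactly as in \eqref{eq:constructed-moment-tail}.

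\emph{Momentum measure.} For $\psi(x,t)=\eta(t)\phi(x)$ with $\phi\in C_c^1(\R)$, the weak momentum equation \eqref{eq:weak-q} says that $m_\phi(t):=\int\phi\,u\dd\rho_t$ has distributional derivative $\int\phi'uf(u)\dd\rho_t$, and that its initial value is $\int\phi u_0\dd\rho_0$. Hence $m_\phi$ agrees almost everywhere with a Lipschitz function $\overline m_\phi$ with constant $MRV_*\|\phi'\|_\infty$, which is the second bound in \eqref{eq:qtest}. Take a countable family $\mathcal D\subset C_c^1$ that is dense in $C_0$, with $\phi'$ controlled along it. Outside a common null set $N$ of times, $u\rho_t$ represents $\overline m_\phi$ for every $\phi\in\mathcal D$.

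For any $t$, take $t_k\notin N$ with $t_k\to t$. The measures $u\rho_{t_k}$ have total variation at most $RM$ and are tight, since $\rho_{t_k}\to\rho_t$ narrowly. Moreover $|u\rho_{t_k}|\le R\rho_{t_k}$. Every subsequential narrow limit $\qmeas_t$ therefore satisfies $|\qmeas_t|\le R\rho_t$. Its integrals against $\mathcal D$ are the values $\overline m_\phi(t)$, and since $\mathcal D$ is dense these determine the limit. This yields a unique limit, independent of the approximating sequence, and hence narrow continuity: vague convergence plus tightness gives narrow convergence. It also gives $\qmeas_0=u_0\rho_0$, because $\overline m_\phi(0)=\int\phi u_0\dd\rho_0$ from the initial term of \eqref{eq:weak-q}.

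Uniqueness of a narrowly continuous family agreeing with $u\rho_t$ almost everywhere is immediate, since two such families agree on a dense set of times. The main technical point is the truncation in the displacement step. It requires finite mass together with a left-tail cutoff whose error is controlled by the speed bound, so that unbounded support causes no difficulty.
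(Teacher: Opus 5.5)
Your proposal is correct and follows essentially the same route as the paper. For \eqref{eq:propagation} you use the moving one-sided test $\theta_\epsilon(y-x-V_*(t-s))$ with a wide spatial cutoff whose error is $O(MV_*(t-s)/N)$, then generalized inversion and Minkowski, then the tail bound for uniform integrability; for the momentum measure you take Lipschitz representatives of $t\mapsto\int\phi u\dd\rho_t$ from \eqref{eq:weak-q} and extend them to all times using narrow continuity of $\rho$ and tightness. The only difference is cosmetic: you obtain $\qmeas_t$ as a subsequential narrow limit of $u\rho_{t_k}$, identified through a countable dense family of tests. The paper instead extends the limiting functional $L_t$ to $C_0(\R)$ and applies the Riesz representation theorem, which is equivalent.
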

\begin{proof}
Write $m_t(y)=\rho_t(({-}\infty,y])$. Let $\eta_\epsilon$ be smooth, nonincreasing, equal to one on $(-\infty,0]$ and zero on $[\epsilon,\infty)$. The test
$\eta_\epsilon(x-y-V_*(r-s))$ has material derivative $(b-V_*)\eta_\epsilon'\ge0$.
Multiply it by $\chi_N$, equal to one on $[-N,N]$, supported in $[-2N,2N]$, with $|\chi_N'|\le C/N$. The cutoff error is at most $CMV_*(t-s)/N$; time cutoffs and narrow continuity justify the endpoint evaluations. Letting $N\to\infty$, then $\epsilon\downarrow0$, and using the test with $+V_*(r-s)$ gives
\[
 m_t(y-V_*(t-s))\le m_s(y)\le m_t(y+V_*(t-s)).
\]
Generalized inversion proves the displacement bound. Minkowski's inequality gives the $L^2$ estimates, without trajectory uniqueness at zero. With $A_T=V_*T$ and $K>A_T$,
\begin{equation}\label{eq:mass-tail}
 \sup_{0\le t\le T}\int_{|x|>K}|x|^2\dd\rho_t(x)
 \le 2\int_{\{|X_0|>K-A_T\}}(|X_0|^2+A_T^2)\dd a
 \longrightarrow0.
\end{equation}
This gives uniformly integrable second moments and uniform tightness. If $\supp\rho_0\subset[A,B]$, finite displacement also gives
$\supp\rho_t\subset[A-V_*t,B+V_*t]$.

The momentum equation gives the distributional derivative in \eqref{eq:qtest}. Denote its Lipschitz representative by $L_t(\phi)$, with the prescribed initial value. Approximation by times at which $L_t(\phi)=\int\phi\,u\dd\rho_t$, together with narrow continuity of $\rho$, gives, for every $t$,
\[
 |L_t(\phi)|\le R\int|\phi|\dd\rho_t\le RM\|\phi\|_\infty.
\]
Linear identities extend from almost every time by continuity. Hence $L_t$ extends to $C_0(\R)$ and represents a unique signed Radon measure $\qmeas_t$. Uniform approximation by $C_c^1$ tests gives weak-star continuity on $C_0$ and $|\qmeas_t|\le R\rho_t$.
\begingroup
For $\psi\in C_b(\R)$, take $0\le\chi_N\le1$ as above. Domination gives, for $s,t\in[0,T]$,
\[
 \begin{aligned}
 \left|\int\psi\dd(\qmeas_t-\qmeas_s)\right|
 &\le\left|\int\chi_N\psi\dd(\qmeas_t-\qmeas_s)\right|\\
 &\quad+2R\|\psi\|_\infty
 \sup_{0\le r\le T}\rho_r(\{|x|>N\}).
 \end{aligned}
\]
For fixed $N$, the first term tends to zero as $t\to s$ by continuity on $C_0$. Letting $N\to\infty$ then removes the second term by \eqref{eq:mass-tail}. This proves continuity on $C_b$, including at $s=0$.
\endgroup
Agreement at almost every time gives uniqueness.
\end{proof}

For OSL transport and distinguished coefficient representatives, see \cite{BJtransport}. The common transport needed here is proved below from superposition.

\begin{proposition}[Deterministic common transport on positive-time strips]\label{prop:transport}
For every $0<s<T$ there is one ordered Borel path map $x\mapsto\Gamma_s(x)$ on a $\rho_s$-full set such that, with $\Phi_{s,t}(x)=\Gamma_s(x)(t)$, for every $s\le t\le T$,
\begin{equation}\label{eq:transport}
 \rho_t=\Phi_{s,t\#}\rho_s,\quad\qmeas_t=\Phi_{s,t\#}\qmeas_s,
 \quad0\le\Phi_{s,t}(y)-\Phi_{s,t}(x)\le(t/s)(y-x)
\end{equation}
for all $x<y$ in this $\rho_s$-full set. Paths have speed at most $V_*$ and cannot separate after meeting.
\end{proposition}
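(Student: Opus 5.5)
The approach is to build the flow on $[s,T]$ from the Oleinik-bounded coefficient $b$ by a superposition argument, and then show it is deterministic and order-preserving.

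\emph{Step 1: superposition.} On $[s,T]$ the clipped extension $b$ of \cref{def:entropy} is bounded by $V_*$ and satisfies $b(y,t)-b(x,t)\le(y-x)/t\le(y-x)/s$, so it is one-sided Lipschitz uniformly on the strip. The mass equation \eqref{eq:weak-m} says $\rho$ solves $\partial_t\rho+\partial_x(b\rho)=0$ on $[s,T]$. This uses that $b=f(u)$ holds $\rho_t\dd t$-a.e. and that narrow continuity controls the endpoint terms, exactly as in the cutoff argument of \cref{lem:canonical}. By Ambrosio's superposition principle, together with the uniform second-moment bound \eqref{eq:mass-tail}, there is a probability measure $\eta$ on $C([s,T])$ concentrated on absolutely continuous solutions of $\dot\gamma=b(\gamma,t)$. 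Its time marginals are $\rho_t/M$.

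\emph{Step 2: determinism and order.} The key observation is that for two integral curves $\gamma_1,\gamma_2$ of $b$ with $\gamma_1(t)<\gamma_2(t)$, the OSL bound gives $\frac{d}{dt}(\gamma_2-\gamma_1)\le(\gamma_2-\gamma_1)/t$. Hence $(\gamma_2-\gamma_1)/t$ is nonincreasing while positive.
\begin{itemize}
\item Once two curves meet, their gap cannot become positive again, so paths cannot separate after meeting.
\item Two curves starting at the same point coincide. This is the forward uniqueness of Filippov flows for OSL fields.
\end{itemize}
The same inequality, integrated, yields $0\le\gamma_2(t)-\gamma_1(t)\le(t/s)(\gamma_2(s)-\gamma_1(s))$. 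This disintegrates $\eta$ into Dirac masses $\delta_{\Gamma_s(x)}$ for $\rho_s$-a.e. $x$.

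Two points need care. First, the ODE holds only for a.e. $t$ along each curve, and $b$ may be modified on $\rho_t\dd t$-null sets. I would handle this by working only with curves in the support of $\eta$, for which $\dot\gamma(t)=b(\gamma(t),t)$ holds a.e. $t$ since $b$ is a fixed Borel function. Second, order preservation needs curves crossing at an interior time, which is excluded by the same gap inequality run from the crossing time. Borel measurability of $\Gamma_s$ follows from the disintegration, and the speed bound $V_*$ from $|b|\le V_*$. The push-forward $\rho_t=\Phi_{s,t\#}\rho_s$ is then the marginal identity.

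\emph{Step 3: momentum (main obstacle).} The hardest part is $\qmeas_t=\Phi_{s,t\#}\qmeas_s$, since the momentum equation has the same coefficient $b$ but a signed density. I would write $\qmeas_t=w_t\rho_t$ with $|w_t|\le R$ and test \eqref{eq:weak-q} against $\psi(x,t)=\phi(\Phi_{t,T}\text{-type backward transport})$. A cleaner route is duality: split $\qmeas_t=\qmeas_t^+-\qmeas_t^-$ via $u^\pm$. Each of $(R\pm u)\rho_t$ is a nonnegative solution of the continuity equation with the same field $b$, because \eqref{eq:weak-m} and \eqref{eq:weak-q} combine linearly. Applying Steps 1--2 to these nonnegative measures gives superposition measures concentrated on integral curves of $b$. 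Forward uniqueness from each point shows that these curves coincide with $\Gamma_s$ on the support, provided each measure is absolutely continuous with respect to $\rho_s$ (true since $|u|\le R$). Subtracting then yields $\qmeas_t=\Phi_{s,t\#}\qmeas_s$ for a.e. $t$, and narrow continuity of both sides extends it to every $t\in[s,T]$.

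The delicate point I expect is ensuring that the superposition curves for $(R\pm u)\rho$ start $\rho_s$-a.e. from points where $\Gamma_s$ is defined, and that the a.e.-in-time ODE suffices for forward uniqueness. I would secure this by fixing the Borel $b$ once and using that both $\eta$'s charge only genuine Carathéodory solutions.
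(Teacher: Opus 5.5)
Your proposal is correct and takes essentially the paper's route: superposition for nonnegative combinations of mass and momentum, forward uniqueness and the gap bound from the Oleinik condition via a Gr\"onwall argument on the gap where it is positive, disintegration into Dirac masses, and identification of the path maps by uniqueness before subtracting. The only difference is minor. The paper takes $k\rho_t\pm\qmeas_t$ with $k>R$, so both measures have positive mass and are equivalent to $\rho_s$, and adding them removes the need for a separate superposition for $\rho$. Your choice $k=R$ needs only $\mu_s^\pm\ll\rho_s$, plus the trivial case where one measure vanishes identically, and you should apply superposition to the narrowly continuous representative $R\rho_t\pm\qmeas_t$ rather than to $(R\pm u)\rho_t$.
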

\begin{proof}
Superposition is applied to nonnegative combinations of the mass and momentum measures; uniqueness of ODE trajectories on positive-time strips, supplied by the Oleinik condition, then identifies a common deterministic transport map.

For $k>R$, the nonnegative narrowly continuous measures $\mu_t^\pm=k\rho_t\pm\qmeas_t$ satisfy
\[
 (k-R)\rho_t\le\mu_t^\pm\le(k+R)\rho_t,
 \qquad \partial_t\mu^\pm+\partial_x(b\mu^\pm)=0.
\]
Testing with $\chi_N$ gives constant positive mass: the flux error is bounded by $C(k+R)MV_*(T-s)/N$. After normalization, these are narrowly continuous probability solutions with Borel transport velocity and finite spacetime $2$-action, since $|b|\le V_*$ for $\mu_t^\pm\dd t$-almost every $(x,t)$. Their second moments are already controlled by \cref{lem:canonical}. Thus \cite[Theorem 8.2.1]{AGS} gives superposition measures on absolutely continuous ODE paths. Rescale them to the original masses.

For two such paths, the Oleinik condition \eqref{eq:OSL} and the positive-part chain rule give
\[
 \frac{\dd}{\dd t}(\gamma_2-\gamma_1)_+
 \le t^{-1}(\gamma_2-\gamma_1)_+,
 \qquad
 (\gamma_2(t)-\gamma_1(t))_+
 \le(t/s)(\gamma_2(s)-\gamma_1(s))_+.
\]
Exchanging the paths proves uniqueness from a common starting point. A first meeting therefore prevents crossing or subsequent separation. Only $s>0$ is used.

Let $\eta^\pm$ be the rescaled superposition measures. Disintegration over the initial evaluation on a measurable full set of ODE paths, and uniqueness for paths starting at $x$, give
\[
 \eta^\pm=\int\eta_x^\pm\,\dd\mu_s^\pm(x),\qquad
 \eta_x^\pm=\delta_{\Gamma_s^\pm(x)}
 \quad(\mu_s^\pm\text{-a.e. }x).
\]
This defines Borel path maps. Since $\mu_s^\pm\sim\rho_s$, uniqueness gives $\Gamma_s^+(x)=\Gamma_s^-(x)=:\Gamma_s(x)$ for $\rho_s$-a.e.\ $x$. Adding and subtracting their push-forwards proves \eqref{eq:transport}. ODE uniqueness also identifies restrictions at later source times and gives the composition law.
\end{proof}

\begin{theorem}[Lagrangian representation of entropy solutions]\label{thm:lift}
For every entropy solution of \textup{\cref{def:entropy}}, its quantile $X(t)=X_{\rho_t}$ and canonical Lagrangian physical velocity satisfy, on each finite time interval,
\begin{equation}\label{eq:entropy-representation}
 \begin{gathered}
 X\in C([0,T];L^2)\cap W^{1,\infty}(0,T;L^2),\qquad
 p_X(t)=P_{X(t)}U_0,\qquad X_t=f(p_X),\\
 \rho_t=X(t)_\#\dd a,\qquad\qmeas_t=X(t)_\#(U_0\dd a).
 \end{gathered}
\end{equation}
The identity $p_X(t)=P_{X(t)}U_0$ and the two push-forward identities hold at every time; the derivative identity holds in $L^2$ for almost every time. Moreover, $\|X(t)-X(s)\|_\infty\le V_*|t-s|$ for $0\le s,t\le T$, and the gap estimate \eqref{eq:limit-gap} holds at every positive time pair. The map has the no-splitting property from time zero. There is a common full-measure set of mass labels whose paths satisfy
\begin{equation}\label{eq:path}
 X(a,t)-X(a,s)=\int_s^tf(p_X(a,r))\dd r\qquad(0\le s<t)
\end{equation}
and whose label positions are continuity points of every $X(\cdot,t)$. Labels in this set are called \emph{regular labels}; all later refinements preserve full measure.
\end{theorem}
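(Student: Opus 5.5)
We would prove \cref{thm:lift} by first working on positive-time strips with the deterministic transport of \cref{prop:transport}, converting it into mass coordinates, and then passing $s\downarrow0$ with the momentum and energy traces.

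\emph{Step 1: positive-time strips.} Fix $0<s<T$. Since $\Phi_{s,t}$ is nondecreasing on a $\rho_s$-full set and $\rho_t=\Phi_{s,t\#}\rho_s$, \cref{lem:quantile-details} gives $X(t)=\Phi_{s,t}\circ X(s)$ almost everywhere for $t\ge s$. Consequently every positive fibre of $X(s)$ stays inside a fibre of $X(t)$, which is no-splitting on $[s,T]$. The Lipschitz bound $\|X(t)-X(r)\|_\infty\le V_*|t-r|$ follows from \cref{lem:canonical}. The gap estimate \eqref{eq:limit-gap} follows from the Lipschitz bound in \eqref{eq:transport}, extended to one-sided traces through \cref{lem:alltime-labels}.

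For the dynamics, we disintegrate the superposition used in \cref{prop:transport}. Every label path $t\mapsto\Phi_{s,t}(X(a,s))$ is an ODE path of $b$, so \eqref{eq:path} holds on $[s,T]$ with $b(X(a,r),r)$ in place of $f(p_X)$. Using $b=f(u)$ $\rho_r\dd r$-a.e., we define $\tilde p(a,r)=u(X(a,r),r)$, which lies in $\mathcal H_{X(r)}$. The momentum push-forward $\qmeas_t=\Phi_{s,t\#}\qmeas_s$ then gives $\qmeas_t=X(t)_\#(\tilde p(s)\dd a)$. Writing $\tilde p(s)=P_{X(s)}\tilde p(s)$ and using the tower property with no-splitting, this yields $\tilde p(t)=P_{X(t)}\tilde p(s)$ for a.e. $t$, and at every $t$ after passing to the canonical representative. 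Hence $X_t=f(\tilde p)$ and $X\in W^{1,\infty}(s,T;L^2)$.

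\emph{Step 2: letting $s\downarrow0$ (main obstacle).} We must show $\tilde p(s)\to U_0$ in $L^2$ along $s\in G$, and that no-splitting holds from time zero. Along $s\in G$, the finite displacement gives $X(s)\to X_0$ in $L^\infty$. The narrow convergence $\qmeas_s\to u_0\rho_0$ then gives $X_0{}_\#(\tilde p(s)\dd a)\to X_0{}_\#(U_0\dd a)$. To justify this, test with $\phi(X(s))$ against $\phi(X_0)$, using \cref{lem:projection-approximation}, and conclude that $P_{X_0}\tilde p(s)\rightharpoonup U_0$ weakly in $L^2$, since $|\tilde p|\le R$.

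This controls only the averages. To exclude residual oscillation we use the energy trace \eqref{eq:E0} with $\varphi\equiv1$, which gives $\|\tilde p(s)\|_2\to\|U_0\|_2$. Weak convergence of a subsequence $\tilde p(s)\rightharpoonup w$ with $P_{X_0}w=U_0$ gives $\|w\|_2\ge\|U_0\|_2$ and $\|w\|_2\le\liminf\|\tilde p(s)\|_2=\|U_0\|_2$. Hence $w=U_0$ and the convergence is strong.

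Then, for fixed $t$ and $s<t$, no-splitting on $[s,t]$ gives
\[
 \|P_{X(t)}U_0-\tilde p(t)\|_2=\|P_{X(t)}(U_0-\tilde p(s))\|_2\le\|U_0-\tilde p(s)\|_2\to0.
\]
So $\tilde p(t)=P_{X(t)}U_0=p_X(t)$, and $\qmeas_t=X(t)_\#(U_0\dd a)$ at every $t$, with $t=0$ holding by \eqref{eq:data}.

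\emph{Step 3: remaining assertions.} For no-splitting at time zero, we use that $X(t)\to X_0$ and that on an initial fibre $I$ the tangency in \cref{lem:fibre-tangent} makes $X_t$ constant on fibres of $X(t)$ only. We need the stronger statement that $X(t)$ is constant on $I$. Our first attempt is the momentum–energy argument applied to $P_{X(t)}U_0$ versus the fibre structure: strong convergence $p_X(t)\to U_0$ together with the gap inequality \eqref{eq:limit-gap} sent to $s\downarrow0$ through traces, since $X(b,s)-X(a,s)\to0$ for $a,b\in I$ forces $X(b,t)=X(a,t)$ once $t/s$ is controlled.

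That ratio blows up, so the fallback is Lipschitz control. Each path moves with speed $f(p_X)$, and $p_X\to U_0$, which is constant on $I$. We will use the compression structure, namely the normal-cone characterization \cref{lem:normal} applied to $U_0-p_X(t)$, to show that labels of $I$ cannot separate.

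Finally, regular labels come from Fubini on the integrated identity \eqref{eq:path}, with $s\downarrow0$ by continuity, together with removal of the countable discontinuity set from \cref{lem:alltime-labels}.
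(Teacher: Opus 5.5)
Your Steps 1 and 2 are sound and essentially follow the paper. On positive-time strips you get $X(t)=\Phi_{s,t}\circ X(s)$ and $p_X(t)=P_{X(t)}p_X(s)$. As $s\downarrow0$, the momentum trace fixes $P_{X_0}$ of weak limits, and the energy trace with $\varphi\equiv1$ upgrades this to strong convergence to $U_0$. Your Fubini construction of the regular labels is fine in outline.

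The genuine gap is Step~3, the no-splitting property from time zero. The statement asserts it, and you do not prove it. You discard the gap-inequality argument because $t/s\to\infty$, and replace it by a ``fallback'' through the normal-cone relation $U_0-p_X(t)\in N_\K(X(t))$ that you never carry out. For an arbitrary entropy solution, the paper's only route to that relation is \cref{cor:obstacle}. It is obtained from \cref{thm:allcut}, whose proof uses no-splitting from zero to make each terminal fibre a union of whole source fibres, so invoking it here is circular. You also do not indicate how the relation would prevent separation. On labels of an initial fibre whose current fibres stay inside that fibre, $U_0-p_X(t)$ vanishes and the relation says nothing, so it would have to be combined with a dynamical argument you have not supplied.

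The missing idea is that the blow-up of $t/s$ is compensated because the gap at time $s$ is $o(s)$. Let $J$ be an initial positive fibre with $X_0=x_0$ and $U_0=p$ on $J$. From \eqref{eq:path} and $p_X(r)=P_{X(r)}U_0\to U_0$ in $L^2$ (\cref{lem:projection-approximation}),
\[
 \Bigl\|\frac{X(s)-x_0}{s}-f(p)\Bigr\|_{L^2(J)}
 \le\frac{C_f}{s}\int_0^s\|p_X(r)-p\|_{L^2(J)}\dd r\longrightarrow0 .
\]
The quotient $(X(\cdot,s)-x_0)/s$ is nondecreasing in the label. So at an interior regular label it lies between its averages over fixed small intervals to the left and right, and both averages tend to $f(p)$. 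Hence $X(b,s)-X(a,s)=o(s)$ for interior regular labels $a<b$ of $J$. Then \eqref{eq:limit-gap} gives
\[
 0\le X(b,t)-X(a,t)\le t\,\frac{X(b,s)-X(a,s)}{s}\longrightarrow0
 \qquad(s\downarrow0).
\]
This is how the paper closes the argument, and with it Step~3 needs no compression or normal-cone input.
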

\begin{proof}
\emph{Conditional averaging at positive times.} For $0<s<t$, \cref{lem:quantile-details,prop:transport} give
\[
 X(t)=\Phi_{s,t}\circ X(s)\quad\text{a.e.}
\]
The gap and time-Lipschitz estimates follow from \cref{prop:transport}, as does the no-splitting property for $0<s<t$. Define the Lagrangian physical velocity $U(t)\in\mathcal H_{X(t)}$ through $X(t)_\#(U(t)\dd a)=\qmeas_t$. For bounded continuous $\zeta$, transport of the canonical momentum measure gives
\[
 \int\zeta(X(t))U(t)\dd a=\int\zeta(X(t))U(s)\dd a.
\]
Density of $C_c^\infty(\R)$ in $L^2(\rho_t)$, pulled back by $X(t)$, gives $U(t)=P_{X(t)}U(s)$.

\emph{Identification from the initial traces.} By \cref{lem:canonical}, $\|X(s)-X_0\|_2\le\sqrt M V_*s$. For any sequence $s_n\in G$ with $s_n\downarrow0$, pass to a subsequence such that $U(s_n)\rightharpoonup\overline U$ in $L^2$. For $\zeta\in C_c^\infty(\R)$, the functions $\zeta(X(s_n))$ converge strongly in $L^2$ to $\zeta(X_0)$, by their Lipschitz bound. The canonical momentum trace then yields
\[
 \int\zeta(X_0)\overline U\dd a=\int\zeta(X_0)U_0\dd a,
 \quad\text{so }P_{X_0}\overline U=U_0.
\]
The projection determines the average on each initial fibre. The kinetic energy trace, obtained by taking $\varphi\equiv1$ in \eqref{eq:E0}, excludes residual variation on these fibres, since
\[
 \|U_0\|_2=\|P_{X_0}\overline U\|_2
 \le\|\overline U\|_2\le\liminf_n\|U(s_n)\|_2=\|U_0\|_2.
\]
Orthogonal projection gives
$\|\overline U-U_0\|_2^2=\|\overline U\|_2^2-\|P_{X_0}\overline U\|_2^2=0$.
Since every weak subsequential limit is $U_0$, convergence of the norms gives strong $U(s)\to U_0$ as $s\downarrow0$ through $G$. For each fixed $t>0$, letting $s\downarrow0$ through $G$ in $U(t)=P_{X(t)}U(s)$ yields $U(t)=P_{X(t)}U_0$. Thus the canonical momentum measure has the push-forward representation in \eqref{eq:entropy-representation} at every time.

\emph{Common label paths and no-splitting at time zero.} For positive rational source times, pull the Borel path maps back by $X(s)$ and remove the null sets for all rational evaluations. Uniqueness on overlapping strips identifies these paths; their common speed bound gives continuous label paths on $(0,T]$. At every real time they represent $X(t)$ by rational approximation and $L^2$ continuity. Fubini and the pullback identity identify their derivative as $f(p_X)$ almost everywhere. Bounded speed extends the paths to zero, where an almost-everywhere subsequence of $X(s)\to X_0$ identifies the initial evaluation. Thus \eqref{eq:path} holds for all time pairs on one full-measure label set; intersecting over integer $T$ gives a global representative. Since $U_0\in\mathcal H_{X_0}$, \cref{lem:projection-approximation} applied to the identified formula $p_X(t)=P_{X(t)}U_0$ now gives $p_X(t)\to U_0$ strongly through all times.

For the no-splitting property at time zero, consider an initial positive fibre $J$ and write $X_0=x_0$ and $U_0=p$. The path identity gives
\[
 \left\|\frac{X(s)-x_0}{s}-f(p)\right\|_{L^2(J)}
 \le\frac{C_f}{s}\int_0^s\|p_X(r)-p\|_{L^2(J)}\dd r\longrightarrow0.
\]
The quotient is nondecreasing. At an interior label it lies between its averages on fixed left and right intervals, which both converge to $f(p)$ by $L^2$ convergence. Hence for interior regular labels $a<b$ in $J$,
$[X(b,s)-X(a,s)]/s\to0$. The positive-time gap estimate then gives
\[
 0\le X(b,t)-X(a,t)\le t\,[X(b,s)-X(a,s)]/s\longrightarrow0.
\]
Thus no initial fibre splits.

Finally intersect the path and Fubini sets with the simultaneous continuity-label set of \cref{lem:alltime-labels}.
\end{proof}

\begin{example}[The role of the initial kinetic energy trace]\label{ex:energy-needed}
Let $f(u)=u$, $\rho_0=\delta_0$, $u_0=0$. Besides the stationary mass, consider
\[
 \rho_t=\tfrac12\delta_{-t}+\tfrac12\delta_t,
 \qquad u(-t,t)=-1,\quad u(t,t)=1.
\]
Both paths solve the free ODE and give the prescribed weak initial mass and zero momentum trace. The Borel coefficient $b(x,t)=\max(-1,\min(1,x/t))$ satisfies the Oleinik condition \eqref{eq:OSL}. The total kinetic energy, however, equals one for every $t>0$, whereas its initial value is zero. Thus \eqref{eq:E0}, tested with $\varphi\equiv1$, excludes this splitting evolution from the entropy class.
\end{example}

\subsection[\texorpdfstring{Local BV regularity of the Lagrangian physical velocity}{Local BV regularity of the Lagrangian physical velocity}]{Local BV regularity of the Lagrangian physical velocity}\label{sec:state-representatives}

Call a time $t>0$ \emph{structural} if \eqref{eq:OSL} holds for all $x<y$, $\qmeas_t=u(\cdot,t)\rho_t$, and the pullback and path-velocity identities hold for almost every label. These properties hold on a Borel full-measure time set $\mathcal G_X$; for two flows use $\mathcal G_X\cap\mathcal G_Y$. At a structural restart $\tau$, refine the regular labels by the Fubini set for $P_{X(t)}p_X(\tau)$ and the Lebesgue points of $p_X(\tau)$.

\begin{lemma}[Local BV regularization and finite position traces]\label{lem:BV}\label{lem:finite-trace-localization}
At almost every $t>0$, on each label interval $J$ with $\osc_JX(t)<\infty$,
\begin{equation}\label{eq:BV}
 \TV_J(f(p_X(t)))\le2V_*+2\osc_JX(t)/t,\qquad
 \TV_J(p_X(t))\le c_f^{-1}\TV_J(f(p_X(t))).
\end{equation}
In particular the physical velocities belong to $BV_{\rm loc}(0,M)$. Fix a structural time $\tau>0$. If $\osc_JX_0<\infty$, then for every $t\ge\tau$,
\begin{equation}\label{eq:local-BV-restart}
 \TV_J(p_X(t))\le\TV_J(p_X(\tau))+4R.
\end{equation}
At a finite one-sided position trace $X(c\pm,t)$, $t\ge\tau$, there is a neighbourhood on that side where $X_0$ is bounded and $p_X(\tau)$ is $BV$. Its one-sided essential oscillation about the physical velocity trace tends to zero. This includes finite inward traces at $0$ or $M$; no physical velocity trace is asserted at an infinite position endpoint.
\end{lemma}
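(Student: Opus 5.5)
The plan has three parts: the static bound \eqref{eq:BV}, the restart bound \eqref{eq:local-BV-restart}, and the trace statement.

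\textbf{Static bound \eqref{eq:BV}.} This will come directly from \cref{lem:OSL-BV} with $B=f(p_X(t))$. I need the label Oleinik inequality
\[
 f(p_X(b,t))-f(p_X(a,t))\le[X(b,t)-X(a,t)]/t
\]
for almost every $a<b$, at almost every $t$. At a structural time $t$ I take regular labels $a<b$ at which the pullback identity $b(X(a,t),t)=f(p_X(a,t))$ holds. Substituting $x=X(a,t)\le y=X(b,t)$ into \eqref{eq:OSL} gives the inequality when $x<y$. When $x=y$, both labels lie in one fibre, where $p_X$ is constant, so the left side is zero. \Cref{lem:OSL-BV} then gives both estimates in \eqref{eq:BV}, with finite one-sided traces on $J$. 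Covering $(0,M)$ by intervals of finite oscillation gives $BV_{\rm loc}$.

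\textbf{Restart bound \eqref{eq:local-BV-restart}.} By \cref{thm:lift}, the no-splitting property holds from time $\tau$. So \cref{lem:compact-mono} and the tower property give $p_X(t)=P_{X(t)}P_{X(\tau)}U_0=P_{X(t)}p_X(\tau)$. Next I check that $p_X(\tau)|_J\in BV(J)$. The displacement bound gives $\osc_JX(\tau)\le\osc_JX_0+2V_*\tau<\infty$, so \eqref{eq:BV} at the structural time $\tau$ applies. Then \eqref{eq:local-BV-projection} with $U=p_X(\tau)$ yields \eqref{eq:local-BV-restart}.

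\textbf{Trace statement.} Suppose $X(c+,t)$ is finite; the left side is symmetric. By monotonicity, $X(\cdot,t)$ is bounded on some $(c,c+\epsilon)$. \Cref{lem:alltime-labels} gives $|X(a\pm,t)-X(a\pm,0)|\le V_*t$, so $X_0$ is bounded there too. This holds also when $c=0$, using the endpoint-trace dichotomy in that lemma. Hence $\osc_{(c,c+\epsilon)}X_0<\infty$, and by the restart step $p_X(\tau)$ is $BV$ on this neighbourhood. Then \eqref{eq:local-BV-restart} gives $p_X(t)\in BV(c,c+\epsilon)$. A one-dimensional $BV$ function has a one-sided trace at $c$, and its essential oscillation on $(c,c+\delta)$ about that trace tends to zero as $\delta\downarrow0$, by the conventions of \cite{AFP}.

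\textbf{Main obstacle.} The delicate point is making the static Oleinik step rigorous: the Eulerian pullback identity at structural times must be transferred to almost every label pair, including pairs straddling fibre endpoints. This is handled through the regular-label set and the removal of countably many fibre endpoints. The time set has to be the structural one, so that \eqref{eq:OSL} holds for all $x<y$ and not merely almost everywhere. The endpoint case $c=0$ or $c=M$ also needs care: I must check that finiteness of the inward trace at time $t$ forces a finite trace at $\tau$ and at time zero. The dichotomy in \cref{lem:alltime-labels} supplies exactly this.
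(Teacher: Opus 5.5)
Your proposal is correct and follows the paper's proof essentially step for step: the static bound comes from \cref{lem:OSL-BV} applied to $B=f(p_X(t))$ at times where the Oleinik condition and the pullback identities hold; the restart bound uses the tower identity $p_X(t)=P_{X(t)}p_X(\tau)$, the displacement estimate for $\osc_JX(\tau)$ and \eqref{eq:local-BV-projection}; and the trace claim follows by bounding $X_0$ near the cut through monotonicity and finite displacement. Your appeal to the endpoint dichotomy of \cref{lem:alltime-labels} is harmless but not needed, since monotonicity and the almost-everywhere displacement bound already bound $X_0$ on the one-sided neighbourhood, which is how the paper argues.
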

\begin{proof}
At a time when the Oleinik condition and the pullback identities hold, $B=f(p_X(t))$ satisfies
$B(b)-B(a)\le[X(b,t)-X(a,t)]/t$ for almost every ordered pair; equal positions have equal physical velocities. The static estimate in \cref{lem:OSL-BV} gives \eqref{eq:BV} and a locally regulated representative. The no-splitting and tower properties give
\begin{equation}\label{eq:restart}
 p_X(t)=P_{X(t)}U_\tau,\qquad U_\tau=p_X(\tau),\quad t\ge\tau.
\end{equation}
If $\osc_JX_0<\infty$, finite displacement gives
$\osc_JX(\tau)\le\osc_JX_0+2V_*\tau$. Thus $U_\tau|_J\in BV(J)$, and \eqref{eq:local-BV-projection} proves \eqref{eq:local-BV-restart}.

For $z=X(c+,t)\in\R$, choose an internal regular label $d>c$. Monotonicity and finite displacement give
\[
 z-V_*t\le X_0(a)\le X_0(d)\quad(c<a<d)
\]
almost everywhere. Hence $U_\tau$ is $BV$ up to $c$ on the right, also for $c=0$. For $z=X(c-,t)\in\R$, choose $d<c$ and use
\[
 X_0(d)\le X_0(a)\le z+V_*t\quad(d<a<c)
\]
to obtain the left trace, also for $c=M$. A bounded $BV$ representative has a one-sided limit with vanishing essential oscillation about it. Internal position traces are finite because $X(t)\in\K\subset L^2$. The local projection estimate gives the corresponding later physical velocity representatives. Finite displacement preserves infinite endpoint positions, where this argument asserts no physical velocity trace.
\end{proof}

\begin{lemma}\label{lem:trace-Fubini}
Fix a structural restart $\tau>0$ and the Lagrangian map $X$ of an entropy solution. There is a fixed full-measure refinement $A\subset(0,M)$ of the regular labels such that: \textup{(a)} \eqref{eq:path} holds for every $a\in A$ and every time pair; \textup{(b)} $X(a-,t)=X(a+,t)$ for $(a,t)\in A\times[0,\infty)$; \textup{(c)} for each $a\in A$, outside a null set $N_a\subset(\tau,\infty)$, $p_X(a,r)=p_X^+(a,r)$, where $p_X^+$ is a jointly Borel, locally regulated representative that is right-continuous in the label variable, and this value is the appropriate fibre average or singleton value from \eqref{eq:restart}. For any interior cut $c$, or endpoint with finite position trace, every countable sequence $a_n\in A$ approaching from one side satisfies
\[
 p_X(a_n,r)\longrightarrow p_X(c\pm,r)\quad(a_n\to c\pm),
 \qquad r\in(\tau,\infty)\setminus\bigcup_nN_{a_n}.
\]
\end{lemma}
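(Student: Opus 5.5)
We build $A$ by intersecting countably many full-measure label sets. We also construct the representative $p_X^+$ directly from the restart formula \eqref{eq:restart}.

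\emph{Parts (a) and (b).} Start from the regular labels of \cref{thm:lift}. They already give the path identity \eqref{eq:path} for all time pairs, and they are continuity points of every $X(\cdot,t)$. Refine them by removing the countable set $N$ of \cref{lem:alltime-labels}. That lemma gives $X(a-,t)=X(a+,t)$ simultaneously for all $t$ off $N$, so (a) and (b) follow. Also remove the Lebesgue-point and Fubini exceptional sets attached to $\tau$ in the definition of structural restart.

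\emph{The representative $p_X^+$.} Define it jointly Borel, as in the proof of \cref{lem:nested-state-compactness}. Where the fibre mass
\[
 m(a,r)=\int_0^M\one_{\{X(b,r)=X(a,r)\}}\dd b
\]
is positive, set $p_X^+(a,r)$ equal to the fibre average of $U_\tau=p_X(\tau)$. Where it vanishes, use the right limit of the regulated representative. This representative comes from \cref{lem:BV} on intervals of finite position oscillation, which cover every interior label.

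Right-continuity in $a$ then comes from two facts. On a fibre the value is constant, and the representative is taken right-continuous in the label. On the non-flat set the value is a regulated right limit, so right-continuity holds directly there.

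For each $r>\tau$, \eqref{eq:restart} shows that $p_X^+(\cdot,r)=p_X(\cdot,r)$ almost everywhere. Fubini in $(a,r)$ then yields a full-measure set of labels $a$ for which the equality holds outside a null time set $N_a$. Intersect $A$ with this set to obtain (c).

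\emph{Trace convergence.} Take a cut $c$ and labels $a_n\in A$ with $a_n\to c+$. The case $c-$ is symmetric. Fix $r\notin\bigcup_nN_{a_n}$. Then
\[
 p_X(a_n,r)=p_X^+(a_n,r),
\]
and $p_X^+(\cdot,r)$ is regulated on a neighbourhood of $c$ on that side. This holds by \cref{lem:BV}: an interior cut lies in an interval of finite oscillation, and a finite endpoint trace provides a side neighbourhood where $X_0$ is bounded and $U_\tau$ is $BV$. Hence the one-sided limit exists and equals the trace $p_X(c\pm,r)$.

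The trace must be understood as the essential one-sided limit. By \cref{lem:BV}, the essential oscillation tends to zero. The pointwise values of $p_X^+$ agree with the essential values at every label, because $p_X^+$ is defined as fibre averages or regulated right limits rather than as an arbitrary representative.

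\emph{Main obstacle.} The delicate step is joint measurability together with the every-label regulated property. Fibre averages of $U_\tau$ are regulated by \cref{lem:BV-projection} with bound \eqref{eq:local-BV-projection}, uniformly for all $r\ge\tau$ on each such $J$. So I would define $p_X^+(a,r)$ as a limit of right averages over $(a,a+1/k)$, which is jointly Borel. On fibres this limit agrees with the fibre average, except at the right endpoint, where right-continuity is the convention anyway.

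Countability is what makes the final union of null sets harmless. This is why the statement only asserts the trace convergence for countable sequences $a_n$.
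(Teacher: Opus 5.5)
Your proposal is correct and, in its final form, is the paper's argument: a jointly Borel right-average representative $p_X^+$ built from the jointly Borel physical velocity, Fubini (also applied to \eqref{eq:restart}) to fix $A$ and the per-label null sets $N_a$, local $BV$ regularity from \cref{lem:BV} near the cut (including the finite-endpoint case), and countability of $\bigcup_nN_{a_n}$. Drop your preliminary ``fibre average wherever the fibre mass is positive'' definition: it is not right-continuous at a fibre's right endpoint when $X(\cdot,r)$ is continuous there, whereas the right-average limit you adopt at the end avoids this, as you note.
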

\begin{proof}
Assertions \textup{(a)}--\textup{(b)} hold on the label set of \cref{thm:lift}. Choose the jointly Borel Lagrangian physical velocity of \cref{lem:joint}. By \cref{lem:BV}, each slice for $r\ge\tau$ is $BV$ on a countable exhaustion by compact interior intervals, augmented by $(0,M/2)$ or $(M/2,M)$ when the corresponding endpoint position is finite. Its right representative is
\[
 p_X^+(a,r)=\lim_{n\to\infty}
 \frac1{\epsilon_n(a)}\int_a^{a+\epsilon_n(a)}p_X(b,r)\dd b,
 \qquad \epsilon_n(a)=\min\{1/n,(M-a)/2\}.
\]
Set an arbitrary bounded value where the limit fails. This is jointly Borel and agrees with $p_X$ almost everywhere in each slice. Finite endpoint values are supplied by the interior limits; no value at an infinite endpoint is used as a trace. Fubini, applied also to \eqref{eq:restart} with the fibre/singleton convention, gives a fixed full-measure refinement $A$ and, for each $a\in A$, a null exceptional time set $N_a$ satisfying \textup{(c)}.

For a specified countable sequence $(a_n)\subset A$, the set $N_{(a_n)}=\bigcup_{n\ge1}N_{a_n}$ is null. At each $r>\tau$ outside this union, the sequence eventually lies in one of the above intervals near the selected cut, where its physical velocities converge to the regulated one-sided trace. Boundedness then permits dominated convergence in the path integrals of \eqref{eq:trace-integral}. The refinement $A$ is fixed before the sequence; only $N_{(a_n)}$ may depend on it.
\end{proof}

\subsection[Compression inequalities for \texorpdfstring{fibre averages}{fibre averages}]{Compression inequalities for fibre averages}\label{sec:calibration}

Averages of physical velocity in a terminal fibre are compared using cumulative mass and momentum evaluated along affine lines. For fixed $\theta\in\R$, consider the line $\gamma(r)=y+(r-s)f_R(\theta)$; it need not be a characteristic of the solution. Set
\[
 m(x,t)=\rho_t(({-}\infty,x]),\quad
 q(x,t)=\qmeas_t(({-}\infty,x]),\quad
 J_\theta=q-\theta m.
\]
These cumulative quantities are locally $BV$ in space. We denote the strict half-line traces by $m^-,q^-$ and the non-strict traces by $m^+,q^+$.

The following estimate gives time continuity on the whole line. Only differences of cumulative functions are asserted to belong to $L^1(\R)$.

\begin{lemma}[Time regularity of cumulative mass and momentum]\label{lem:cumulative-current}
For all $s,t\ge0$,
\begin{equation}\label{eq:cumulative-time}
 \|m(\cdot,t)-m(\cdot,s)\|_{L^1(\R)}\le MV_*|t-s|,
 \qquad
 \|q(\cdot,t)-q(\cdot,s)\|_{L^1(\R)}\le RMV_*|t-s|.
\end{equation}
In spacetime distributions,
\begin{equation}\label{eq:cumulative-current}
 \partial_tm=-f(u)\rho,\qquad \partial_tq=-uf(u)\rho,
 \qquad\partial_xm=\rho,\qquad\partial_xq=u\rho.
\end{equation}
\end{lemma}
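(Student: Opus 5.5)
The plan is to work in mass coordinates, using the Lagrangian representation of \cref{thm:lift}. The key tool is the layer-cake identity for the $L^1$ distance between cumulative functions of two measures pushed forward by quantile maps. For the mass,
\[
 \int_\R|m(x,t)-m(x,s)|\dd x=\int_0^M|X(a,t)-X(a,s)|\dd a .
\]
This holds because both sides equal the area of the region between the graphs of the two distribution functions, equivalently between the two quantiles; it is the standard $W_1$ identity for measures of equal mass on the line. Combining it with the displacement bound \eqref{eq:propagation}, $|X(a,t)-X(a,s)|\le V_*|t-s|$ for almost every label, gives the first estimate in \eqref{eq:cumulative-time} directly. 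No moment assumption is needed beyond finiteness of the left side, and that finiteness is part of the conclusion.

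For the momentum we have $\qmeas_t=X(t)_\#(U_0\dd a)$ by \eqref{eq:entropy-representation}, so $q(x,t)=\int_0^M U_0(a)\one_{\{X(a,t)\le x\}}\dd a$. Therefore
\[
 |q(x,t)-q(x,s)|\le R\int_0^M\bigl|\one_{\{X(a,t)\le x\}}-\one_{\{X(a,s)\le x\}}\bigr|\dd a .
\]
Integrating in $x$ and applying Fubini, the inner integral over $x$ equals $|X(a,t)-X(a,s)|$ for each label. This yields the bound $R\int|X(t)-X(s)|\le RMV_*|t-s|$. The same Fubini argument also proves the mass identity without any quantile inversion, which is the cleaner route, so I would present both estimates this way.

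For \eqref{eq:cumulative-current}, the spatial identities $\partial_xm=\rho$ and $\partial_xq=u\rho$ follow from $m,q$ being cumulative functions of $\rho_t$ and $\qmeas_t$, with $\qmeas_t=u\rho_t$ for almost every $t$, tested against $\phi\in C_c^\infty(\R\times(0,\infty))$ via Fubini. For the time derivatives, pair $m$ with $\phi_t$ and write $m(x,t)=\int\one_{\{y\le x\}}\dd\rho_t(y)$. Fubini converts $\int\!\!\int m\,\phi_t\dd x\dd t$ into $\int\!\!\int\Psi_t\dd\rho_t\dd t$, where $\Psi(y,t)=\int_y^\infty\phi(x,t)\dd x$. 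This $\Psi$ is not compactly supported in $y$: it is constant for $y$ to the left of $\supp\phi$. This is the main obstacle. I would handle it by splitting $\Psi=\Psi\chi_N+\Psi(1-\chi_N)$ with the cutoffs $\chi_N$ of \cref{lem:canonical}. On the left tail, $\Psi$ reduces to $c(t)=\int\phi(x,t)\dd x$, and conservation of total mass ($\rho_t(\R)=M$) and of total momentum (from \eqref{eq:qtest} with $\chi_N$) makes the contribution of the constant part vanish in the limit. The cutoff errors are bounded by $CMV_*/N$, exactly as in \cref{lem:canonical}.

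Applying the weak equations \eqref{eq:weak-m}--\eqref{eq:weak-q} to $\Psi\chi_N$ (a $C^1$ test after mollification, vanishing near $t=0$) then gives
\[
 \int\!\!\int m\phi_t=-\int\!\!\int f(u)\Psi_x\dd\rho_t\dd t=\int\!\!\int f(u)\phi\dd\rho_t\dd t,
\]
since $\Psi_x=-\phi$. This is $\partial_tm=-f(u)\rho$, and the analogous computation gives $\partial_tq=-uf(u)\rho$. The $L^1$ Lipschitz bounds ensure $m,q$ are locally integrable in spacetime, with differences in $L^1$, so all pairings are meaningful.
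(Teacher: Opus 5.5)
Your proposal is correct, but for the time identities it takes a different route from the paper. The $L^1$ estimates match the paper's proof: indicator representation in mass coordinates, $\|\one_{[z,\infty)}-\one_{[w,\infty)}\|_{L^1}=|z-w|$, Tonelli, $|U_0|\le R$ and finite displacement.

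\textbf{The paper's route.} For the time identities the paper never returns to the Eulerian weak equations. It applies the chain rule to $\Psi(X(a,t),t)$ along each regular label path, using $X_t=f(p_X)$ from \eqref{eq:path}. Compact time support removes the boundary terms, so the constant value of $\Psi$ to the left of $\supp\phi$ never enters; this is the meaning of ``no constant at spatial infinity is introduced''. Integration over labels gives $\partial_tm=-f(u)\rho$. For $q$ it multiplies by $U_0$ and uses $\phi(X)f(p_X)\in\mathcal H_X$ to replace $U_0$ by $p_X$.

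\textbf{Your route.} You test \eqref{eq:weak-m}--\eqref{eq:weak-q} with $\Psi\chi_N$. This is already smooth and supported in $\R\times(0,T)$, so no mollification is needed. The only extra term is $\int_0^T\!\int f(u)\Psi\chi_N'\dd\rho_t\dd t$, respectively with $uf(u)$ in place of $f(u)$. On the support of $\chi_N'$, $\Psi=c(t)$ on the left piece and $\Psi=0$ on the right piece for large $N$. Hence the term is bounded by $C\|c\|_\infty MV_*T/N$, with an extra factor $R$ for momentum, and dominated convergence handles $\Psi_t\chi_N$. This cutoff bound alone disposes of the left tail, so your appeal to conservation of total mass and momentum is superfluous. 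Local integrability of $m,q$ is also immediate from $|m|\le M$ and $|q|\le RM$; the $L^1$ Lipschitz bounds are not needed for it.

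\textbf{Comparison.} Your argument is independent of the Lagrangian structure: the distributional identities hold for any finite-mass weak solution with $|u|\le R$. You still need $\qmeas_t=X(t)_\#(U_0\dd a)$ for the $q$ estimate, as does the paper. The paper's argument is shorter once \cref{thm:lift} is available and avoids spatial truncation altogether.
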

\begin{proof}
At every time,
\[
 m(x,t)=\int_0^M\one_{\{X(a,t)\le x\}}\dd a,
 \qquad q(x,t)=\int_0^MU_0(a)\one_{\{X(a,t)\le x\}}\dd a.
\]
The $L^1(\R)$ distance between $\one_{[z,\infty)}$ and $\one_{[w,\infty)}$ is $|z-w|$; Fubini, $|U_0|\le R$ and finite displacement prove \eqref{eq:cumulative-time}.

For $\phi\in C_c^\infty(\R\times(0,T))$, apply the chain rule to
$\Psi(X(a,t),t)$, where $\Psi(z,t)=\int_z^\infty\phi(x,t)\dd x$. The function and its derivatives are bounded, and compact time support gives
\[
 \int_0^T\!\int_{X(a,t)}^\infty\phi_t(x,t)\dd x\dd t
 =\int_0^T\phi(X(a,t),t)X_t(a,t)\dd t.
\]
Integrating over labels proves the first time identity. Multiplication by $U_0$ proves the second, since $X_t=f(p_X)$ and conditional expectation replaces $U_0$ by $p_X$ against $\phi(X)f(p_X)\in\mathcal H_X$. The spatial identities follow from the indicators. No constant at spatial infinity is introduced.
\end{proof}

\begin{lemma}[Monotonicity along affine lines]\label{lem:pencil}
For each fixed real $\theta$,
\begin{equation}\label{eq:current}
 (\partial_t+f_R(\theta)\partial_x)J_\theta
 =-\W(u,\theta)\rho,
 \qquad\W(r,\theta)=(r-\theta)(f_R(r)-f_R(\theta))\ge0.
\end{equation}
Moreover
\begin{equation}\label{eq:W}
 \W(r,\theta)=D_H(r\mid\theta)+D_H(\theta\mid r),\qquad
 c_f|r-\theta|^2\le\W(r,\theta)\le C_f|r-\theta|^2
\end{equation}
for $r\in[-R,R]$ and $\theta\in\R$. For all $0\le s<t$ and $y\in\R$,
\begin{equation}\label{eq:pencil}
 J_\theta^\pm(y+(t-s)f_R(\theta),t)\le J_\theta^\pm(y,s).
\end{equation}
For each $\theta\in\R$, the inequality holds for every time pair with the indicated one-sided traces.
\end{lemma}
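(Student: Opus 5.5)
The plan has three steps: derive \eqref{eq:current} from \cref{lem:cumulative-current}, check \eqref{eq:W} by elementary calculus, and integrate the transport identity along the affine characteristic $\gamma$ to obtain \eqref{eq:pencil}.

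\emph{The current identity.} By \eqref{eq:cumulative-current}, in the sense of distributions,
\[
 (\partial_t+f_R(\theta)\partial_x)J_\theta
 =-uf(u)\rho+\theta f(u)\rho+f_R(\theta)(u\rho-\theta\rho).
\]
Since $|u|\le R$ and $f=f_R$ on $[-R,R]$, the right side equals
\[
 -(u-\theta)\bigl(f_R(u)-f_R(\theta)\bigr)\rho=-\W(u,\theta)\rho .
\]
Nonnegativity of $\W$ follows from monotonicity of $f_R$.

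\emph{The identity \eqref{eq:W}.} Using $H'=f_R$,
\[
 D_H(r\mid\theta)+D_H(\theta\mid r)=(r-\theta)\bigl(f_R(r)-f_R(\theta)\bigr).
\]
The two-sided bound then follows from $c_f\le f_R'\le C_f$ on all of $\R$ (tangent-line extension) and the mean value theorem. It holds for all real $r,\theta$, in particular for $r\in[-R,R]$.

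\emph{The monotonicity \eqref{eq:pencil}.} Integrate \eqref{eq:current} along the moving frame $x=\xi+(r-s)f_R(\theta)$. Set $\widetilde J(\xi,r)=J_\theta(\xi+(r-s)f_R(\theta),r)$. This is a change of variables by a spatial translation depending smoothly on time, so distributionally
\[
 \partial_r\widetilde J=-\widetilde\mu\le0,
\]
where $\widetilde\mu$ is the translated image of the nonnegative measure $\W(u,\theta)\rho_r\dd r$. To pass from this distributional inequality to a pointwise one with traces, test against $\phi(\xi)\chi(r)$ with $\phi\ge0$ in $C_c^\infty$ and $\chi$ approximating $\one_{[s,t]}$. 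Time continuity in $L^1_{\rm loc}$ from \eqref{eq:cumulative-time} (only differences are in $L^1$, which suffices because we compare $\widetilde J(\cdot,t)-\widetilde J(\cdot,s)$) lets $\chi\to\one_{[s,t]}$. This gives
\[
 \int\phi\,\bigl[\widetilde J(\xi,t)-\widetilde J(\xi,s)\bigr]\dd\xi\le0
 \qquad\text{for all }s<t,
\]
with the endpoint $s=0$ included by continuity. Hence $\widetilde J(\cdot,t)\le\widetilde J(\cdot,s)$ almost everywhere in $\xi$.

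\emph{Passing to the one-sided traces.} This is the main delicate point. $J_\theta(\cdot,r)$ is a difference of the nondecreasing $q+R m$-type functions; rewrite it as $(q+Rm)-(\theta+R)m$, which is BV with left and right limits. The traces are taken at a fixed point $y$, however, so I would let $\phi$ concentrate on $(y-\epsilon,y)$ or $(y,y+\epsilon)$. Averages of $\widetilde J(\cdot,r)$ over these intervals converge to $J^-_\theta$ and $J^+_\theta$ respectively, because $m,q$ are càdlàg cumulative functions: the strict trace is the left limit and the non-strict trace is the value, equal to the right limit. The translation $y\mapsto y+(t-s)f_R(\theta)$ is the same for all points at time $t$, so one-sided averages correspond. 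Letting $\epsilon\downarrow0$ yields \eqref{eq:pencil} for every $y$, every time pair, and both traces, with no exceptional set depending on $\theta$.
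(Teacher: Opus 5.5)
Your proposal is correct and follows essentially the paper's route: the current identity from \cref{lem:cumulative-current}, the Bregman/mean-value computation for \eqref{eq:W}, a shear along the line of slope $f_R(\theta)$ giving $\partial_t\widetilde J\le0$ in distributions, and recovery of the traces $J_\theta^\pm$ through one-sided spatial averages. The differences are cosmetic. The paper first mollifies with a one-sided kernel to obtain jointly continuous $F_\epsilon^\pm$, derives the inequality pointwise, and only then lets $\epsilon\downarrow0$. It also states explicitly the $L^1$ spatial-translation bound on $m,q$, which your time-continuity step for the sheared function uses implicitly alongside \eqref{eq:cumulative-time}.
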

\begin{proof}
By \cref{lem:cumulative-current}, the coefficient of $\rho$ on the left of \eqref{eq:current} is
$-uf(u)+\theta f(u)+f_R(\theta)(u-\theta)=-\W(u,\theta)$.
Here $f_R(u)=f(u)$ on the physical velocity range. Adding the Bregman divergences and using the derivative bounds on $f_R$ proves \eqref{eq:W}.

The shear $\widetilde J(z,t)=J_\theta(z+tf_R(\theta),t)$ satisfies $\partial_t\widetilde J\le0$ in distributions.
\begingroup
Choose $0\le\kappa\in C_c^\infty(0,1)$ with $\int\kappa=1$, set $\kappa_\epsilon(w)=\epsilon^{-1}\kappa(w/\epsilon)$, and define
\[
 F_\epsilon^\pm(z,t)=\int_0^\epsilon
 \kappa_\epsilon(w)\widetilde J(z\pm w,t)\dd w.
\]
The indicator calculation above gives
\endgroup
\[
 \|m(\cdot+h,t)-m(\cdot,t)\|_1\le M|h|,\qquad
 \|q(\cdot+h,t)-q(\cdot,t)\|_1\le RM|h|
\]
\begingroup
and, together with \eqref{eq:cumulative-time}, yields
\[
 \begin{aligned}
 \|\widetilde J(\cdot,t)-\widetilde J(\cdot,s)\|_1
 &\le M(R+|\theta|)(V_*+|f_R(\theta)|)|t-s|,\\
 |F_\epsilon^\pm(z,t)-F_\epsilon^\pm(z,s)|
 &\le\|\kappa_\epsilon\|_\infty
 \|\widetilde J(\cdot,t)-\widetilde J(\cdot,s)\|_1.
 \end{aligned}
\]
Since $|\widetilde J|\le M(R+|\theta|)$, spatial convolution also gives continuity in $z$, uniformly in $t$ for fixed $\epsilon$. Thus $F_\epsilon^\pm$ is jointly continuous, including at time zero. Convolution preserves $\partial_t\widetilde J\le0$, so for $0\le\psi\in C_c^\infty(\R)$ the continuous function $t\mapsto\int\psi F_\epsilon^\pm\dd z$ is nonincreasing. Consequently, for every $0\le s<t$,
\[
 \int\psi(z)[F_\epsilon^\pm(z,t)-F_\epsilon^\pm(z,s)]\dd z\le0.
\]
Continuity in $z$ gives $F_\epsilon^\pm(z,t)\le F_\epsilon^\pm(z,s)$ for every $z$. At each fixed time, the one-sided $BV$ limits give
\[
 F_\epsilon^\pm(z,t)\longrightarrow
 J_\theta^\pm(z+tf_R(\theta),t)
 \qquad(\epsilon\downarrow0).
\]
Taking this limit at $s$ and $t$, and setting $z=y-sf_R(\theta)$, proves \eqref{eq:pencil} for every time pair and every fixed real $\theta$.
\endgroup
\end{proof}

In mass coordinates, the source traces are expressed as follows. For $0\le s<t$, set
\begin{equation}\label{eq:source}
 Z_s=X(s),\qquad U_s=P_{X(s)}U_0,\qquad
 Q_s(a)=\int_0^aU_s(b)\dd b.
\end{equation}
At $s=0$, we have $U_s=U_0$. The no-splitting property and the tower property give
$p_X(t)=P_{X(t)}U_s$ and $\qmeas_t=X(t)_\#(U_s\dd a)$, the latter by testing against bounded functions of $X(t)$. Define
\[
 \lambda_s(y)=|\{a:Z_s(a)<y\}|,\qquad
 \varrho_s(y)=|\{a:Z_s(a)\le y\}|.
\]
The strict and non-strict source traces are then
\begin{equation}\label{eq:dictionary}
 (m_s^-,q_s^-)(y)=(\lambda_s(y),Q_s(\lambda_s(y))),\quad
 (m_s^+,q_s^+)(y)=(\varrho_s(y),Q_s(\varrho_s(y))).
\end{equation}
The two inverse images are prefixes of the mass-label interval $(0,M)$. At a vacuum jump cut $c$ both cuts equal $c$; at an atom fibre $(\ell,r)$ the strict and non-strict cuts are $\ell$ and $r$.

\begin{theorem}[Source-time compression at every cut]\label{thm:allcut}
Let $I=(\alpha,\beta)$ be a maximal positive fibre of $X(t)$, at position $x$, and let
\[
 p_I=\mean{I}{U_0}=\mean{I}{U_s}.
\]
Then for every $c\in[\alpha,\beta]$,
\begin{equation}\label{eq:allcut}
 \int_\alpha^c(U_s-p_I)\dd a\ge0,\qquad
 \int_c^\beta(U_s-p_I)\dd a\le0.
\end{equation}
The source time may be $s=0$ or any positive time preceding $t$.
\end{theorem}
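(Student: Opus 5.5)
The plan is to derive \eqref{eq:allcut} from the monotonicity of the cumulative flux along affine lines, \cref{lem:pencil}. The lines are drawn backward from the terminal position $x$, and their slope is chosen so that they hit the source configuration exactly at the cut $c$. Throughout I use the source dictionary \eqref{eq:dictionary} together with $\qmeas_t=X(t)_\#(U_s\dd a)$.

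\emph{Reduction and terminal traces.} The cases $c=\alpha,\beta$ are trivial. Since $\int_\alpha^\beta(U_s-p_I)\dd a=0$, the second inequality in \eqref{eq:allcut} follows from the first. For $\theta\in\R$ put
\[
 F_\theta(c)=\int_0^c(U_s-\theta)\dd a=Q_s(c)-\theta c .
\]
Then it suffices to show $F_{p_I}(c)\ge F_{p_I}(\alpha)$ for $\alpha<c<\beta$. At time $t$ the strict and non-strict traces at $x$ are $(m^-,q^-)=(\alpha,Q_s(\alpha))$ and $(m^+,q^+)=(\beta,Q_s(\beta))$, because $I$ is the fibre at $x$. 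Hence $J_\theta^-(x,t)=F_\theta(\alpha)$ and $J_\theta^+(x,t)=F_\theta(\beta)$. By \eqref{eq:dictionary}, for the source point $y=x-(t-s)f_R(\theta)$, \eqref{eq:pencil} gives
\[
 F_\theta(\alpha)\le F_\theta(\lambda_s(y)),\qquad F_\theta(\beta)\le F_\theta(\varrho_s(y)).
\]
Since $f_R$ is a bijection of $\R$, every source point $y$ is reached by exactly one $\theta=\theta(y)=g((x-y)/(t-s))$, and $\theta$ is decreasing in $y$. This is why the tangent-line extension is needed.

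\emph{Choice of the line for a given cut.} Fix $c\in(\alpha,\beta)$ and choose $y$ with $\lambda_s(y)\le c\le\varrho_s(y)$; for instance take $y$ to be the position $Z_s(c)$, read through its one-sided traces. Set $\theta'=\theta(y)$ and $\theta=p_I$. On $(\lambda_s(y),\varrho_s(y))$ the labels sit at a single source position, so $U_s$ is constant there and $F_\theta$ is affine on that interval. It therefore suffices to prove $F_\theta(\ell)\ge F_\theta(\alpha)$ at the two endpoints $\ell\in\{\lambda_s(y),\varrho_s(y)\}$. I split into two cases.
\begin{itemize}
\item If $\theta'\ge\theta$, then
\[
 \int_\alpha^{\lambda}(U_s-\theta)\ge\int_\alpha^{\lambda}(U_s-\theta')\ge0 .
\]
For the right endpoint $\varrho_s(y)$, I apply the same argument to source points $y''\downarrow y$. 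Their slopes satisfy $\theta(y'')\uparrow\theta'$, so for $y''$ close to $y$ they still exceed $\theta$ when $\theta'>\theta$, and $\lambda_s(y'')\downarrow\varrho_s(y)$. Continuity of $F_\theta$ then gives the bound at $\varrho_s(y)$.
\item If $\theta'\le\theta$, the non-strict inequality gives
\[
 \int_{\varrho}^\beta(U_s-\theta)\le\int_\varrho^\beta(U_s-\theta')\le0 ,
\]
which is equivalent to $\int_\alpha^\varrho(U_s-\theta)\ge0$. The left endpoint is handled symmetrically, using $y''\uparrow y$ and $\varrho_s(y'')\uparrow\lambda_s(y)$.
\end{itemize}
The borderline $\theta'=\theta$ falls under both bullets directly. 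In it, each limiting approximation should be taken from the side on which the slope inequality persists.

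\emph{Main obstacle.} The delicate part is this endpoint bookkeeping. It requires checking that $\lambda_s$ and $\varrho_s$ behave as claimed under one-sided limits in $y$ (left and right continuity of the distribution function of $Z_s$). It also requires handling source cuts at $0$ or $M$, and the case where the chosen $y$ lies in a vacuum gap, where $\lambda_s(y)=\varrho_s(y)=c$. The case $s=0$ needs no separate treatment, since \cref{lem:pencil} holds for every time pair, including $s=0$, with $U_0=U_s$.
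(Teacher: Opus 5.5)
Your proposal is correct and follows essentially the paper's route. Applying the pencil inequality along the line of slope $f_R(\theta(y))$, with $\theta(y)=g((x-y)/(t-s))$, gives exactly the two source-trace bounds \eqref{eq:cutleft}--\eqref{eq:cutright}, and your case split on the sign of $\theta(y)-p_I$ (one endpoint directly, the other by one-sided approximation $y''\to y$, then affinity on atom interiors) mirrors the paper's argument; using arbitrary $y''$ with the strict or non-strict trace instead of non-atomic points is a harmless simplification, and in the borderline $\theta(y)=p_I$ no approximation is needed since both endpoints are direct. The one point you should state explicitly is that no-splitting makes $I$ a union of whole source fibres, so $\alpha\le\lambda_s(y)\le\varrho_s(y)\le\beta$; your comparisons $\int_\alpha^{\lambda}(U_s-p_I)\ge\int_\alpha^{\lambda}(U_s-\theta(y))$ and their mirror use this ordering.
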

\begin{proof}
The proof reduces compression to nonnegativity of a scalar defect. The inequalities in \eqref{eq:pencil} control source cuts; continuity and affinity then cover the endpoints and interiors of source atoms.

The no-splitting property, including at zero, makes $I$ a union of whole source fibres; conditional expectation identifies its two means. Set
\[
 \Delta=t-s,\quad y_I=x-\Delta f(p_I),\quad
 F(c)=Q_s(c)-Q_s(\alpha)-p_I(c-\alpha).
\]
Then
\begin{equation}\label{eq:cut-defect}
 F(\alpha)=F(\beta)=0,\qquad
 F(c)=p_I(\beta-c)-[Q_s(\beta)-Q_s(c)],
\end{equation}
so both assertions reduce to $F(c)\ge0$. On $I$, finite displacement gives $|Z_s(a)-x|\le V_*\Delta$; all source positions and inward traces used below are finite, also when $\alpha=0$ or $\beta=M$.

At the terminal atom,
\[
 (m_t^-,q_t^-)(x)=(\alpha,Q_s(\alpha)),\qquad
 (m_t^+,q_t^+)(x)=(\beta,Q_s(\beta)).
\]
For $y\in\R$, put $\theta(y)=g((x-y)/\Delta)$, using the auxiliary inverse. Applying \eqref{eq:pencil} with each source trace gives
\begin{align}
 F(\lambda_s(y))&\ge(\theta(y)-p_I)(\lambda_s(y)-\alpha),\label{eq:cutleft}\\
 F(\varrho_s(y))&\ge(p_I-\theta(y))(\beta-\varrho_s(y)).\label{eq:cutright}
\end{align}
Use these only when the indicated source cut belongs to $[\alpha,\beta]$.

\emph{Non-atomic cuts and vacuum gaps.} If $Z_s(c-)=Z_s(c+)=z$ and $\rho_s(\{z\})=0$, take $y=z$; if $Z_s(c-)<Z_s(c+)$, take any $y$ in the open gap. In either case $\lambda_s(y)=\varrho_s(y)=c$. The right-hand side of \eqref{eq:cutleft} is nonnegative when $\theta(y)\ge p_I$, and that of \eqref{eq:cutright} is nonnegative when $\theta(y)\le p_I$. Thus $F(c)\ge0$ in both cases.

\emph{Endpoints of source atoms.} Let $J=(\ell,r)\subset I$ be a positive source fibre at position $z$. If $\ell=\alpha$, then $F(\ell)=0$. If $\ell>\alpha$ and $z\le y_I$, use the strict source trace and \eqref{eq:cutleft}. If $\ell>\alpha$ and $z>y_I$, choose non-atomic points $y_n\uparrow z$ with $y_n>y_I$, possible since the atoms are countable. Then $c_n=m_s(y_n)\uparrow\ell$ lies in $I$ eventually, both source cuts equal $c_n$, and $\theta(y_n)<p_I$. Equation \eqref{eq:cutright} gives $F(c_n)\ge0$; continuity gives $F(\ell)\ge0$.

If $r=\beta$, then $F(r)=0$. For $r<\beta$ and $z\ge y_I$, use the non-strict trace and \eqref{eq:cutright}. For $r<\beta$ and $z<y_I$, choose non-atomic $y_n\downarrow z$, $y_n<y_I$. The cuts $c_n=m_s(y_n)\downarrow r$ lie in $I$ eventually; \eqref{eq:cutleft} and continuity give $F(r)\ge0$.

\emph{Interiors of source atoms.} On $J$, $U_s$ is constant and $F$ is affine, with nonnegative endpoint values. Hence $F\ge0$ on $J$. These cases exhaust $[\alpha,\beta]$.
\end{proof}

\begin{corollary}[Obstacle and normal-cone relation]\label{cor:obstacle}
For $t\ge s$ put $\Xi_s(a,t)=Q_s(a)-\int_0^ap_X(b,t)\dd b$. Then
\begin{equation}\label{eq:obstacle}
 \Xi_s\ge0,\quad\Xi_s(0,t)=\Xi_s(M,t)=0,\quad
 \Xi_s(a,t)=0\quad(a\in(0,M)\setminus\Omega_{X(t)}),
\end{equation}
and $U_s-p_X(t)\in N_\K(X(t))$.
\end{corollary}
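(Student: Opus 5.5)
The plan is to obtain \eqref{eq:obstacle} from \cref{thm:allcut} by working one positive fibre of $X(t)$ at a time and using \cref{lem:PX} on the non-flat set. Then \cref{lem:normal} converts the obstacle properties into the normal-cone relation. Throughout I use $p_X(t)=P_{X(t)}U_s$, which comes from no-splitting and the tower property as recorded after \eqref{eq:source}.

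\emph{Step 1 (total mass and vanishing off fibres).} Since $P_{X(t)}$ preserves integrals, $\Xi_s(M,t)=\int_0^M(U_s-P_{X(t)}U_s)\dd a=0$, and trivially $\Xi_s(0,t)=0$. For $a$ in the non-flat set $N_{X(t)}$, I write $(0,a)$ up to null sets as a union of whole positive fibres of $X(t)$ together with non-flat labels. By \cref{lem:PX}, the integrand $U_s-p_X(t)$ has zero integral over each whole fibre and vanishes a.e. on the non-flat set. Hence $\Xi_s(a,t)=0$ for $a\notin\Omega_{X(t)}$. The only point needing care is that every fibre of $X(t)$ meeting $(0,a)$ lies entirely in $(0,a)$, which holds precisely because $a$ belongs to no open positive fibre. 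Countability of the fibres and dominated convergence justify summing the fibre contributions, since $|U_s|,|p_X|\le R$.

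\emph{Step 2 (nonnegativity on fibres).} Fix a positive fibre $I=(\alpha,\beta)$ of $X(t)$. By Step 1 and continuity of $\Xi_s(\cdot,t)$, we have $\Xi_s(\alpha,t)=0$. For $c\in(\alpha,\beta)$,
\[
\Xi_s(c,t)=\int_\alpha^c(U_s-p_I)\dd a ,
\]
because $p_X(t)=p_I$ on $I$ and $p_I$ is also the average of $U_s$ over $I$. The first inequality in \eqref{eq:allcut} then gives $\Xi_s(c,t)\ge0$. Together with Step 1, this proves $\Xi_s\ge0$ everywhere.

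\emph{Step 3 (normal cone).} Set $\xi=U_s-p_X(t)$, whose primitive is $\Xi_s(\cdot,t)$. I verify the three conditions of \eqref{eq:normalchar}. The first two, $\Xi(M)=0$ and $\Xi\ge0$, are Steps 1 and 2. For the orthogonality condition $\int\Xi\dd D_aX(t)=0$: the Stieltjes measure $D_aX(t)$ is carried by the complement of the open positive fibres, since $X(t)$ is constant on each of them. By Step 1 together with continuity of $\Xi$ at fibre endpoints, $\Xi$ vanishes on that complement. \Cref{lem:normal} then gives $U_s-p_X(t)\in N_\K(X(t))$.

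I expect no step to be a serious obstacle, since the real work is \cref{thm:allcut}. The most delicate point is Step 1's claim that $\Xi$ vanishes at every label of $(0,M)\setminus\Omega_{X(t)}$, including fibre endpoints, rather than merely almost everywhere. I would handle this using the continuity of $\Xi$ in $a$ and the countable decomposition into fibres.
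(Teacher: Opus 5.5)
Your proposal is correct and follows the paper's proof. Both split $(0,a)$ into whole positive fibres of $X(t)$ (zero defect by averaging), the non-flat set (zero defect by \cref{lem:PX}), and a final partial-fibre prefix (nonnegative by the prefix inequality of \cref{thm:allcut}), and then combine the vanishing of $\Xi_s$ off $\Omega_{X(t)}$, hence $D_aX(t)$-almost everywhere, with \cref{lem:normal}. Your two-step organization is simply an unpacking of that single decomposition.
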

\begin{proof}
Decompose $(0,a)$ into complete positive fibres of $X(t)$, the non-flat set and, when present, a final positive-fibre prefix. The defect vanishes on complete fibres by averaging, and on the non-flat set because $P_{X(t)}U_s=U_s$ by \cref{lem:PX}. The remaining prefix contributes a nonnegative defect by \eqref{eq:allcut}. Thus $\Xi_s\ge0$ and vanishes outside positive-fibre interiors. Its continuity gives zero integral against $D_aX(t)$, and \cref{lem:normal} yields the normal-cone relation. Taking the source at zero gives $U_0-p_X\in N_\K(X)$ for the entropy solution under consideration.
\end{proof}
\subsection[\texorpdfstring{One-sided physical velocities and position traces}{One-sided physical velocities and position traces}]{One-sided physical velocities and position traces}\label{sec:cut-curvature}

For a cut $c$ outside positive-fibre interiors, write $L_c(t)=X(c-,t)$ and $R_c(t)=X(c+,t)$. Internal traces are finite; endpoint traces $R_0,L_M$ are used only when finite. Finite displacement makes their finiteness independent of time and gives the $V_*$-Lipschitz bound. The local BV representatives and finite one-sided physical velocity traces are supplied by \cref{lem:BV}; no physical velocity trace is used at an infinite position endpoint. Fix a structural restart $\tau>0$ and write $U_\tau=P_{X(\tau)}U_0$. Recall that $Q_U(a)=\int_0^aU_0(b)\dd b$ is the primitive of the initial physical velocity.

At a given time, a side of a cut is called \emph{exposed} if no positive fibre contains a one-sided neighbourhood of the cut on that side. This does not imply vacuum and does not exclude small fibres accumulating at the cut.

\begin{lemma}[Identification of exposed one-sided physical velocities]\label{lem:Dini}
If at $t\ge\tau$ no positive fibre contains a right neighbourhood of a cut $c$, then
\begin{equation}\label{eq:DiniR}
 p_X(c+,t)=U_\tau(c+)=\underline D_+Q_U(c)
 :=\liminf_{h\downarrow0}\frac{Q_U(c+h)-Q_U(c)}h.
\end{equation}
If no positive fibre contains a left neighbourhood, then
\begin{equation}\label{eq:DiniL}
 p_X(c-,t)=U_\tau(c-)=\overline D_-Q_U(c)
 :=\limsup_{h\downarrow0}\frac{Q_U(c)-Q_U(c-h)}h.
\end{equation}
The interior-only formulas hold at $0$ and $M$ whenever the corresponding position trace is finite. A positive fibre starting at $c$ satisfies $p_I\le\underline D_+Q_U(c)$; one ending at $c$ satisfies $\overline D_-Q_U(c)\le p_I$.
\end{lemma}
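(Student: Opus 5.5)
The plan is to derive both identities from the compression inequalities of \cref{thm:allcut}, applied with source time $s=0$, combined with the restart representation \eqref{eq:restart}. We treat the right side in detail; the left side follows by reflecting the label interval. Suppose that at time $t\ge\tau$ no positive fibre of $X(t)$ contains a right neighbourhood of $c$. By the no-splitting property, fibres of $X(\tau)$ lie inside fibres of $X(t)$. Hence no fibre of $X(\tau)$ contains a right neighbourhood of $c$ either, so the side is exposed also at time $\tau$.

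\emph{First identity.} We show $p_X(c+,t)=U_\tau(c+)$. Right of $c$, the labels split into the non-flat set of $X(t)$ and fibres $I=(\alpha,\beta)$ with $\alpha\ge c$ and $\alpha\downarrow c$. On the non-flat set, $p_X(t)=U_\tau$ by \cref{lem:PX}. On a fibre $I$, $p_I$ is the mean of $U_\tau$ over $I$. By \cref{lem:BV}, $U_\tau$ has small one-sided essential oscillation about $U_\tau(c+)$ near $c$, and this holds at the endpoint too when the position trace is finite. Fibre means of $U_\tau$ over intervals shrinking to $c$ from the right therefore converge to $U_\tau(c+)$. This gives the first equality, using the regulated representative from \cref{lem:trace-Fubini}.

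\emph{Second identity.} We show $U_\tau(c+)=\underline D_+Q_U(c)$. The natural route is to pass from $U_\tau$ back to $U_0$. Exposure at $\tau$ gives labels $c_n\downarrow c$ that are either non-flat labels of $X(\tau)$ or left endpoints of its fibres. For such a cut, the prefix $(c,c_n)$ splits into complete fibres of $X(\tau)$ and non-flat labels. Using the relation $\int_0^a(U_0-U_\tau)\ge0$ with equality off fibre interiors (\cref{cor:obstacle} with $s=0$, $t=\tau$), we get $Q_U(c_n)-Q_U(c)=\int_c^{c_n}U_\tau$ along these cuts.

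At a general label $c+h$ lying inside a fibre $(\alpha,\beta)$ of $X(\tau)$ with $\alpha>c$, \cref{thm:allcut} bounds the extra prefix term. The bound is
\[
 \int_\alpha^{c+h}U_0\ge(c+h-\alpha)p_J.
\]
Consequently the difference quotients satisfy
\[
 \frac{Q_U(c+h)-Q_U(c)}h\ge\frac1h\int_c^{c+h}\widehat U_\tau
\]
for a function $\widehat U_\tau$ whose essential values near $c$ approach $U_\tau(c+)$. This yields $\underline D_+Q_U(c)\ge U_\tau(c+)$. Along the cuts $c_n$ the quotients are exactly the means of $U_\tau$, and these tend to $U_\tau(c+)$. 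So the liminf is attained along $c_n$ only if $h_n=c_n-c$ realises it; the inequality above already controls every other $h$ from below, so equality follows.

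The main obstacle is this second identity. The difficulty is to handle $h$ landing inside small fibres accumulating at $c$, where the compression inequality supplies only a one-sided bound. That is precisely why the formula uses the lower Dini derivative on the right and the upper one on the left.

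\emph{Fibre bounds.} For a positive fibre $I=(c,\beta)$ of $X(t)$ starting at $c$, apply \eqref{eq:allcut} with $s=0$ and small prefixes $(c,c+h)$. This gives $\int_c^{c+h}U_0\ge hp_I$, hence $\underline D_+Q_U(c)\ge p_I$. Symmetrically, suffixes of a fibre ending at $c$ give $\overline D_-Q_U(c)\le p_I$. Endpoint cases at $0$ or $M$ need only finite inward position traces, which \cref{lem:BV} supplies.
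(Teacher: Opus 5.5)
Your proposal is correct and follows essentially the paper's argument. You localize $p_X(t)$ near the exposed side to get $p_X(c+,t)=U_\tau(c+)$, then use the obstacle relation of \cref{cor:obstacle} (an inequality for every $h$, with equality along cuts $c_n\downarrow c$ outside fibre interiors) to identify the lower Dini slope, and prefix/suffix compression for the fibre bounds. There are two small differences: you run the obstacle step at the restart time $\tau$ (after transferring exposure back to $\tau$ by no-splitting) rather than at $t$, and you leave implicit the short overlap argument, used in the paper, showing that fibres of $X(t)$ meeting $(c,c+\epsilon)$ must lie in $(c,c+\eta)$, which is what makes them shrink to $c$.
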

\begin{proof}
\emph{Localization of the Lagrangian physical velocity.} Consider an exposed right side. For every $\eta>0$, all fibres meeting $(c,c+\epsilon_\eta)$ lie in $(c,c+\eta)$ for some $\epsilon_\eta>0$: otherwise fibres meeting arbitrarily close to $c$ and extending beyond $c+\eta$ would overlap and hence coincide, producing a positive right fibre. By \cref{lem:PX} and the tower property,
\begin{equation}\label{eq:exposed-essential-modulus}
 \esssup_{c<a<c+\epsilon_\eta}|p_X(a,t)-U_\tau(c+)|
 \le\esssup_{c<b<c+\eta}|U_\tau(b)-U_\tau(c+)|\longrightarrow0.
\end{equation}
This identifies the essential trace; \cref{lem:trace-Fubini} identifies it along any selected countable set of regular paths almost everywhere in time.

\emph{Identification of the Dini slopes.} Since $\Xi_0(c,t)=0$, the obstacle relation \eqref{eq:obstacle} gives
\[
 Q_U(c+h)-Q_U(c)\ge\int_c^{c+h}p_X(a,t)\dd a.
\]
There are cuts $c_n\downarrow c$ outside positive-fibre interiors, since otherwise a right neighbourhood would lie in one positive fibre. At these cuts equality holds, so the corresponding difference quotients converge to $p_X(c+,t)$. This proves \eqref{eq:DiniR}. On the left the same obstacle relation reads
$Q_U(c)-Q_U(c-h)\le\int_{c-h}^cp_X(a,t)\dd a$; equality along cuts approaching from the left gives \eqref{eq:DiniL}. Prefix and suffix compression on a positive fibre give the two asserted Dini inequalities. At $0,M$ only the available finite-position side is used.
\end{proof}

\begin{proposition}[Order of one-sided physical velocities and convexity of position traces]\label{thm:cut-curvature}
Suppose $c$ lies outside positive-fibre interiors on $[s,T]$ (or on $[s,T)$ with endpoint limits). Set $p_\pm(t)=p_X(c\pm,t)$. For structural times $r<t$ in this interval,
\[
 p_-(r)\le p_-(t),\qquad p_+(r)\ge p_+(t).
\]
The traces satisfy
\begin{equation}\label{eq:trace-integral}
 L_c(t)-L_c(r)=\int_r^t f(p_-(q))\dd q,\qquad
 R_c(t)-R_c(r)=\int_r^t f(p_+(q))\dd q.
\end{equation}
Consequently $L_c$ is convex and $R_c$ is concave. The one-sided endpoint assertions hold for $R_0$ or $L_M$ when that trace is finite.
\end{proposition}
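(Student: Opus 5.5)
The order of the one-sided velocities comes from the exposed-side identification (\cref{lem:Dini}) together with the Dini inequalities for fibres adjacent to $c$. The trace formulas \eqref{eq:trace-integral} will come from integrating regular label paths \eqref{eq:path} that approach $c$, using \cref{lem:trace-Fubini} to pass to the limit. Convexity then follows from monotonicity of $p_\pm$ and of $f$.

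\emph{Step 1: monotonicity of $p_+$.} Fix structural times $r<t$ with $c$ outside positive-fibre interiors at both. At time $t$ there are two cases.
\begin{itemize}
\item If the right side of $c$ is exposed, \cref{lem:Dini} gives $p_+(t)=\underline D_+Q_U(c)$.
\item If a positive fibre $I$ starts at $c$, then $p_+(t)=p_I\le\underline D_+Q_U(c)$ by the last assertion of \cref{lem:Dini}.
\end{itemize}
At the earlier time $r$ we use no-splitting. If a fibre $I'$ starts at $c$ at time $r$, it lies inside a fibre of $X(t)$ containing a right neighbourhood of $c$. Since $c$ is not interior at $t$, that fibre also starts at $c$. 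The cleaner route is to apply \cref{thm:allcut} with source time $r$ to the terminal fibre $I$ at time $t$. With the prefix $c$ replaced by the endpoint of $I'$, or a short prefix $(c,c+h)$ in the exposed case at $r$, the prefix inequality gives
\[
\text{average over the prefix of } U_r \ge p_I .
\]
Letting the prefix shrink toward $c$, where $U_r$ has a right trace $p_+(r)$ by \cref{lem:BV}, yields $p_+(r)\ge p_I=p_+(t)$. If instead the right side at $t$ is exposed, then $p_X(t)=P_{X(t)}U_r$ near $c$ has trace $U_r(c+)=p_+(r)$ by the localization \eqref{eq:exposed-essential-modulus}, run with source $r$ in place of $\tau$. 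In that case equality holds. The left side is symmetric, using the suffix inequality of \eqref{eq:allcut} to get $p_-(r)\le p_-(t)$.

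\emph{Step 2: trace identities.} Choose a countable sequence of labels $a_n\in A$ with $a_n\downarrow c$, where $A$ is the set of \cref{lem:trace-Fubini}. By \eqref{eq:path},
\[
X(a_n,t)-X(a_n,r)=\int_r^t f(p_X(a_n,q))\dd q .
\]
As $n\to\infty$, the left side tends to $R_c(t)-R_c(r)$, by continuity of the traces (\cref{lem:alltime-labels}) and monotonicity in the label. On the right, \cref{lem:trace-Fubini} gives $p_X(a_n,q)\to p_+(q)$ for almost every $q>\tau$. Boundedness of the integrand gives dominated convergence. The same argument from the left gives the formula for $L_c$.

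Since $\tau$ is an arbitrary structural restart below $s$, or $\tau\downarrow s$ when $s>0$, the formulas extend to the whole interval. The endpoint case $s=0$ needs $p_\pm$ only on positive times, and the path identity continues to $0$ by Lipschitz continuity.

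\emph{Step 3: convexity.} $R_c$ is absolutely continuous with derivative $f(p_+)$ almost everywhere, and this derivative is nonincreasing on structural times, a full-measure set. Hence $R_c$ is concave. Likewise $L_c$ is convex. At $0$ or $M$ only the side with finite trace is used.

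\emph{Main obstacle.} I expect the hardest part to be Step 1 when the side of $c$ is exposed at time $t$ but small fibres accumulate at $c$. There the shrinking-prefix limit must be matched with the regulated trace of $U_r$, and the fibre-containment argument must be adapted via \eqref{eq:exposed-essential-modulus}.
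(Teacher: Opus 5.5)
Your proposal is correct and follows the paper's proof essentially step for step. The paper also uses source-time compression with source $r$, over either the whole earlier fibre or a shrinking prefix or suffix $(c,c+h)$; it uses the exposed-side localization of \cref{lem:Dini} with restart $r$ when the side stays exposed, then passes to the limit in \eqref{eq:path} via \cref{lem:trace-Fubini} and dominated convergence, and extends to the interval endpoints by Lipschitz continuity. Your preliminary comparison with $\underline D_+Q_U(c)$ is not needed, and by itself it would not settle the case of positive fibres at both $r$ and $t$; the compression route you then adopt does cover all cases.
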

\begin{proof}
Compression and identification of exposed physical velocity traces give the order of physical velocities at structural times. Passing to one-sided limits in the path identity yields the trace integrals and curvature.

Fix structural times $r<t$. A positive left fibre $I_r=(\alpha_r,c)$ is a suffix of its descendant $I_t=(\alpha_t,c)$, with $\alpha_t\le\alpha_r$; a positive right fibre $J_r=(c,\beta_r)$ is a prefix of its descendant $J_t=(c,\beta_t)$, with $\beta_r\le\beta_t$. Source-time compression gives, on the respective positive sides,
\[
 p_-(r)=\frac1{|I_r|}\int_{I_r}U_r\dd a\le p_-(t),\qquad
 p_+(r)=\frac1{|J_r|}\int_{J_r}U_r\dd a\ge p_+(t).
\]
If a side is exposed at $r$ and positive at $t$, apply suffix compression to $(c-h,c)$ on the left, or prefix compression to $(c,c+h)$ on the right, and let $h\downarrow0$. If it remains exposed, \cref{lem:Dini} with restart $r$ gives $p_\pm(t)=U_r(c\pm)=p_\pm(r)$ on that side.

These orders define monotone time representatives on the structural set. Choose regular labels $a_n\uparrow c$ in the path identity. By \cref{lem:trace-Fubini}, $p_X(a_n,q)\to p_-(q)$ for almost every intermediate $q$. Dominated convergence gives the first identity in \eqref{eq:trace-integral}; labels $a_n\downarrow c$ give the second. The monotonicity of $f$ now implies convexity of $L_c$ and concavity of $R_c$. Their Lipschitz continuity extends both conclusions to interval endpoints, including time zero when the cut stays outside fibre interiors, and to each finite domain-endpoint trace.
\end{proof}

\subsection{Backward fans and cumulative identities}\label{sec:affine-fans}\label{sec:fan}

Characteristic triangles for the nonlinear system already appear in \cite{Huang}. Here source-time compression gives the curvature of one-sided position traces and the affine separators of terminal fibres for every entropy solution. The full fan and cumulative identities below are structural consequences; the later comparison uses the one-sided physical velocity information in \cref{lem:Dini}.

\begin{lemma}[Affine bounds and rigidity of interior equality]\label{lem:chord}
Let $L$ be convex, $R$ concave and $L\le R$ on $[s,T]$. If $y\in[L(s),R(s)]$, $x\in[L(T),R(T)]$, then
\begin{equation}\label{eq:chord}
 \gamma(r)=\frac{T-r}{T-s}y+\frac{r-s}{T-s}x
 \quad\text{satisfies}\quad L(r)\le\gamma(r)\le R(r).
\end{equation}
If $L(r_0)=\gamma(r_0)$ or $R(r_0)=\gamma(r_0)$ at one interior time, that bounding trace equals $\gamma$ throughout $[s,T]$.
\end{lemma}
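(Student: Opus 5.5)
The plan is to compare each trace with the chord through the differences $\phi=\gamma-L$ and $\psi=R-\gamma$. Since $\gamma$ is affine, $\phi$ is convex and $\psi$ is concave on $[s,T]$. The hypotheses $y\in[L(s),R(s)]$ and $x\in[L(T),R(T)]$ give the endpoint values
\[
 \phi(s)=y-L(s)\ge0,\qquad \phi(T)=x-L(T)\ge0,
\]
and likewise $\psi(s)\ge0$ and $\psi(T)\ge0$. The hypothesis $L\le R$ on $[s,T]$ is needed only to make the endpoint intervals nonempty.

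For the bound $\psi\ge0$, concavity suffices. A concave function lies above the chord joining its endpoint values, so $\psi(r)\ge\frac{T-r}{T-s}\psi(s)+\frac{r-s}{T-s}\psi(T)\ge0$. This gives $\gamma\le R$.

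The bound $L\le\gamma$ is where the plan has to be checked most carefully. Convexity only puts $\phi$ below its chord, which is the wrong direction for a lower bound. I would therefore rewrite the inequality in terms of $L$ itself. Convexity of $L$ gives
\[
 L(r)\le\frac{T-r}{T-s}L(s)+\frac{r-s}{T-s}L(T).
\]
The coefficients are nonnegative, and $L(s)\le y$, $L(T)\le x$. Hence the right-hand side is at most $\gamma(r)$, and $L(r)\le\gamma(r)$. The bound $\gamma\le R$ can be obtained in the same way from concavity of $R$ together with $R(s)\ge y$ and $R(T)\ge x$. This establishes \eqref{eq:chord}.

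For rigidity, suppose $L(r_0)=\gamma(r_0)$ at some interior time $r_0$. Then $\phi\ge0$ on $[s,T]$ by the first part, and $\phi(r_0)=0$, so $\phi$ attains its minimum at an interior point. The remaining step is to use concavity of $\phi$, which holds because $\phi=\gamma-L$ is affine minus convex. Write $r_0=\lambda r_1+(1-\lambda)r_2$ with $\lambda\in(0,1)$ for arbitrary $r_1<r_0<r_2$ in $[s,T]$. Concavity gives
\[
 0=\phi(r_0)\ge\lambda\phi(r_1)+(1-\lambda)\phi(r_2)\ge0,
\]
and so $\phi(r_1)=\phi(r_2)=0$. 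Since $r_1$ and $r_2$ are arbitrary, $\phi\equiv0$, that is, $L=\gamma$ on $[s,T]$. The same argument applied to $\psi$, which is convex as affine minus concave and is nonnegative with an interior zero, gives $R=\gamma$ throughout. No earlier results of the paper are needed. The lemma is elementary, and its only delicate point is tracking the direction of each convexity inequality.
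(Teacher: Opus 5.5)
Your route is the paper's: the bounds come from the chord inequalities for $L$ and $R$ themselves, and rigidity comes from a nonnegative concave function with an interior zero. However, two convexity labels are reversed, and the second one breaks your last step as written.

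In the opening paragraph you call $\phi=\gamma-L$ convex. It is affine minus convex, hence concave, which is how you use it later. With the correct label, the concave chord inequality $\phi(r)\ge\frac{T-r}{T-s}\phi(s)+\frac{r-s}{T-s}\phi(T)\ge0$ gives $L\le\gamma$ at once. Your detour through the chord inequality for $L$ is therefore valid but unnecessary.

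The more serious slip is in your final sentence. There you say $\psi=R-\gamma$ is ``convex as affine minus concave'' and invoke ``the same argument''. In fact $\psi$ is concave minus affine, hence concave, as you correctly stated in your first paragraph. The argument genuinely needs concavity: for a nonnegative convex function an interior zero forces nothing, as $r\mapsto|r-r_0|$ shows. So rigidity of $R$ does not follow from what you wrote. It follows once you use the correct fact that $\psi$ is concave; then $0=\psi(r_0)\ge\lambda\psi(r_1)+(1-\lambda)\psi(r_2)\ge0$ goes through verbatim. With both $\gamma-L$ and $R-\gamma$ identified as nonnegative concave functions, your proof coincides with the paper's.
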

\begin{proof}
The convex and concave chord inequalities give \eqref{eq:chord}. The functions $\gamma-L$ and $R-\gamma$ are nonnegative and concave; an interior zero forces either function to vanish identically, proving rigidity.
\end{proof}

\begin{theorem}[Affine backward fan]\label{thm:fan}
Let $I=(\alpha,\beta)$ be a maximal positive fibre at time $T>0$ and position $x$. Define
\begin{align}
 y_-&=X_0(\alpha+),&\gamma_-(t)&=y_-+\frac tT(x-y_-),\label{eq:fan-minus}\\
 y_+&=X_0(\beta-),&\gamma_+(t)&=y_++\frac tT(x-y_+).\label{eq:fan-plus}
\end{align}
These inner initial traces are finite, including at a domain endpoint. Their slopes belong to $[v_-,v_+]$, $\gamma_-\le\gamma_+$, and for every $0\le t\le T$,
\begin{equation}\label{eq:fan-separation}
 \begin{cases}
 X(a,t)\le\gamma_-(t),&a<\alpha,\\
 \gamma_-(t)\le X(a,t)\le\gamma_+(t),&\alpha<a<\beta,\\
 X(a,t)\ge\gamma_+(t),&a>\beta,
 \end{cases}
\end{equation}
in the appropriate monotone-trace sense. If either side meets its corresponding bounding position trace at an interior time, then that trace coincides with the side throughout $[0,T]$.
\end{theorem}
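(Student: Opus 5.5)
The plan is to apply \cref{lem:chord} on $[0,T]$ to the cuts $c=\alpha$ and $c=\beta$, using the convex/concave position traces from \cref{thm:cut-curvature}. For the left separator, take $c=\alpha$. Throughout $[0,T]$ the cut $\alpha$ lies outside positive-fibre interiors. Indeed, by the no-splitting property, a fibre of $X(t)$ with $t\le T$ containing $\alpha$ in its interior would be contained in a fibre of $X(T)$ containing $\alpha$ in its interior, contradicting maximality of $I$. The same argument applies to $\beta$. Hence \cref{thm:cut-curvature} makes $L_\alpha$ convex and $R_\alpha$ concave, with $L_\alpha\le R_\alpha$, and extends both to time zero by Lipschitz continuity.

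Next I identify the endpoints. At time $T$ one has $R_\alpha(T)=X(\alpha+,T)=x$, and at time zero $R_\alpha(0)=X_0(\alpha+)=y_-$. This trace is finite: by finite displacement \eqref{eq:propagation}, $|X_0(a)-x|\le V_*T$ for almost every $a\in I$, which also covers $\alpha=0$. Since $R_\alpha$ is concave, the chord $\gamma_-$ through $(0,y_-)$ and $(T,x)$ satisfies $\gamma_-\le R_\alpha$ on $[0,T]$. I also need $L_\alpha\le\gamma_-$. Here I apply \cref{lem:chord} with $L=L_\alpha$, $R=R_\alpha$, $y=y_-\in[L_\alpha(0),R_\alpha(0)]$ and $x\in[L_\alpha(T),R_\alpha(T)]$, where $L_\alpha(T)\le x$ by monotonicity. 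This gives $L_\alpha\le\gamma_-\le R_\alpha$. Monotonicity in the label then yields $X(a,t)\le L_\alpha(t)\le\gamma_-(t)$ for $a<\alpha$ and $X(a,t)\ge R_\alpha(t)\ge\gamma_-(t)$ for $a>\alpha$. The symmetric argument at $\beta$ uses $L_\beta(T)=x$ and $L_\beta(0)=y_+$, and gives $\gamma_+$ together with the remaining inequalities in \eqref{eq:fan-separation}. Since $\alpha<\beta$, we have $y_-\le y_+$, and because both chords end at $x$ this gives $\gamma_-\le\gamma_+$. The slope $(x-y_\pm)/T$ lies in $[v_-,v_+]$ because $R_\alpha$ and $L_\beta$ are integrals of $f(p_\pm)\in[v_-,v_+]$ by \eqref{eq:trace-integral}. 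The rigidity statement is exactly the interior-equality clause of \cref{lem:chord}, applied to $R_\alpha$ against $\gamma_-$, or to $L_\beta$ against $\gamma_+$. The outer traces $L_\alpha$ and $R_\beta$ also lie on the correct side, and rigidity applies to them in the same way.

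The main obstacle is the degenerate endpoint situation. When $\alpha=0$ or $\beta=M$, the trace $L_\alpha$ (respectively $R_\beta$) may not exist or may be infinite. In that case the outer inequality is vacuous, and only the inward traces $R_\alpha$ and $L_\beta$ are needed. Their finiteness at all times comes from the displacement bound and \cref{lem:alltime-labels}, and concavity or convexity of these one-sided traces is supplied by the endpoint clause of \cref{thm:cut-curvature}. I would verify carefully that the chord argument uses only the inward trace. Concavity of $R_\alpha$ alone gives $\gamma_-\le R_\alpha$, which suffices for the labels on the side of $I$. For the other side, $a<\alpha$, I would use $L_\alpha\le\gamma_-$ from convexity together with $L_\alpha(0)\le y_-$ and $L_\alpha(T)\le x$.
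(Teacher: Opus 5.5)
Your proposal is correct and follows the paper's proof essentially step by step. Like the paper, you use no-splitting and maximality to keep $\alpha,\beta$ outside fibre interiors on $[0,T]$, then apply \cref{thm:cut-curvature} and \cref{lem:chord} with initial points $R_\alpha(0)=y_-$ and $L_\beta(0)=y_+$, and at $\alpha=0$ or $\beta=M$ you rely only on the inward trace. The one cosmetic difference is the slope bound: you read it off the trace integrals \eqref{eq:trace-integral} extended to time zero, whereas the paper takes limits from inside $I$ of the path-identity quotients $[x-X_0(a)]/T$. Either route gives the bound.
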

\begin{proof}
For almost every $a\in I$, $|X_0(a)-x|\le V_*T$. Thus monotonicity makes $y_-,y_+$ finite, including at domain endpoints. Earlier fibres cannot cross $\alpha$ or $\beta$, since the no-splitting property would contradict maximality of $I$ at $T$. At internal boundary cuts, $L_\alpha(T)\le R_\alpha(T)=x$ and $L_\beta(T)=x\le R_\beta(T)$.
Apply \cref{thm:cut-curvature,lem:chord} with initial points $R_\alpha(0)$ and $L_\beta(0)$. At $\alpha=0$, concavity of $R_0$ supplies the lower inner bound; at $\beta=M$, convexity of $L_M$ supplies the upper inner bound. The missing exterior label sets are empty.

For inner labels, $R_\alpha(t)\le X(a,t)\le L_\beta(t)$; exterior labels satisfy $X(a,t)\le L_\alpha(t)$ or $X(a,t)\ge R_\beta(t)$. The chord bounds therefore give \eqref{eq:fan-separation}. Monotonicity of $X_0$ orders $y_-,y_+$, hence the sides sharing terminal point $x$. Approaching the two boundaries from inside $I$, the quotients $[x-X_0(a)]/T$ are time averages of transport velocities in $[v_-,v_+]$; their limits give the slope bounds. Interior equality is the rigidity assertion of \cref{lem:chord}.
\end{proof}

\begin{proposition}[Restriction to whole fibres]\label{prop:restriction}
Suppose a fixed label interval $A$ is a union of whole fibres of $X(t)$ for every $t$ in an open interval $(s,T)$. Define
\[
 \rho_t^A=X(t)_\#(\one_A\dd a),\qquad
 \qmeas_t^A=X(t)_\#(U_0\one_A\dd a).
\]
With the physical velocity $u$ of the unrestricted solution on $\rho_t^A\dd t$, they satisfy in $\mathcal D'(\R\times(s,T))$
\[
 \partial_t\rho^A+\partial_x(f(u)\rho^A)=0,\qquad
 \partial_t\qmeas^A+\partial_x(f(u)\qmeas^A)=0.
\]
Their totals are $\rho_t^A(\R)=|A|$ and $\qmeas_t^A(\R)=\int_AU_0\dd a$. No lateral source is produced by the restriction.
\end{proposition}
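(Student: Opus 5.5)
We verify the two weak equations directly in mass coordinates, mirroring the proof of \cref{prop:closure} with the weight $\one_A$ inserted. Fix $\phi\in C_c^1(\R\times(s,T))$. On the common full-measure set of regular labels from \cref{thm:lift}, the path identity \eqref{eq:path} makes $t\mapsto\phi(X(a,t),t)$ absolutely continuous with derivative $\phi_t(X,t)+f(p_X)\phi_x(X,t)$. Compact time support in $(s,T)$ kills the boundary terms, so integrating over $t$ and then over $a\in A$ with bounded integrands gives
\[
 \int_s^T\!\int_0^M\one_A\,[\phi_t(X,t)+f(p_X)\phi_x(X,t)]\dd a\dd t=0 .
\]
The mass equation follows by push-forward once $f(p_X(a,t))$ is replaced by $f(u(X(a,t),t))$. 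This replacement uses the pullback identity $u(X(a,t),t)=p_X(a,t)$ for almost every $(a,t)$, which holds at structural times, together with $b=f(u)$ holding $\rho_t\dd t$-almost everywhere. Since $\rho^A_t\le\rho_t$, the physical velocity of the unrestricted solution is defined $\rho^A_t\dd t$-almost everywhere, and the pushed-forward flux equals $f(u)\rho^A$.

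For the momentum equation we multiply the pathwise identity by $U_0(a)\one_A(a)$ and integrate. The key step, and the only place the whole-fibre hypothesis enters, is showing that $\one_A$ is a function of the position at every $t\in(s,T)$, that is $\one_A\in\mathcal H_{X(t)}$ by \cref{lem:PX}. This holds because $A$ is a union of whole fibres, so no positive fibre of $X(t)$ meets both $A$ and its complement in positive measure. Consequently, for almost every $t$, the functions $\one_A\phi_t(X,t)$ and $\one_Af(p_X)\phi_x(X,t)$ lie in $\mathcal H_{X(t)}$, since $f(p_X)\in\mathcal H_{X(t)}$ as well. Conditional expectation therefore lets us replace $U_0$ by $p_X=P_{X(t)}U_0$ inside these pairings. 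Pushing forward, $X(t)_\#(U_0\one_A\phi(X,t)\dd a)$ pairs as $\int\phi\dd\qmeas^A_t$, and the flux term becomes $\int\phi_x f(u)\dd\qmeas^A_t$, since $X(t)_\#(p_X\one_A f(p_X)\dd a)=uf(u)\rho^A_t$ by the pullback identity. This yields the second equation.

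The totals are immediate from the push-forward: $\rho^A_t(\R)=\int\one_A\dd a=|A|$ and $\qmeas^A_t(\R)=\int_AU_0\dd a$, constant in $t$. No lateral source appears because the distributional identities hold with no extra term; this is exactly the fibre-measurability of $\one_A$. Without it, the replacement of $U_0$ by $p_X$ would fail on fibres straddling $\partial A$, producing the source term.

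The main obstacle is stating the fibre condition precisely. "Whole fibres" must mean that, for each $t$, every positive fibre of $X(t)$ is either almost contained in $A$ or almost disjoint from it. Labels in the non-flat set impose no constraint, since \cref{lem:PX} characterizes $\mathcal H_{X(t)}$ purely by constancy on positive fibres. It is enough that this holds for almost every $t\in(s,T)$, which is what the integrated identity requires.
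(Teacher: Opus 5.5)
Your proposal is correct and is essentially the paper's argument. Integrate the path chain rule over regular labels with the fixed weights $\one_A$ and $U_0\one_A$, use $\one_A\in\mathcal H_{X(t)}$ (from \cref{lem:PX}) to get $P_{X(t)}(U_0\one_A)=\one_A p_X(t)$ in the momentum case, push forward via the pullback identity, and read off the totals from the fixed weights.

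One refinement to your closing remark. With the flux written as $f(u)\qmeas^A$, the weighted momentum identity already follows from $f(u(X,t))=f(p_X)$ for any label set $A$. So the whole-fibre hypothesis is not what removes a source from that equation; what it actually supplies is $\qmeas^A_t=u\rho^A_t$ for almost every $t$. That is, the restricted pair $(\rho^A,u)$ itself solves the system with momentum $u\rho^A$ and no source.
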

\begin{proof}
Since $\one_A$ is constant on every positive fibre of $X(t)$, \cref{lem:PX} gives $\one_A\in\mathcal H_{X(t)}$. Hence
\[
 P_{X(t)}(U_0\one_A)=\one_Ap_X(t).
\]
Integrating the path chain rule with the fixed weights $\one_A$ and $U_0\one_A$, and using this identity in the latter case, gives the two weak equations. Dominated convergence along continuous paths gives narrow temporal traces. The totals are the integrals of these fixed weights.
\end{proof}

\begin{proposition}[One-sided cumulative identities]\label{thm:raw-sides}
For the fan in \textup{\cref{thm:fan}}, for every $0<t<T$,
\begin{align}
 m^-(\gamma_-(t),t)&=\alpha,&q^-(\gamma_-(t),t)&=Q_U(\alpha),\label{eq:raw-minus}\\
 m^+(\gamma_+(t),t)&=\beta,&q^+(\gamma_+(t),t)&=Q_U(\beta).\label{eq:raw-plus}
\end{align}
Thus the lower strict trace excludes the whole fan mass, while the upper non-strict trace includes it. The formulas also hold at interior merger times.
\end{proposition}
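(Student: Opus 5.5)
The plan is to express the cumulative quantities through the Lagrangian representation of \cref{thm:lift},
\[
 m^\mp(x,t)=|\{a:X(a,t)\lessgtr x\}|,\qquad q^\mp(x,t)=\int_{\{X(a,t)\lessgtr x\}}U_0\dd a,
\]
where $m^-,q^-$ use the strict inequality $X(a,t)<x$ and $m^+,q^+$ the non-strict $X(a,t)\le x$. Both formulas follow from $\rho_t=X(t)_\#\dd a$ and $\qmeas_t=X(t)_\#(U_0\dd a)$. With this, \eqref{eq:raw-minus} reduces to showing that, up to a null label set, $\{a:X(a,t)<\gamma_-(t)\}=(0,\alpha)$. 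Then $m^-=\alpha$ and $q^-=\int_0^\alpha U_0=Q_U(\alpha)$. Symmetrically, \eqref{eq:raw-plus} reduces to $\{a:X(a,t)\le\gamma_+(t)\}=(0,\beta)$ up to null sets. I treat the left side; the right is the mirror argument with the convex/concave roles exchanged.

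\emph{Inclusion of the exterior.} By \cref{thm:fan}, labels $a<\alpha$ satisfy $X(a,t)\le\gamma_-(t)$. What must be shown is that they satisfy it strictly, at least for almost every such label. Since $X(\cdot,t)$ is nondecreasing, it suffices to prove $L_\alpha(t)<\gamma_-(t)$ for $0<t<T$, where $L_\alpha(t)=X(\alpha-,t)$; if $\alpha=0$ there is nothing to prove. If instead $L_\alpha(t_0)=\gamma_-(t_0)$ at some interior $t_0$, the rigidity part of \cref{thm:fan} (via \cref{lem:chord}) forces $L_\alpha\equiv\gamma_-$ on $[0,T]$. Then $L_\alpha(T)=x=R_\alpha(T)$. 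However, $\gamma_-$ also satisfies $\gamma_-\le R_\alpha$ with $R_\alpha(0)=y_-=\gamma_-(0)$, and $R_\alpha$ is concave, so this gives $R_\alpha\ge\gamma_-=L_\alpha$. I still need a contradiction in the case where the labels just left of $\alpha$ travel exactly along $\gamma_-$ and only join the fibre at time $T$.

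\emph{Main obstacle.} This is precisely the situation that is not excluded. A label block left of $\alpha$ can move along $\gamma_-$ and reach $x$ exactly at $T$, which is compatible with maximality of $I$ at time $T$. In that case $m^-(\gamma_-(t),t)$ still equals $\alpha$, since strict inequality only matters where $X(a,t)=\gamma_-(t)$, but this set need not be null. The fix is to use $X(a,t)\le L_\alpha(t)$ together with the following claim: on the set $\{a<\alpha: X(a,t)=\gamma_-(t)\}$, these labels form a fibre (or a single label) at position $\gamma_-(t)$, and by the no-splitting property that fibre merges into $I$ at time $T$. Merging at $T$ would make $\alpha$ not the left endpoint of the maximal fibre unless the block is empty. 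More precisely, if a positive-length set of labels $a<\alpha$ sat at $\gamma_-(t_0)$, they would form a fibre $J$. By no-splitting and the gap estimate \eqref{eq:limit-gap}, $J$ moves together with the adjacent labels and reaches $x$ at time $T$. Then $J\subset I$, contradicting $J\cap I=\emptyset$. The same argument applied to labels of $I$ near $\alpha$ shows that $(\alpha,\beta)$ lies in $\{X(t)\ge\gamma_-(t)\}$, using the inner bound in \eqref{eq:fan-separation}.

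\emph{Closing.} Combining the two inclusions gives the set identity up to null sets. This yields $m^-=\alpha$ and $q^-=Q_U(\alpha)$, and the mirror argument gives \eqref{eq:raw-plus}. At interior merger times the same set identities hold because the argument only used $0<t<T$ and the fan bounds, which remain valid at those times.
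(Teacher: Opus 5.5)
Your argument is correct and is essentially the paper's. You reduce to the label-set identities $\{X(t)<\gamma_-(t)\}=(0,\alpha)$ and $\{X(t)\le\gamma_+(t)\}=(0,\beta)$ via the push-forward formulas. You then exclude a positive-measure exterior block on $\gamma_-$ (resp.\ $\gamma_+$): rigidity in \cref{lem:chord} forces $L_\alpha\equiv\gamma_-$ (resp.\ $R_\beta\equiv\gamma_+$), and no-splitting carries that block to $x$ at time $T$, contradicting maximality of $I$.

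Tidy the middle paragraph, though. The trace $L_\alpha$ can indeed run along $\gamma_-$, so $L_\alpha<\gamma_-$ is the wrong target. But a positive-measure block doing so is exactly what maximality forbids, so $\{a<\alpha:X(a,t)=\gamma_-(t)\}$ is null. It is not ``possibly non-null'', and such a block is not ``compatible with maximality''.
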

\begin{proof}
By \eqref{eq:fan-separation}, lower exterior labels lie at or below $\gamma_-$, and inner labels at or above it. If, for some $t_0\in(0,T)$,
\[
 E=\{a<\alpha:X(a,t_0)=\gamma_-(t_0)\},\qquad |E|>0,
\]
then $L_\alpha(t_0)=\gamma_-(t_0)$ and \cref{lem:chord} gives $L_\alpha\equiv\gamma_-$. Monotonicity and the no-splitting property keep this positive fibre, modulo endpoints, on the left trace, so $X(a,T)=L_\alpha(T)=x$ for a.e. $a\in E$, contradicting maximality of $I$. Thus $\{X<\gamma_-\}=(0,\alpha)$ modulo null sets, and push-forward of $1,U_0$ gives \eqref{eq:raw-minus}; for $\alpha=0$ the strict lower set is empty.

Positive mass above $\beta$ on $\gamma_+$ gives $R_\beta(t_0)=\gamma_+(t_0)$; concave rigidity and no-splitting retain this mass at $x$ at time $T$, again contradicting maximality. Hence $\{X\le\gamma_+\}=(0,\beta)$ modulo null sets, proving \eqref{eq:raw-plus}; for $\beta=M$ there is no exterior mass. All-time path continuity includes interior merger times.
\end{proof}

\section[Quantitative stability and uniqueness]{Quantitative stability and uniqueness}\label{sec:compare}

We compare the Lagrangian maps of two arbitrary entropy solutions. Local averages of physical velocity and short-time expansions provide the cluster comparison; an estimate for the essential supremum of the position gap then yields $W_\infty$ stability of the normalized mass measures and equal-data uniqueness.

\subsection[\texorpdfstring{Local physical velocity averages and cluster contacts}{Local physical velocity averages and cluster contacts}]{Local physical velocity averages and cluster contacts}\label{sec:localization}\label{sec:contact-times}

The estimates in this subsection prepare the short-time expansions of a single flow; they will be used to compare two flows with different cluster partitions.

Let $Z$ be the Lagrangian map of an entropy solution. Fix a structural restart $\tau>0$, put $U_\tau=P_{Z(\tau)}U_0\in BV_{\mathrm{loc}}(0,M)$, and use the finite-position physical velocity traces of \cref{lem:BV}. The tower property gives
\begin{equation}\label{eq:oneflow-tower}
 p_Z(t)=P_{Z(t)}U_\tau\quad(t\ge\tau).
\end{equation}

The restart profile $U_\tau$ supplies the local one-sided traces used in the short-time expansions. For two flows, their restart profiles need not remain as close as the initial physical velocities. The quantitative comparison will therefore use the initial profiles $U_0,W_0$ through source-time compression at zero.

Write $I_Z(a,t)$ for the closure in $[0,M]$ of the level set of $Z(\cdot,t)$ containing a regular label $a$. It is either a singleton or the closure of a positive fibre; in the latter case, endpoints do not affect averages. The physical velocity $p_Z(a,t)$ is the average of $U_\tau$ over $I_Z(a,t)$ when this fibre has positive mass, and equals $U_\tau(a)$ when it is a singleton. By \cref{lem:trace-Fubini}, this holds along paths on a fixed full-measure label set for almost every later time; a selected countable label sequence admits a common such time set.

\begin{lemma}[Localization on shrinking time intervals]\label{lem:localization}
Fix $t\ge\tau$ and $h_n\downarrow0$.
\begin{enumerate}[label=\textup{(\roman*)}]
\item If $I$ is a maximal positive fibre at $t$, its descendants $I(q)$ satisfy
\begin{equation}\label{eq:etaI}
 \eta_I(h):=\sup_{0\le q\le h}|I(q)\setminus I|\longrightarrow0.
\end{equation}
\item Suppose there is no positive right fibre at a cut $c$ with finite $Z(c+,t)$, let regular labels $a_n\downarrow c$, and let $S_n\subset[0,h_n]$ be arbitrary sets. If the fibres $J_n(q)=I_Z(a_n,t+q)$ contain no labels strictly less than $c$ for $q\in S_n$, then
\begin{equation}\label{eq:stopped-loc}
 \sup_{q\in S_n}(\sup J_n(q)-c)\to0,
 \qquad0\le\inf J_n(q)-c\le a_n-c\to0.
\end{equation}
Closed endpoint inclusion is immaterial. The reflected statement holds on the left.
\item If $c$ is a \emph{zero-mass contact}, meaning $Z(c-,t)=Z(c+,t)$ and neither side has a positive fibre at that level, then crossing descendants $J_n(q)$ at the selected times $q\in S_n\subset[0,h_n]$ satisfy
\[
 \sup_{q\in S_n}\max\{|\inf J_n(q)-c|,|\sup J_n(q)-c|\}\longrightarrow0,
\]
with $\sup\varnothing:=0$.
\end{enumerate}
\end{lemma}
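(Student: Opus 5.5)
The proof relies on two earlier facts: no-splitting (Theorem~\ref{thm:lift}), which makes the fibres of $Z(t+q)$ containing a fixed label nested and nondecreasing in $q$, and the uniform position bound $\|Z(t+q)-Z(t)\|_\infty\le V_*q$. In each part I argue by contradiction: if a descendant fibre keeps a definite amount of mass away from the limiting set, then positions that were separated at time $t$ would have to coincide within a time $h_n\to0$, which the Lipschitz bound forbids.

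\emph{Part (i).} By no-splitting the descendants $I(q)\supset I$ are increasing in $q$, so $\eta_I$ is nondecreasing. It therefore suffices to show $\bigcap_{h>0}I(h)\setminus I$ is null, i.e.\ that the right-limit $\lim_{h\downarrow0}\sup I(h)=\beta$ (and symmetrically for the left end). Suppose instead $\sup I(h)\ge\beta+\epsilon$ for all small $h$. Since $I$ is maximal at $t$, the regular labels in $(\beta,\beta+\epsilon)$ include some $b$ with $Z(b,t)>x$, where $x$ is the position of $I$; indeed, if all of them sat at $x$, the fibre would be larger than $I$. By monotonicity, $Z(b',t)\ge Z(b,t)$ for a regular $b'\in(b,\beta+\epsilon)$. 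Then $b$ and a label of $I$ share a position at time $t+h$, so their time-$t$ gap $Z(b,t)-x>0$ would have to be closed within time $h$. But the gap can close by at most $2V_*h\to0$, a contradiction.

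\emph{Part (ii).} Here $J_n(q)\subset[c,M]$ by assumption, and $a_n\in J_n(q)$, which gives $0\le\inf J_n(q)-c\le a_n-c$ at once. For the supremum, exposure on the right means that for each $\eta>0$ there is $\epsilon_\eta$ such that fibres at time $t$ meeting $(c,c+\epsilon_\eta)$ lie in $(c,c+\eta)$ (the argument in Lemma~\ref{lem:Dini}). I choose regular labels $c<d<e<c+\eta$ with $Z(d,t)<Z(e,t)$; these exist because no positive fibre contains a right neighbourhood of $c$. Then I fix $n$ so large that $a_n<d$ and $2V_*h_n<Z(e,t)-Z(a_n,t)$. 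This is possible because $Z(a_n,t)\to Z(c+,t)$, which is finite, and $Z(c+,t)\le Z(d,t)<Z(e,t)$. With these choices no fibre containing $a_n$ at a time $t+q$ with $q\le h_n$ can reach $e$, so $\sup J_n(q)\le e<c+\eta$. The reflected statement on the left follows symmetrically.

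\emph{Part (iii)} combines the two one-sided arguments. A zero-mass contact is exposed on both sides and has a common position trace $z$. A crossing fibre $J_n(q)$ contains labels on both sides of $c$ and has a single position at time $t+q$. Taking regular labels $d_-<c<d_+$ with $Z(d_-,t)<z<Z(d_+,t)$ inside any prescribed $\eta$-neighbourhood, the same gap argument shows that for large $n$ no such fibre extends beyond $d_\pm$. The main obstacle is precisely this choice: the labels must be regular, must have time-$t$ positions strictly separated from $z$, and must lie inside the $\eta$-window. Strict separation holds because the absence of positive fibres on either side at the level $z$ excludes mass accumulating at $z$ outside $c$. The convention $\sup\varnothing=0$ handles the case of no crossings.
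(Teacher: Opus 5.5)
Your proposal is correct and takes essentially the paper's route: in each part, a descendant fibre reaching a fixed regular label whose time-$t$ position is strictly separated from the relevant level would force that gap to close within $2V_*h_n$. This contradicts maximality of $I$ in (i), and the absence of a positive fibre at the trace level in (ii)--(iii). The differences are cosmetic. You use nestedness of descendants to reduce (i) to a monotone limit, which makes explicit the fixed label $d$ the paper invokes directly. In (ii)--(iii) you argue directly with a label $e$ satisfying $Z(e,t)>Z(c+,t)$, where the paper argues by contradiction and derives $Z(d,t)=Z(c+,t)$.
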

\begin{proof}
For \textup{(i)}, failure would give $q_n\le h_n\downarrow0$ and fixed regular labels $b\in I$, $d\notin\overline I$ in the same descendant fibre. Then
\[
 |Z(d,t)-Z(b,t)|\le2V_*q_n\to0,
\]
contradicting maximality of $I$.

For \textup{(ii)}, containment of $a_n$ and absence of labels below $c$ give the left endpoint bound. If the right endpoint stays beyond $c+\epsilon$ at some $q_n\in S_n$, choose a fixed regular $d\in(c+\epsilon/3,c+2\epsilon/3)$. Equality of the positions of $d,a_n$ at $t+q_n$ gives
\[
 |Z(d,t)-Z(a_n,t)|\le2V_*h_n\to0.
\]
Hence $Z(d,t)=Z(c+,t)$, producing a positive right fibre. The left-sided assertion uses $a_n\uparrow c$ and fixed $d<c$ in the putative extension, producing a positive left fibre. Only $q_n\le h_n$ is used, so these bounds are uniform over arbitrary $S_n$.

For \textup{(iii)}, a fixed right extension contains a fixed regular $d>c$ and regular $b_n<c$ with $b_n\uparrow c$. Their equality at times $t+q_n\to t$ yields $Z(d,t)=Z(c-,t)=Z(c+,t)$, contradicting right exposure. A fixed left extension, compared with $b_n>c$ decreasing to $c$, similarly contradicts left exposure.
\end{proof}

\begin{lemma}[Trace moduli and fibre averages]\label{lem:moduli}
At a finite one-sided position trace, use the local BV representative from \textup{\cref{lem:finite-trace-localization}} to define
\begin{align}
 \omega_c^+(d)&=\esssup_{c<a<c+d}|U_\tau(a)-U_\tau(c+)|,\label{eq:omegaplus}\\
 \omega_c^-(d)&=\esssup_{c-d<a<c}|U_\tau(a)-U_\tau(c-)|.
\end{align}
Each modulus on an available side tends to zero as $d\downarrow0$. At a label endpoint only the interior modulus is used, and only when the position trace is finite. A physical velocity on a fibre contained on the corresponding side within label distance $d$ differs from the trace by at most that modulus. If a descendant contains a fixed positive fibre $I$ and adds mass $e$, with $E$ denoting the added labels and $|E|=e$, then
\begin{equation}\label{eq:positive-modulus}
 \left|\mean{I\cup E}{U_\tau}-\mean{I}{U_\tau}\right|
 \le\frac{2R|E|}{|I|+|E|}\le\frac{2Re}{|I|}.
\end{equation}
If the two traces agree at $c$, use the maximum of the two one-sided moduli for any two-sided shrinking fibre.
\end{lemma}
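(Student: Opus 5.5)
The lemma bundles three elementary assertions: the moduli vanish, fibre averages near a cut are controlled by the modulus, and adding mass to a fibre changes its average by the stated amount. I would prove them in that order, using only the local BV structure from \cref{lem:finite-trace-localization} and the definition of $P_{Z(t)}$ in \cref{lem:PX}.

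\emph{Vanishing of the moduli.} Take a cut $c$ at which the one-sided position trace, say $Z(c+,t)$, is finite. By \cref{lem:finite-trace-localization}, there is a right neighbourhood $(c,c+d_0)$ on which $X_0$ is bounded and $U_\tau$ is $BV$. The regulated representative of a bounded $BV$ function has a right limit $U_\tau(c+)$. Its essential oscillation about this limit on $(c,c+d)$ tends to zero, which is exactly $\omega_c^+(d)\to0$. The left side is the reflection of this argument. At a label endpoint $0$ or $M$, only the interior side exists. It is used only when the position trace is finite, because this is where the lemma supplies BV up to the endpoint; at an infinite endpoint nothing is claimed.

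\emph{Averages near the trace.} Let $J$ be a fibre of positive length contained in $(c,c+d)$. Its physical velocity is $\frac1{|J|}\int_JU_\tau\dd a$. The integrand differs from $U_\tau(c+)$ by at most $\omega_c^+(d)$ almost everywhere on $J$, so the average does too. For a singleton label $a\in(c,c+d)$, the value is $U_\tau(a)$, and the same bound holds off a null set; this is where regular labels, taken from the Fubini set of \cref{lem:trace-Fubini}, are needed.

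\emph{Two-sided shrinking fibres.} If $U_\tau(c-)=U_\tau(c+)$ and a fibre lies in $(c-d,c+d)$, split its integral into the parts left and right of $c$. Each part deviates from the common trace by at most its one-sided modulus, so the average deviates by at most $\max\{\omega_c^-(d),\omega_c^+(d)\}$.

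\emph{Added mass.} Let $E$ be disjoint from $I$ with $|E|=e$. A direct computation gives
\[
 \mean{I\cup E}{U_\tau}-\mean{I}{U_\tau}
 =\frac{|E|}{|I|+|E|}\Bigl(\mean{E}{U_\tau}-\mean{I}{U_\tau}\Bigr).
\]
Both averages lie in $[-R,R]$, so the bracket is at most $2R$ in absolute value. This gives the first inequality in \eqref{eq:positive-modulus}, and dropping $|E|$ from the denominator gives the second. If $e=0$, the two averages coincide.

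The only delicate point is bookkeeping rather than analysis. One must take the moduli with respect to the regulated representative, and one must restrict to the side where the position trace is finite, so that the BV neighbourhood actually exists. Everything else is immediate.
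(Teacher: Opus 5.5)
Your proposal is correct and takes the same route as the paper. The moduli vanish because of the one-sided $BV$ neighbourhood from \cref{lem:finite-trace-localization}, fibre averages and singleton values at regular labels lie within the essential bounds, and your added-mass identity is the paper's computation, since $|E|\bigl(\mean{E}{U_\tau}-\mean{I}{U_\tau}\bigr)$ equals its numerator $\int_E\bigl(U_\tau-\mean{I}{U_\tau}\bigr)\dd a$; the paper simply states these steps more tersely.
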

\begin{proof}
The trace moduli follow from \cref{lem:BV}, and fibre averages lie between essential bounds. The bounds also hold on singleton regular labels at almost every time. Subtracting the average over $I$ gives numerator $\int_E(U_\tau-\mean{I}{U_\tau})$, of absolute value at most $2R|E|$, proving \eqref{eq:positive-modulus}.
\end{proof}

\paragraph{Contact continuation and exceptional times.}

\begin{lemma}[Continuation of coinciding traces and order of physical velocities at contact]\label{lem:funnel}
If $Z(c-,t)=Z(c+,t)=z$ at a positive time, then for all $r\ge t$
\begin{equation}\label{eq:funnel}
 Z(c-,r)=Z(c+,r)=:\chi_{c,t}(r),
\end{equation}
and $\chi_{c,t}$ is $V_*$-Lipschitz. At a structural contact time the adjacent physical velocities satisfy $p_Z(c+,t)\le p_Z(c-,t)$, using fibre averages on positive sides and exposed traces on the others.
\end{lemma}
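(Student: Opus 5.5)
The plan has two parts: first the continuation \eqref{eq:funnel}, then the order of the adjacent physical velocities.

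\emph{Continuation.} The main input is the positive-time gap estimate \eqref{eq:limit-gap}, which \cref{thm:lift} provides for every entropy solution. Take regular labels $a<c<b$. For $r\ge t$ it gives
\[
 0\le Z(b,r)-Z(a,r)\le\frac rt\,[Z(b,t)-Z(a,t)].
\]
Let $a\uparrow c$ and $b\downarrow c$ through regular labels. By monotonicity, the right-hand side tends to $(r/t)[Z(c+,t)-Z(c-,t)]=0$. The left-hand side tends to $Z(c+,r)-Z(c-,r)\ge0$, using the canonical one-sided traces of \cref{lem:alltime-labels}. Hence the two traces coincide for every $r\ge t$, and this defines $\chi_{c,t}$. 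The $V_*$-Lipschitz bound is then the trace estimate of \cref{lem:alltime-labels}, with $C=V_*$ coming from \eqref{eq:propagation}.

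\emph{Order at contact.} I would use the cumulative traces at the common position $z$. At time $t$, the strict trace $m^-(z,t)$ and the non-strict trace $m^+(z,t)$ bracket the label $c$. Any mass located at $z$ forms a positive fibre containing $c$ on one or both sides, or no mass sits there at all. The argument splits into cases by side.

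\emph{Case 1: both sides lie in positive fibres.} Since the positions coincide, both sides belong to the same fibre $I$, and then $p_Z(c+,t)=p_Z(c-,t)=p_I$.

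\emph{Case 2: one side positive, one exposed.} Suppose the left side is a positive fibre $I=(\alpha,c)$ and the right side is exposed. \Cref{lem:Dini} gives
\[
 p_I\ge\overline D_-Q_U(c)\quad\text{and}\quad p_Z(c+,t)=\underline D_+Q_U(c).
\]
A direct comparison of these two Dini slopes is not available in general, so I would instead use forward propagation. By continuation, $c$ stays outside positive-fibre interiors only if the traces separate, and they cannot, so they coincide after $t$. \Cref{thm:cut-curvature} then describes $L_c$ and $R_c$: $L_c$ is convex with derivative $f(p_-)$, and $R_c$ is concave with derivative $f(p_+)$. Since $L_c\le R_c$ with equality at $t$ and afterwards, the right derivative of $R_c-L_c$ at $t$ is nonnegative. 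Concretely, for $r>t$,
\[
 0=R_c(r)-L_c(r)=\int_t^r\bigl(f(p_+)-f(p_-)\bigr)\dd q,
\]
with $p_+$ nonincreasing and $p_-$ nondecreasing at structural times. This forces $f(p_+(q))=f(p_-(q))$ for almost every $q>t$, and consequently $p_+(t)\le\lim_{q\downarrow t}p_+(q)=\lim_{q\downarrow t} p_-(q)\le$ ... The last comparison runs the wrong way, so instead I would use the backward direction. For $q<t$ there is a mass-free separation or a gap, so $R_c(q)\ge L_c(q)$ with equality at $t$. This gives
\[
 \int_q^t\bigl(f(p_-)-f(p_+)\bigr)\ge0,
\]
and monotonicity in time yields $f(p_-(t))\ge\ldots$. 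In fact the cleanest argument is the following. The function $R_c-L_c$ is concave and nonnegative, and it vanishes at $t$ and beyond. Its left derivative at $t$ is therefore $\le0$, which gives $f(p_+(t^-))-f(p_-(t^-))\le0$. The monotonicity of $p_\pm$ then transfers this to structural $t$: $p_+(t)\le p_+(t^-)\le p_-(t^-)\le p_-(t)$.

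\emph{Case 3: both sides exposed.} This is the zero-mass contact, and the same concavity argument applies.

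Two points need care. The cut must lie outside positive-fibre interiors on $[s,t]$ for some $s<t$, and the structural-time representatives of $p_\pm$ must be identified with the fibre averages and exposed traces used in the statement. The first holds because a fibre containing $c$ in its interior before $t$ would persist by no-splitting. The second follows from \cref{lem:Dini} and \cref{thm:cut-curvature}. The main obstacle is ensuring these one-sided limits behave correctly when $c$ is the endpoint of a fibre that only forms at time $t$. I would handle it with suffix and prefix compression, \cref{thm:allcut}, at source time $s<t$.
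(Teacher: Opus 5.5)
Your continuation step is the paper's argument: the positive-time gap estimate, one-sided limits through regular labels, and the Lipschitz bound passed to the traces.

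For the order $p_Z(c+,t)\le p_Z(c-,t)$ you take a genuinely different and heavier route.

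\emph{The paper's route.} The paper works only at the contact time. At a structural time, the Oleinik condition \eqref{eq:OSL} and the pullback identity give $f(p_Z(b,t))-f(p_Z(a,t))\le[Z(b,t)-Z(a,t)]/t$ for almost every $a<b$. Let $a\uparrow c$ and $b\downarrow c$ through Lebesgue points; the one-sided traces exist by \cref{lem:BV}. The right side tends to $[Z(c+,t)-Z(c-,t)]/t=0$, so $f(p_+)\le f(p_-)$. This takes one line and needs no case split; it also covers $c$ inside a fibre, and it uses nothing before time $t$.

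\emph{Your route.} You work backward on a window $[s,t]$ on which $c$ stays outside positive-fibre interiors, which you correctly justify by no-splitting. There you combine the trace integrals \eqref{eq:trace-integral} with the time monotonicity of $p_\pm$ from \cref{thm:cut-curvature}, which itself rests on compression and the Dini identification. Your final concavity step is valid. It even gives slightly more, namely the ordered pre-contact limits $p_+(t^-)\le p_-(t^-)$. The cost is the case split, this extra machinery, and a structural restart strictly before $t$, so that \cref{lem:trace-Fubini} applies on the window.

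\emph{A shortcut.} You can drop the left derivatives. For structural $r<t$ one has $f(p_+(r))-f(p_-(r))\ge f(p_+(t))-f(p_-(t))$, so integrating over $[q,t]$ gives $(t-q)[f(p_+(t))-f(p_-(t))]\le-(R_c-L_c)(q)\le0$ directly.

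\emph{What to delete.} In a write-up, remove the abandoned forward-propagation attempt. \eqref{eq:trace-integral} is not available after $t$ if $c$ enters a fibre interior. Also, the claim that $c$ stays outside positive-fibre interiors only if the traces separate is false: at a zero-mass contact the traces coincide while $c$ lies outside every fibre interior.
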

\begin{proof}
Let $a\uparrow c$, $b\downarrow c$ through regular labels in
\[
 0\le Z(b,r)-Z(a,r)\le(r/t)[Z(b,t)-Z(a,t)].
\]
This proves \eqref{eq:funnel}; the $V_*$-Lipschitz bound passes to the traces. At a structural time, the same label limit through Lebesgue points of the physical velocity in the Oleinik condition gives $f(p_+)\le f(p_-)$ and hence $p_+\le p_-$.
\end{proof}

\begin{proposition}[Cluster formation at a strict zero-mass contact]\label{prop:bare-birth}
At a structural time $t>\tau$, suppose $c$ is a zero-mass contact and
$p_Z(c-,t)>p_Z(c+,t)$. There is $h_0>0$ such that $Z(t+h)$ has a positive fibre crossing $c$ for every $0<h<h_0$. A fixed cut can have a strict zero-mass contact at most once after $\tau$. For fixed $\tau$, all such times form an at most countable set.
\end{proposition}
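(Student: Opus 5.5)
Write $p_\pm=p_Z(c\pm,t)$ and $z=Z(c-,t)=Z(c+,t)$. The plan is to show that the traces $L_c$ and $R_c$ move with speeds $f(p_-)>f(p_+)$ just after $t$. Because they already coincide at time $t$, they would cross unless labels on both sides of $c$ share a position. By \cref{lem:funnel} the two traces stay equal for all $r\ge t$, so the real content is that mass accumulates at the common position $\chi_{c,t}$ from both sides. I would argue by contradiction. Suppose there are times $h_n\downarrow0$ such that no positive fibre of $Z(t+h_n)$ crosses $c$. Then at $t+h_n$ the cut $c$ lies outside positive-fibre interiors. If the same holds on a whole interval $[t,t+h]$, \cref{thm:cut-curvature} applies and gives
\[
 L_c(r)-L_c(t)=\int_t^rf(p_-(q))\dd q,\qquad R_c(r)-R_c(t)=\int_t^rf(p_+(q))\dd q .
\]
The monotonicity in that proposition gives $p_-(q)\ge p_-(t)$ and $p_+(q)\le p_+(t)$. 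Hence $R_c(r)-L_c(r)\le (r-t)(f(p_+)-f(p_-))<0$, which contradicts $L_c\le R_c$.

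The delicate point is that the bad times $h_n$ need not fill an interval: a crossing fibre could form and then split. The no-splitting property from \cref{thm:lift} rules this out. A positive fibre of $Z(t+h)$ crossing $c$ remains inside a fibre at every later time, so the set of $h$ with a crossing fibre is an up-set $(h_*,\infty)$ or $[h_*,\infty)$. If $h_*>0$, the cut is outside fibre interiors on $[t,t+h_*)$, and the trace argument above gives the contradiction on that interval. I also need $p_\pm(q)$ near $q=t$ to be the trace values used at $t$. Since $t$ is structural and the traces are ordered monotonically in time on structural times, the inequalities $p_-(q)\ge p_-(t)$ and $p_+(q)\le p_+(t)$ hold for almost every $q$, and this is all the integral needs. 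It remains to justify that $p_\pm(t)$ here are the exposed traces of \cref{lem:Dini}. This holds because the contact is zero-mass, so \cref{lem:Dini} identifies $p_\pm(t)$ with $U_\tau(c\pm)$. I expect this step, checking the hypotheses of \cref{thm:cut-curvature} on a half-open interval with the endpoint at $t$, to be the main technical point. Its endpoint-limit version covers it. So $h_0=h_*=0$ and the first claim follows.

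For uniqueness at a fixed cut: after a strict zero-mass contact at $t$, $c$ lies inside a positive fibre for every later time, by the previous step and no-splitting. It can therefore never again be a zero-mass contact, which requires no positive fibre on either side at the contact level. Hence each cut has at most one such time after $\tau$.

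For countability, attach to each strict contact time $t$ a positive fibre born at $t$ crossing some cut $c$. For $h$ small this fibre contains a rational label interval around $c$ that was not inside a single fibre at time $t$, since $c$ was exposed on both sides. Choose rationals $a<c<b$ with $a,b$ in the descendant fibre at $t+h$. The first time at which $a$ and $b$ lie in one fibre is then exactly $t$, because they are separated at $t$ and joined at every time after $t$. Thus the map $t\mapsto(a,b)$ from the set of strict contact times into $\mathbb Q^2$ is injective, and the set of such times is at most countable.
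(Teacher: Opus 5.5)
Your first two assertions are handled correctly. Since no-splitting makes the set of crossing times an up-set, one bad time $h_1$ already excludes a crossing fibre on all of $[t,t+h_1]$. Then \cref{thm:cut-curvature}, applied with $s=t$, gives $R_c(r)-L_c(r)\le(r-t)\bigl(f(p_+(t))-f(p_-(t))\bigr)<0$. This is a legitimate alternative to the paper's direct argument, which uses a regular label pair $a_n<c<b_n$ together with the localization and modulus lemmas. (What you actually show is that crossing occurs for every $h>0$, not that $h_0=0$.)

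The countability step, however, has a genuine gap. Your claim that $a$ and $b$ are joined at every time after $t$ is false. Because $c$ is a zero-mass contact, $Z(a,t)<z<Z(b,t)$ for all $a<c<b$. By the $V_*$-Lipschitz time dependence of positions, $a$ and $b$ stay in different fibres on $[t,t+\sigma]$ for some $\sigma>0$. This is consistent with \cref{lem:localization}\textup{(iii)}: the crossing fibres shrink to $c$ as $h\downarrow0$. Hence the first joining time $s_{ab}$ is strictly larger than $t$ and does not recover $t$. Consider two strict contacts $(c_1,t_1)$ and $(c_2,t_2)$ with $a<c_1<c_2<b$ and $t_1<t_2<s_{ab}$, whose newborn clusters later merge into the fibre that first captures $a$ and $b$. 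Both can be assigned the same pair. Your reasoning only yields $t_2<s_{ab}\le t_1+h_1$, which is no contradiction. Without already knowing that strict contacts are isolated, you cannot choose $h$ in advance to exclude this.

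The missing idea is to count cuts, using the velocity jump rather than the fibre geometry. By your second assertion, $t\mapsto c$ is injective on strict contact times. By \cref{lem:Dini}, $U_\tau(c-)=p_Z(c-,t)>p_Z(c+,t)=U_\tau(c+)$. So every strict contact cut is a jump point of $U_\tau$, which lies in $BV_{\rm loc}(0,M)$ by \cref{lem:BV}. On each compact $J\Subset(0,M)$ there are at most $j\,\TV_J(U_\tau)$ jumps of size at least $1/j$. A countable exhaustion therefore gives countably many strict contact cuts, and hence countably many strict contact times.
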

\begin{proof}
By \cref{lem:Dini}, the physical velocities are $\ell=U_\tau(c-)>r=U_\tau(c+)$. Choose $\epsilon>0$ with $\lambda=f(\ell-\epsilon)-f(r+\epsilon)>0$. If no fibre crosses $c$ at times $t+h_n\downarrow t$, the no-splitting property excludes crossing at all intermediate times. Choose regular $a_n<c<b_n$ tending to $c$ with $0\le g_n(0)<\lambda h_n/2$, where $g_n(q):=Z(b_n,t+q)-Z(a_n,t+q)$. By \cref{lem:localization,lem:moduli}, for all large $n$ and a.e. $q\in(0,h_n)$,
\[
 p_Z(a_n,t+q)\ge\ell-\epsilon,\qquad
 p_Z(b_n,t+q)\le r+\epsilon,\qquad g_n'(q)\le-\lambda.
\]
Thus $0\le g_n(h_n)\le g_n(0)-\lambda h_n<0$, a contradiction.

A crossing positive fibre persists, so a given cut cannot have another zero-mass contact. Every strict contact cut is a jump of $U_\tau$. For a compact interval $J\Subset(0,M)$ and $j\ge1$,
\[
 \#\{c\in\operatorname{int}J:|U_\tau(c-)-U_\tau(c+)|\ge1/j\}
 \le j\,\TV_J(U_\tau).
\]
A countable exhaustion by such interiors and the union over $j$ therefore give only countably many jump cuts; each contributes at most one strict contact time. No global BV bound is needed.
\end{proof}
We call a time $t>\tau$ \emph{contact regular} if it is structural and no strict zero-mass contact occurs. These times form a full-measure set determined by the flow and the fixed restart. The preceding countable union is taken before choosing any cut, label sequence, time increment or spatial penalty. At a zero-mass contact at such a time, the order of physical velocities at contact and the preceding proposition give
\begin{equation}\label{eq:equal-bare}
 p_Z(c-,t)=p_Z(c+,t)=U_\tau(c-)=U_\tau(c+)=:r_c.
\end{equation}

\begin{lemma}[Short-time expansion at a zero-mass contact]\label{lem:bare-speed}
At a contact-regular zero-mass contact,
\begin{equation}\label{eq:funnel-error}
 \sup_{0<q\le h}\frac{|\chi_{c,t}(t+q)-z-qf(r_c)|}{q}\to0
 \quad(h\downarrow0).
\end{equation}
\end{lemma}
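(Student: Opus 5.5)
We show that the trace velocities of $\chi_{c,t}$ just after time $t$ are uniformly close to $f(r_c)$, and then integrate. By \cref{lem:funnel}, $\chi_{c,t}(r)=Z(c-,r)=Z(c+,r)$ for $r\ge t$. Fix $\epsilon>0$. The integral identity \eqref{eq:trace-integral} of \cref{thm:cut-curvature} is not directly available, since $c$ may enter a positive fibre after $t$. We therefore work with regular label paths approaching $c$. Choose regular labels $a_n\uparrow c$ and $b_n\downarrow c$ and a common full-measure set of later times from \cref{lem:trace-Fubini}. At time $t+q$, the position $\chi_{c,t}(t+q)$ lies between $Z(a_n,t+q)$ and $Z(b_n,t+q)$. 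Each of these satisfies the path identity \eqref{eq:path} from time $t$. Their starting values $Z(a_n,t),Z(b_n,t)$ converge to $z$ as $n\to\infty$ because the contact is zero-mass and $a_n,b_n$ are continuity labels.

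It therefore suffices to show the following. There are $h_\epsilon>0$ and a sequence index, chosen depending on $h$, such that $|p_Z(a_n,t+q)-r_c|\le\epsilon$ and $|p_Z(b_n,t+q)-r_c|\le\epsilon$ for almost every $q\in(0,h]$, for all $h\le h_\epsilon$. Given this, the $C_f$-Lipschitz bound for $f$ on $[-R,R]$ yields
\[
 Z(a_n,t)-z+q(f(r_c)-C_f\epsilon)\le\chi_{c,t}(t+q)-z\le Z(b_n,t)-z+q(f(r_c)+C_f\epsilon).
\]
Letting $n\to\infty$ for fixed $q$ removes the initial offsets. This is legitimate because the velocity estimate is uniform in $n$ beyond some threshold depending only on $h$. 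Dividing by $q$ and taking the supremum over $q\le h$ gives \eqref{eq:funnel-error}.

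The velocity estimate is the main obstacle, because the fibres of $a_n,b_n$ at times $t+q$ may cross $c$ and absorb mass on both sides. We use the localization statement \cref{lem:localization}\textup{(iii)}. Since $c$ is a zero-mass contact at a contact-regular time, \eqref{eq:equal-bare} gives $U_\tau(c-)=U_\tau(c+)=r_c$. Hence by \cref{lem:moduli} we may use the two-sided modulus $\omega_c=\max(\omega_c^+,\omega_c^-)$.

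We apply \cref{lem:localization} with $h_n\downarrow0$ and split into cases according to the fibre $I_Z(a_n,t+q)$. If this fibre crosses $c$, part \textup{(iii)} shows that its label extent around $c$ tends to zero uniformly in $q\le h_n$. Its average of $U_\tau$ from \eqref{eq:oneflow-tower} then lies within $\omega_c(d_n)$ of $r_c$, where $d_n\to0$. If the fibre stays on the left of $c$, the reflected part \textup{(ii)} confines it to a shrinking left neighbourhood of $c$, giving the same bound via $\omega_c^-$. The labels $b_n$ are treated symmetrically.

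To convert this sequential statement into the uniform one needed, we argue by contradiction. If \eqref{eq:funnel-error} fails, there are $h_k\downarrow0$ and $q_k\le h_k$ with the ratio exceeding a fixed $\delta>0$. Choosing $a_{n_k},b_{n_k}$ close enough to $c$ that the initial offsets are $o(q_k)$, we apply the preceding localization with this sequence $h_k$ and $S_k=[0,h_k]$. This contradicts the sandwich bound, so \eqref{eq:funnel-error} holds.
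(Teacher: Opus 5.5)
Your proof is correct and follows essentially the paper's route: localize the descendant fibres via \cref{lem:localization}\textup{(ii)}--\textup{(iii)}, use \eqref{eq:equal-bare} with the two-sided modulus of \cref{lem:moduli}, and integrate the path identity along a joint label--time sequence. The one difference is that you trap $\chi_{c,t}$ between $Z(a_n,\cdot)$ and $Z(b_n,\cdot)$ with $a_n\uparrow c$, $b_n\downarrow c$, so only the initial offsets need control, whereas the paper takes a single right sequence $a_n\downarrow c$ with both $|Z(a_n,t)-z|$ and $|Z(a_n,t+h_n)-\chi_{c,t}(t+h_n)|$ below $h_n^2$; your intermediate claim that the velocity bound is uniform in $n$ for fixed $h$ would need its own contradiction argument, but it is unnecessary because your final sequential argument already completes the proof.
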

\begin{proof}
Given any $h_n\downarrow0$, choose regular $a_n\downarrow c$ with
\[
 |Z(a_n,t)-z|+|Z(a_n,t+h_n)-\chi_{c,t}(t+h_n)|<h_n^2.
\]
Partition the intermediate times into those when the descendant stays to the right of $c$ and those when it crosses $c$. Parts \textup{(ii)} and \textup{(iii)} of \cref{lem:localization} place all these fibres in a shrinking two-sided neighbourhood. By \eqref{eq:equal-bare} and \cref{lem:moduli}, there are $\epsilon_n\to0$ for this sequence such that
\[
 \begin{aligned}
 \esssup_{0<q<h_n}|p_Z(a_n,t+q)-r_c|&\le\epsilon_n,\\
 |\chi_{c,t}(t+h_n)-z-h_nf(r_c)|&\le h_n^2+C_fh_n\epsilon_n=o(h_n).
 \end{aligned}
\]
The second bound follows by integration and the endpoint errors. This gives the right derivative of $\chi$ along every sequence, hence \eqref{eq:funnel-error}.
\end{proof}

\subsection[Short-time \texorpdfstring{expansions}{expansions} of \texorpdfstring{trajectories}{trajectories}]{Short-time expansions of trajectories}\label{sec:right-response}\label{sec:capture}

The expansions below hold for arbitrary joint label--time sequences, including capture intervals whose relative length tends to zero.

At an exposed right trace $z=Z(c+,t)$, a positive left fibre at $z$ defines the incoming trajectory $\chi$; at a zero-mass contact use \eqref{eq:funnel}. A positive position gap excludes capture for small increments. At $0+$ only a finite interior position trace is used.

For the right-sided argument, let $a_n>c$ be regular labels with $a_n\downarrow c$, and let $h_n\downarrow0$.

\begin{lemma}[First capture and continuation]\label{lem:capture}
When an incoming trajectory is present, define
\begin{equation}\label{eq:theta}
 d_n(q)=Z(a_n,t+q)-\chi(t+q)\ge0,\quad
 \theta_n=\inf\{q\in[0,h_n]:d_n(q)=0\},
\end{equation}
with $\theta_n=+\infty$ when the set is empty. Then
\[
 \theta_n<\infty\quad\Longrightarrow\quad
 d_n(q)=0\quad(\theta_n\le q\le h_n).
\]
Before capture, $\inf I_Z(a_n,t+q)\ge c$ for $0\le q\le h_n$ with $q<\theta_n$.
\end{lemma}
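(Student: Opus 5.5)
The plan rests on two facts already established: labels cannot cross or separate after meeting (\cref{prop:transport}), and the incoming trajectory $\chi$ is itself a position trace of the flow. In the left-fibre case, $\chi(r)=Z(c-,r)$ for $r\ge t$, since the positive left fibre at $z$ persists by no-splitting and keeps the position $Z(c-,\cdot)$. In the zero-mass contact case, $\chi=\chi_{c,t}$ coincides with both $Z(c\pm,\cdot)$ by \cref{lem:funnel}.

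\emph{Sign of $d_n$.} Monotonicity of $Z(\cdot,t+q)$ in the label, together with $a_n>c$, gives $Z(a_n,t+q)\ge Z(c+,t+q)\ge Z(c-,t+q)=\chi(t+q)$ for regular $a_n$. Hence $d_n\ge0$.

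\emph{Persistence after capture.} Suppose $\theta_n<\infty$. By continuity of the label path and of $\chi$ (both $V_*$-Lipschitz, by \cref{lem:alltime-labels}), the infimum is attained, so $d_n(\theta_n)=0$. Choose a regular label $b<c$ sufficiently close to $c$. The gap estimate \eqref{eq:limit-gap} (or its trace form from \cref{thm:lift}) with source time $t+\theta_n>0$ gives
\[
 0\le Z(a_n,r)-Z(b,r)\le\frac{r}{t+\theta_n}\,[Z(a_n,t+\theta_n)-Z(b,t+\theta_n)]\qquad(r\ge t+\theta_n).
\]
Letting $b\uparrow c$ through regular labels, $Z(b,\cdot)\to Z(c-,\cdot)=\chi$ at each time. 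The bracket then tends to $d_n(\theta_n)=0$, so $Z(a_n,r)=\chi(r)$ for all $r\ge t+\theta_n$, and in particular $d_n(q)=0$ on $[\theta_n,h_n]$.

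\emph{Before capture.} Let $q<\theta_n$, so that $d_n(q)>0$. If $I_Z(a_n,t+q)$ contained a label $a'<c$, then every label between $a'$ and $a_n$ would share the position $Z(a_n,t+q)$, since a level set of a monotone map is an interval. Taking regular labels in $(a',c)$ and letting them increase to $c$ yields $Z(c-,t+q)=Z(a_n,t+q)$, that is, $d_n(q)=0$, a contradiction. Hence $\inf I_Z(a_n,t+q)\ge c$.

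The main obstacle is the identification $\chi=Z(c-,\cdot)$ on $[t,t+h_n]$ together with the passage of the gap estimate to the one-sided trace at $c$. Both rest on the finite-trace conventions: $c$ may be the endpoint $0$, in which case only a finite interior trace is used, as the preceding paragraph stipulates. I would first handle an internal cut $c$ and then observe that the endpoint case either has no incoming trajectory or reduces to the same argument.
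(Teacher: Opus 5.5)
Your proof is correct and follows essentially the paper's route: attainment of $\theta_n$ by continuity, persistence of contact because paths cannot separate after meeting, and an earlier zero of $d_n$ whenever a fibre reaches below $c$ before capture. The only variation is that you first identify $\chi=Z(c-,\cdot)$ on $[t,\infty)$ and then apply the gap estimate \eqref{eq:limit-gap} with $b\uparrow c$, which handles the positive-incoming-fibre and zero-mass-contact cases uniformly, whereas the paper invokes the no-splitting property separately in each case.
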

\begin{proof}
Continuity makes a nonempty zero set of $d_n$ closed, so its infimum is attained. Contact with a positive incoming fibre persists by the no-splitting property. At a zero-mass contact, equality with $\chi$ makes $Z$ constant between $a_n$ and labels arbitrarily close to $c$; these positive intervals persist and their trace remains $\chi$. Thus equality persists after first capture in both cases.

A crossing fibre before capture would contain the old incoming fibre, or both sides of the zero-mass contact, giving an earlier zero of $d_n$. Hence \cref{lem:localization}\textup{(ii)} applies on $[0,\min(h_n,\theta_n))$; after capture the path equals $\chi$.
\end{proof}

When no incoming trajectory is present, we set $\theta_n=+\infty$.

For the following expansions, fix a contact-regular time $t>\tau$ and the refined full-measure set of regular labels.

\begin{proposition}[Right-sided first-capture expansion]\label{thm:right-response}
Let $a_n>c$, $a_n\downarrow c$, $h_n\downarrow0$, and suppose $z=Z(c+,t)$ is finite. If a positive right fibre exists with physical velocity $\pi$, then
\begin{equation}\label{eq:one-branch}
 Z(a_n,t+h_n)=z+h_nf(\pi)+e_n,\quad |e_n|\le h_n\epsilon_n,\quad\epsilon_n\to0.
\end{equation}
Otherwise let $r=p_Z(c+,t)=\underline D_+Q_U(c)$. Let $\ell$ be the physical velocity of a positive incoming left fibre at $z$ when it exists, and set $\ell=r$ otherwise. Then
\begin{equation}\label{eq:right-branch}
 Z(a_n,t+h_n)=\max\{z+h_nf(\ell),\ Z(a_n,t)+h_nf(r)\}+e_n,
 \qquad |e_n|\le h_n\epsilon_n,\quad\epsilon_n\to0.
\end{equation}
The error bound holds for every such joint label--time sequence, including sequences for which capture by $\chi$ occurs after a vanishing fraction of the time increment.
\end{proposition}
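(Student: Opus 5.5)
The plan is to integrate the path identity \eqref{eq:path} for the regular label $a_n$ over $[t,t+h_n]$ and to control the integrand $p_Z(a_n,t+q)$ uniformly on $0<q<h_n$. We fix a contact-regular time $t$ and a sequence $(a_n)$ in the refined label set of \cref{lem:trace-Fubini}, so that one common null exceptional time set serves all $n$. Throughout we use $Z(a_n,t)\to z$ and $|Z(a_n,t+q)-Z(a_n,t)|\le V_*q$.

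\emph{Case of a positive right fibre $J=(c,\beta)$.} For large $n$ we have $a_n\in J$, and the no-splitting property gives $Z(a_n,t+q)=Z(c+,t+q)=R_c(t+q)$, independently of $n$. The descendants of $J$ satisfy \eqref{eq:etaI} by \cref{lem:localization}(i). By \eqref{eq:positive-modulus}, the physical velocity on the descendant differs from $\pi$ by at most $2R\eta_J(h_n)/|J|\to0$. Integrating \eqref{eq:path} then gives $Z(a_n,t+h_n)-Z(a_n,t)=h_nf(\pi)+o(h_n)$, and $Z(a_n,t)=z$, which yields \eqref{eq:one-branch}.

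\emph{Exposed right side.} Here $r=p_Z(c+,t)$ is identified by \cref{lem:Dini}, and we define $d_n$ and $\theta_n$ as in \cref{lem:capture}. We split $[0,h_n]$ into the pre-capture interval $[0,\min(\theta_n,h_n))$ and the post-capture interval.

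On the pre-capture interval, \cref{lem:capture} shows that the fibres $I_Z(a_n,t+q)$ contain no labels below $c$. \Cref{lem:localization}(ii) then places them in $[c,c+o(1)]$ uniformly in $q$, and \cref{lem:moduli} with $\omega_c^+$ gives $|p_Z(a_n,t+q)-r|\le\epsilon_n$. Hence $Z(a_n,t+q)=Z(a_n,t)+qf(r)+O(q\epsilon_n)$ on this interval.

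After capture, $Z(a_n,t+q)=\chi(t+q)$. We must show $\chi(t+q)=z+qf(\ell)+o(q)$ uniformly for $q\le h_n$. When the incoming fibre $I$ is positive, \cref{lem:localization}(i) and \eqref{eq:positive-modulus} give this, since the descendants of $I$ add vanishing mass. At a zero-mass contact it is \cref{lem:bare-speed}, where $\ell=r=r_c$ by \eqref{eq:equal-bare}.

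It remains to combine the two regimes into the maximum. Consider first $\theta_n\ge h_n$. Then $d_n(h_n)\ge0$, so $Z(a_n,t+h_n)\ge\chi(t+h_n)$ and the max is attained, up to $o(h_n)$, by the free branch. Now consider $\theta_n<h_n$. Then $Z(a_n,t+h_n)=\chi(t+h_n)$. At time $\theta_n$ the two branches agree up to $o(h_n)$, namely $Z(a_n,t)+\theta_nf(r)\approx z+\theta_nf(\ell)$. We need $f(\ell)\ge f(r)$ so that the captured branch dominates afterwards. This follows from the contact order in \cref{lem:funnel} together with the Dini inequality $p_I\ge\overline D_-Q_U(c)$ and the compression inequality of \cref{lem:Dini}. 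The cleanest route is probably directly: before capture $d_n\ge0$ forces the free branch to lie above $\chi$ up to the errors, and after capture the path coincides with $\chi$. Then $\max\{\chi,\text{free}\}$ is correct in both regimes, because in the captured regime the free branch ends below $\chi$, as $d_n$ would otherwise have stayed positive.

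The last point is the main obstacle. When capture occurs at a vanishing fraction $\theta_n/h_n\to0$, the errors must remain $o(h_n)$ rather than $o(\theta_n)$. I would handle this by measuring all errors against $h_n$: the pre-capture error is $O(\theta_n\epsilon_n)\le O(h_n\epsilon_n)$, and the post-capture error comes from the uniform expansion of $\chi$ over $(0,h_n]$. Both bounds are uniform over arbitrary joint sequences, since \cref{lem:localization} was proved for arbitrary time sets $S_n$.
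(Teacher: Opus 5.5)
Your main line is the paper's proof. It treats the positive-fibre case through $\eta_J$ and \eqref{eq:positive-modulus}. In the exposed case it combines the pre-capture estimate from \cref{lem:capture}, \cref{lem:localization}(ii) and $\omega_c^+$ with the incoming expansion from $\eta_{I^-}$ or \cref{lem:bare-speed}. The branch is then selected using $f(\ell)\ge f(r)$ from the contact order in \cref{lem:funnel}, with every error measured against $h_n$.

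There are two corrections.

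First, drop the ``direct'' route you call cleanest; it is not valid. After capture the true path is $\chi$, so capture says nothing about the fictitious free line on $[\theta_n,h_n]$. Write $A_n=z+h_nf(\ell)$ and $B_n=Z(a_n,t)+h_nf(r)$.
\begin{itemize}
\item With $o(h_n)$ errors, a label with $d_n=o(h_n)$ can be captured at $\theta_n=o(h_n)$ whatever the sign of $f(\ell)-f(r)$.
\item The meeting balance $d_n=\theta_n(f(\ell)-f(r))+o(h_n)$ then carries no sign information.
\item Hence $A_n-B_n=(h_n-\theta_n)(f(\ell)-f(r))+o(h_n)\ge-o(h_n)$ holds only because $\ell\ge r$.
\end{itemize}
That sign comes from the Oleinik-based contact order of \cref{lem:funnel} alone. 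The Dini inequalities of \cref{lem:Dini} are not needed and would not suffice. They bound $p_I$ by the left slope $\overline D_-Q_U(c)$ and identify $r$ with the right slope $\underline D_+Q_U(c)$, and these two slopes are not ordered for general $U_0$. This is precisely the vanishing-fraction regime the statement singles out.

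Second, in the vacuum-gap case $Z(c-,t)<z$ there is no incoming trajectory. So \cref{lem:capture} cannot be used to keep the descendants to the right of $c$. Use instead that the gap exceeds $2V_*h_n$ for large $n$, which excludes crossing. The formula then reduces to the free branch, because $\ell=r$ and $Z(a_n,t)\ge z$.
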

\begin{proof}
\emph{Positive fibres.} If the right fibre is positive, $a_n$ lies in a fixed interval $I$ for all large $n$. Its descendant adds at most $\eta_I(h_n)\to0$ mass by \eqref{eq:etaI}, and hence
\[
 |p_Z(a_n,t+q)-\pi|\le 2R\eta_I(h_n)/|I|\quad\text{a.e. }q\in(0,h_n).
\]
Integrating gives \eqref{eq:one-branch} with error at most
$2RC_fh_n\eta_I(h_n)/|I|$.

\emph{Approximation of the exterior and incoming branches.} Suppose instead that the right trace is exposed, and set $d_n=Z(a_n,t)-z\ge0$. Before first capture, localization bounds the descendant fibres within a label radius $\epsilon_n^{\rm lab}\to0$. The averaging moduli give
\begin{equation}\label{eq:pre-error}
 \sup_{0\le q\le\min(h_n,\theta_n)}
 |Z(a_n,t+q)-z-d_n-qf(r)|
 \le C_fh_n\omega_c^+(\epsilon_n^{\rm lab})=o(h_n).
\end{equation}
When capture occurs, continuity from below extends this position estimate to $q=\theta_n$.

If the incoming trajectory is that of a positive left fibre $I^-$, it satisfies
\begin{equation}\label{eq:incoming-error}
 \sup_{0\le q\le h_n}|\chi(t+q)-z-qf(\ell)|
 \le 2RC_fh_n\eta_{I^-}(h_n)/|I^-|=o(h_n).
\end{equation}
For an incoming trajectory at a zero-mass contact, \eqref{eq:funnel-error} gives the corresponding estimate with $\ell=r$. When a positive incoming fibre is present, the order of physical velocities at contact in \cref{lem:funnel} gives $\ell\ge r$.

\emph{Selection of the terminal branch.} If no capture occurs, \eqref{eq:pre-error} gives the terminal position $B_n=Z(a_n,t)+h_nf(r)$ up to $o(h_n)$. The incoming trajectory remains to its left, so its affine approximation $A_n=z+h_nf(\ell)$ satisfies $A_n\le B_n+o(h_n)$. If no incoming trajectory is present, $\ell=r$ and $A_n\le B_n$ directly. A positive position gap excludes capture when $2V_*h_n$ is smaller than the gap; when $V_*=0$, all paths are stationary.

If a positive incoming fibre captures the label, equality at $\theta_n$ in \eqref{eq:pre-error} and \eqref{eq:incoming-error} gives
\begin{equation}\label{eq:meeting-balance}
 d_n=\theta_n(f(\ell)-f(r))+o(h_n).
\end{equation}
The two trajectories agree after capture, giving terminal position $A_n+o(h_n)$. Moreover,
\[
 A_n-B_n=(h_n-\theta_n)(f(\ell)-f(r))+o(h_n)\ge-o(h_n).
\]
Thus $A_n$ is the larger affine contribution up to the stated error. For capture at a zero-mass contact, the two physical velocities agree, \eqref{eq:meeting-balance} gives $d_n=o(h_n)$, and the two contributions coincide to first order.

In each case, the remainder is bounded by $h_n$ times a fixed finite sum of the moduli in \eqref{eq:pre-error}, \eqref{eq:incoming-error} and \eqref{eq:funnel-error}. These moduli tend to zero for the chosen joint sequence, uniformly over intermediate times. Combining the alternatives proves \eqref{eq:right-branch}.
\end{proof}

The left-sided expansion follows by reflection.

\begin{lemma}[Reflection of the Lagrangian evolution]\label{lem:reflect}
Define
\[
 \widehat Z(a,t)=-Z(M-a,t),\quad\widehat U_0(a)=-U_0(M-a),\quad
 \widehat f(r)=-f(-r).
\]
Then $\widehat Z$ is nondecreasing, $\widehat f'(r)=f'(-r)\in[c_f,C_f]$ for $|r|\le R$, and
\[
 P_{\widehat Z}\widehat U_0(a)=-P_ZU_0(M-a),\quad
 \widehat Z_t=\widehat f(P_{\widehat Z}\widehat U_0).
\]
A left contact becomes a right contact, with physical velocities negated and unchanged capture time. The label map preserves measure and error norms.
\end{lemma}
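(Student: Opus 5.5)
The proof is a direct verification of each assertion in \cref{lem:reflect}, using only the fibrewise formula of \cref{lem:PX} and the path equation of \cref{thm:lift}. Write $\iota(a)=M-a$. This is a measure-preserving involution of $(0,M)$ that reverses order.

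\emph{Monotonicity and the velocity law.} If $a<b$, then $M-a>M-b$, so $Z(M-a,t)\ge Z(M-b,t)$ and hence $\widehat Z(a,t)\le\widehat Z(b,t)$. Square integrability is preserved because $\iota$ preserves Lebesgue measure, so $\widehat Z(t)\in\K$. The chain rule gives $\widehat f'(r)=f'(-r)$. For $|r|\le R$ we have $|-r|\le R$, so the bounds $c_f,C_f$ carry over. Moreover $|\widehat U_0|\le R$.

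\emph{Conditional expectation.} The map $a\mapsto M-a$ sends maximal open level intervals of $Z(t)$ bijectively onto those of $\widehat Z(t)$, with the same lengths. It also sends the non-flat set $N_{Z}$ onto $N_{\widehat Z}$, up to null sets. On a fibre $\widehat I=\iota(I)$, the substitution $b\mapsto M-b$ gives
\[
\frac1{|\widehat I|}\int_{\widehat I}\widehat U_0=-\frac1{|I|}\int_IU_0 .
\]
On the non-flat set both projections act as the identity. \Cref{lem:PX} therefore yields $P_{\widehat Z}\widehat U_0(a)=-P_ZU_0(M-a)$ almost everywhere.

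\emph{Path equation.} Differentiate $\widehat Z(a,t)=-Z(M-a,t)$ in $L^2$. This is legitimate because composition with $\iota$ is an isometry of $L^2(0,M)$ and therefore commutes with Bochner derivatives. It gives
\[
\widehat Z_t(a)=-f\bigl(P_ZU_0(M-a)\bigr)=-f\bigl(-P_{\widehat Z}\widehat U_0(a)\bigr)=\widehat f\bigl(P_{\widehat Z}\widehat U_0(a)\bigr).
\]
The same computation, applied along regular labels in \eqref{eq:path}, transfers the pathwise identity to the reflected labels. The reflected labels form a full-measure set, and continuity points of the positions are preserved.

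\emph{Contacts and capture times.} Under $\iota$, one-sided traces are exchanged: $\widehat Z((M-c)\pm,t)=-Z(c\mp,t)$. A left contact of $Z$ at cut $c$ therefore becomes a right contact of $\widehat Z$ at $M-c$. Fibre averages and exposed traces are negated by the projection formula. The position gap between a label and the incoming trajectory in \eqref{eq:theta} is negated together with the order. Its zero set, and hence the first-capture time $\theta_n$, is therefore unchanged. Finally, every error norm used later is either $L^2$ or $L^\infty$ in the label, and both are invariant under the measure-preserving relabelling.

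\emph{Main obstacle.} The only step needing care is the transfer of representatives. Regular-label sets, Lebesgue points and the right-continuous representative $p^+$ of \cref{lem:trace-Fubini} become left-continuous after reflection. I would handle this by noting that all statements used depend only on almost-everywhere classes and one-sided traces, which are swapped consistently. In addition, the reflected problem satisfies every hypothesis of \cref{def:entropy}: $(x,u)\mapsto(-x,-u)$ maps entropy solutions for $f$ to entropy solutions for $\widehat f$, with extension $\widehat b(x,t)=-b(-x,t)$ satisfying \eqref{eq:OSL}. The right-sided results, in particular \cref{thm:right-response}, then apply verbatim.
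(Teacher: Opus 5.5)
Your proposal is correct and follows the paper's proof: a direct check that the reversal $(\alpha,\beta)\mapsto(M-\beta,M-\alpha)$ negates fibre averages and non-flat values, that the path equation follows from $\widehat f(-p)=-f(p)$, and that the left gap becomes the right gap with the same zero set and capture time. Your added remarks on the swapping of one-sided representatives, and on $(x,u)\mapsto(-x,-u)$ giving an entropy solution for $\widehat f$ with $\widehat b(x,t)=-b(-x,t)$, are correct details that the paper leaves implicit when it applies the right-sided expansion to the reflected evolution.
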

\begin{proof}
The measure-preserving reversal sends $(\alpha,\beta)$ to $(M-\beta,M-\alpha)$ and negates fibre averages and non-flat physical velocities, proving the projection identity. The path equation follows from $\widehat f(-p)=-f(p)$; the reflected positions are nondecreasing. The left gap becomes the right gap with the same zero set, capture time and error norms.
\end{proof}

\begin{corollary}[Left-sided first-capture expansion]\label{cor:left-response}
For regular $a_n<c$, $a_n\uparrow c$, suppose $z=Z(c-,t)$ is finite. A positive left fibre of physical velocity $\pi$ gives $Z(a_n,t+h_n)=z+h_nf(\pi)+o(h_n)$. Otherwise let
$r=p_Z(c-,t)=\overline D_-Q_U(c)$, and let $\ell$ be the physical velocity of a positive incoming right fibre at $z$, or $r$ if none exists. Then
\begin{equation}\label{eq:left-branch}
 Z(a_n,t+h_n)=\min\{z+h_nf(\ell),\ Z(a_n,t)+h_nf(r)\}+e_n,
 \qquad |e_n|\le h_n\epsilon_n,\quad\epsilon_n\to0.
\end{equation}
\end{corollary}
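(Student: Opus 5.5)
The plan is to deduce \cref{cor:left-response} from \cref{thm:right-response} by applying it to the reflected evolution of \cref{lem:reflect}, and then undoing the reflection. The argument has four steps.

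\emph{Step 1: the reflected flow.} Set $\widehat Z(a,t)=-Z(M-a,t)$, $\widehat U_0(a)=-U_0(M-a)$ and $\widehat f(r)=-f(-r)$. By \cref{lem:reflect}, $\widehat Z$ is the Lagrangian map of an evolution of the same type, with law $\widehat f$ satisfying the same bounds $c_f\le\widehat f'\le C_f$ on $[-R,R]$. The structural statements used for \cref{thm:right-response} hold for $\widehat Z$, because each was derived only from the Lagrangian representation, the path identity and the Oleinik gap estimate. These statements include the restart tower \eqref{eq:oneflow-tower}, localization, the trace moduli, contact continuation, cluster formation and contact regularity. All of these are invariant under the measure-preserving label reversal $a\mapsto M-a$ combined with position negation.

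The gap estimate \eqref{eq:limit-gap} is symmetric under reversing order and negating positions. Hence $t$ is contact regular for $\widehat Z$ exactly when it is for $Z$. The refined regular label set also reflects to a full-measure regular label set.

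\emph{Step 2: the dictionary.} Put $\widehat c=M-c$ and $\widehat a_n=M-a_n\downarrow\widehat c$. Then $\widehat Z(\widehat c+,t)=-Z(c-,t)=-z$, and a positive left fibre of $Z$ at $c$ with physical velocity $\pi$ becomes a positive right fibre of $\widehat Z$ with physical velocity $-\pi$. An exposed left side becomes an exposed right side.

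The one-sided physical velocities transform as follows. The reflected primitive is $\widehat Q(a)=-(Q_U(M)-Q_U(M-a))$. Therefore
\[
\underline D_+\widehat Q(\widehat c)=-\overline D_-Q_U(c),
\]
so $\widehat r=-r$. Likewise a positive incoming right fibre of $Z$ becomes a positive incoming left fibre of $\widehat Z$, with $\widehat\ell=-\ell$; when no such fibre exists, $\widehat\ell=\widehat r=-r$ is consistent. First-capture times are unchanged, as \cref{lem:reflect} states.

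\emph{Step 3: applying \cref{thm:right-response}.} In the positive-fibre case it gives
\[
\widehat Z(\widehat a_n,t+h_n)=-z+h_n\widehat f(-\pi)+e_n=-z-h_nf(\pi)+e_n .
\]
Negating yields $Z(a_n,t+h_n)=z+h_nf(\pi)-e_n$.

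In the exposed case,
\[
\widehat Z(\widehat a_n,t+h_n)=\max\{-z-h_nf(\ell),\,-Z(a_n,t)-h_nf(r)\}+e_n .
\]
Negating and using $-\max\{-A,-B\}=\min\{A,B\}$ gives \eqref{eq:left-branch} with error $-e_n$, still bounded by $h_n\epsilon_n$.

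\emph{Main obstacle.} No single step carries real difficulty. The care lies in the bookkeeping of Step 1: one must confirm that every ingredient of \cref{thm:right-response} is invariant under reflection, including the Dini identification in \cref{lem:Dini} (which exchanges $\underline D_+$ and $\overline D_-$), the choice of restart $\tau$, and the uniformity of the errors over arbitrary joint sequences. All of these follow from the measure-preserving label map in \cref{lem:reflect}, which preserves fibres, averages up to sign, capture times and error norms.
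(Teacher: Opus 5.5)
Your proposal is correct and follows the paper's proof. Reflect via $a\mapsto M-a$, $x\mapsto -x$, $f\mapsto\widehat f$, read off $\widehat z=-z$, $\widehat r=-r$, $\widehat\ell=-\ell$, apply the right-sided first-capture expansion to the reflected evolution, and negate using $-\max\{-A,-B\}=\min\{A,B\}$ and $\widehat f(-p)=-f(p)$. Your extra bookkeeping is accurate and only makes explicit what the paper leaves implicit: the computation $\underline D_+\widehat Q(\widehat c)=-\overline D_-Q_U(c)$ and the invariance of contact regularity. The cleanest justification of your Step 1 is that the reflected Eulerian pair is itself an entropy solution with law $\widehat f$.
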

\begin{proof}
For $\widehat c=M-c$ and $\widehat a_n=M-a_n\downarrow\widehat c$, the reflected right physical velocities are $\widehat r=-r$, $\widehat\ell=-\ell$, and the trace position is $\widehat z=-z$. Apply \cref{thm:right-response} to the reflected evolution, multiply by $-1$, and use
\[
 -\max\{-A,-B\}=\min\{A,B\},\qquad\widehat f(-p)=-f(p).
\]
The remainder has the same absolute bound.
\end{proof}

\begin{lemma}[Expansions at finite endpoint position traces]\label{lem:end-response}
Suppose $Z(0+,t)$ is finite. At $0+$, for regular $a_n\downarrow0$, a positive endpoint fibre $I=(0,\beta)$ gives
\[
 Z(a_n,t+h_n)=Z(0+,t)+h_nf(p_I)+o(h_n).
\]
Only if no such fibre exists, put $r_0=U_\tau(0+)=\underline D_+Q_U(0)$ and obtain
\begin{equation}\label{eq:end-exposed}
 Z(a_n,t+h_n)=Z(a_n,t)+h_nf(r_0)+o(h_n).
\end{equation}
When $Z(M-,t)$ is finite, the corresponding $M-$ formulas use a positive left endpoint fibre or, only in the exposed case, $r_M=U_\tau(M-)=\overline D_-Q_U(M)$. No expansion is asserted at an infinite position endpoint. All errors are bounded by $h_n\epsilon_n$ with $\epsilon_n\to0$.
\end{lemma}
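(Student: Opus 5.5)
The plan is to treat the endpoint $0+$ as a degenerate right-sided cut and repeat the proof of \cref{thm:right-response}, with the simplification that no incoming trajectory can exist. Since there are no labels below $0$, no fibre can arrive from the left. So the first-capture time is $\theta_n=+\infty$, and the maximum in \eqref{eq:right-branch} collapses to the exterior branch. The $M-$ statements will then follow by applying the $0+$ case to the reflected evolution of \cref{lem:reflect}, exactly as \cref{cor:left-response} is deduced from \cref{thm:right-response}. The reflection turns $\min$ into $\max$ and sends $\overline D_-Q_U(M)$ to $-\underline D_+\widehat Q(0)$.

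\emph{Positive endpoint fibre.} Let $I=(0,\beta)$ be the positive fibre. For large $n$ the label $a_n$ lies in $I$, and its descendants $I(q)$ only add mass on the right. By \cref{lem:localization}\textup{(i)} the added mass is at most $\eta_I(h_n)\to0$. Estimate \eqref{eq:positive-modulus} then gives
\[
|p_Z(a_n,t+q)-p_I|\le 2R\eta_I(h_n)/|I|
\]
for a.e. $q\in(0,h_n)$. Integrating the path identity \eqref{eq:path} with the $C_f$-Lipschitz bound on $f$ yields the claim. Here $Z(a_n,t)=Z(0+,t)$ because $a_n$ lies in the fibre.

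\emph{Exposed case.} Every descendant fibre $J_n(q)=I_Z(a_n,t+q)$ automatically contains no labels below $0$. So \cref{lem:localization}\textup{(ii)} applies with $c=0$ and $S_n=[0,h_n]$. This uses the finiteness of $Z(0+,t)$, which by \cref{lem:BV} gives a neighbourhood $(0,d)$ on which $U_\tau$ is $BV$ with vanishing one-sided oscillation. The lemma gives $\sup J_n(q)\le\epsilon_n^{\rm lab}\to0$. By \cref{lem:moduli}, for a.e. $q$,
\[
|p_Z(a_n,t+q)-U_\tau(0+)|\le\omega_0^+(\epsilon_n^{\rm lab}).
\]
This uses the fibre-average or singleton representation of \cref{lem:trace-Fubini} along the countable sequence $(a_n)$, off a common null time set. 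Integrating \eqref{eq:path} gives \eqref{eq:end-exposed} with
\[
|e_n|\le C_fh_n\omega_0^+(\epsilon_n^{\rm lab}).
\]
The identification $r_0=U_\tau(0+)=\underline D_+Q_U(0)$ is the endpoint case of \cref{lem:Dini}, which is valid because the position trace is finite.

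The main obstacle is the justification of the exposed-case localization at the domain endpoint. Part \textup{(ii)} of \cref{lem:localization} was stated at a cut with finite trace, and its proof needs a fixed regular label $d$ beyond $c+\epsilon/3$. For $c=0$ this is available because $(0,M)$ contains such labels, and finiteness of $Z(0+,t)$ makes the comparison $Z(d,t)=Z(0+,t)$ meaningful. I will check that its contradiction still produces a positive endpoint fibre $(0,\cdot)$, which is excluded by assumption. No statement is made when the endpoint trace is infinite, consistent with \cref{lem:BV}.
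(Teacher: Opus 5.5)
Your proposal is correct and follows the paper's proof essentially step by step. The positive endpoint fibre is handled by \eqref{eq:etaI} and \eqref{eq:positive-modulus}, and the exposed case by \cref{lem:localization}\textup{(ii)} with $c=0$ and $S_n=[0,h_n]$, the trace modulus $\omega_0^+$, and \cref{lem:Dini} for the Dini identification. The only difference is at $M-$: you reflect via \cref{lem:reflect}, whereas the paper applies the left-sided localization directly to obtain the error $C_fh_n\omega_M^-(\gamma_n)$. Both routes are valid. Your remark that reflection turns $\min$ into $\max$ is moot here, since only one branch occurs at an endpoint.
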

\begin{proof}
Finite-position physical velocity traces are supplied by \cref{lem:BV}. On a positive endpoint fibre, \eqref{eq:etaI} and \eqref{eq:positive-modulus} bound the physical velocity error by $2R\eta_I(h_n)/|I|$; its time integral is at most $2RC_fh_n\eta_I(h_n)/|I|=o(h_n)$.

At an exposed $0+$, \cref{lem:localization}\textup{(ii)} applies with $c=0$ and $S_n=[0,h_n]$: no fibre contains labels below zero. Thus, for some $\gamma_n\to0$, its descendants satisfy $I_Z(a_n,t+q)\cap(0,M)\subset(0,\gamma_n)$ throughout the time window. The tower property and \cref{lem:moduli} bound the position error by $C_fh_n\omega_0^+(\gamma_n)=o(h_n)$. At an exposed finite $M-$, the left-sided localization gives $I_Z(a_n,t+q)\cap(0,M)\subset(M-\gamma_n,M)$ and error $C_fh_n\omega_M^-(\gamma_n)=o(h_n)$. Only the interior branch occurs. The exposed Dini identities follow from \cref{lem:Dini}; positive endpoint fibres use their fibre averages.
\end{proof}

\subsection[Comparison of \texorpdfstring{fibre averages}{fibre averages} and \texorpdfstring{physical velocity traces}{physical velocity traces}]{Comparison of fibre averages and physical velocity traces}\label{sec:comparison-times}\label{sec:branch-comparisons}

A spatial penalization localizes near maximizers while preserving level intervals. Compression and the first-capture expansions then give the estimate for the maximal position gap.

Let $X,Y\in\K$ be the Lagrangian maps of two entropy solutions with equal mass and initial physical velocities $U_0,W_0$. First assume $d_0=\|X_0-Y_0\|_\infty<\infty$. Finite displacement gives
\begin{equation}\label{eq:comparison-finite-gap}
 |X(a,t)-Y(a,t)|\le d_0+2V_*T\qquad(0\le t\le T)
\end{equation}
on a common full-measure label set. Set
\[
 \delta=\|U_0-W_0\|_\infty,\quad
 Q_U(a)=\int_0^aU_0,\quad Q_W(a)=\int_0^aW_0.
\]
Fix a common structural restart $\tau>0$ and a common refined set of regular labels. The short-time expansions use the restart profiles, whereas the physical-velocity comparison below uses $Q_U,Q_W$ and the initial discrepancy $\delta$.

Let $\mathcal B^X_\tau,\mathcal B^Y_\tau$ be the at most countable strict zero-mass contact time sets from \cref{prop:bare-birth}, and define
\[
 \mathcal T_\tau=(\mathcal G_X\cap\mathcal G_Y)\setminus
 (\mathcal B^X_\tau\cup\mathcal B^Y_\tau),\qquad t>\tau.
\]
This full-measure set is fixed before choosing labels, increments or spatial penalties. At its times, the short-time expansions of \cref{sec:right-response} apply to both flows at internal cuts and finite position endpoints. The local BV constants may depend on the restart and the neighbourhood of the cut.

\begin{lemma}[Order of physical velocities on common intervals and exposed traces]\label{lem:response-order}
If a positive interval $K$ is a prefix of an $X$ fibre and a suffix of a $Y$ fibre, their physical velocities satisfy
\begin{equation}\label{eq:overlap-order}
 p_X\le\mean{K}{U_0}\le\mean{K}{W_0}+\delta\le p_Y+\delta.
\end{equation}
Here $Q=Q_U$ for an $X$ fibre and $Q=Q_W$ for a $Y$ fibre.
For a positive fibre starting at $c$, $p_I\le\underline D_+Q(c)$; for one ending at $c$, $\overline D_-Q(c)\le p_I$. Corresponding Dini slopes for the two data sets differ by at most $\delta$ in the required direction.
\end{lemma}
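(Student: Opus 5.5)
The chain in \eqref{eq:overlap-order} has three links, and I will prove each separately. Throughout I use the source time $s=0$ in the compression theorem, which is available for every entropy solution.

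\emph{First link.} Let $I=(\alpha,\beta)$ be the $X$ fibre and $K=(\alpha,c)$ a prefix of positive length. Its physical velocity is $p_X=\frac{1}{|I|}\int_I U_0$ by \cref{thm:lift}. \Cref{thm:allcut} with $s=0$ gives $\int_\alpha^c(U_0-p_X)\dd a\ge0$. Dividing by $|K|=c-\alpha>0$ gives $p_X\le\mean{K}{U_0}$. When $K=I$ this is an equality, which is allowed.

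\emph{Second link.} We have $|U_0-W_0|\le\delta$ almost everywhere. Averaging over $K$ gives $\mean{K}{U_0}\le\mean{K}{W_0}+\delta$.

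\emph{Third link.} Let $J=(\alpha',\beta')$ be the $Y$ fibre, with $K=(c',\beta')$ a suffix. The suffix inequality of \cref{thm:allcut}, applied to $Y$ with its own data $W_0$ and source time zero, gives $\int_{c'}^{\beta'}(W_0-p_Y)\le0$. Hence $\mean{K}{W_0}\le p_Y$.

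\emph{Dini inequalities.} For the one-sided bounds I invoke the last sentence of \cref{lem:Dini}. Alternatively, they follow directly from the same compression. A fibre starting at $c$ has every prefix $(c,c+h)$ with average at least $p_I$, so $Q(c+h)-Q(c)\ge hp_I$ for small $h$. Taking the liminf gives $p_I\le\underline D_+Q(c)$. The suffix version gives $\overline D_-Q(c)\le p_I$.

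\emph{Comparison of Dini slopes.} From $|U_0-W_0|\le\delta$ we get
\[
 |(Q_U(c+h)-Q_U(c))-(Q_W(c+h)-Q_W(c))|\le\delta h,
\]
and similarly on the left. Taking liminf and limsup of the difference quotients yields
\[
 \underline D_+Q_U(c)\le\underline D_+Q_W(c)+\delta,\qquad \overline D_-Q_U(c)\le\overline D_-Q_W(c)+\delta,
\]
together with the symmetric inequalities with $U$ and $W$ exchanged. These bounds cover whichever direction is needed.

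\emph{Main obstacle.} The only delicate point is that \cref{thm:allcut} requires $I$ to be a \emph{maximal} positive fibre at a positive time. The bound must also hold at endpoint cuts $0$ or $M$, where the inward position traces are finite. The statement of \cref{thm:allcut} covers $c\in[\alpha,\beta]$ including domain endpoints, so I expect this to follow without further work.
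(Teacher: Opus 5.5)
Your proposal is correct and matches the paper's proof. The outer inequalities in \eqref{eq:overlap-order} come from prefix and suffix compression (\cref{thm:allcut}) at source time zero, the middle one from $|U_0-W_0|\le\delta$, and the Dini statements from the one-sided fibre bounds of \cref{lem:Dini} combined with the $\delta$-bound on difference quotients; your closing worry is unnecessary, since positive fibres are maximal level intervals by definition and \cref{thm:allcut} already covers fibres adjoining $0$ or $M$.
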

\begin{proof}
Prefix and suffix compression at source time zero give the outer inequalities in \eqref{eq:overlap-order}; $U_0\le W_0+\delta$ gives the middle one. Moreover,
\[
 \left|\frac{Q_U(c+h)-Q_U(c)}h
       -\frac{Q_W(c+h)-Q_W(c)}h\right|\le\delta,
\]
and the same bound holds for left quotients. Taking matching lower or upper limits and applying \cref{lem:Dini} proves the remaining assertions, with error independent of interval length.
\end{proof}

For $0<\varepsilon<1$ define
\begin{equation}\label{eq:comparison-penalty}
 T_\varepsilon(x)=x-\varepsilon\sqrt{1+x^2},\qquad
 F_\varepsilon(a,t)=T_\varepsilon(X(a,t))-Y(a,t).
\end{equation}
Since
\[
 1-\varepsilon\le T_\varepsilon'(x)
 =1-\varepsilon\frac{x}{\sqrt{1+x^2}}\le1+\varepsilon,
\]
$T_\varepsilon$ preserves the level intervals of $X$; only the comparison functional is transformed, not the evolution equation. Put
\begin{equation}\label{eq:D}
 D_\varepsilon(t)=\esssup_{0<a<M}F_\varepsilon(a,t),\qquad
 D_+(t)=\esssup_{0<a<M}(X(a,t)-Y(a,t)).
\end{equation}
A one-sided trace, including $0+$ and $M-$, is \emph{active} if both positions are finite and its penalized gap is $D_\varepsilon(t)$.

\begin{lemma}[Localization of near maximizers]\label{lem:active}
For fixed $\varepsilon,T$, $D_\varepsilon$ is finite and $(2+\varepsilon)V_*$-Lipschitz on $[0,T]$. For every fixed $t\in[0,T]$ and sequence $(a_n)$ of common regular labels,
\[
 F_\varepsilon(a_n,t)\to D_\varepsilon(t)
 \quad\Longrightarrow\quad
 \sup_n\bigl(|X(a_n,t)|+|Y(a_n,t)|\bigr)<\infty.
\]
Pairwise distinct near-maximizing sequences exist, and each has a strictly monotone subsequence approaching an active finite trace. No global regulatedness or lower bound for $F_\varepsilon$ is required.
\end{lemma}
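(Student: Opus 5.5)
The plan has three parts: an upper bound on $D_\varepsilon$, a coercivity estimate that confines near maximizers to bounded positions, and a monotone-subsequence extraction that produces an active trace.

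\emph{Finiteness and Lipschitz bound.} By \eqref{eq:comparison-finite-gap}, $|X-Y|\le d_0+2V_*T$ on the common regular label set. Since $\sqrt{1+x^2}\ge1$, we have $T_\varepsilon(x)\le x-\varepsilon$, so
\[
F_\varepsilon\le X-Y-\varepsilon\le d_0+2V_*T,
\]
which bounds $D_\varepsilon$ from above. For a lower bound, $D_\varepsilon(t)\ge F_\varepsilon(a,t)>-\infty$ at any single regular label; as an essential supremum, take a set of positive measure where $X,Y$ are bounded. For the Lipschitz estimate, $T_\varepsilon$ is $(1+\varepsilon)$-Lipschitz, and $X(a,\cdot)$, $Y(a,\cdot)$ are $V_*$-Lipschitz along regular paths by \cref{thm:lift}. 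Hence $F_\varepsilon(a,\cdot)$ is $(2+\varepsilon)V_*$-Lipschitz uniformly in $a$, and so is its essential supremum.

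\emph{Coercivity.} Write
\[
F_\varepsilon=(X-Y)-\varepsilon\sqrt{1+X^2}\le d_0+2V_*T-\varepsilon|X|.
\]
If $F_\varepsilon(a_n,t)\to D_\varepsilon(t)>-\infty$, then $\varepsilon|X(a_n,t)|\le d_0+2V_*T-D_\varepsilon(t)+1$ for large $n$, so $X(a_n,t)$ is bounded. The gap bound then bounds $Y(a_n,t)$ as well. This is the key step and uses only the finite $d_0$.

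\emph{Existence of near-maximizing sequences and active traces.} Since $D_\varepsilon$ is an essential supremum, each set $\{F_\varepsilon>D_\varepsilon-1/n\}$ has positive measure. We can therefore pick pairwise distinct common regular labels from these sets, avoiding the countably many earlier choices. A bounded sequence in $[0,M]$ has a convergent subsequence; passing to a further subsequence makes it strictly monotone, converging to some $c\in[0,M]$, say $a_n\uparrow c$ (the case $a_n\downarrow c$ is symmetric). Monotonicity in the label gives $X(a_n,t)\to X(c-,t)$ and $Y(a_n,t)\to Y(c-,t)$. These traces are finite, since the sequences are bounded by the coercivity step; this holds even when $c=M$. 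This is exactly where the penalty excludes escape to an infinite endpoint.

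Continuity of $T_\varepsilon$ gives $T_\varepsilon(X(c-,t))-Y(c-,t)=\lim F_\varepsilon(a_n,t)=D_\varepsilon(t)$, so the left trace at $c$ is active. No regulatedness of $F_\varepsilon$ is needed, because only the one-sided monotone limits of $X$ and $Y$ along the sequence are used. The step I expect to need care is ensuring that $c$ is not an interior point from which the sequence approaches an endpoint with infinite trace. The coercivity bound excludes this.
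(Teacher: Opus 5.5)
Your proposal is correct and follows the paper's proof essentially step for step: the upper bound and the coercivity both come from \eqref{eq:comparison-finite-gap} combined with the penalty, and the time bound comes from the uniform $(2+\varepsilon)V_*$ path estimate. The active trace is likewise obtained as in the paper, by a distinct selection from positive-measure superlevel sets followed by a strictly monotone extraction with finite one-sided limits.

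One point needs tightening. Labels chosen only from $\{F_\varepsilon>D_\varepsilon-1/n\}$ need not satisfy $F_\varepsilon(a_n,t)\to D_\varepsilon(t)$ unless you also know $F_\varepsilon(a_n,t)\le D_\varepsilon(t)$. The paper proves this inequality at every regular label from label continuity there: a strict reverse inequality would persist on a neighbourhood. The same argument also justifies your claim $D_\varepsilon(t)\ge F_\varepsilon(a,t)$ at a single label. Alternatively, you can simply select inside the full-measure set where $F_\varepsilon(\cdot,t)\le D_\varepsilon(t)$.
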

\begin{proof}
On the common set of regular labels from \cref{thm:lift}, both maps are continuous in the label at every time and their paths are $V_*$-Lipschitz. Hence $F_\varepsilon(a,t)\le D_\varepsilon(t)$ there, since a strict reverse inequality would persist on a neighbourhood. By \eqref{eq:comparison-finite-gap},
\begin{equation}\label{eq:penalty-tail}
 F_\varepsilon(a,t)
 \le d_0+2V_*T-\varepsilon\sqrt{1+X(a,t)^2}.
\end{equation}
This bounds $D_\varepsilon$ above; a fixed regular label bounds it below. Also,
\begin{equation}\label{eq:penalty-time-Lipschitz}
 |F_\varepsilon(a,t)-F_\varepsilon(a,s)|
 \le(2+\varepsilon)V_*|t-s|.
\end{equation}
Taking essential suprema gives the time bound. For a near-maximizing sequence, \eqref{eq:penalty-tail} gives, eventually,
\[
 \varepsilon\sqrt{1+X(a_n,t)^2}
 \le d_0+2V_*T-D_\varepsilon(t)+1.
\]
Both positions are therefore bounded by \eqref{eq:comparison-finite-gap}. Distinct labels admit a strictly monotone subsequence tending to $c\in[0,M]$. Monotonicity gives finite one-sided position limits, even at label endpoints, and continuity of $T_\varepsilon$ makes the limiting trace active.

Every finite one-sided trace has gap at most $D_\varepsilon(t)$ by approximation through regular labels. The sets
\[
 \{a\text{ regular}:F_\varepsilon(a,t)>D_\varepsilon(t)-1/n\}
\]
have positive measure; successive selection outside finitely many chosen labels yields a distinct near-maximizing sequence. The position bounds exclude infinite endpoint traces.
\end{proof}

\begin{lemma}[Interlacing of interior level intervals]\label{lem:interlace}
At an active right trace $c+\in(0,M)$, let $x=X(c+,t)$ and $y=Y(c+,t)$. For right-continuous representatives,
\begin{equation}\label{eq:global-gap-order}
 T_\varepsilon(X(a,t))-T_\varepsilon(x)\le Y(a,t)-y.
\end{equation}
Let the closures of their level intervals through this interior trace be $I_X=[r,s]$, $I_Y=[p,q]$, with singleton intervals allowed. Then
\begin{equation}\label{eq:interlace}
 p\le r\le c\le q\le s.
\end{equation}
The same statement holds at an active left trace for left-continuous representatives. Exposed domain endpoints, whose level sets may be empty, are treated directly in \textup{\cref{lem:end-order}}.
\end{lemma}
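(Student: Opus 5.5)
We first prove the global inequality \eqref{eq:global-gap-order} and then read off the interlacing \eqref{eq:interlace} from it. Activity of the right trace gives $T_\varepsilon(x)-y=D_\varepsilon(t)$. On the common set of regular labels we have $F_\varepsilon(a,t)\le D_\varepsilon(t)$; this was shown in the proof of \cref{lem:active}, because a strict excess at a continuity label would persist on a neighbourhood. Hence $T_\varepsilon(X(a,t))-Y(a,t)\le T_\varepsilon(x)-y$ at regular labels, which is \eqref{eq:global-gap-order} there. Both sides become right-continuous once we use right-continuous representatives, and $T_\varepsilon$ is continuous and increasing. Approximating an arbitrary label $a$ from the right through regular labels therefore extends the inequality to every $a\in(0,M)$, including $a=c$, where both sides vanish.

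\emph{Interlacing.} Here $c\in I_X\cap I_Y$ by definition, since both closed level intervals pass through the trace. So it remains to show $p\le r$ and $q\le s$.

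For $q\le s$, suppose instead $s<q$. Pick labels $a\in(s,q)$. Then $Y(a,t)=y$, because $a$ lies in the $Y$ level interval. On the other hand $X(a,t)>x$, because $a$ lies beyond the closure of the $X$ level interval through $c$ and $X$ is nondecreasing. Since $T_\varepsilon$ is strictly increasing ($T_\varepsilon'\ge1-\varepsilon>0$), the left side of \eqref{eq:global-gap-order} is positive while the right side is zero, a contradiction. For $p\le r$, suppose $r<p$ and take $a\in(r,p)$, so $a<c$. Now $Y(a,t)<y$ strictly, since $a$ is left of the $Y$ level interval. Also $X(a,t)=x$, because $(r,c]$ lies in $I_X$ (modulo the endpoint convention for right-continuous representatives). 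Thus the left side of \eqref{eq:global-gap-order} is $0$ and the right side is negative, again a contradiction.

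Singleton intervals need separate handling. If $I_X=\{c\}$, the same argument with labels just to the left or right of $c$ shows $p\le c$ and $q\le c$; that is, $I_Y$ cannot extend to the right of $c$. The case $I_Y=\{c\}$ is analogous. The left active trace follows by the same argument with left-continuous representatives and left approximation, or by the reflection of \cref{lem:reflect}, which exchanges the roles of prefixes and suffixes.

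The main obstacle is the representative bookkeeping at the endpoints of level intervals. The right trace $c+$ and the closures must be matched so that the strict inequalities $X(a,t)>x$ and $Y(a,t)<y$ hold on sets of labels of positive measure, rather than only at a single jump label. We plan to handle this by always choosing the test labels $a$ in the open complementary intervals, which have positive length whenever the interlacing fails, and by using the monotone traces only through limits.
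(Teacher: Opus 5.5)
Your proposal is correct and essentially reproduces the paper's proof: you obtain the gap inequality from $F_\varepsilon\le D_\varepsilon(t)=T_\varepsilon(x)-y$ at regular labels and extend it to right-continuous representatives by approximation from the right, then read off $p\le r$ and $q\le s$ by contradiction on $(r,p)$ and $(s,q)$, whereas the paper argues directly that $X=x$ on $(r,c)$ forces $Y=y$ there and $Y=y$ on $(c,q)$ forces $X=x$ there. The separate singleton discussion is redundant, since the general argument already covers $r=s=c$ or $p=q=c$; for the left trace, use your direct left-continuous argument rather than \cref{lem:reflect}, because reflection turns $T_\varepsilon(X)-Y$ into $\widehat Y-\widehat T_\varepsilon(\widehat X)$ with $\widehat T_\varepsilon(z)=z+\varepsilon\sqrt{1+z^2}$, which is not literally of the same form.
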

\begin{proof}
The corresponding one-sided representatives satisfy $F_\varepsilon\le D_\varepsilon(t)=T_\varepsilon(x)-y$, giving \eqref{eq:global-gap-order}. On $(r,c)$, $X=x$, hence $Y\ge y$; monotonicity gives $Y=y$ and $p\le r$. On $(c,q)$, $Y=y$, hence $T_\varepsilon(X)\le T_\varepsilon(x)$; strict increase and monotonicity give $X=x$ and $q\le s$. This argument applies to the right-continuous representatives at $c+$ and the left-continuous representatives at $c-$, independently of endpoint values.
\end{proof}

When the overlap has positive length, interlacing makes it a prefix of the $X$ fibre and a suffix of the $Y$ fibre, allowing \cref{lem:response-order} to compare their physical velocities. Exposed physical velocity traces are compared through the corresponding one-sided Dini slopes.

\paragraph{Right, left, and endpoint comparisons.}

For the contact-order identities and short-time comparisons below, fix a common contact-regular time $t\in\mathcal T_\tau$.

At an exposed right side, write $r_Z^+$ for the exterior physical velocity and $\ell_Z^+$ for the incoming left-fibre physical velocity, setting $\ell_Z^+=r_Z^+$ if that fibre is absent. Contact order gives $a_Z^+=\max\{\ell_Z^+,r_Z^+\}=\ell_Z^+$.

\begin{lemma}[Order of physical velocities at an active right trace]\label{lem:Rcases}
At an active interior right trace and a common contact-regular time:
\begin{enumerate}[label=\textup{(\roman*)}]
\item if both positive right fibres exist, $\pi_X\le\pi_Y+\delta$;
\item if only $X$ has a positive right fibre, $\pi_X\le a_Y^++\delta$;
\item if neither has one, $r_X^+\le r_Y^++\delta$ and $a_X^+\le a_Y^++\delta$.
\end{enumerate}
A positive right fibre for $Y$ but not for $X$ is impossible.
\end{lemma}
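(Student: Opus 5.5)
The plan is to combine the interlacing of \cref{lem:interlace} with the comparisons of \cref{lem:response-order}. Work at the active right trace $c+$, and take the closed level intervals $I_X=[r,s]$ and $I_Y=[p,q]$ through it, so that $p\le r\le c\le q\le s$.

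\textbf{The impossible case.} A positive right fibre of $Y$ at $c+$ means $q>c$. Interlacing then gives $s\ge q>c$, so $X$ also has a positive right fibre. Hence the case ``$Y$ positive, $X$ not'' cannot occur.

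\textbf{Case (i).} Both $X$ and $Y$ have positive right fibres, so $q>c$ and $K=(c,q)$ has positive length. Since $r\le c$ and $q\le s$, $K$ is contained in the $X$ fibre $(r,s)$. I would use the portion $(r,q)$: it is a prefix of the $X$ fibre. Likewise $(p,q)$ is the whole $Y$ fibre, hence a suffix of it. Their common part $(\max(p,r),q)=(r,q)$ is therefore a prefix of the $X$ fibre and a suffix of the $Y$ fibre, with positive length. Applying \eqref{eq:overlap-order} gives $\pi_X\le\pi_Y+\delta$.

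One caveat: if $r<c$, the relevant $X$ physical velocity is the fibre average over $(r,s)$. The ``right fibre'' at $c+$ is that same fibre, so $\pi_X$ is that average and nothing changes.

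\textbf{Case (ii).} Only $X$ has a positive right fibre, so the $Y$ right side is exposed and $q=c$.
\begin{itemize}
\item If $Y$ has a positive incoming left fibre $(p,c)$ with $p<c$, then $p\le r$. The interval $(r,c)$, when $r<c$, is a prefix of the $X$ fibre and a suffix of the $Y$ fibre, so \eqref{eq:overlap-order} gives $\pi_X\le\ell_Y^++\delta=a_Y^++\delta$.
\item If instead $r=c$, the $X$ fibre starts at $c$, and \cref{lem:response-order} gives $\pi_X\le\underline D_+Q_U(c)$. Since $\underline D_+Q_U(c)\le\underline D_+Q_W(c)+\delta$ and, by \cref{lem:Dini}, $\underline D_+Q_W(c)=r_Y^+\le a_Y^+$, we again get $\pi_X\le a_Y^++\delta$.
\end{itemize}

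\textbf{Case (iii).} Neither has a positive right fibre, so $q=s=c$. The exterior bound $r_X^+\le r_Y^++\delta$ follows from the Dini identification $r_Z^+=\underline D_+Q(c)$ and the $\delta$-comparison of quotients. For the incoming velocities:
\begin{itemize}
\item If $X$ has an incoming fibre $(r,c)$, then $p\le r$. Either $Y$ has a positive fibre $(p,c)\supset(r,c)$, and the overlap argument gives $\ell_X^+\le\ell_Y^++\delta$; or $p=r=c$, contradicting the existence of the $X$ fibre.
\item If $X$ has no incoming fibre, then $a_X^+=r_X^+\le r_Y^++\delta\le a_Y^++\delta$, using contact order $r_Y^+\le a_Y^+$.
\end{itemize}

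\textbf{Main obstacle.} The delicate step is checking that the intervals used really are prefix/suffix pairs in the sense required by \cref{thm:allcut}, including degenerate singleton cases. One also needs the contact-regular time to guarantee $a_Z^+=\ell_Z^+$ via \cref{lem:funnel}.
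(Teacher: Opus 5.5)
Your proposal is correct and follows the paper's proof essentially step for step. Interlacing $p\le r\le c\le q\le s$ excludes the $Y$-only case and supplies the prefix/suffix overlaps $(r,q)$ or $(r,c)$ for \eqref{eq:overlap-order}, while the Dini-slope comparison of \cref{lem:Dini,lem:response-order} handles an $X$ fibre starting at $c$ and the exterior velocities when both right sides are exposed; your case split in (ii) is exhaustive, though you leave implicit that $r<c$ without an incoming $Y$ fibre cannot occur, since interlacing gives $p\le r<c=q$.
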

\begin{proof}
If both right fibres are positive, $q>c$ and $s\ge q$. Their overlap $[r,q]$ is an $X$ prefix and a $Y$ suffix; apply \eqref{eq:overlap-order}.

If only $X$ has a positive right fibre, $q=c<s$. For $r=c$,
\[
 \pi_X\le\underline D_+Q_U(c)
 \le\underline D_+Q_W(c)+\delta=r_Y^++\delta\le a_Y^++\delta.
\]
For $r<c$, interlacing gives $p\le r<c=q$. The overlap $[r,c]$ is an $X$ prefix and a suffix of the incoming $Y$ fibre, hence $\pi_X\le\ell_Y^++\delta\le a_Y^++\delta$.

If both right sides are exposed, $s=q=c$; \cref{lem:Dini,lem:response-order} compare the exterior physical velocities. An incoming $X$ fibre $[r,c]$ forces an incoming $Y$ fibre $[p,c]$ with $p\le r<c$. Thus $K=[r,c]$ is an $X$ prefix and a $Y$ suffix, and \eqref{eq:overlap-order} gives $\ell_X^+\le\ell_Y^++\delta$.
Taking maxima gives the asserted order of $a_X^+,a_Y^+$. Without an incoming $X$ fibre, it follows from $a_X^+=r_X^+\le r_Y^++\delta\le a_Y^++\delta$. Finally, $q>c$ forces $s>c$.
\end{proof}

At an exposed left side, use the exterior physical velocity $r_Z^-$ and incoming right-fibre physical velocity $\ell_Z^-$, setting $\ell_Z^-=r_Z^-$ when absent. Left contact order gives $a_Z^-=\min\{\ell_Z^-,r_Z^-\}=\ell_Z^-$.

\begin{lemma}[Order of physical velocities at an active left trace]\label{lem:Lcases}
At an active interior left trace:
\begin{enumerate}[label=\textup{(\roman*)}]
\item if both positive left fibres exist, $\pi_X\le\pi_Y+\delta$;
\item the only one-sided possibility is a positive left fibre for $Y$ but not $X$, and $a_X^-\le\pi_Y+\delta$;
\item if neither has one, $r_X^-\le r_Y^-+\delta$ and $a_X^-\le a_Y^-+\delta$.
\end{enumerate}
\end{lemma}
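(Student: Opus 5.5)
The plan is to mirror the proof of \cref{lem:Rcases}, now at an active left trace $c-$. We use the left-continuous representatives together with the interlacing of \cref{lem:interlace}. Write the closures of the level intervals through $c-$ as $I_X=[r,s]$ and $I_Y=[p,q]$. The left version of \eqref{eq:global-gap-order} reads $T_\varepsilon(X(a,t))-T_\varepsilon(x)\le Y(a,t)-y$ with $x=X(c-,t)$ and $y=Y(c-,t)$. I would first rederive the left analogue of \eqref{eq:interlace}. Since the inequality is the same as on the right, I expect the same ordering $p\le r\le c\le q\le s$. Indeed, on $(r,c)$ we have $X=x$, which forces $Y\ge y$ and hence $Y=y$; on $(c,q)$ we have $Y=y$, which forces $X=x$. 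The key consequence is that whenever $X$ has a positive left fibre ($r<c$), so does $Y$ ($p\le r<c$). This excludes the case ``$X$ positive left, $Y$ exposed''. It is exactly the asserted uniqueness of the one-sided case (ii).

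For case (i), both left fibres are positive. The overlap $K=[r,c]$ has positive length. Its position relative to the two fibres needs care. For the $X$ fibre $[r,s]$ it is a prefix. For the $Y$ fibre it is the portion $[p,c]\cap[r,c]=[r,c]$, which is a suffix of $[p,c]$ exactly when the $Y$ fibre ends at $c$, and otherwise lies inside $[p,q]$. To apply \eqref{eq:overlap-order} I need $K$ to be an $X$ prefix and a $Y$ suffix. The fix is to take instead $K=[r,q]$, which is a prefix of $[r,s]$ and a suffix of $[p,q]$ because $p\le r$ and $q\le s$. Since $\pi_X$ and $\pi_Y$ are the full fibre averages, \eqref{eq:overlap-order} gives $\pi_X\le\pi_Y+\delta$.

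For case (ii), $Y$ has a positive left fibre and $X$ is exposed on the left, so $r=c$. Two subcases arise.
\begin{itemize}
\item If $q=c$, the $Y$ fibre ends at $c$. \cref{lem:response-order} gives $\overline D_-Q_W(c)\le\pi_Y$, and the Dini comparison gives $r_X^-=\overline D_-Q_U(c)\le\overline D_-Q_W(c)+\delta\le\pi_Y+\delta$. Since $a_X^-=\min\{\ell_X^-,r_X^-\}\le r_X^-$, the claim follows.
\item If $q>c$, then $s\ge q>c$ and $X$ has an incoming right fibre $[c,s]$ with velocity $\ell_X^-$. The interval $[c,q]$ is a prefix of that $X$ fibre and a suffix of the $Y$ fibre $[p,q]$. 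Hence \eqref{eq:overlap-order} yields $\ell_X^-\le\pi_Y+\delta$, and so $a_X^-\le\ell_X^-\le\pi_Y+\delta$.
\end{itemize}

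For case (iii), both sides are exposed, so $r=p=c$. The exterior comparison $r_X^-\le r_Y^-+\delta$ comes from \cref{lem:Dini} and the Dini bound of \cref{lem:response-order} applied to $\overline D_-$. For the minima, note that $a_X^-\le r_X^-\le r_Y^-+\delta$, so it remains to treat the case where $Y$ has an incoming right fibre $[c,q]$ with $q>c$. Then $s\ge q>c$, and $[c,q]$ is an $X$ prefix and a $Y$ suffix, so $\ell_X^-\le\ell_Y^-+\delta$. Taking minima gives $a_X^-\le a_Y^-+\delta$.

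The main obstacle is orienting the prefix/suffix overlaps correctly: for each case one must select the overlap that is simultaneously an $X$ prefix and a $Y$ suffix. A fallback is to apply \cref{lem:Rcases} to the reflected flows of \cref{lem:reflect} with the roles of $X$ and $Y$ exchanged.
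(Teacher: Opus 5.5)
Your proposal is correct and follows the paper's proof essentially step by step. You use interlacing $p\le r\le c\le q\le s$ to exclude a lone positive left $X$ fibre, the overlap $[r,q]$ in case (i), the split $q=c$ (Dini slopes) versus $q>c$ (the interval $[c,q]$ as an $X$ prefix and $Y$ suffix) in case (ii), and in case (iii) the upper-left Dini comparison plus the incoming-fibre overlap $[c,q]$ before taking minima. The reflection fallback is not needed.
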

\begin{proof}
By \eqref{eq:interlace}, $r<c$ implies $p\le r<c$, excluding a positive left $X$ fibre alone. If both fibres are positive, apply \eqref{eq:overlap-order} on their overlap $[r,q]$.

If only $Y$ has a positive left fibre, $p<c=r$. For $q=c$,
\[
 a_X^-\le r_X^-=\overline D_-Q_U(c)
 \le\overline D_-Q_W(c)+\delta\le\pi_Y+\delta.
\]
For $q>c$, interlacing gives an incoming $X$ fibre $[c,s]$ with $q\le s$. The interval $K=[c,q]$ is an $X$ prefix and a $Y$ suffix, so \eqref{eq:overlap-order} gives $a_X^-\le\ell_X^-\le\pi_Y+\delta$.
If both left sides are exposed, their upper-left Dini slopes compare the exterior physical velocities. An incoming $Y$ fibre $[c,q]$ forces an incoming $X$ fibre $[c,s]$ with $s\ge q$. Since $K=[c,q]$ is an $X$ prefix and the whole $Y$ fibre, hence a $Y$ suffix, \eqref{eq:overlap-order} gives $\ell_X^-\le\ell_Y^-+\delta$.
Taking minima yields $a_X^-\le a_Y^-+\delta$. Without an incoming $Y$ fibre, use $a_X^-\le r_X^-\le r_Y^-+\delta=a_Y^-+\delta$.
\end{proof}

\begin{lemma}[Order of physical velocities at finite endpoint position traces]\label{lem:end-order}
At active finite endpoint traces, the cases with only a positive $Y$ endpoint fibre at $0+$, or only a positive $X$ endpoint fibre at $M-$, are excluded. For two positive endpoint fibres, $\pi_X\le\pi_Y+\delta$; for two exposed traces, $r_X\le r_Y+\delta$. The remaining cases give
\[
 \begin{aligned}
  0+,\quad\text{only $X$ positive}:&\qquad \pi_X\le r_Y+\delta,\\
  M-,\quad\text{only $Y$ positive}:&\qquad r_X\le\pi_Y+\delta.
 \end{aligned}
\]
The physical velocities $\pi_X,\pi_Y$ are used only when the corresponding positive fibres exist. The exposed physical velocities are the values supplied by the lower-right Dini slopes at zero and the upper-left Dini slopes at $M$.
\end{lemma}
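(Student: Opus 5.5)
The plan is to mirror the interior arguments of \cref{lem:Rcases,lem:Lcases}, with the domain endpoint playing the role of the missing cut. The first step is an endpoint version of the gap order \eqref{eq:global-gap-order}. At an active $0+$ trace with finite positions $x=X(0+,t)$ and $y=Y(0+,t)$, activity means $T_\varepsilon(x)-y=D_\varepsilon(t)$. Since $F_\varepsilon\le D_\varepsilon(t)$ on regular labels, we obtain
\[
 T_\varepsilon(X(a,t))-T_\varepsilon(x)\le Y(a,t)-y
\]
for right-continuous representatives. At $M-$ the same inequality holds with left-continuous representatives and $x=X(M-,t)$, $y=Y(M-,t)$.

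\emph{Exclusions.} At $0+$, suppose $Y$ has a positive endpoint fibre $(0,q)$. On $(0,q)$ we have $Y=y$, so $T_\varepsilon(X)\le T_\varepsilon(x)$ there. Since $T_\varepsilon$ is strictly increasing and $X$ is nondecreasing with $X\ge x$, this forces $X=x$ on $(0,q)$. Hence $X$ also has a positive endpoint fibre $(0,s)$ with $s\ge q$, which excludes the case where only $Y$ is positive. At $M-$, suppose $X$ has a positive endpoint fibre $(r,M)$. On it $X=x$, so $Y\ge y$, while monotonicity gives $Y\le y$; thus $Y=y$ and $Y$ has an endpoint fibre $(p,M)$ with $p\le r$. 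This excludes the case where only $X$ is positive. These are exactly the endpoint analogues of the interlacing \eqref{eq:interlace}.

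\emph{Comparisons.} If both endpoint fibres are positive at $0+$, the intervals $(0,q)\subset(0,s)$ make $K=(0,q)$ a prefix of the $X$ fibre and the whole $Y$ fibre, hence a $Y$ suffix. Then \eqref{eq:overlap-order} gives $\pi_X\le\pi_Y+\delta$. At $M-$ take $K=(r,M)$, which is the whole $X$ fibre (hence a prefix) and a suffix of $(p,M)$; again \eqref{eq:overlap-order} applies.

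If both traces are exposed at $0+$, \cref{lem:Dini} and \cref{lem:end-response} identify $r_X=\underline D_+Q_U(0)$ and $r_Y=\underline D_+Q_W(0)$. The quotient bound in \cref{lem:response-order} then gives $r_X\le r_Y+\delta$. At $M-$ the upper-left Dini slopes give the same conclusion.

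In the case at $0+$ where only $X$ is positive, the fibre $(0,s)$ starts at $0$. By \cref{lem:Dini}, $\pi_X\le\underline D_+Q_U(0)\le\underline D_+Q_W(0)+\delta=r_Y+\delta$. In the case at $M-$ where only $Y$ is positive, the fibre $(p,M)$ ends at $M$, so $r_X=\overline D_-Q_U(M)\le\overline D_-Q_W(M)+\delta\le\pi_Y+\delta$. This uses the last assertion of \cref{lem:Dini} for a fibre ending at $M$.

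The main delicacy is that \cref{lem:Dini} is used at a domain endpoint only through interior one-sided quotients. Its validity there must be checked when the position trace is finite, which the activity of the trace guarantees. Care is also needed that no incoming-fibre branch arises: no labels lie beyond $0$ or $M$, so only the interior branch occurs, consistent with \cref{lem:end-response}.
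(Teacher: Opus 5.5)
Your proposal is correct and follows the paper's proof essentially step for step. The endpoint version of the gap order \eqref{eq:global-gap-order} gives both exclusions and the nested endpoint fibres, \eqref{eq:overlap-order} handles the case of two positive fibres, and the endpoint Dini identities and inequalities of \cref{lem:Dini}, shifted by $\delta$ as in \cref{lem:response-order}, handle the exposed and one-sided cases.
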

\begin{proof}
At $0+$ put $x=X(0+,t)$ and $y=Y(0+,t)$. If $Y=y$ on $(0,q)$, \eqref{eq:global-gap-order} and $X\ge x$ give $X=x$ there. Thus its fibre $(0,s)$ has $s\ge q$. The interval $K=(0,q)$ is an $X$ prefix and the whole $Y$ fibre, hence a $Y$ suffix; \eqref{eq:overlap-order} gives $\pi_X\le\pi_Y+\delta$.
If only $X$ has a positive fibre, left-edge compression gives
\[
 \pi_X\le\underline D_+Q_U(0)
 \le\underline D_+Q_W(0)+\delta=r_Y+\delta.
\]
For two exposed endpoints the two Dini slopes are compared directly.

At $M-$ put $x=X(M-,t)$ and $y=Y(M-,t)$. If $X=x$ on $(r,M)$, \eqref{eq:global-gap-order} and $Y\le y$ give $Y=y$ there. Its fibre $(p,M)$ has $p\le r$. The interval $K=(r,M)$ is the whole $X$ fibre, hence an $X$ prefix, and a $Y$ suffix; \eqref{eq:overlap-order} gives $\pi_X\le\pi_Y+\delta$.
If only $Y$ has a positive endpoint fibre, use
\[
 r_X=\overline D_-Q_U(M)
 \le\overline D_-Q_W(M)+\delta\le\pi_Y+\delta.
\]
For two exposed endpoints compare the upper-left slopes. The finite-position physical velocities exist by \cref{lem:end-response}; neither endpoint argument uses reflection.
\end{proof}

Since $x\mapsto\sqrt{1+x^2}$ is $1$-Lipschitz, for $|v|\le V_*$,
\begin{equation}\label{eq:penalty-affine-error}
 |T_\varepsilon(z+hv)-T_\varepsilon(z)-hv|
 \le\varepsilon V_*h.
\end{equation}
Additional errors are multiplied by at most $1+\varepsilon$, and
\begin{equation}\label{eq:penalty-minmax}
 T_\varepsilon(\max\{A,B\})=\max\{T_\varepsilon(A),T_\varepsilon(B)\},
 \qquad
 T_\varepsilon(\min\{A,B\})=\min\{T_\varepsilon(A),T_\varepsilon(B)\}.
\end{equation}

\begin{lemma}[Short-time estimate at an active trace]\label{thm:active-response}
Fix $\varepsilon\in(0,1)$ and $t\in\mathcal T_\tau$. Let a finite trace $c\pm$ be active. For every compatible monotone sequence of common regular labels $a_n\to c$ and every $h_n\downarrow0$,
\begin{equation}\label{eq:active-response}
 F_\varepsilon(a_n,t+h_n)
 \le D_\varepsilon(t)+(C_f\delta+\varepsilon V_*)h_n
       +h_n\epsilon_n,\qquad\epsilon_n\to0.
\end{equation}
The assertion includes the finite endpoint traces $0+$ and $M-$.
\end{lemma}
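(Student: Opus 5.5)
The plan is a case analysis over the configuration at the active trace. In each case I apply the first-capture expansions to $X$ and $Y$ along the same label sequence, then compare them using the velocity orders of \cref{lem:Rcases,lem:Lcases,lem:end-order}. Write $x$ and $y$ for the two one-sided position traces, so $D_\varepsilon(t)=T_\varepsilon(x)-y$. Consider an active right trace $c+$ with $a_n\downarrow c$. For $X$ and for $Y$ separately, \cref{thm:right-response} gives either $z+h_nf(\pi)+o(h_n)$ (positive right fibre) or $\max\{z+h_nf(\ell),Z(a_n,t)+h_nf(r)\}+o(h_n)$ (exposed side). Applying $T_\varepsilon$ and using \eqref{eq:penalty-affine-error} and \eqref{eq:penalty-minmax}, a term $T_\varepsilon(z+h_nv)$ becomes $T_\varepsilon(z)+h_nv$ up to $\varepsilon V_*h_n$. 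The remaining errors are multiplied by at most $1+\varepsilon$, so they stay $o(h_n)$.

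Case (i) of \cref{lem:Rcases}, both right fibres positive, is immediate:
\[
F_\varepsilon(a_n,t+h_n)\le T_\varepsilon(x)-y+h_n\bigl(f(\pi_X)-f(\pi_Y)\bigr)+\varepsilon V_*h_n+o(h_n),
\]
and $f(\pi_X)-f(\pi_Y)\le C_f\delta$ by monotonicity and the Lipschitz bound of $f$ (either $\pi_X\le\pi_Y$, or the difference is at most $C_f(\pi_X-\pi_Y)\le C_f\delta$). In case (ii), $X$ has a positive fibre and $Y$ is exposed. Here I lower-bound $Y(a_n,t+h_n)$ by the incoming branch $y+h_nf(a_Y^+)$, using $a_Y^+=\ell_Y^+$ and the $\max$ in \eqref{eq:right-branch}, and then use $\pi_X\le a_Y^++\delta$.

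In case (iii), both sides are exposed and both $\max$ expressions occur. I bound the $X$ maximum term by term. The incoming term is $T_\varepsilon(x)+h_nf(a_X^+)$; I compare it with the $Y$ incoming term, using $a_X^+\le a_Y^++\delta$. For the exterior term, $T_\varepsilon(X(a_n,t))+h_nf(r_X^+)$ is compared with $Y(a_n,t)+h_nf(r_Y^+)$. This comparison needs $F_\varepsilon(a_n,t)\le D_\varepsilon(t)$, which holds at regular labels by \cref{lem:active}, together with $r_X^+\le r_Y^++\delta$. Bounding each of $Y$'s terms below by the matching one gives the maximum of the two differences, each at most $D_\varepsilon(t)+(C_f\delta+\varepsilon V_*)h_n+o(h_n)$.

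The left traces are handled with \cref{cor:left-response} and \cref{lem:Lcases}; the endpoint traces with \cref{lem:end-response} and \cref{lem:end-order}. For the left traces there is a $\min$ instead of a $\max$. I bound the $X$ minimum by the appropriate single branch and bound each $Y$ branch below, selecting the $X$ branch in each subcase that the ordered pairs $a_X^-\le a_Y^-+\delta$, $r_X^-\le r_Y^-+\delta$ control. The mixed case, where only $Y$ has a positive left fibre, uses $a_X^-\le\pi_Y+\delta$ via $X(a_n,t+h_n)\le x+h_nf(a_X^-)+o(h_n)$. The endpoint exposed branches have no incoming trajectory and are simpler.

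I expect the main obstacle to be the mixed-branch comparisons. When one flow is exposed and the other has an incoming fibre, or both are exposed, the $\max$ and $\min$ must be matched so that the velocity orders apply, and the non-affine term $T_\varepsilon(X(a_n,t))$ has to be controlled by $F_\varepsilon(a_n,t)\le D_\varepsilon(t)$ rather than by the trace. I would first check that choosing, for each flow, the branch that realizes the relevant extremum always pairs with one of the orders in \cref{lem:Rcases,lem:Lcases}. Where it does not, I would fall back on the interlacing \eqref{eq:interlace} to rule the configuration out.
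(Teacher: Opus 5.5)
Your proposal is correct and follows the paper's proof essentially step for step: the case split by \cref{lem:Rcases,lem:Lcases,lem:end-order}, the pairing of incoming with incoming and exterior with exterior branches via $\max\{A_1,A_2\}-\max\{B_1,B_2\}\le\max\{A_1-B_1,A_2-B_2\}$ (and its $\min$ analogue), and the base-gap bound $F_\varepsilon(a_n,t)\le D_\varepsilon(t)$ for the exterior branches are exactly the paper's argument. The one detail you leave implicit is that the mixed endpoint cases use hybrid base gaps, $T_\varepsilon(x)-Y(a_n,t)\le D_\varepsilon(t)$ at $0+$ and $T_\varepsilon(X(a_n,t))-y\le D_\varepsilon(t)$ at $M-$, which follow from monotonicity ($Y(a_n,t)\ge y$, respectively $X(a_n,t)\le x$).
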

\begin{proof}
The order of physical velocities controls the affine contributions in the short-time expansions; the max/min formulas and the penalty bounds then transfer this control to $F_\varepsilon$.

Write $h=h_n$, $x=X(c\pm,t)$, $y=Y(c\pm,t)$ and $D=D_\varepsilon(t)=T_\varepsilon(x)-y$. For $p,q\in[-R,R]$, $p\le q+\delta$ implies $f(p)-f(q)\le C_f\delta$ by monotonicity and the Lipschitz bound. Thus finite branch bases satisfy
\begin{equation}\label{eq:penalty-branch-pair}
 T_\varepsilon(z+hf(p))-(w+hf(q))
 \le T_\varepsilon(z)-w+h(C_f\delta+\varepsilon V_*).
\end{equation}
The joint-sequence remainders remain $o(h)$ after transformation. It suffices to compare the affine branches.

At a right trace, two positive fibres give branches $x+hf(\pi_X)$ and $y+hf(\pi_Y)$, compared by \cref{lem:Rcases}(i). If only $X$ has a positive right fibre, the $Y$ maximum is at least $y+hf(a_Y^+)$, also when no incoming fibre is present. Apply \cref{lem:Rcases}(ii) and \eqref{eq:penalty-branch-pair}.

If both right sides are exposed, \cref{lem:Rcases}(iii) pairs the two incoming physical velocities and the two exterior physical velocities. Their bases have gaps
\[
 T_\varepsilon(x)-y=D,\qquad
 T_\varepsilon(X(a_n,t))-Y(a_n,t)\le D.
\]
Apply \eqref{eq:penalty-branch-pair} to each pair, then \eqref{eq:penalty-minmax} and
\[
 \max\{A_1,A_2\}-\max\{B_1,B_2\}
 \le\max\{A_1-B_1,A_2-B_2\}.
\]
Interlacing excludes the other one-fibre case.

At a left trace, compare two positive fibres by \cref{lem:Lcases}(i). If only $Y$ has a positive left fibre, the transformed $X$ minimum is at most $T_\varepsilon(x+hf(a_X^-))$; compare with $y+hf(\pi_Y)$ by \cref{lem:Lcases}(ii). For two exposed left sides, \cref{lem:Lcases}(iii) pairs the incoming and exterior contributions with the same two base-gap bounds, and
\[
 \min\{A_1,A_2\}-\min\{B_1,B_2\}
 \le\max\{A_1-B_1,A_2-B_2\}
\]
and \eqref{eq:penalty-minmax} give the estimate. Interlacing excludes a positive left $X$ fibre alone.

At finite $0+$, \cref{lem:end-response} gives single branches. For two positive fibres, only a positive $X$ fibre, and two exposed endpoints, their base gaps respectively satisfy
\[
 T_\varepsilon(x)-y=D,\qquad
 T_\varepsilon(x)-Y(a_n,t)\le D,\qquad
 T_\varepsilon(X(a_n,t))-Y(a_n,t)\le D.
\]
The middle bound uses $Y(a_n,t)\ge y$. Apply the corresponding orders in \cref{lem:end-order} and \eqref{eq:penalty-branch-pair}.

At finite $M-$, the cases are two positive fibres, only a positive $Y$ fibre, and two exposed endpoints, with base gaps
\[
 T_\varepsilon(x)-y=D,\qquad
 T_\varepsilon(X(a_n,t))-y\le D,\qquad
 T_\varepsilon(X(a_n,t))-Y(a_n,t)\le D.
\]
The middle bound uses $X(a_n,t)\le x$ and monotonicity of $T_\varepsilon$. The upper-endpoint orders in \cref{lem:end-order} complete these comparisons.

Adding the remainders from \cref{thm:right-response,cor:left-response,lem:end-response} proves \eqref{eq:active-response}. No estimate divides by a capture time, so the error remains $o(h)$ when its ratio to $h$ tends to zero. The error $\varepsilon V_*h$ occurs once per compared branch, not per intermediate event.
\end{proof}

\subsection[The stability estimate and uniqueness]{The stability estimate and uniqueness}\label{sec:envelope}

To estimate the growth of the essential supremum, we will choose near-maximizing labels at the future times $t+h_n$. These labels may vary with $h_n$ and may repeat. The next lemma extends the active-trace estimate to arbitrary near-maximizing sequences, using the common all-time continuity set from \cref{lem:alltime-labels,thm:lift}.

\begin{lemma}[Estimate along near-maximizing sequences]\label{thm:all-sequences}
For every fixed $\varepsilon\in(0,1)$, every $t\in\mathcal T_\tau$, every $h_n\downarrow0$, and every sequence of common regular labels satisfying $F_\varepsilon(a_n,t)\to D_\varepsilon(t)$,
\begin{equation}\label{eq:all-sequences}
 \limsup_n\frac{F_\varepsilon(a_n,t+h_n)-D_\varepsilon(t)}{h_n}
 \le C_f\delta+\varepsilon V_*.
\end{equation}
\end{lemma}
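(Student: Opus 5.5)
We argue by contradiction combined with a subsequence extraction, reducing to \cref{thm:active-response}. Suppose the $\limsup$ in \eqref{eq:all-sequences} exceeds $C_f\delta+\varepsilon V_*+\eta$ for some $\eta>0$. Pass to a subsequence (not relabelled) along which the quotient stays above this value. By \cref{lem:active}, the near-maximizing condition $F_\varepsilon(a_n,t)\to D_\varepsilon(t)$ bounds $|X(a_n,t)|+|Y(a_n,t)|$. We then extract a further subsequence with $a_n\to c\in[0,M]$ and split into two cases: either $a_n$ is eventually constant, or $a_n$ is strictly monotone.

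\emph{Monotone case.} Suppose $a_n$ is strictly monotone, say $a_n\downarrow c$; the increasing case is symmetric. The labels are common regular labels, so the position limits are the one-sided traces $X(c+,t)$ and $Y(c+,t)$. These are finite by the position bound, also when $c=0$. By continuity of $T_\varepsilon$ they satisfy $T_\varepsilon(X(c+,t))-Y(c+,t)=\lim F_\varepsilon(a_n,t)=D_\varepsilon(t)$, so the trace $c+$ is active. The sequence $(a_n,h_n)$ is then a compatible monotone joint sequence, and \cref{thm:active-response} gives $F_\varepsilon(a_n,t+h_n)\le D_\varepsilon(t)+(C_f\delta+\varepsilon V_*)h_n+o(h_n)$. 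This contradicts the assumed lower bound on the quotient.

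\emph{Constant case.} Suppose $a_n=a$ is fixed, so that $F_\varepsilon(a,t)=D_\varepsilon(t)$. Since $a$ is regular, $X(\cdot,t)$ and $Y(\cdot,t)$ are continuous at $a$ by \cref{lem:alltime-labels,thm:lift}. Hence both one-sided traces at $a$ coincide with the point values, and the trace $a+$ (equally $a-$) is active. Along each time sequence the expansions of \cref{thm:right-response} are stated for label sequences tending to $c$. We therefore cannot apply them directly with $a_n\equiv c$, and this is the main technical point.

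To handle it, I would replace $a$ by auxiliary regular labels $b_n\downarrow a$ chosen close enough that
\[
 |X(b_n,s)-X(a,s)|+|Y(b_n,s)-Y(a,s)|<h_n^2\qquad\text{for } s\in\{t,t+h_n\}.
\]
This is possible because the paths of regular labels are continuous in the label at every fixed time. Then $F_\varepsilon(b_n,t+h_n)$ differs from $F_\varepsilon(a,t+h_n)$ by $O(h_n^2)$. Moreover $(b_n)$ is strictly monotone with an active limit trace, and it is still near-maximizing at time $t$. Applying \cref{thm:active-response} to $(b_n,h_n)$ transfers the bound to $a$ up to $o(h_n)$, again a contradiction.

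Since every subsequence has a further subsequence obeying the bound, \eqref{eq:all-sequences} follows. The one delicate check is that \cref{thm:active-response} allows the label sequence to depend on $h_n$. It does, since it is stated for arbitrary joint sequences. The time $t\in\mathcal T_\tau$ and the regular label set are fixed before any of these choices, as that lemma requires.
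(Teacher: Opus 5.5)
Your proof is correct and follows the paper's argument. You argue by contradiction along a violating subsequence, send the distinct-label case through \cref{lem:active} to a monotone sequence at an active finite trace and apply \cref{thm:active-response}, and treat a repeated label by substituting nearby regular labels whose positions at $t$ and $t+h_n$ differ by $O(h_n^2)$. The only cosmetic difference is that you build the substitutes directly as a strictly decreasing sequence $b_n\downarrow a$, whereas the paper picks distinct labels within $1/n$ of $a_*$ and feeds them back into the distinct-label case.
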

\begin{proof}
Suppose a subsequence violates the bound by a fixed positive amount. Infinitely many distinct labels would, by \cref{lem:active}, give a monotone subsequence approaching an active finite trace, contradicting \cref{thm:active-response}. Only local regularity near that finite trace is used.

Otherwise pass to a repeated label $a_*$. Both maps are continuous there at $t$ and $t+h_n$. Choose distinct common regular labels $b_n$, $|b_n-a_*|<1/n$, with
\[
 \sum_{r\in\{t,t+h_n\}}
 \bigl(|X(b_n,r)-X(a_*,r)|+|Y(b_n,r)-Y(a_*,r)|\bigr)
 <\frac{h_n^2}{2+\varepsilon}.
\]
Continuity and density permit each selection outside finitely many previously chosen labels. The two penalized-gap errors sum to less than $h_n^2$. Thus $b_n$ remain near-maximizing and change the quotient by at most $h_n$, contradicting the distinct-label case.
\end{proof}

Write $D^+$ for the upper right Dini derivative with respect to time.

\begin{lemma}[Differentiation of the essential supremum]\label{lem:envelope}
Let $A\subset(0,M)$ have full measure. Suppose $F(a,t)$ is finite on $A$, measurable in $a$, and
\[
 |F(a,t)-F(a,s)|\le L|t-s|\qquad(a\in A).
\]
Assume $D(t)=\esssup_aF(a,t)$ is finite, $F(a,t)\le D(t)$ for every $a\in A$ and every $t$, and a fixed full-measure time set has the following property: for every sequence $h_n\downarrow0$ and $a_n\in A$ with $F(a_n,t)\to D(t)$,
\[
 \limsup_n\frac{F(a_n,t+h_n)-D(t)}{h_n}\le C.
\]
Then $D$ is $L$-Lipschitz and $D^+D(t)\le C$ at those times. Neither a global lower bound for $F$ nor global regulatedness in the label is assumed.
\end{lemma}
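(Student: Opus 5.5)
The plan has two parts: first the Lipschitz bound on $D$, then an argument by contradiction for the Dini bound at a good time.

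\emph{Lipschitz bound.} For every $a\in A$ we have $F(a,t)\le F(a,s)+L|t-s|\le D(s)+L|t-s|$, where the last step uses the standing assumption $F(a,s)\le D(s)$ on $A$. Since $A$ has full measure, taking the essential supremum in $a$ gives $D(t)\le D(s)+L|t-s|$. Exchanging $s$ and $t$ shows that $D$ is $L$-Lipschitz. Measurability of $F(\cdot,t)$ makes each essential supremum well defined. No lower bound on $F$ is needed, because $D$ is finite by assumption.

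\emph{Dini bound.} Fix a time $t$ in the prescribed full-measure set and suppose $D^+D(t)>C$. Then there are $\eta>0$ and $h_n\downarrow0$ with $D(t+h_n)\ge D(t)+(C+2\eta)h_n$. For each $n$, use the definition of the essential supremum at time $t+h_n$: the set
\[
 \{a\in A:\ F(a,t+h_n)>D(t+h_n)-\eta h_n\}
\]
has positive measure, so it contains a label $a_n$. This is the key choice: the labels are near-maximizing at the \emph{future} time, and they may vary with $n$ or repeat.

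Next check that $(a_n)$ is near-maximizing at the base time $t$. The Lipschitz bound gives
\[
 F(a_n,t)\ge F(a_n,t+h_n)-Lh_n>D(t+h_n)-\eta h_n-Lh_n\ge D(t)-(2L+\eta)h_n,
\]
where the last step uses that $D$ is $L$-Lipschitz. Combined with $F(a_n,t)\le D(t)$, this yields $F(a_n,t)\to D(t)$. The hypothesis at $t$ then applies and gives
\[
 \limsup_n\frac{F(a_n,t+h_n)-D(t)}{h_n}\le C.
\]
On the other hand, by construction $F(a_n,t+h_n)-D(t)>(C+\eta)h_n$ for every $n$, so this $\limsup$ is at least $C+\eta$. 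That contradicts the hypothesis, hence $D^+D(t)\le C$.

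I expect no real obstacle. The one delicate point is that the pointwise bound $F\le D$ must hold for every label in $A$ and every time, not just almost everywhere. This is what turns essential-supremum information into pointwise control along the selected labels, both in the Lipschitz argument and in the verification $F(a_n,t)\to D(t)$. It is exactly the hypothesis supplied by \cref{lem:active} in the intended application with $F=F_\varepsilon$.
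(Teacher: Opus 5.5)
Your proposal is correct and follows essentially the same route as the paper: choose near-maximizers at the future times $t+h_n$, use the Lipschitz bound together with the pointwise inequality $F\le D$ on $A$ to show they are near-maximizing at $t$, and then apply the hypothesis. The only difference is cosmetic: the paper argues directly along a sequence realizing $D^+D(t)$ with tolerance $h_n^2$, while you argue by contradiction with tolerance $\eta h_n$.
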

\begin{proof}
Taking essential suprema in the increment bound gives $|D(t)-D(s)|\le L|t-s|$. At a specified time choose $h_n\downarrow0$ realizing $D^+D(t)$, and future near maximizers $a_n\in A$ with
\[
 D(t+h_n)-h_n^2\le F(a_n,t+h_n),\qquad
 F(a_n,t)\le D(t).
\]
It follows that
\[
 D(t)\ge F(a_n,t)
 \ge D(t+h_n)-Lh_n-h_n^2
 \ge D(t)-2Lh_n-h_n^2.
\]
The assumed estimate therefore applies, and
\[
 \frac{D(t+h_n)-D(t)}{h_n}
 \le\frac{F(a_n,t+h_n)-D(t)}{h_n}+h_n
\]
gives $D^+D(t)\le C$.
\end{proof}

\begin{proof}[Proof of the comparison and uniqueness assertions in \textup{\cref{thm:main}}]
\emph{Differentiation and integration at fixed parameters.} Fix $T>0$, a common structural restart $0<\tau<T$, and $\varepsilon\in(0,1)$. By \cref{lem:active}, $F_\varepsilon$ meets the hypotheses of \cref{lem:envelope} with $L=(2+\varepsilon)V_*$. Thus \cref{thm:all-sequences} gives
\[
 D^+D_\varepsilon(t)\le C_f\delta+\varepsilon V_*
 \quad\text{for every }t\in\mathcal T_\tau.
\]
Absolute continuity of $D_\varepsilon$ gives, for $t\in[\tau,T]$,
\begin{equation}\label{eq:penalized-integrated-comparison}
 D_\varepsilon(t)\le D_\varepsilon(\tau)
 +(C_f\delta+\varepsilon V_*)(t-\tau).
\end{equation}
The short-time limit has thus been taken and integrated with both $\varepsilon$ and $\tau$ fixed.

\emph{Removal of the spatial penalty and restart.} For each fixed time, $F_\varepsilon\uparrow X-Y$ almost everywhere as $\varepsilon\downarrow0$, and
\[
 D_\varepsilon(t)\uparrow D_+(t).
\]
Indeed, choose a regular label with gap above $D_+(t)-\eta$. Its finite position makes its penalty less than $\eta$ for small $\varepsilon$, so $D_+(t)-2\eta<D_\varepsilon(t)\le D_+(t)$. Sending $\varepsilon\downarrow0$ in \eqref{eq:penalized-integrated-comparison} gives
\[
 D_+(t)\le D_+(\tau)+C_f\delta(t-\tau).
\]
Now choose common structural restarts tending to zero. Finite displacement yields
\[
 D_+(\tau)\le\esssup_a(X_0(a)-Y_0(a))+2V_*\tau.
\]
Send $\tau\downarrow0$, then exchange the original flows to obtain
\begin{equation}\label{eq:stability}
 \|X(t)-Y(t)\|_\infty
 \le\|X_0-Y_0\|_\infty+C_ft\|U_0-W_0\|_\infty.
\end{equation}
Since $X(t)=X_{\rho_t}$ and $Y(t)=X_{\widetilde\rho_t}$, \eqref{eq:quantile-Winfty} identifies both position discrepancies with the corresponding normalized mass distances. With $\delta=\delta_0$, this proves \eqref{eq:main-mass-stability}.
The local BV and averaging constants enter only remainders eliminated before the parameter limits; the integrated constant remains $C_f\delta$. For $d_0=\infty$, the estimate has only its extended-value meaning.

\emph{Equal-data uniqueness on a common label set.} For identical data, $X(t)=Y(t)$ almost everywhere at each time. Intersect the equality sets for rational times with the common all-time path set of \cref{thm:lift}. Lipschitz continuity gives equality for all $t\ge0$ on this fixed full-measure label set. The push-forward formulas in \eqref{eq:entropy-representation}, with the same initial physical velocity $U_0=W_0$, give $\rho_t^X=\rho_t^Y$ and $\qmeas_t^X=\qmeas_t^Y$ at every time; their densities agree $\rho_t\dd t$-almost everywhere, proving uniqueness in \cref{def:entropy}.
\end{proof}
\section[\texorpdfstring{Kinetic energy dissipation and the Lagrangian semigroup}{Kinetic energy dissipation and the Lagrangian semigroup}]{Kinetic energy dissipation and the Lagrangian semigroup}\label{sec:consequences}

The kinetic energy of the unique flow is nonincreasing, and the flow can be restarted from any time. Let $p_X(t)=P_{X(t)}U_0$ be the canonical Lagrangian physical velocity of the unique Lagrangian flow. All label norms below are taken on $(0,M)$.

\subsection[\texorpdfstring{Kinetic energy dissipation and convex entropies}{Kinetic energy dissipation and convex entropies}]{Kinetic energy dissipation and convex entropies}\label{sec:energy-entropies}

\begin{proposition}[Kinetic energy identity and right continuity]\label{thm:energy}
For $0\le s\le t$,
\begin{equation}\label{eq:energy-identity}
 p_X(t)=P_{X(t)}p_X(s),\qquad
 \|p_X(s)\|_2^2-\|p_X(t)\|_2^2=\|p_X(s)-p_X(t)\|_2^2.
\end{equation}
The map $p_X$ is strongly right-continuous in $L^2$ at every $s\ge0$, and
\begin{equation}\label{eq:L2-right-velocity}
 \lim_{h\downarrow0}
 \left\|\frac{X(s+h)-X(s)}h-f(p_X(s))\right\|_2=0.
\end{equation}
If $I$ is a positive fibre at time $s>0$, then, for every regular label $a$ in its interior, the right time derivative of $X(a,\cdot)$ at $s$ equals $f(p_I)$.
\end{proposition}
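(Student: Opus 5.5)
The plan is to derive everything from the representation in \cref{thm:lift} and the no-splitting property.

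\emph{Tower identity and energy.} For $0\le s\le t$, no-splitting from time zero gives $\mathcal H_{X(t)}\subset\mathcal H_{X(s)}$ by \cref{lem:compact-mono}. The tower property then yields
\[
 P_{X(t)}p_X(s)=P_{X(t)}P_{X(s)}U_0=P_{X(t)}U_0=p_X(t).
\]
Since $p_X(t)$ is the orthogonal projection of $p_X(s)$, Pythagoras gives $\|p_X(s)\|_2^2=\|p_X(t)\|_2^2+\|p_X(s)-p_X(t)\|_2^2$, which is \eqref{eq:energy-identity}.

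\emph{Right continuity.} Fix $s\ge0$ and note that $p_X(s)\in\mathcal H_{X(s)}$. Because $X(s+h)\to X(s)$ in $L^2$ (indeed in $L^\infty$ at rate $V_*h$), \cref{lem:projection-approximation} gives $P_{X(s+h)}p_X(s)\to p_X(s)$ in $L^2$. By the tower identity, the left side equals $p_X(s+h)$. At $s=0$ this recovers the argument of \cref{lem:constructed-energy}.

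\emph{Difference quotients.} By the path identity \eqref{eq:path} on regular labels and Fubini,
\[
 \frac{X(s+h)-X(s)}h-f(p_X(s))=\frac1h\int_s^{s+h}\bigl[f(p_X(r))-f(p_X(s))\bigr]\dd r .
\]
Minkowski's integral inequality and the Lipschitz bound $C_f$ bound its $L^2$ norm by $C_f\sup_{0<r-s\le h}\|p_X(r)-p_X(s)\|_2$. This tends to zero by right continuity, which proves \eqref{eq:L2-right-velocity}.

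\emph{Pointwise statement on fibres; main obstacle.} The delicate part is upgrading from $L^2$ to every regular interior label. Let $I$ be a positive fibre at $s>0$ and let $a$ be a regular label in its interior. Take a compact subinterval $I'\Subset I$ containing $a$ in its interior.

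\cref{lem:localization}(i) says the descendants $I(q)$ add mass $\eta_I(h)\to0$. By no-splitting and \eqref{eq:positive-modulus}, for almost every $q\in(0,h)$,
\[
 |p_X(a,s+q)-p_I|\le 2R\,\eta_I(h)/|I|.
\]
This holds along the single path of $a$ for almost every $q$, using \cref{lem:trace-Fubini} with a structural restart $\tau<s$. The fibre-average description applies to $a$ since $a$ stays in the descendant of $I$.

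Integrating \eqref{eq:path} then gives
\[
 \left|X(a,s+h)-X(a,s)-hf(p_I)\right|\le 2RC_fh\,\eta_I(h)/|I|=o(h).
\]
So the right derivative exists and equals $f(p_I)$, and no Lebesgue-point property of $a$ is needed. The care required is only in applying the fibre-average identification pathwise for almost every time. This is exactly what the fixed full-measure label refinement provides.
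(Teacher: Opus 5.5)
Your first three steps match the paper's proof. The tower property and orthogonality give the energy identity, \cref{lem:projection-approximation} gives right continuity, and the integrated path equation with the $C_f$ bound gives \eqref{eq:L2-right-velocity}.

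For the pointwise claim on a positive fibre you take a different and heavier route. The paper observes that, by no-splitting, each right difference quotient $[X(s+h)-X(s)]/h$ is constant on $I$, and that the canonical position representative is constant on the interior of $I$ for every $t\ge s$. The $L^2(I)$ convergence to the constant $f(p_I)$, already given by \eqref{eq:L2-right-velocity}, is therefore convergence of these constants, and every interior label inherits the limit.

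You instead re-run the positive-fibre case of \cref{thm:right-response}. You localize the descendants by \cref{lem:localization}(i), apply the averaging bound \eqref{eq:positive-modulus}, and use the pathwise identification of \cref{lem:trace-Fubini} at a structural restart $\tau<s$, almost everywhere in time. This is valid, since none of these results depends on the present proposition. It also gives the explicit remainder $2RC_fh\,\eta_I(h)/|I|$. However, it leaves your $L^2$ derivative unused for this part.

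More importantly, it reaches only labels in the $\tau$-dependent refinement $A$ of \cref{lem:trace-Fubini}. That set is itself built from Fubini and Lebesgue-point refinements at the restart, so your remark that no Lebesgue-point property of $a$ is needed is not quite accurate.

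To cover every regular label of \cref{thm:lift} in the interior of $I$, you need the constancy fact the paper relies on. All such labels are continuity points of $X(\cdot,t)$ lying in a level interval for every $t\ge s$, so they share one position. The right derivative therefore transfers from a single label of $A\cap I$ to all of them. With that one-line addition your argument is complete. The paper's version is shorter and needs no restart, localization lemma or Fubini refinement.
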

\begin{proof}
\emph{Projection loss.} The no-splitting property gives $\mathcal H_{X(t)}\subset\mathcal H_{X(s)}$. The tower property yields $p_X(t)=P_{X(t)}p_X(s)$, and orthogonality proves \eqref{eq:energy-identity}.

\emph{Strong right continuity and the right derivative.} Since $p_X(s)\in\mathcal H_{X(s)}$ and $X(s+h)\to X(s)$ in $L^2$, \cref{lem:projection-approximation} gives strong right continuity at every $s$, including $s=0$ by compatibility. The evolution equation then gives
\[
 \left\|\frac{X(s+h)-X(s)}h-f(p_X(s))\right\|_2
 \le\frac{C_f}{h}\int_s^{s+h}
 \|p_X(r)-p_X(s)\|_2\dd r\longrightarrow0.
\]
\emph{Pointwise transport velocity on positive fibres.} On a positive fibre $I$ at time $s$, the no-splitting property makes every right difference quotient constant on $I$. Its constant value therefore converges to $f(p_I)$ by \eqref{eq:L2-right-velocity}. The canonical position representative is constant on the interior of $I$ at every $t\ge s$, also when the fibre is formed at time $s$.
\end{proof}

\begin{proposition}[Convex entropies]\label{prop:entropies}
For every continuous convex $\eta:[-R,R]\to\R$, the solution satisfies
\begin{equation}\label{eq:entropy-dist}
 \partial_t(\eta(u^\sharp)\rho)+
 \partial_x(\eta(u^\sharp)f(u^\sharp)\rho)\le0
\end{equation}
in distributions, with the prescribed initial entropy trace in the narrow topology. In particular, the total entropy $\int_\R\eta(u^\sharp(x,t))\dd\rho_t(x)$ is nonincreasing in $t$.
\end{proposition}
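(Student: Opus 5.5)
The plan is to work in mass coordinates and use the Lagrangian representation of \cref{thm:lift} together with the push-forward identities $\rho_t=X(t)_\#\dd a$, $\qmeas_t=X(t)_\#(U_0\dd a)$ and the pullback \eqref{eq:pullback}. For a test $0\le\phi\in C_c^1(\R\times[0,T))$ the distributional inequality \eqref{eq:entropy-dist}, including its initial term, should read
\[
 \int_0^T\!\int_0^M\eta(p_X)\bigl[\phi_t(X,t)+f(p_X)\phi_x(X,t)\bigr]\dd a\dd t
 +\int_0^M\eta(U_0)\phi(X_0,0)\dd a\ge0 .
\]
Along regular labels the chain rule \eqref{eq:path} gives $\frac{\dd}{\dd t}\phi(X(a,t),t)=\phi_t+f(p_X)\phi_x$ for almost every $t$. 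So the bracket is the time derivative of $\phi(X(a,t),t)$, and the left side equals $\int_0^M\bigl[\int_0^T\eta(p_X(a,t))\,\partial_t\phi(X(a,t),t)\dd t+\eta(U_0(a))\phi(X_0(a),0)\bigr]\dd a$.

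The key step is a discrete summation by parts in time, which avoids differentiating $p_X$. Fix a partition $0=t_0<t_1<\dots<t_N=T$ of mesh $h$ and replace $\eta(p_X(t))$ on $[t_k,t_{k+1})$ by $\eta(p_X(t_k))$. This replacement costs $o(1)$ as $h\to0$, by strong right continuity and the energy identity \eqref{eq:energy-identity}. Indeed $\sum_k\int_{t_k}^{t_{k+1}}\|p_X(t)-p_X(t_k)\|_2^2\dd t\le h\sum_k(\|p_X(t_k)\|^2-\|p_X(t_{k+1})\|^2)\le hMR^2$, and $\eta$ is uniformly continuous on $[-R,R]$, so dominated convergence applies. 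After the replacement, the integral becomes $\sum_k\int\eta(p_X(t_k))[\phi(X(t_{k+1}),t_{k+1})-\phi(X(t_k),t_k)]\dd a$ plus the initial term. Abel summation turns it into
\[
 \sum_{k\ge1}\int_0^M\bigl[\eta(p_X(t_{k-1}))-\eta(p_X(t_k))\bigr]\phi(X(t_k),t_k)\dd a ,
\]
since the boundary term at $k=0$ cancels the initial term, because $p_X(0)=U_0$. Each summand is nonnegative. This is where conditional Jensen enters: $p_X(t_k)=P_{X(t_k)}p_X(t_{k-1})$ and $\phi(X(t_k),t_k)\ge0$ is $\sigma(X(t_k))$-measurable. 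Hence $\int\eta(p_X(t_{k-1}))\phi(X(t_k))\dd a=\int P_{X(t_k)}[\eta(p_X(t_{k-1}))]\,\phi(X(t_k))\dd a\ge\int\eta(p_X(t_k))\phi(X(t_k))\dd a$. Letting $h\to0$ gives the inequality, and the push-forward \eqref{eq:fluxclosure}, extended to $\eta(u^\sharp)$ and $\eta(u^\sharp)f(u^\sharp)$ via \eqref{eq:pullback}, converts it to Eulerian form.

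For the initial trace, \cref{lem:constructed-energy} (or the strong convergence $p_X(t)\to U_0$ from \cref{thm:lift}) gives $\eta(p_X(t))\to\eta(U_0)$ in $L^1$, since $\eta$ is uniformly continuous on $[-R,R]$. Combined with $X(t)\to X_0$ in $L^2$, this makes $\int\varphi\,\eta(u^\sharp)\dd\rho_t\to\int\varphi\,\eta(u_0)\dd\rho_0$ for $\varphi\in C_b$. Monotonicity of the total entropy follows directly from conditional Jensen applied to $p_X(t)=P_{X(t)}p_X(s)$ with $\varphi\equiv1$.

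The main obstacle is the replacement error in the time discretization, namely justifying that freezing $\eta(p_X)$ on subintervals costs $o(1)$ when $p_X$ has only $L^2$ right continuity and no time derivative. The energy-dissipation telescoping bound handles it: the squared oscillation summed over the partition is controlled by $h$ times the total energy loss. The uniform-continuity modulus of $\eta$ then carries this bound from $p_X$ to $\eta(p_X)$.
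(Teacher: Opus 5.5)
Your proposal is correct and follows essentially the same route as the paper. The shared steps are: freeze $\eta(p_X)$ on a time partition; apply conditional Jensen through $p_X(t_k)=P_{X(t_k)}p_X(t_{k-1})$ against the nonnegative $\sigma(X(t_k))$-measurable test; use the chain rule along Lagrangian paths; and bound the freezing error by the telescoping energy estimate $h[e(0)-e(T)]$, which needs only the energy identity and not strong right continuity. Your Abel summation is just the paper's telescoping of $E_j=\int_0^M\eta(p_j)\phi(X_j,t_j)\dd a$ written in the other order, and the initial trace and monotonicity of the total entropy are handled the same way.
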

\begin{proof}
\emph{Conditional Jensen's inequality and time summation.} Fix $T>0$, $0\le\phi\in C_c^1(\R\times[0,T))$, and put $h=T/N$, $t_j=jh$, $p_j=p_X(t_j)$, $X_j=X(t_j)$. Conditional Jensen's inequality and \eqref{eq:energy-identity} give
\[
 \eta(p_{j+1})\le P_{X_{j+1}}\eta(p_j).
\]
Testing against the nonnegative $\sigma(X_{j+1})$-measurable function $\phi(X_{j+1},t_{j+1})$, and setting $E_j=\int_0^M\eta(p_j)\phi(X_j,t_j)\dd a$, yields
\[
 \begin{aligned}
 E_{j+1}-E_j
 &\le\int_0^M\eta(p_j)
 [\phi(X_{j+1},t_{j+1})-\phi(X_j,t_j)]\dd a\\
 &=\int_{t_j}^{t_{j+1}}\!\int_0^M
 \eta(p_j)[\phi_t(X,t)+f(p_X(t))\phi_x(X,t)]\dd a\dd t,
 \end{aligned}
\]
by the chain rule along the Lagrangian paths. With the left-endpoint time interpolant $p_N^-(t)=p_j$ on $[t_j,t_{j+1})$, summation, $E_N=0$ and $p_0=U_0$ give
\[
 \int_0^T\!\int_0^M
 \eta(p_N^-)[\phi_t(X,t)+f(p_X(t))\phi_x(X,t)]\dd a\dd t
 +\int_0^M\eta(U_0)\phi(X_0,0)\dd a\ge0.
\]
\emph{Convergence from the projection loss.} Writing $e(t)=\|p_X(t)\|_2^2$, the exact projection loss gives
\[
 \begin{aligned}
 \int_0^T\|p_N^-(t)-p_X(t)\|_2^2\dd t
 &=\sum_{j=0}^{N-1}\int_{t_j}^{t_{j+1}}[e(t_j)-e(t)]\dd t\\
 &\le h[e(0)-e(T)]\longrightarrow0.
 \end{aligned}
\]
\begingroup
To pass the continuous convex function $\eta$ through this limit, let
\[
 \omega_\eta(\delta)=\sup\{|\eta(r)-\eta(s)|:
 r,s\in[-R,R],\ |r-s|\le\delta\}.
\]
Splitting according to whether $|p_N^--p_X|\le\delta$ gives, for every $\delta>0$,
\[
 \begin{aligned}
 \int_0^T\!\int_0^M|\eta(p_N^-)-\eta(p_X)|\dd a\dd t
 &\le MT\omega_\eta(\delta)\\
 &\quad+\frac{2\|\eta\|_{L^\infty([-R,R])}}{\delta^2}
 \int_0^T\|p_N^-(t)-p_X(t)\|_2^2\dd t.
 \end{aligned}
\]
First let $N\to\infty$, then $\delta\downarrow0$. Uniform continuity gives $\eta(p_N^-)\to\eta(p_X)$ in $L^1((0,M)\times(0,T))$. The factor multiplying $\eta(p_N^-)$ in the summed inequality is bounded by $\|\phi_t\|_\infty+V_*\|\phi_x\|_\infty$.
\endgroup
Passing to the limit and using \eqref{eq:pullback} proves \eqref{eq:entropy-dist}, including its initial term.

\emph{Initial trace and total entropy.} For the initial entropy trace, \cref{thm:energy} gives $p_X(t)\to U_0$ in $L^2$, hence $\eta(p_X(t))\to\eta(U_0)$ in $L^1$. On the common all-time path set, finite displacement gives $X(a,t)\to X_0(a)$. For every $\psi\in C_b(\R)$, splitting the product difference and applying dominated convergence on the finite label interval yields
\[
 \eta(p_X(t))\psi(X(t))\longrightarrow\eta(U_0)\psi(X_0)
 \quad\text{in }L^1(0,M).
\]
This proves the narrow initial entropy trace through all positive times. For every $s\le t$, conditional Jensen's inequality and preservation of integrals by $P_{X(t)}$ give $\int_0^M\eta(p_X(t))\dd a\le\int_0^M\eta(p_X(s))\dd a$.
\end{proof}

\subsection[\texorpdfstring{Restarting the flow and the semigroup property}{Restarting the flow and the semigroup property}]{Restarting the flow and the semigroup property}\label{sec:restart-semigroup}

The strong right continuity in \cref{thm:energy} supplies the initial kinetic energy trace when the flow is restarted at an arbitrary time. We verify that the continuation belongs to the same entropy class and then apply uniqueness to obtain the semigroup property.

\begin{theorem}[Lagrangian semigroup]\label{prop:semigroup}
For fixed $M,R,f$, let
\[
 \mathcal A_{M,R}=\{(X,U):X\in\K\subset L^2(0,M),\ U\in L^\infty(0,M),
             \ |U|\le R,\ P_XU=U\}.
\]
The map
\[
 \mathcal S_t(X_0,U_0)=(X(t),p_X(t))
\]
takes $\mathcal A_{M,R}$ into itself and satisfies
$\mathcal S_{t+s}=\mathcal S_t\circ\mathcal S_s$.
\end{theorem}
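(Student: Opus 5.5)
The plan is to reduce the semigroup identity to uniqueness in the entropy class of \cref{def:entropy}, after restarting the flow at time $s$.

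First, the invariance $\mathcal S_t(\mathcal A_{M,R})\subset\mathcal A_{M,R}$. Given $(X_0,U_0)\in\mathcal A_{M,R}$, set $\rho_0=X_{0\#}\dd a$ and $u_0\rho_0=X_{0\#}(U_0\dd a)$. Compatibility $P_{X_0}U_0=U_0$ lets us write $U_0=u_0\circ X_0$. Monotonicity of $X_0$ gives $X_0=X_{\rho_0}$ a.e. (\cref{lem:quantile-details}), and $X_0\in L^2$ gives a finite second moment. Hence \cref{thm:main}(a) yields a unique entropy solution. Its Lagrangian map $X(t)$ is then well defined by \cref{thm:main}(c), with no dependence on the chosen representative. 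We have $X(t)\in\K$ and $p_X(t)=P_{X(t)}U_0$, so $|p_X(t)|\le R$ and $P_{X(t)}p_X(t)=p_X(t)$. Thus $\mathcal S_t$ is well defined and maps $\mathcal A_{M,R}$ into itself.

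Next, fix $s\ge0$ and restart from the data $(X(s),p_X(s))\in\mathcal A_{M,R}$, that is, $\rho_s$ with velocity given by $\qmeas_s=X(s)_\#(p_X(s)\dd a)$ (\cref{thm:lift}). Define $\widetilde X(t)=X(s+t)$ and $\widetilde\rho_t=\rho_{s+t}$, with $\widetilde u(\cdot,t)=u(\cdot,s+t)$. We verify that this is an entropy solution with the restarted data.
\begin{itemize}
\item The weak equations on $[0,T]$ follow by Lagrangian testing as in \cref{prop:closure}. By the tower property, $p_X(s+t)=P_{X(s+t)}p_X(s)$ (\cref{thm:energy}), so $U_0$ may be replaced by $p_X(s)$. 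The initial terms come from narrow continuity of $\rho$ and $\qmeas$.
\item For the Oleinik condition with constant $1/t$ rather than $1/(s+t)$, we do not reuse $b$. Instead we apply \cref{lem:OSL-extension} to $B=f(p_X(s+t))$ and $\widetilde X$. The label-wise inequality $B(b)-B(a)\le[\widetilde X(b)-\widetilde X(a)]/t$ follows from \eqref{eq:limit-gap}, which holds at every positive time pair for entropy solutions (\cref{thm:lift}). For almost every pair, $d(r)=X(b,r)-X(a,r)$ satisfies $d(s+t+h)-d(s+t)\le (h/t)\,d(s+t)$ along label paths, and the right derivative at Lebesgue times gives the bound. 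This is the analogue of Step~3 of \cref{prop:variational-limit}, applied to the restarted gap inequality $d(s+t')\le (t'/t)d(s+t)$, which follows by the same argument from time $s$ using no-splitting at time $s$.
\item The initial kinetic energy trace at restart uses strong right continuity of $p_X$ at $s$ (\cref{thm:energy}), together with $X(s+t)\to X(s)$ uniformly. The computation is that of \cref{lem:constructed-energy}, with $G$ taken to be all positive times, on which \eqref{eq:pullback}-type identities hold a.e.; we restrict $G$ to times where $u\rho_t=\qmeas_t$.
\end{itemize}

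Uniqueness from \cref{thm:main}(a),(c) then identifies the Lagrangian map of the restarted solution with $\widetilde X(t)=X(s+t)$. Its velocity is $P_{X(s+t)}p_X(s)=p_X(s+t)$, which gives $\mathcal S_t\mathcal S_s=\mathcal S_{t+s}$.

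The main obstacle is the Oleinik condition with constant $1/t$ after the shift. I would first try the gap monotonicity $t\mapsto[X(b,s+t)-X(a,s+t)]/t$ directly. The source of this monotonicity is that, on the restarted problem, fibres existing at time $s$ are whole and the transport velocity is $f(p)$. Following the proof of \cref{prop:transport}, uniqueness of label paths gives $(\gamma_2-\gamma_1)' \le (\gamma_2-\gamma_1)/(s+t)$. This is weaker than needed, so the $1/t$ bound must instead come from the structure itself. Concretely, I would use the compression \cref{thm:allcut} with source time $s$ to reprove the discrete-type gap estimate \eqref{eq:OSL-new-gap}: backward fans from time $s+t$ to source $s$ (\cref{thm:fan} with time origin $s$) give $[x-X(a,s)]/t$ as the average transport velocity. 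The OSL with $1/t$ then follows as in \cref{thm:fan}.
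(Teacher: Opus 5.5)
Your overall route is the paper's. You check that $(X(s),p_X(s))\in\mathcal A_{M,R}$, verify that the time-shifted flow is an entropy solution for these data, and conclude by uniqueness. The weak laws come from Lagrangian testing with weights $1$ and $p_X(s)$, and the energy trace from the strong right continuity in \cref{thm:energy}. The only real difference concerns the Oleinik condition, and there your final paragraph rests on a reversed inequality.

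Since $s\ge0$ and $y-x>0$, one has $(y-x)/(s+r)\le(y-x)/r$. The inherited bound with constant $1/(s+r)$ is therefore \emph{stronger} than the one required, not weaker. The paper simply takes $b_s(x,r)=b(x,s+r)$. This coefficient is jointly Borel, bounded, equal to the transport velocity $\rho_{s+r}\dd r$-almost everywhere, and satisfies $b_s(y,r)-b_s(x,r)\le(y-x)/(s+r)\le(y-x)/r$; nothing more is needed.

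Your second bullet is in fact already correct for the same reason. Since $d\ge0$, \eqref{eq:limit-gap} gives $d(s+t+h)-d(s+t)\le\frac{h}{s+t}\,d(s+t)\le\frac ht\,d(s+t)$. So no restarted gap inequality has to be reproved from time $s$ using no-splitting. Rebuilding a coefficient through \cref{lem:OSL-extension} is valid but an unnecessary detour compared with translating $b$.

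The compression and backward-fan programme in your last paragraph should be dropped. It addresses an obstacle that does not exist, and as sketched it is not an argument anyway. \cref{thm:fan} yields affine separators for the boundaries of a terminal fibre, not a pointwise one-sided Lipschitz bound on the transport velocity.
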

\begin{proof}
\emph{Admissibility of the restart data.} Fix $s\ge0$ and set $X_s=X(s)$, $U_s=p_X(s)$. The moment bound, $|U_s|\le R$ and $U_s\in\mathcal H_{X_s}$ give $(X_s,U_s)\in\mathcal A_{M,R}$, so \cref{sec:existence} constructs a global entropy solution from these data.

\emph{Entropy admissibility of the continuation.} For $Z(r)=X(s+r)$ and $W(r)=p_X(s+r)$, the tower property gives
\[
 \begin{gathered}
 W(r)=P_{Z(r)}U_s,\qquad
 Z(r)=X_s+\int_0^r f(W(q))\dd q,\\
 \rho_{s+r}=Z(r)_\#\dd a,\qquad
 \qmeas_{s+r}=Z(r)_\#(U_s\dd a).
 \end{gathered}
\]
The pathwise chain-rule argument of \cref{prop:closure}, with the fixed label weights $1$ and $U_s$, supplies both weak equations and their initial terms. The translated transport coefficient $b_s(x,r)=b(x,s+r)$ is jointly Borel, has the same transport-velocity bound and agrees with the transport velocity $\rho_{s+r}\dd r$-almost everywhere. For almost every $r>0$,
\[
 b_s(y,r)-b_s(x,r)
 \le\frac{y-x}{s+r}\le\frac{y-x}{r}\qquad(x<y).
\]
By \cref{thm:energy}, $W(r)\to U_s$ strongly in $L^2$ as $r\downarrow0$. Together with $Z(r)\to X_s$ along the common label paths, the same product estimate as in \cref{lem:constructed-energy} gives, for every $\varphi\in C_b(\R)$,
\[
 \int_0^M\varphi(Z(r))W(r)^2\dd a
 \longrightarrow\int_0^M\varphi(X_s)U_s^2\dd a.
\]
For every $T>0$, choose $G$ on $[0,s+T]$. The continuation has this initial energy-measure trace on the fixed full-measure set $\{0<r<T:s+r\in G\}$, where its physical velocity agrees $\rho_{s+r}$-almost everywhere with the canonical physical velocity. Thus the continuation satisfies \cref{def:entropy} with source data $(X_s,U_s)$.

\emph{Identification by uniqueness.} Uniqueness from \cref{sec:compare} identifies it with the solution from these data, including its Lagrangian physical velocity $P_{Z(r)}U_s$. Hence $\mathcal S_r\mathcal S_s=\mathcal S_{s+r}$; compatibility gives $\mathcal S_0=\mathrm{Id}$.
\end{proof}

\appendix

\section[Quantiles, traces and representatives]{Quantiles, traces and representatives}\label{app:measure}

\begin{proof}[Proof of \textup{\cref{lem:quantile-details}}]
$T\circ X_\mu$ is finite and nondecreasing on a full-measure label set and pushes $\dd a$ to $\nu$. Its finite interior one-sided completion has the same sublevel-set lengths as the quantile of $\nu$, hence agrees with it almost everywhere. No infinite endpoint value is needed.
\end{proof}

\begin{proof}[Proof of \textup{\cref{lem:alltime-labels}}]
\emph{Time regularity of the one-sided traces.} Interior traces are finite by local boundedness of monotone $L^2$ maps. Passing to one-sided limits through the full-measure set where the increment bound holds gives
$|X(c\pm,t)-X(c\pm,s)|\le C|t-s|$ for every interior cut $c$. Letting $c$ approach an endpoint proves the same bound whenever its trace is finite; the two order inequalities force an infinite endpoint trace to retain its sign and value at all times.

\emph{A common continuity-label set.} Remove the countable union of jump labels of $X(\cdot,q)$ over rational $q\in[0,T]$. A jump of size $d>0$ at a remaining label and a time $t$ would give a jump at least $d-2C|q-t|>0$ at a sufficiently close rational time, a contradiction. For $C=0$ all slices agree. This proves simultaneous continuity on the remaining labels.
\end{proof}

\begin{proof}[Proof of \textup{\cref{lem:joint}}]
\emph{Jointly Borel position and physical velocity representatives.} The representative-independent primitive $A(a,t)=\int_0^aX(b,t)\dd b$ satisfies, with $C_T=\sup_{[0,T]}\|X(t)\|_2$,
\begin{equation}\label{eq:primitive-joint-modulus}
 |A(a,t)-A(a',t')|
 \le C_T|a-a'|^{1/2}+\sqrt M\|X(t)-X(t')\|_2.
\end{equation}
It is jointly continuous. For $0<a<M$ and $h_n(a)=\min\{1/n,(M-a)/2\}$, the Borel limit
\[
 X^+(a,t)=\lim_{n\to\infty}
       \frac{A(a+h_n(a),t)-A(a,t)}{h_n(a)}
\]
is finite and equals the right-continuous monotone representative at every time. From now on, $X$ denotes this jointly Borel right-continuous representative. Define the jointly Borel fibre endpoints
\[
 l(a,t)=\int_0^M\one_{\{X(b,t)<X(a,t)\}}\dd b,\qquad
 r(a,t)=\int_0^M\one_{\{X(b,t)\le X(a,t)\}}\dd b.
\]
For $r>l$ set $p(a,t)=[Q_U(r)-Q_U(l)]/(r-l)$; for $r=l$ use a fixed Borel representative of $U_0(a)$. By \cref{lem:PX}, this is $P_{X(t)}U_0$ almost everywhere for every $t$.

\emph{Narrow continuity of the mass and momentum measures.} If $t_n\to t$, then $X(t_n)\to X(t)$ in $L^2(0,M)$. For every $\psi\in C_b(\R)$, continuity gives $\psi(X(t_n))\to\psi(X(t))$ in measure; uniform boundedness on the finite label interval gives convergence in $L^1$. Multiplication by the fixed bounded initial physical velocity $U_0$ preserves this convergence, proving narrow continuity of both push-forwards. For a Borel set $D$, the push-forward formulas also show that $t\mapsto\rho_t(D),\qmeas_t(D)$ are Borel.

\emph{A jointly Borel representative of the physical velocity.} Let $D_n(x)$ be the half-open dyadic interval of length $2^{-n}$ containing $x$, and define
\[
 u_n(x,t)=
 \begin{cases}
 \qmeas_t(D_n(x))/\rho_t(D_n(x)),&\rho_t(D_n(x))>0,\\
 0,&\rho_t(D_n(x))=0.
 \end{cases}
\]
These functions are jointly Borel and bounded by $R$. Their limit where it exists, extended by zero elsewhere, is therefore a bounded Borel function $u^\sharp$. For each fixed $t$, $|\qmeas_t|\le R\rho_t$ gives a density $w_t=d\qmeas_t/d\rho_t$, and $u_n(\cdot,t)$ is its conditional expectation on the dyadic partition. These increasing sigma-algebras generate the Borel sigma-algebra; the upward martingale convergence theorem \cite[Theorem 4.6.8]{Durrett}, applied to the probability measure $\rho_t/M$, gives $u_n(\cdot,t)\to w_t$ almost everywhere and in $L^1(\rho_t)$. This argument applies separately at every time, without a common null set in $x$.

\emph{Identification of the pullback.} Both $p(t)$ and $u^\sharp(X(t),t)$ lie in $\mathcal H_{X(t)}$ and have the same integrals against every bounded function of $X(t)$, by their common push-forward formula for the momentum measure. They therefore coincide in $L^2$, proving \eqref{eq:pullback} at every time.
\end{proof}
\section*{Use of generative AI}

During the preparation of this manuscript, the authors used ChatGPT (OpenAI) to assist with language editing, \LaTeX{} formatting, and the organization, review and clarification of the mathematical exposition. The authors independently assessed and verified the suggestions incorporated into the manuscript and take full responsibility for its content, including the mathematical statements, computations, proofs and references.

\bibliographystyle{abbrv}
\bibliography{references}
\end{document}